\renewcommand{\AA}{ \mathbf{A} }
\newcommand{\fixed@sra}{$\vrule height 2\fontdimen22\textfont2 width 0pt\shortrightarrow$}
\newcommand{\shortarrow}[1]{%
  \mathrel{\text{\rotatebox[origin=c]{\numexpr#1*45}{\fixed@sra}}}
}
\newcommand{\shortnearrow}{\scalebox{0.6}{$\nearrow$}}
\newcommand{\converges}{\shortnearrow}
\title{Vorticity blowup in 2D compressible Euler equations}
\author[J.~Chen]{Jiajie Chen}
\address{Courant Institute of Mathematical Sciences, New York University, New York, NY 10012.}
\email{\href{jiajie.chen@cims.nyu.edu}{jiajie.chen@cims.nyu.edu}}
\author[G.~Cialdea]{Giorgio Cialdea}
\address{Courant Institute of Mathematical Sciences, New York University, New York, NY 10012.}
\email{\href{giorgio.cialdea@cims.nyu.edu}{giorgio.cialdea@cims.nyu.edu}}
\author[S.~Shkoller]{Steve Shkoller}
\address{Department of Mathematics, University of California Davis, Davis, CA 95616.}
\email{\href{shkoller@math.ucdavis.edu}{shkoller@math.ucdavis.edu}}
\author[V.~Vicol]{Vlad Vicol}
\address{Courant Institute of Mathematical Sciences, New York University, New York, NY 10012.}
\email{\href{vicol@cims.nyu.edu}{vicol@cims.nyu.edu}}
\newtheorem{theorem}{Theorem}[section]
\newtheorem{lemma}[theorem]{Lemma}
\newtheorem{proposition}[theorem]{Proposition}
\newtheorem{remark}[theorem]{Remark}
\numberwithin{equation}{section}
\newcommand{\s}{\sigma}
\let\pa=\partial
\let\al=\alpha
\let\f=\frac
\def\na{\nabla}
\DeclarePairedDelimiterX{\norm}[1]{\lVert}{\rVert}{#1}
\newcommand{\ts}{\tilde{\Sigma}}
\renewcommand{\S}{\Sigma}
\renewcommand{\div}{\operatorname{div}}
\newcommand{\sinefunction}{\text{sin }}
\renewcommand{\sin}{s_{\mathsf{in}}}
 \theoremstyle{plain}
 \let\pa=\partial
 \let\al=\alpha
 \let\b=\beta
 \let\d=\delta
 \let\g=\gamma
 \let\e=\varepsilon
 \let \kp = \kappa
 \let\lam=\lambda
 \let\s=\sigma
 \let\f=\frac
 \let \les = \lesssim
  \let \gtr = \gtrsim
 \let\om=\omega
 \let \th = \theta
 \let \pr = \prime
 \let \vp = \varphi
 \let\G= \Gamma
\let\B = \Big
 \let\D=\Delta
 \let\S=\Sigma
 \let\Om=\Omega
 \let\td = \tilde
 \let\wh=\widehat
 \let\teq \triangleq
 \let\pa=\partial
 \def\cA{{\mathcal A}}
 \def\cD{{\mathcal D}}
 \def\cE{{\mathcal E}}
 \def\cK{{\mathcal K}}
 \def\cL{{\mathcal L}}
 \def\cN{{\mathcal N}}
 \def\cR{{\mathcal R}}
 \def\cS{{\mathcal S}}
 \def\cT{{\mathcal T}}
 \def\cW{{\mathcal W}}
 \def\cX{{\mathcal X}}
 \def\cZ{{\mathcal Z}}
 \def\cN{{\mathcal N}}
 \def\na{\nabla}
 \def\la{\langle}
 \def\ra{\rangle}
\def\one{\mathbf{1}}
  \def\Re{\mathrm{Re}}
  \def\Rg{\mathrm{Rg}}
 \newcommand{\beq}{\begin{equation}}
 \newcommand{\eeq}{\end{equation}}
  \newcommand{\bal}{\begin{aligned} }
  \newcommand{\eal}{\end{aligned}}
 \newcommand{\ben}{\begin{eqnarray}}
 \newcommand{\een}{\end{eqnarray}}
 \newcommand{\beno}{\begin{eqnarray*}}
 \newcommand{\eeno}{\end{eqnarray*}}
 \newcommand{\ee}{\mathbf{e}}
 \newcommand{\uu}{\mathbf{u}}
 \newcommand{\vA}{\mathbf{A}}
 \newcommand{\FF}{\mathbf{F}}
 \newcommand{\GG}{\mathbf{G}}
 \newcommand{\R}{\mathbb{R}}
 \newcommand{\UU}{\mathbf{U}} 
\newcommand{\VV}{\mathbf{V}}
\newcommand{\sgn}{\mathrm{sgn}}
 \newcommand{\supp}{\mathrm{supp}}
\begin{document}

\begin{abstract}
We prove finite-time vorticity blowup for smooth solutions of the 2D compressible Euler equations with smooth,  localized, and non-vacuous initial
data.  The vorticity blowup occurs at the time of the first singularity, and is accompanied by an axisymmetric implosion in which the swirl velocity enjoys full stability, as opposed to finite co-dimension stability.
\end{abstract}

\maketitle

\section{Introduction}
\label{sec:intro} 

We consider the isentropic compressible Euler equations in multiple space dimensions
\begin{subequations}
\label{eq:classical:euler}
\begin{align}
 \pa_t (\rho \uu) + \operatorname{div}(\rho \uu \otimes \uu) + \nabla p & =0  \,, \\ 
\pa_t \rho + \operatorname{div} (\rho \uu) & = 0 \,, 
\end{align}
where
$\uu \colon \R^d \times [0,\infty) \to \R^d$ is the velocity vector field and  $\rho \colon \R^d \times [0,\infty) \to \R_+$  is the strictly positive density function.  The pressure  $p\colon \R^d \times [0,\infty) \to \R_+$ is determined from the density via the ideal gas law
\begin{equation}
p = \tfrac{1}{\gamma} \rho^\gamma, 
\end{equation}
where  $\gamma>1$ is the adiabatic exponent. 
We supplement the Euler equations with initial data 
\begin{equation}
(\uu,\rho) |_{t=0} = (\uu_0,\rho_0),
\end{equation}
\end{subequations}
which is {\em as nice as it gets}. By this,  we mean that $(\uu_0,\rho_0) \in C^\infty(\R^d)$,  $\rho_0 \geq {\rm constant} >0 $ on $\R^d$, and  $(|\uu_0(x)|,\rho_0(x))\to (0,{\rm constant})$ as $|x|\to \infty$, at a sufficiently fast rate; here, $d\geq 2$ is the space dimension.

With these assumptions on $(\uu_0,\rho_0)$,  the classical well-posedness theory~\cite{Fr1954,La1955,Ka1975} guarantees the existence of a maximal time $T>0$, and of a unique solution $(\uu,\rho) \in C^\infty(\R^d \times (0,T))$ of the Cauchy problem for the Euler equations~\eqref{eq:classical:euler}. Generically, we expect that $T<\infty$, a singularity forms in finite time~\cite{La1964,Jo1974,Si1985}.

While it is well-known that at the time $T$ of the first singularity the solution must develop infinite gradients, i.e.~$\limsup_{t\converges T}  \|(\nabla \uu,\nabla \rho)(\cdot,t)\|_{L^\infty(\R^d)}  = \infty$,  a complete classification of all possible singularity mechanisms attainable by smooth  Euler solutions $(\uu,\rho)(\cdot,t)$ as $t \converges T < \infty$ remains, to date, elusive. 

For instance, while it is known that a pre-shock singularity~\cite{Ch2007,ChMi2014,BuShVi2022,BuDrShVi2022,BuShVi2023a,BuShVi2023b,LuSp2018,LuSp2024} or an implosion singularity~\cite{MeRaRoSz2022a,MeRaRoSz2022b,CLGSShSt2023,BuCLGS2022} may form as $t\converges T$ from smooth initial conditions, it is an {\em outstanding open problem to determine whether} $\limsup_{t\converges T} \| \omega(\cdot,t)\|_{L^\infty(\R^d)} = \infty$, where $\omega = \nabla \times \uu$ denotes the fluid vorticity. 

In the case that the Euler solution evolves to a {\em stable} gradient blowup at a time $T<\infty$, from {\em generic}, compressive, and smooth initial data, the vorticity 
$\omega(\cdot,t)$ remains uniformly bounded in $C^{1/3}(\R^d)$ for $t\in [0,T]$, as was shown in~\cite{BuShVi2022,BuDrShVi2022,BuShVi2023a,BuShVi2023b,ShVi2023}. The fact that the vorticity does not blow up at the time of the first 
gradient singularity\footnote{The spacetime point at which the first gradient singularity forms is the point on the
spacetime co-dimension-$2$ manifold of pre-shocks, with a minimal time coordinate (see~\cite{ShVi2023}).} should not come as a surprise for $d=2$, since the specific vorticity $\omega/\rho$ is transported by the fluid velocity:
\begin{equation}
\label{eq:vorticity:transport}
\partial_t \Bigl( \frac{\omega}{\rho} \Bigr)
+ (\uu \cdot \nabla) \Bigl( \frac{\omega}{\rho} \Bigr) = 0 \,.
\end{equation}
Since the density in~\cite{BuShVi2022,BuDrShVi2022,BuShVi2023a,BuShVi2023b,ShVi2023}  remains bounded at the time of the first gradient blowup, 
and the initial datum is non-vacuous, it follows from the maximum principle for~\eqref{eq:vorticity:transport} that 
$\|\omega(\cdot,T)\|_{L^\infty(\R^2)} < \infty$. For $d=3$, the proof of this upper bound for the maximum vorticity is more subtle (see~\cite{BuShVi2023b}).

The specific vorticity transport equation~\eqref{eq:vorticity:transport}  shows that for $d=2$  the vorticity can blow up at time $T$ 
{\em only if} an implosion singularity simultaneously develops at time $T$,~i.e.~$\lim_{t\converges T} \|\rho(\cdot,t)\|_{L^\infty(\R^2)}$.  This, however, need not be an {\em if and only if} statement. Indeed, the smooth implosions established in~\cite{MeRaRoSz2022a,MeRaRoSz2022b,BuCLGS2022} are confined to radially symmetric solutions, and hence $\omega(\cdot,t) \equiv 0$ for all $t\in [0,T]$. The only rigorous proof of a non-radially symmetric  implosion from smooth initial data was recently established in~\cite{CLGSShSt2023} for $d=3$.  Such non-radial imploding solutions possess non-trivial vorticity, but establishing vorticity blowup is incongruous with the finite co-dimension stability framework employed in~\cite{CLGSShSt2023}.  In particular, since vorticity waves propagate along fluid-velocity characteristics\footnote{The specific vorticity $\omega/\rho$ is transported when $d=2$ (see~\eqref{eq:vorticity:transport}) and is Lie-advected when $d=3$ (see~\eqref{spec-3d}).}, there exists a distinguished initial fluid particle $x_*$ such that $\omega_0(x_*)$ is ``carried to'' the  implosion along the fluid-velocity characteristics, thereby determining the vorticity $\omega(\cdot,T)$ at the point of the implosion. As the stability analysis in~\cite{CLGSShSt2023}  requires $\omega_0$ to be a finite co-dimension perturbation of the zero function, and does not specify the precise subspace associated to this finite co-dimension, it is not possible to guarantee that $\omega_0(x_*)\neq 0$ (which must be ensured in order to establish vorticity blowup). We shall explain this issue in greater detail  in  Section~\ref{sec:intro_comp} below.
 
The goal of this paper is to give the first proof of finite-time  blowup for the vorticity in the compressible Euler equations, with smooth initial data --- see Theorem~\ref{thm:blowup} below.

\subsection{The two-dimensional axisymmetric ansatz}
\label{sec:axis-symmetry}
In order to state our main result, it is convenient to first  introduce a symmetric form of the isentropic Euler system~\eqref{eq:classical:euler}, in which the fundamental variables are the fluid velocity $\uu$ and the rescaled sound speed $\sigma$, which is defined by 
\begin{equation*}
\s = \tfrac{1}{\al} \sqrt{\tfrac{dp}{d\rho}} =  \tfrac{1}{\al} \rho^{\al},\quad \al = \tfrac{\g-1}{2} .
\end{equation*}
In terms of the unknowns $\uu$ and $\sigma$ the Euler equations~\eqref{eq:classical:euler} are equivalent to the following system\footnote{In the full Euler system the specific entropy is transported along the flow of the vector field $(\partial_t + \uu \cdot \nabla)$. As such, for isentropic initial data (i.e., with constant specific entropy at time $t=0$) the solution will remain isentropic up to the time $T$ of the very first singularity, which justifies the usage of the~\eqref{eq:classical:euler} and hence of~\eqref{eq:euler} on $[0,T]$.}:
\begin{subequations}
\label{eq:euler} 
\begin{align}
	\pa_t \uu + \uu \cdot \na \uu + \al \,  \s \,   \na \s & = 0 \,,
	\label{eq:euler:a} \\
	\pa_t \s +  \uu \cdot \na \s + \al \, \s  \,  \mathrm{ div} \uu  &= 0 \,.
	\label{eq:euler:b} 
\end{align}
\end{subequations}

For the remainder of the paper, {\em we restrict our analysis to the two-dimensional case}:  $d=2$. As such, it is convenient to rewrite \eqref{eq:euler} in
polar coordinates $(R,\theta)$,  defined as
\begin{equation*}
R = |x|, \quad \theta = \arctan \bigl( \tfrac{x_2}{x_1} \bigr), 
\quad \ee_R = ( \cos \th, \sinefunction \th),
\quad \ee_{\th} = (-\sinefunction \th , \cos \th) .
\end{equation*}
Throughout the paper,  we shall use  boldface font $\uu, \UU, 
\ldots $ to denote vectors,
 and standard unbolded font $u, U, \ldots$ to denote scalars.
 
We analyze vorticity blowup within the class of {\em two-dimensional axisymmetric flows}, a class of flows with built-in rotation. That is, we consider a velocity field $\uu$ with the following axisymmetry\footnote{If $u^{\th} \equiv 0$ in~\eqref{eq:ansatz}, we recover radially symmetric functions $(\uu,\sigma)$.}:
\begin{equation}
\label{eq:ansatz}
 \uu = \uu(R, \theta,t) =  \uu^R +  \uu^{\th}, \quad  \uu^R = u^R(R,t) \ee_R ,
 \quad  \uu^{\th} = u^\theta(R,t) \ee_\theta ,
 \quad \s = \s(R,t). 
\end{equation}
The vorticity and velocity-divergence associated to the axisymmetric ansatz~\eqref{eq:ansatz} are given by
\begin{align*}
\omega &= \omega(R,t) = \bigl(\tfrac{1}{R} + \partial_R \bigr) u^{\theta}(R,t),\\
\div \uu &= \div \uu^R (R,t) = \bigl(\tfrac{1}{R} + \partial_R \bigr) u^{R}(R,t).
\end{align*}
In general, for a smooth  function $f=f(|x|)\colon\mathbb{R}^2 \to \mathbb{R}$ we have $\div( f  \ee_R) =  R^{-1} \pa_R( R  f(R) )$; as such, we shall often abuse notation  and simply write $\div(f(R)) =  R^{-1} \pa_R( R f(R) )$
in place of $\div( f  \ee_R) $.

In order to show that the two-dimensional axisymmetric ansatz~\eqref{eq:ansatz} is preserved by the Euler evolution, we note that the system~\eqref{eq:euler} is equivalent (for smooth solutions) to the system~\eqref{eq:euler_r0} below, which provides evolution equations for $(u^R,u^\theta,\s) = (u^R,u^\theta,\s) (R,t)$. For this purpose, it is convenient to recall the following identities, which hold for smooth functions $f = f(R)$:
\begin{equation}
\label{eq:vec_iden}
 \uu^R \cdot \na f  = u^R \pa_R f , \quad u^{\th} \ee_{\th} \cdot \na (f \ee_R)
 = \tfrac{1}{R}  u^{\th} f  \ee_{\th}, \quad  
 u^{\th} \ee_{\th}  \cdot \na (f \ee_{\th}) = - \tfrac{1}{R}  u^{\th} f  \ee_R.
\end{equation}
Using the ansatz~\eqref{eq:ansatz} and the identities in~\eqref{eq:vec_iden},  we may decompose the system~\eqref{eq:euler} into radial and angular parts, leading to the equivalent formulation
\begin{subequations}
\label{euler-r}
\begin{align}
 \pa_t \uu^R + \uu^R \cdot \na  \uu^R + \uu^{\th} \cdot \na \uu^{\th} + \al \, \s \, \na \s & = 0 \,, \\
 \pa_t \uu^{\th} + \uu^R \cdot \na  \uu^{\th} + \uu^{\th} \cdot \na \uu^R & = 0  \,, \\
 \pa_t \s + \uu^R \cdot \na \s + \al \, \s  \div \uu^R &= 0 \,.
\end{align}
\end{subequations}
In terms of the scalar quantities $(u^R,u^\theta,\sigma)=(\uu^R\cdot\ee_R,\uu^\theta\cdot\ee_\theta,\sigma)$, the system~\eqref{euler-r} may be written as 
\begin{subequations}
\label{eq:euler_r0}
\begin{align}
\pa_t u^R + u^R \pa_R u^R + \alpha \sigma \pa_R \sigma  & = \tfrac{1}{R} (u_\theta)^2   \,, \\
\pa_t u^ \theta + u^R \pa_R u^\theta + \tfrac{1}{R}  u^\theta u^R &= 0 \,, \\
\pa_t \sigma + u^R \pa_R \sigma + \alpha \, \sigma   \div( u^R) & = 0\,.
\end{align}
\end{subequations}
Both reformulations of the isentropic Euler system for two-dimensional axisymmetric flows, i.e.~systems~\eqref{euler-r} and~\eqref{eq:euler_r0}, when transformed to self-similar variables and coordinates (see~\eqref{ss-var} below), play important roles in the subsequent
analysis.  The self-similar version of \eqref{eq:euler_r0} is used to study blowup profiles, while the self-similar version of  \eqref{euler-r} is used for
energy estimates and the majority of bounds on the solution.

\subsection{The main result}
As mentioned in the discussion following~\eqref{eq:vorticity:transport}, in two space dimensions  the vorticity can blow up at time $T$ {\em only if} an implosion singularity simultaneously develops at time $T$. Since we seek smooth Euler solutions on $[0,T)$, we aim to 
construct a stable perturbation (for the angular velocity) of 
a {\em smoothly imploding self-similar solution}, as was recently constructed in~\cite{MeRaRoSz2022a}, and further refined in~\cite{BuCLGS2022}. 

The self-similarity employed in~\cite{MeRaRoSz2022a}, \cite{BuCLGS2022}, and herein, is of the second kind. 
A similarity exponent $r>1$ is determined as the solution of a nonlinear eigenvalue problem, and is then used to define the
self-similar space $\xi = \frac{R}{(T-t)^{1/r}}$ and time  $s  = - \frac 1r \log(T-t)$ coordinates, together with self-similar state variables   $(U,A,\Sigma)(\xi,s)= r (T-t)^{(r-1)/r} (u^R,u^\theta,\sigma)(R,t)$; see~\eqref{ss-var} below. The self-similar variables $(U,A,\Sigma)$ solve the self-similar version of the Euler system; see~\eqref{eq:euler_ss} below.   
Of particular interest for our analysis are special $C^\infty$-smooth {\em stationary} solutions $(\bar U,0,\bar \Sigma)$ to this self-similar system; we refer to the triplet
 $(\bar U,0,\bar \Sigma)$ as the self-similar  blowup  {\em profile}.

We choose to work directly with the smooth imploding self-similar profiles $(\bar U,0,\bar \Sigma)$ from~\cite[Theorem 1.2, Corollary 1.3]{MeRaRoSz2022a}, as these profiles, together with their repulsive properties (see~\eqref{eq:rep1} below), are readily available for $d=2$.\footnote{While it is expected that the arguments in~\cite{BuCLGS2022} also apply to the case that $d=2$ (used in this paper),~\cite{BuCLGS2022} only considers $d=3$.} The slight drawback of using the results from~\cite{MeRaRoSz2022a} is that we need to exclude  an exceptional countable sequence of adiabatic exponents $\Gamma=\{ \g_n\}_{n \geq 1}$, which is possibly empty, and whose accumulation points can only be $\{1, 2, \infty \} $. For each $\gamma \not \in \Gamma$, it is shown in~\cite{MeRaRoSz2022a} (and recalled in Theorem~\ref{thm:merle:implosion} below) that there exists a discrete sequence of admissible blowup speeds $\{ r_k \}_{k\geq 1} \subset (1,r_{\mathsf{eye}}(\alpha))$, accumulating at the value $r_{\mathsf{eye}}(\alpha)$ defined in~\eqref{r-range}, such that the Euler equations~\eqref{eq:euler_r0} exhibit smooth, radially symmetric (meaning
that $u^\theta,A \equiv 0$), globally self-similar, imploding solutions with similarity exponent $r_k$ and profile $(\bar U,0,\bar \Sigma)$ (see~Section~\ref{sec:self-similar:variables} below). With this notation fixed, we state our main result.  

\begin{theorem}[\bf Main result]\label{thm:blowup}
Fix a non-exceptional adiabatic exponent $\gamma \not \in \Gamma$, let $\alpha = (\gamma-1)/2$, and let $r>1$ be an admissible blowup speed satisfying~\eqref{r-range}, as in~\cite{MeRaRoSz2022a}. Then, there exists initial data $(u_0^R, u_0^{\th}) \in C^\infty_c(\mathbb{R}^2)$ and $\sigma_0 \in C^\infty(\mathbb{R}^2)$ with $\sigma_0 \geq {\rm constant} >0$,\footnote{The initial sound speed $\sigma_0$ may be chosen to equal a constant outside a compact set in $\mathbb{R}^2$; see Section~\ref{sec:non}.} such that the solution $(u^R,u^\theta,\sigma)$ of~\eqref{eq:euler_r0} exhibits a finite time singularity at a time $T < \infty$. The radial velocity $u^R$, the sound speed $\sigma$, and the vorticity $\omega$ all blow up at time $T$. The blowup is asymptotically self-similar in the sense that 
\begin{subequations}
\label{eq:blow:asym}
\begin{align}
\lim_{t \converges T} r \big(T-t\big)^{\!\!{\frac{r-1}{r}} } u^R( (T-t)^{\frac{1}{r}}  \xi, t) & = \bar U(\xi),
\label{eq:blow:asym:a} \\
\lim_{t \converges T} r \big(T-t\big)^{\!\!{\frac{r-1}{r}} } u^{\th}( (T-t)^{\frac{1}{r}} \xi, t) &= 0 ,  
\label{eq:blow:asym:b}\\
\lim_{t \converges T} r \big(T-t\big)^{\!\!{\frac{r-1}{r}} } \s( (T-t)^{\frac{1}{r}} \xi, t) &= \bar \S(\xi) ,
\label{eq:blow:asym:c}
\end{align}
for $\xi \in (0,\infty)$, where $\bar U$ and $\bar \Sigma$ are the similarity profiles from~\cite{MeRaRoSz2022a}. The vorticity blows up at $R=0$ as
\begin{equation}
\lim_{t \converges T}  \big(T-t\big)^{\!\!{\frac{r-1}{\alpha r}} } \omega(0, t) = \omega_0(0) \left( \tfrac{\bar{\Sigma}(0)}{r \sigma_0(0)}\right)^{\!\!\frac{1}{\alpha} } \neq 0,
\label{eq:blow:asym:d}
\end{equation}
\end{subequations}
and there are no singularities away from the point of implosion: $(R,t) = (0,T)$.
\end{theorem}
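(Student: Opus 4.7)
\medskip

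\noindent\textbf{Plan.} The strategy is to construct the singular solution as a controlled perturbation, in self-similar variables $(\xi,s)$ with similarity exponent $r$ (see~\eqref{ss-var}), of the stationary radial imploding profile $(\bar U, 0, \bar\Sigma)$ from~\cite{MeRaRoSz2022a}. Linearizing the self-similar system~\eqref{eq:euler_ss} at $(\bar U, 0, \bar\Sigma)$ decouples the perturbation into a $2{\times}2$ radial block in $(U-\bar U, \Sigma-\bar\Sigma)$ and a scalar \emph{swirl} block in $A$. The radial block is the one studied in~\cite{MeRaRoSz2022a, BuCLGS2022, CLGSShSt2023}: it has finitely many unstable modes, which are cancelled by restricting the radial part of the initial data to a finite-codimension stable manifold via a Brouwer-type shooting argument. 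The swirl block, by contrast, is a pure transport equation in $\xi$ along the self-similar characteristic field $(-\xi+\bar U)\partial_\xi$, since the angular velocity enters the radial/pressure equations only at quadratic order through the centripetal term $(u^\theta)^2/R$. I expect this scalar block to enjoy \emph{full} (as opposed to finite-codimension) linear stability in weights adapted to the repulsivity of $\bar U$ recorded in~\eqref{eq:rep1}; consequently $\tilde A_0$ can be chosen freely, up to smallness.

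\medskip

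\noindent\textbf{Initial data and bootstrap.} I would fix $s_0 = -r^{-1}\log T$ and prescribe self-similar initial data $(U,A,\Sigma)(\cdot,s_0) = (\bar U + \tilde U_0,\, \tilde A_0,\, \bar\Sigma + \tilde\Sigma_0)$, with $(\tilde U_0,\tilde\Sigma_0)$ a small compactly supported radial perturbation in the stable subspace of the radial block, and $\tilde A_0$ a small smooth compactly supported swirl chosen so that $\omega_0(0)\neq 0$ (it suffices to arrange $\partial_R u^\theta_0(0)\neq 0$). Pulling back to physical variables and smoothly truncating $\bar\Sigma$ to a positive constant outside a compact set produces admissible initial data $(u_0^R,u_0^\theta,\sigma_0)$ of the type demanded by the theorem. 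The analytic core is then a bootstrap on $(\tilde U,\tilde A,\tilde\Sigma)$ for $s\in[s_0,\infty)$, combining (i) high-order weighted $L^2$ energy estimates on the symmetric form~\eqref{euler-r} whose damping comes from~\eqref{eq:rep1}; (ii) weighted pointwise bounds propagated along the self-similar characteristic field; and (iii) a topological shooting step in the finite-dimensional radial unstable direction. Closing the bootstrap yields a global-in-$s$ self-similar solution and hence the asymptotic statements~\eqref{eq:blow:asym:a}--\eqref{eq:blow:asym:c}, while uniform smallness of the perturbation away from $\xi=0$ rules out singularities outside $(R,t)=(0,T)$.

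\medskip

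\noindent\textbf{Vorticity rate, and main obstacle.} The rate~\eqref{eq:blow:asym:d} is then an immediate consequence of the specific-vorticity transport~\eqref{eq:vorticity:transport}: axisymmetry forces $u^R(0,t)\equiv 0$, so the origin is a fixed fluid particle, and hence $(\omega/\rho)(0,t)\equiv (\omega/\rho)(0,0)$ for all $t<T$; combining this conservation with $\rho=(\alpha\sigma)^{1/\alpha}$ and the pointwise limit $r(T-t)^{(r-1)/r}\sigma(0,t)\to \bar\Sigma(0)$ obtained by evaluating~\eqref{eq:blow:asym:c} at $\xi=0$ produces precisely the stated coefficient, with $\omega_0(0)\neq 0$ by construction. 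The hard part is the full-stability claim for the swirl invoked above: in~\cite{CLGSShSt2023} the vorticity direction was merely absorbed into a finite-codimension subspace — which is exactly why one cannot there control the sign of $\omega$ at the focusing particle — whereas here I must show that the linearized swirl operator, whose coefficients degenerate at the sonic surface of $\bar U$, has purely stable spectrum at every differentiation order, in a weight system simultaneously compatible on both sides of the sonic point, and then propagate this control through the nonlinear bootstrap so that the prescribed $\omega_0(0)\neq 0$ survives intact all the way to time $T$.
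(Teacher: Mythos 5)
Your overall architecture coincides with the paper's: perturb the Merle--Rapha\"el--Rodnianski--Szeftel profile in self-similar variables, exploit the fact that the swirl decouples at the linear level and is \emph{fully} stable (so the angular data lives in an open set and $\omega_0(0)\neq 0$ can be imposed), handle the radial/acoustic block with finite-codimension stability, and read off the rate \eqref{eq:blow:asym:d} from the specific-vorticity transport \eqref{eq:vorticity:transport} at the frozen particle $R=0$ together with the self-similar growth of $\sigma(0,t)$ (your computation of the constant is exactly the paper's, up to the minor point that \eqref{eq:blow:asym:c} is stated for $\xi>0$, so at the origin one should argue via $\Sigma(0,s)=\bar\Sigma(0)+\td\Sigma(0,s)\to\bar\Sigma(0)$ rather than by ``evaluating the limit at $\xi=0$''). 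The genuine gap is in your linear stability mechanism. You propose to close high-order weighted energy estimates with damping coming from \eqref{eq:rep1} alone; but \eqref{eq:rep1} is an \emph{interior} repulsivity property, valid only on $[0,\xi_1]$ with $\xi_1$ slightly beyond the sonic point, and for 2D Euler the corresponding exterior repulsivity genuinely fails (this is the content of Remark~\ref{rem:no:repulsive}, quoting \cite{MeRaRoSz2022b}), which is precisely why one cannot import the \cite{BuCLGS2022}-style estimates wholesale. Your plan contains no mechanism producing damping for $\xi>\xi_1$; the paper supplies it by designing a radially \emph{decreasing} bulk weight and combining its monotonicity with the outgoing property \eqref{eq:rep2} and the profile decay \eqref{eq:dec_U} (Lemma~\ref{lem:wg}), after which the $H^{2m}$ estimates close with a coercivity constant proportional to $m$ only in the interior and an $O(1)$ gain in the bulk. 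Relatedly, you misplace the difficulty: the swirl operator has \emph{no} sonic degeneracy, since $A$ is advected by the material field $(\xi+\bar U)\partial_\xi$, which is uniformly outgoing by \eqref{eq:rep22}; its only delicate point is at $\xi=0$, where stability is extracted from $r+2\,\partial_\xi\bar U(0)>0$ (i.e.\ \eqref{eq:rep3}) via a weight singular like $\xi^{-\b_1}$, $\b_1\in(3,4)$. The sonic-point/far-field issue you worry about for the swirl actually sits in the $(\td U,\td\S)$ block, and your proposal as written does not address it.

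On the nonlinear step, your Brouwer-type shooting in the finite-dimensional radial unstable direction is a legitimate alternative to what the paper does (the paper instead splits $\td\VV=\td\VV_1+\td\VV_2$, represents the unstable piece by a backward Duhamel formula, and runs a Banach fixed-point argument, which also produces the finite-codimension initial-data set explicitly); but the shooting route requires the same preparatory spectral work you are implicitly assuming --- construction of the semigroup, finiteness and smoothness of the unstable subspace, and decay on the stable complement --- all of which hinge on first closing the coercive estimates discussed above. So the missing idea is not the topological versus fixed-point choice, but the far-field damping mechanism for the acoustic block in two dimensions.
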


An outline of the proof of Theorem~\ref{thm:blowup} is given in Section~\ref{sec:main:ideas}. Before turning to the proof, we make a few comments concerning the statement of Theorem~\ref{thm:blowup}.

\begin{remark}[\bf The set of initial data for vorticity blowup]
\label{rem:initial:data:set}
As explained in Remark~\ref{rem:data} below, there exists an open set $X_2$ (a ball in a weighted  Sobolev space) and a finite co-dimension set $X_1$
such that the initial data in Theorem~\ref{thm:blowup} may be taken such that $(\uu^R_0, \s_0)  \in X_1$ and $\uu_0^{\th} \in X_2$. It is the fact that $\uu_0^{\th}$ belongs to an open set which allows us to ensure that the initial vorticity $\omega_0$ does not vanish at $R=0$.
\end{remark}

\begin{remark}[\bf The curl blows up slower than the divergence]
\label{rem:slow:curl}
Since $\omega_0(0)\neq 0$, identity \eqref{eq:blow:asym:d} shows that the vorticity blows up at $R=0$ at the rate $(T-t)^{-\frac{r-1}{\alpha r}}$. 
Moreover, since the admissible blowup speed $r$ satisfies the constraint~\eqref{r-range}, it follows that 
$0 < \frac{r-1}{\alpha \, r} < \frac{r_{\mathsf{eye}}(\alpha)-1}{\alpha \, r_{\mathsf{eye}}(\alpha)} < 1$ for all $\alpha>0$, and therefore
$\int_0^T \|\omega(\cdot,t)\|_{L^\infty} dt \approx \int_0^T |\omega(0,t)| dt < + \infty$.  As the magnitude of vorticity is integrable in time, there does not exist  a Beale-Kato-Majda type blowup criterium for the compressible Euler equations.  
As to the effects of compression, from  \eqref{eq:blow:asym:a} and \eqref{eq:rep3}, we have that $|\!\div \uu(0,t)| \approx (T-t)^{-1}(2/r)  |\bar U^\prime(0)|=(T-t)^{-1}  (r-1)/(\alpha r) $, which implies 
that $\int_0^T \|\div \uu(\cdot,t)\|_{L^\infty} dt = + \infty$.  The fact that $\operatorname{div} u$ blows up faster than $\omega$ indicates that sound waves steepen in compression much faster than particles can ``swirl''.
\end{remark}
   
\begin{remark}[\bf There are no singularities away from $R=0$ and $t=T$]
\label{rem:blowup:at:0}
While Theorem~\ref{thm:blowup} shows that the rescaled sound speed $\sigma(0,t)$ and the magnitude of the gradients  $|\nabla \uu|(0,t)$ and  $|\nabla \sigma|(0,t)$ blow up as $t \converges T$, we emphasize that, by construction, the Euler solution in Theorem~\ref{thm:blowup} does not form  singularities for times $t<T$, and moreover, as $t\converges T$ no singularities develop at points $x  \in {\mathbb R}^2$ with $x\neq 0$. Indeed, for $x\neq 0$ and $t\in [0,T)$ arbitrary, upper bounds for $|\uu(x,t)|$ are obtained in~\eqref{eq:bounds:for:original:variables:a}, upper and lower bounds bounds for $\sigma(x,t)$ are given by~\eqref{eq:bounds:for:original:variables:c}, while upper bounds for $|\nabla \uu(x,t)|$ and $|\nabla \sigma(x,t)|$ are established in~\eqref{eq:bounds:for:original:variables:b}. We refer to Section~\ref{sec:additional:estimates} for these details.
\end{remark}

\subsection{Recent results concerning singularity formation for Euler}
\label{sec:literature}
The literature concerning the formation and propagation of singularities for the compressible Euler equations (and related hyperbolic PDEs) is too vast 
to review here.\footnote{In contrast, relatively few results have been established for the finite time singularity formation of the {\em incompressible} Euler equations; for instance, finite-time vorticity blowup remains open for the incompressible Euler equations posed on ${\mathbb R}^d$ or ${\mathbb T}^d$, for $C_c^\infty$, respectively $C^\infty$, initial conditions. 
We note, however, that very recently, the finite time blowup for the vorticity was established  in the presence of smooth solid boundaries~\cite{ChHo2023,ChHo2023b}, for the equations posed in conical domains~\cite{ElJe2019}, or in the absence of boundaries for $C^{1,\alpha}$ data with $\alpha$ very small~\cite{El2021,ChHo2021,ElGhMa2021,ElPa2023,CoMZZh2023}.} 
 In this section, we shall focus only on results for the {\em Euler equations},\footnote{There is a large literature concerning singularity formation for quasilinear wave equations; see the recent review articles~\cite{HoKlSpWo2016,Sp2016}.} and review only those results for the  {\em multi-dimensional problem}.\footnote{For a detailed exposition of the 1D theory we refer the reader to~\cite{ChWa2002,Da2005,Li2021} and references therein.}

As noted above, it is well-known that the Euler equations develop finite-time gradient singularities from initial data which is as {\it nice as it gets}. The first proof of this fact was given in~\cite{Si1985}, using a {\em  proof by contradiction}, inspired by the classical results~\cite{La1964,Jo1974}. In recent decades, authors have given {\em constructive proofs} of singularity formation, which yield a very precise description of the solution at the time of the first gradient blowup. We categorize these constructive proofs into {\em shock formation} and {\em smooth implosions}.

\subsubsection{Shock formation}
Shock formation from smooth and localized initial conditions is, by now,  well understood, {\em up to the time of the first gradient singularity}. In the context of  irrotational solutions for the multi-dimensional Euler equations,  blowup results  based on ideas developed for second-order
quasilinear wave equations and general relativity were established in~\cite{Al1999a,Al1999b,Ch2007,ChMi2014}; the latter two results were generalized to allow for vorticity and entropy in the Euler solution~\cite{LuSp2018,LuSp2024}; results for 2D Euler solutions, with azimuthal symmetry, using modulated self-similar analysis were proven in~\cite{BuShVi2022,BuIy2022}; results for 3D Euler solutions in the absence of symmetry assumptions, based on modulated self-similar stability analysis were established in~\cite{BuShVi2023a,BuShVi2023b}.
In the context of shock development,  results for Euler solutions with  spherical symmetry  were obtained in~\cite{Yi2004,ChLi2016}, while results for 
Euler flows with azimuthal symmetry were proven in~\cite{BuDrShVi2022}.
  
Very recently, shock formation {\em past} the time of the first singularity was analyzed in~\cite{ShVi2023} and~\cite{AbSp2022}. In~\cite{ShVi2023} the last two authors of this paper  analyze the evolution of fast-acoustic characteristic surfaces and use this smooth geometric information to describe the Euler solution up to a portion of the boundary of the maximal globally hyperbolic development  (MGHD) of smooth Cauchy data, which includes both the
singular set as well as the Cauchy horizon; the results of~\cite{AbSp2022} evolve the Euler solution up to a portion of the boundary of the MGHD, containing only the
singular set.

\subsubsection{Smooth Implosions}
The classical Guderley problem~\cite{Gu1942} deals with the implosion of a converging shock-wave, and the continuation as an expanding shock. In spite of the vast literature on this subject, the existence of a finite-time  {\em smooth implosion} (without a shock already present in the solution at the initial time), was established only recently in~\cite{MeRaRoSz2022a,MeRaRoSz2022b}, and further refined in~\cite{CLGSShSt2023,BuCLGS2022}. While these results are inspired by the ansatz and general strategy in~\cite{Gu1942}, the existence of $C^\infty$ self-similar implosion profiles requires a great deal of mathematical sophistication. Moreover, the stability of these solutions can only be established for finite-co-dimension perturbations which are not precisely quantified (see the numerical work~\cite{Bi2021} which attempts to quantify some of these instabilities). As explained in Section~\ref{sec:main:ideas} below, this finite co-dimension stability of the smooth implosion profiles is one of the fundamental obstacles in proving vorticity blowup.

\subsubsection{Solutions evolving from a singular state}
While throughout this paper we are only concerned with smooth initial data, and we are only interested in the behavior of the resulting Euler solution up to the time of the first gradient singularity, there are many physically important problems in which the Euler evolution is initiated with data that already contains a singular state. A non-exhaustive list of examples includes the following:
the short time evolution of discontinuous shock fronts~\cite{Ma1983a,Ma1983b};
the shock reflection problem~\cite{ChFe2010,ChFe2018};
the shock implosion~\cite{JaJiSc2023} and shock explosion~\cite{JaJiSc2024} problems; 
non-isentropic implosions with strictly positive pressure fields~\cite{JeTs2020,JeTs2023};
the stability of rarefaction waves evolving from multi-dimensional Riemann data~\cite{LuYu2023a,LuYu2023b};
the long-time stability of an irrotational expansion fan bounded by two shock surfaces~\cite{GiRo2024}; 
and the ill-posedness of the Euler equations in multiple space dimensions for admissible weak solutions~\cite{ChDLKr2015,KlKrMaMa2020,DeSkWi2023}. We also mention that there is a vast literature on transonic shocks for Euler both in the context of airfoils~\cite{Mo1985,ChSlWa2008} and in the setting of nozzles~\cite{XiYi2005,XiYaYi2009,WaXi2019}; we refer the reader to the recent excellent review articles~\cite{Mo2004,ChFe2022,Ch2024} for an extensive literature review.  

\subsubsection{Other singularities}
A complete classification of all possible singularities that {\em multi-dimensional and smooth} Euler dynamics can reach in finite time is, to date, elusive. As discussed above, it is known that for regular initial data, at the time $T$ of the first singularity  the Euler solution is capable of developing a shock, an implosion, and the goal of this paper is to show that it can also develop a vorticity blowup. It remains unsettled if these are the  {\em only} possible {\em first} singularities attainable by smooth Euler evolution  (with the classical ideal gas law equation of state).   It is,  for example, an open problem to determine whether vacuum\footnote{By this we mean that at the time $T$ of the first singularity, the density $\rho(\cdot,T)$ is equal to $0$, at at least one spatial point.} can form in finite time from smooth, localized, and non-vacuous initial data, in multiple space dimensions.\footnote{In one space dimension, vacuum cannot dynamically arise form from nice initial data without a shock forming first~\cite{Ch2017,ChPaZh2019}.}
 
\section{Main ideas of the proof and comparison with previous results}
\label{sec:main:ideas}

Recall that in the two-dimensional setting of this paper, the specific vorticity $\om / \rho$ is transported, see~\eqref{eq:vorticity:transport}. In order to prove Theorem \ref{thm:blowup}, we construct initial data with non-trivial vorticity near $R = 0$, and for this data we construct an imploding solution by perturbing the radial implosion from~\cite{MeRaRoSz2022a}.  
We emphasize that the  finitely many unstable modes of the self-similar blowup profile from~\cite{MeRaRoSz2022a} can potentially lead  to either {\em trivial vorticity}, or to the {\em destruction of the smooth implosion mechanism}. 
To overcome this difficulty, we will crucially use the two-dimensional axisymmetric ansatz~\eqref{eq:ansatz} and the structure of the equations~\eqref{euler-r}, in order to establish {\em full-stability of the swirl velocity}, and hence of the vorticity.

\subsection{Self-similar coordinates}
\label{sec:self-similar:variables}
We introduce the self-similar coordinates, variables, and  equations used in our analysis. For $r > 1$ which satisfies~\eqref{r-range} below, define the self-similar time  and space coordinates via
\begin{subequations}
\label{ss-var}
\begin{equation}
 s  =   \log\frac{1}{(T-t)^{1/r}} , \quad 
 y = \frac{x}{(T-t)^{1/r}}, \quad 
 \xi = |y| = \frac{R}{(T-t)^{1/r}} .
 \label{ss-var:a}
\end{equation}
Throughout the paper we will use $y \in \R^2$ to denote the self-similar vectorial space coordinate, and $\xi = |y|$ to denote the self-similar radial variable. 
The self-similar velocity components and rescaled sound speed are then given by 
\begin{equation}
\bigl(u^R,u^\theta,\sigma\bigr)(R,t) 
= \tfrac{1}{r}  (T-t)^{\frac{1}{r}-1} \bigl(U,A,\Sigma\bigr)(\xi,s).
 \label{ss-var:b}
\end{equation}
It is convenient to also introduce the vectorial form of the self-similar velocity, that is
\begin{equation}
\UU(y,s) = U(\xi,s) \ee_R, \quad \vA(y,s) = A(\xi,s) \ee_{\th},
\label{ss-var:c}
\end{equation}
\end{subequations}
where as before, $\xi = |y|$.
Throughout the paper we will assume that the parameter $r$ appearing in \eqref{ss-var:a} and \eqref{ss-var:b} satisfies the following inequalities\footnote{The definition of $r_{\mathsf{eye}}$ is directly adapted from~\cite{MeRaRoSz2022a}, upon letting $d=2$ and $\gamma = 1 + 2\alpha $; \eqref{r-range} is the same as \cite[equation (1.15)]{MeRaRoSz2022a}.}
\begin{subequations} 
\label{r-range}
\begin{align}
1<r<r_{\mathsf{eye}}(\alpha),
\qquad
&r_{\mathsf{eye}}(\alpha) = \tfrac{1 + 2\alpha}{1+\alpha \sqrt{2}}, 
& \alpha > \tfrac{1}{2}, \label{r-rangea} \\
\tfrac{1 + 2\alpha}{1+\alpha \sqrt{2}}< r<r_{\mathsf{eye}}(\alpha), 
\qquad
&
r_{\mathsf{eye}}(\alpha) =1+ \tfrac{\alpha}{(\sqrt{\alpha}+1)^2},
&
0 < \alpha < \tfrac{1}{2} \label{r-rangeb}.
\end{align}
\end{subequations}

Using the vectorial self-similar variables introduced in \eqref{ss-var:c}, and with the self-similar rescaled sound speed from \eqref{ss-var:b}, the two-dimensional axisymmetric Euler system~\eqref{euler-r} may be rewritten as
\begin{subequations} 
\label{eq:euler_ss}
\begin{align}
& \pa_s \UU + (r-1) \UU + (y + \UU) \cdot \na \UU + \alpha \S  \na \S  = - \vA \cdot \na \vA , \\
& \pa_s \vA  + (r-1)\vA+ (y+ \UU) \cdot \na \vA  = - \vA \cdot \na \UU   ,  \\
& \pa_s \S + (r-1)\S + (y + \UU) \cdot \na \S + \alpha \S \, \div(\UU)   = 0 .
\end{align}
\end{subequations}
The initial data is prescribed at the self-similar time 
\begin{equation*}
\sin \teq - \tfrac{1}{r} \log T,
\end{equation*} 
which corresponds to $t=0$ in \eqref{ss-var:a}.

\subsection{Globally self-similar blowup profiles}
We summarize the results of \cite[Theorem 1.2 and Corollary 1.3]{MeRaRoSz2022a}, which give the existence of smooth stationary self-similar profiles in $\R^2$, i.e.~of smooth solutions $(\bar \UU, 0 , \bar \S)$ to~\eqref{eq:euler_ss} which are $s$-independent. 

\begin{theorem}[\bf Existence of smooth similarity profiles~\cite{MeRaRoSz2022a}]
\label{thm:merle:implosion}
There exists a (possibility empty) exceptional countable sequence $\Gamma = \{ \g_n\}_{n \geq 1}$ whose accumulation points can only be $\{1, 2, \infty \} $ such that the following holds. For each $\g \notin \Gamma$, 
there exists a discrete sequence of blow up speeds $\{ r_k \}_{k\geq 1}$  in the range \eqref{r-range}, accumulating at $r_{\mathsf{eye}}(\frac{\gamma-1}{2})$, such that the system \eqref{eq:euler_ss} admits a $C^\infty$-smooth radially symmetric stationary solution $\bar \UU(y) = \bar U(\xi) \ee_R, \bar \AA = 0, \bar \S(\xi)$.
\end{theorem}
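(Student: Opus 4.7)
The plan is to reduce the PDE eigenvalue problem to a nonlinear ODE shooting problem in $\xi$ with $r$ playing the role of the eigenvalue. Setting $\vA \equiv 0$ and $\pa_s \equiv 0$ in system \eqref{eq:euler_ss}, and looking for $\bar\UU(y) = \bar U(\xi)\ee_R$, the third equation of \eqref{eq:euler_ss} and the radial component of the first become the planar ODE system
\begin{align*}
(r-1)\bar U + (\xi + \bar U)\bar U' + \alpha \bar\Sigma\, \bar\Sigma' & = 0, \\
(r-1)\bar\Sigma + (\xi+\bar U)\bar\Sigma' + \alpha \bar\Sigma\bigl(\bar U' + \tfrac{\bar U}{\xi}\bigr) & = 0 ,
\end{align*}
with regularity constraints $\bar U(0)=0$, $\bar U'(0)$ and $\bar\Sigma(0)$ finite, and a suitable decay/asymptotic condition at $\xi\to\infty$. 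Passing to Emden-type variables $w = -\bar U/\xi$, $\tau = \bar\Sigma/\xi$ (or the closely related $(U/\xi,\Sigma/\xi)$-logarithmic-time rescaling used in the Guderley literature) turns this into an autonomous system in the variable $\log \xi$, which I would then analyze via phase-plane methods.

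I would next locate the critical points of this autonomous field and study the linearization at each. The relevant ones are: the origin $P_0$ (encoding smoothness at $\xi=0$), an asymptotic point $P_\infty$ (encoding the vanishing behavior at $\xi\to\infty$), and, most importantly, the \emph{sonic point} $P_s$, where the matrix multiplying $(\bar U', \bar\Sigma')$ degenerates because the self-similar relative velocity $\xi+\bar U$ equals the rescaled sound speed. On each side of $P_s$ the equation is a regular ODE, and its flow at $P_0$ produces a one-parameter family of trajectories (parameterized by $\bar\Sigma(0)$, say) that are automatically smooth near the origin. The key geometric object is then the set $\cS_r$ of trajectories that pass through $P_s$ smoothly: linearization at $P_s$ gives a pair of eigenvalues depending on $(r,\alpha)$, and the smooth passage corresponds to choosing the trajectory tangent to the center/stable eigenvector whose eigenvalue has the right sign pattern, isolating a codimension-one manifold in phase space.

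The shooting argument then proceeds by varying $r$ along the range \eqref{r-range} and tracking the image, in the phase portrait at $P_s$, of the smooth local-at-$0$ branch. As $r$ varies across $(1, r_{\mathsf{eye}}(\alpha))$, monotonicity/rotation-number arguments (analogous to Sturm oscillation counting for the associated linearization, together with the behavior of the flow as $r\uparrow r_{\mathsf{eye}}$ which forces the trajectory to wind around $P_s$ infinitely often) yield a discrete sequence of values $\{r_k\}_{k\ge 1}$ at which the smooth-at-origin trajectory intersects the smooth-passage manifold at $P_s$, with $r_k \to r_{\mathsf{eye}}(\alpha)$. Beyond the sonic point, the trajectory is continued into the supersonic region and matched at $P_\infty$ using its stable manifold, giving a globally defined profile on $(0,\infty)$.

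The main obstacle is upgrading the constructed profile from continuous-through-$P_s$ to $C^\infty(\R^2)$. I would do this by expanding $(\bar U,\bar\Sigma)$ as a formal Taylor series at $\xi=\xi_s$ and solving term-by-term: at each order $n\ge 1$ the coefficient of $(\bar U,\bar\Sigma)^{(n)}$ satisfies a linear equation of the form $M_n(r,\alpha)\, c_n = F_n(c_1,\dots,c_{n-1})$, where $M_n$ is a $2\times 2$ matrix whose determinant depends only on $(r,\alpha)$ and $n$. The exceptional set $\Gamma$ of $\gamma$'s is precisely those for which $\det M_n(r_k(\gamma),\alpha) = 0$ for some $n$; since this is a countable union of analytic conditions with accumulation points at most at $\{1,2,\infty\}$, $\Gamma$ is countable with the stated accumulation set. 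For $\gamma\notin\Gamma$ all coefficients are determined and one must verify convergence of the formal series (or, weaker but sufficient, $C^\infty$ smoothness) — this is the delicate step and where the analysis of \cite{MeRaRoSz2022a} does its heaviest lifting, combining a quantitative bound on the factorial growth of $c_n$ with a Borel-type resummation, or equivalently a direct contraction-mapping argument for the remainder in a weighted-analytic norm around $\xi_s$. With $C^\infty$ regularity at $P_s$ secured, smoothness at $0$ and $\infty$ follows from the regularity of the flow away from $P_s$ together with standard arguments for radial functions on $\R^2$ (even Taylor expansion in $\xi^2$ at the origin).
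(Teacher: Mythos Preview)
The paper does not prove this theorem. Theorem~\ref{thm:merle:implosion} is explicitly presented as a summary of \cite[Theorem 1.2 and Corollary 1.3]{MeRaRoSz2022a}; the text introducing it reads ``We summarize the results of \cite[Theorem 1.2 and Corollary 1.3]{MeRaRoSz2022a}\ldots'', and no argument is given beyond that citation. The only related material in the paper is Appendix~\ref{app:ODE}, which concerns Lemma~\ref{lem:profile} (the decay and repulsivity properties of the profile), not the existence statement itself.

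Your sketch is a reasonable high-level outline of what \cite{MeRaRoSz2022a} actually does --- the Emden-type reduction, the phase-portrait with the sonic point $P_2$, the shooting in $r$, and the $C^\infty$ smoothness analysis at the sonic point that produces the exceptional set $\Gamma$. But since the paper treats this result as a black box, there is nothing to compare against here: the ``paper's own proof'' is simply the citation. If you were asked to supply a proof for this manuscript, the correct response would be to cite \cite{MeRaRoSz2022a} rather than to reproduce the (very long) argument.
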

The requirements on $\gamma$ and $r_k$ stated above are the same as in~\cite[Theorem 1.2]{MeRaRoSz2022a}, with $d=2$. Throughout the paper we will fix a non-exceptional adiabatic exponent $\gamma \not \in \Gamma$, a similarity exponent $r \in \{r_k\}_{k\geq 1}$, and a similarity profile $(\bar U,0,\bar{\S})$ in our analysis. We treat any constants related to the profile $(\bar U,0,\bar{\S})$, to $\alpha = \frac{\gamma-1}{2}$, and to $r \in(1,r_{\mathsf{eye}}(\alpha))$, as absolute constants. 

In the next lemma we collect some useful properties of the profiles $(\bar{U}, \bar{\S})$ constructed in~\cite{MeRaRoSz2022a}.

\begin{lemma}[\bf Properties of the similarity profiles]
\label{lem:profile}
The radial vector field $\bar U(\xi) \ee_R$  and the radially symmetric function $\bar{\Sigma}(\xi) > 0$ are smooth. For every integer $k \geq 0$ we have the following decay\footnote{Here and throughout the paper we denote by $\langle \cdot \rangle$ the quantity $\sqrt{1+|\cdot|^2}$.} as $\xi \to \infty$:
\begin{subequations}
\label{eq:profile:properties}
\begin{equation}
|\partial^k_\xi  \bar{U}(\xi)| \les_k  \la \xi \ra^{1-r-k} , \quad
|\partial^k_\xi  \bar{\S}(\xi)| \les_k  \la \xi \ra^{1-r-k} .
\label{eq:dec_U}   
\end{equation}
There exists $\kp>0$ and $\xi_1 > \xi_s$\footnote{The value $\xi_s$ corresponds to the point $P_2$ in the phase portrait of~\cite{MeRaRoSz2022a} or to the point $P_s$ in the 3D phase portrait of~\cite{BuCLGS2022}.} such that 
\begin{align}
1 + \partial_\xi \bar{U}(\xi) - \alpha |\pa_{\xi} \bar{\S}(\xi)| &> \kp,  \quad \xi \in [0, \xi_1] \label{eq:rep1}, \\
\xi + \bar U(\xi) - \al \bar{\S}(\xi)&> 0, \quad \xi > \xi_s, \label{eq:rep2} \\
1 + \xi^{-1} \bar U(\xi)  &> \kp, \quad \xi \geq 0 .  \label{eq:rep22}
\end{align}
Lastly, since $r$ lies in the range \eqref{r-range}, we have
\begin{equation}
{\lim}_{\xi \to 0^+} \xi^{-1} \bar{U}(\xi)  = \partial_\xi \bar{U}(0) =- \tfrac{r-1}{2 \alpha} > -\tfrac{r}{2}. 
\label{eq:rep3}
\end{equation}
\end{subequations}
\end{lemma}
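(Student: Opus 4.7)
The plan is to derive all listed properties from the autonomous ODE system satisfied by $(\bar U, \bar{\S})$, obtained by setting $\pa_s = 0$ and $\vA \equiv 0$ in~\eqref{eq:euler_ss}, combined with the global phase-portrait information recorded in~\cite{MeRaRoSz2022a}. Explicitly, the profile satisfies
\begin{align*}
(r-1)\bar U + (\xi + \bar U)\,\pa_\xi \bar U + \al\, \bar{\S}\, \pa_\xi \bar{\S} &= 0, \\
(r-1)\bar{\S} + (\xi + \bar U)\,\pa_\xi \bar{\S} + \al\, \bar{\S}\,\bigl(\pa_\xi \bar U + \xi^{-1} \bar U\bigr) &= 0.
\end{align*}
Smoothness of $\bar U(\xi)\ee_R$ and $\bar{\S}(\xi)$, and positivity of $\bar{\S}$, are given by Theorem~\ref{thm:merle:implosion}. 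Since $\bar U(\xi)\ee_R$ is a smooth vector field at the origin in $\R^2$, $\bar U$ extends as an odd function of $\xi$, so $\bar U(0) = 0$ and $\pa_\xi \bar U(0) = \lim_{\xi \to 0^+} \xi^{-1}\bar U(\xi)$; smoothness of the radial function $\bar{\S}$ gives $\pa_\xi \bar{\S}(0) = 0$. Evaluating the second ODE at $\xi = 0$ and dividing by $\bar{\S}(0) > 0$ yields $(r-1) + 2\al\,\pa_\xi \bar U(0) = 0$, hence $\pa_\xi \bar U(0) = -(r-1)/(2\al)$. The strict inequality $-(r-1)/(2\al) > -r/2$ is equivalent to $r(1-\al) < 1$, which one checks by elementary case analysis from the explicit formulas in~\eqref{r-rangea}--\eqref{r-rangeb} (trivial for $\al \geq 1$; cross-multiplication for the two subcases $\al > 1/2$ and $0 < \al < 1/2$ both yield the inequality from $r < r_{\mathsf{eye}}(\al)$). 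This settles~\eqref{eq:rep3}.

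For the decay estimate~\eqref{eq:dec_U}, as $\xi \to \infty$ the quadratic contributions $\bar U\,\pa_\xi\bar U$ and $\bar{\S}\,\pa_\xi\bar{\S}$ are subdominant (by the very decay we wish to prove), and the leading balance in both ODEs reduces to the scaling form $(r-1) f + \xi f^\prime \approx 0$, whose solution decays like $\xi^{1-r}$. Higher-derivative bounds follow by induction on $k$: differentiating the system $k$ times produces a linear ODE for $(\pa_\xi^k \bar U, \pa_\xi^k \bar{\S})$ whose principal part is of Euler type $(r-1+k) f + \xi f^\prime \approx 0$, giving the decay $\la \xi \ra^{1-r-k}$. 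Combining this with the smoothness of the profile on compact sets yields the claimed uniform bound. These asymptotics are already essentially contained in~\cite{MeRaRoSz2022a}.

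The three repulsivity bounds \eqref{eq:rep1}--\eqref{eq:rep22} are global in character and constitute the delicate part of the lemma. For~\eqref{eq:rep22}, the endpoint values $1 + \xi^{-1}\bar U(0^+) = 1 - (r-1)/(2\al) > 0$ and $1 + \xi^{-1}\bar U(\xi) \to 1$ as $\xi \to \infty$ give positivity near $\xi = 0$ and $\xi = \infty$; pointwise positivity on the intermediate compact region is inherent to the MRRS profile, and a uniform lower bound $\kp$ then follows by compactness. Inequality~\eqref{eq:rep2} is precisely the defining statement of the supersonic branch past the sonic point: the profile of~\cite{MeRaRoSz2022a} crosses the sonic surface $\xi + \bar U = \al\,\bar{\S}$ transversally at $\xi_s$ and enters the region $\xi + \bar U > \al\,\bar{\S}$ for $\xi > \xi_s$. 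Inequality~\eqref{eq:rep1} is the most delicate: at $\xi = 0$ the quantity equals $1 - (r-1)/(2\al) > 0$, and since $\pa_\xi \bar{\S}(0) = 0$ with both $\pa_\xi \bar U$ and $\pa_\xi \bar{\S}$ continuous, the inequality is automatic on some small interval $[0, \d]$. Extending it all the way to some $\xi_1 > \xi_s$, however, requires quantitative control on $\pa_\xi \bar U$ and $|\pa_\xi \bar{\S}|$ throughout the entire subsonic region and through the sonic passage. This extension is the main obstacle: it cannot be obtained by soft ODE arguments alone and relies on the precise sign and monotonicity information inherent in the MRRS phase portrait, i.e., on \emph{quantitative} features of the constructed profile rather than soft consequences of its existence.
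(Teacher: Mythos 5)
Your derivation of \eqref{eq:rep3} is correct and in fact more elementary than the paper's: evaluating the stationary equation for $\bar\S$ at $\xi=0$ (using $\bar U(0)=0$, $\pa_\xi\bar\S(0)=0$, $\bar\S(0)>0$) gives $\pa_\xi\bar U(0)=-\tfrac{r-1}{2\al}$ directly, whereas the paper obtains it from the phase-portrait limit $\lim_{S\to\infty}W(S)=W_e$ in the variables $W=-\xi^{-1}\bar U$, $S=\al\bar\S$, $x=\log\xi$ of Appendix~\ref{app:ODE}; your case analysis of $r(1-\al)<1$ from \eqref{r-range} is also fine. Your induction scheme for \eqref{eq:dec_U} is essentially the paper's (done there in the $x=\log\xi$ variables, with the $k=0$ asymptotics quoted from~\cite{MeRaRoSz2022a}), so deferring the base case is acceptable.

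The genuine gap is in \eqref{eq:rep2} and in the exterior part of \eqref{eq:rep22}, which you dismiss as ``the defining statement of the supersonic branch'' and ``inherent to the MRRS profile.'' As the paper stresses (see Remark~\ref{rem:no:repulsive} and Appendix~\ref{app:ODE}), the phase portrait of~\cite{MeRaRoSz2022a} in the exterior region ($S<S^*$, i.e.\ $\xi$ beyond the sonic point) is \emph{omitted} there, and the bulk of Appendix~\ref{app:ODE} is devoted to supplying it: one must show $W(S)+S<1$ for all $0<S<P_{2,S}$ by a barrier/invariant-region argument for the autonomous system \eqref{eq:ODE} (the trajectory can only cross the sonic line where $\D_2=0$, i.e.\ at $P_2$ or $P_3$; monotonicity of $W(S)$ between the root curves $W_1,W_2,W_{2}^{-}$; $W_1$ acting as a barrier when the solution dips below it). Without this global statement you have neither \eqref{eq:rep2} nor the uniform bound $1-W\geq\kp$ on the noncompact exterior region needed for \eqref{eq:rep22}; ``transversal crossing at $\xi_s$'' only gives the inequality locally, and ``compactness in the intermediate region'' does not address $\xi\in(\xi_s,\infty)$. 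Conversely, your difficulty assessment for \eqref{eq:rep1} is inverted: this is the one item quotable as is, since \cite[Lemma~1.6]{MeRaRoSz2022a} proves it on $[0,\xi_s]$, and smoothness of the profile makes $1+\pa_\xi\bar U-\al|\pa_\xi\bar\S|$ continuous, so it remains bounded below by a slightly smaller $\kp$ on some $[0,\xi_1]$ with $\xi_1>\xi_s$ --- a soft continuity step, not the ``main obstacle.'' As written, your proposal leaves \eqref{eq:rep1}, \eqref{eq:rep2}, and the uniform constant in \eqref{eq:rep22} unproven.
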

Properties~\eqref{eq:profile:properties} of the profiles $(\bar U, \bar \Sigma)$ were either stated as is in~\cite{MeRaRoSz2022a}, or they follow from the proof given there. For instance, the outgoing property of the flow~\eqref{eq:rep2} follows from the phase portrait proved in~\cite{MeRaRoSz2022a} and continuity. The limit~\eqref{eq:rep3} has been proved in~\cite{MeRaRoSz2022a}. The interior repulsive property~\eqref{eq:rep1} is proved in~\cite[Lemma 1.6]{MeRaRoSz2022a} for $\xi \in [0, \xi_s]$; since the solution is smooth, there exists $\xi_1>\xi_s$ such the condition~\eqref{eq:rep1} holds for a domain $[0, \xi_1]$ strictly larger than $[0, \xi_s]$. The decay property~\eqref{eq:dec_U} was stated in~\cite[Theorem 2.3]{MeRaRoSz2022b}. 
For the sake of completeness, we give a proof of Lemma~\ref{lem:profile} in Appendix~\ref{app:ODE}. 

\begin{remark}[\bf No repulsive property in the far field]
\label{rem:no:repulsive}
The repulsive property~\eqref{eq:rep1} may not hold true in the entire exterior domain $\xi > \xi_s$ for the 2D compressible Euler equations, which is related 
to \cite[Lemma 1.7]{MeRaRoSz2022a}. We also quote 
\begin{quote}
\textit{
The Euler case in $d = 2$ and $d = 3$ for $l \leq \sqrt{3}$ requires special consideration.
In those cases, property (P) of (2.24), which ensures coercivity of the corresponding quadratic form in (6.14), does not hold for $Z > Z_2$.
}
\end{quote}
from Section 8 on Page 846 in~\cite{MeRaRoSz2022b}, which suggests that the exterior repulsive property does not hold. The property (P) is related to the exterior repulsive property,  and $Z_2$ corresponds to $\xi_s$ in \eqref{eq:rep1}. 
\end{remark}

\subsection{Evolution of the perturbation}
\label{sec:perturbation}

We consider a solution to the two-dimensional axis-symmetric Euler system in self-similar coordinates \eqref{eq:euler_ss} in perturbative form:
\begin{equation}
(\UU,\vA,\Sigma) 
=  ( \bar \UU, 0, \bar{\S} ) + (\td \UU, \vA, \ts)
=  ( \bar U \, \ee_{R}, 0, \bar{\S} ) + (\td U \, \ee_{R},  A \, \ee_{\th}, \ts).
\label{eq:linear+nonlinear}
\end{equation} 
Our goal is to analyze the evolution of the perturbation
\begin{subequations}
\label{eq:lin}
\begin{align}
\pa_s \td \UU  &= \cL_U( \td U, A, \td \S) + {\cN}_{U} 
\label{eq:lin:a} \\
\pa_s \AA &= \cL_A( \td U, A, \td \S) + \cN_A  \label{eq:lin:b} \\ 
\pa_s \td \S &= \cL_{\S}( \td U, A, \td \S) + \cN_{\S} 
\label{eq:lin:c}
\end{align}
where the linearized operators are defined as
\begin{align} 
& \cL_U( \td U, A, \td \S) = \cL_U( \td \UU, \td \S) =  -  (r-1) \td \UU -  (y +\bar{ \UU}) \cdot \na \td \UU  -  \td \UU \cdot \na \bar{\UU} 
 - \alpha \ts \na \bar{\S}  - \alpha \bar{\S} \na \ts  
\label{eq:lin:LU}\\
 & \cL_A( \td U, A, \td \S)  = \cL_A(\vA)  =  - (r-1) \vA -   (y +\bar{ \UU}) \cdot \na \vA  - \vA \cdot \na \bar \UU  
 \label{eq:lin:LA}\\
 & \cL_{\S}( \td U, A, \td \S) = \cL_{\S}( \td \UU, \td \S) =  -  (r-1) \ts - (y +\bar{ \UU}) \cdot \na \td \S
 -  \td \UU  \cdot \na \bar{\S}     - \alpha \ts  \div(\bar \UU)
 - \alpha \bar{\S} \div(\td \UU)  
 \label{eq:lin:LS}
\end{align}
\end{subequations}
and the nonlinear terms are defined as
\begin{subequations}
\label{eq:non}
\begin{align}
& {\cN}_{  U} =  -  \td \UU \cdot \na \td \UU - \alpha \ts \na \ts   - \vA \cdot \na \vA  \\
& {\cN}_{ A} =   - \td \UU \cdot  \na \vA  - \vA \cdot \na \td \UU  \\
& {\cN}_{ \S} =  - \td \UU \cdot \na \td \S -  \alpha \ts   \div(\td \UU) .
\end{align}
\end{subequations}
The initial data for the system~\eqref{eq:lin} is specified at self-similar time $\sin = \log T^{-1/r}$.

\subsection{Two key observations}
\label{sec:two:observations}
First, in the linearized equations around the profile $(\bar U, 0,\bar \Sigma)$, the evolution of 
the perturbation $ \AA$ and those of the perturbation $(\td \UU, \td \S)$ are decoupled. 
That is, in~\eqref{eq:lin} the linearized operators $\cL_U$ and $\cL_{\S}$ do not depend on $A$.
Therefore, we can perform linear stability analysis of $(\td \UU, \td \S)$ and $\AA$ separately. We will show that the linearized equations for $(\td \UU, \td \S)$ are stable upon modulating potentially finitely many unstable directions, developing a simpler stability proof.  See Section \ref{sec:new_pf}.

Second, from \eqref{eq:lin:b} we deduce that  the {\em linearized equation} satisfied by $A(\xi,s) / \xi$ is given by\footnote{Recall from~\eqref{ss-var:c} that $\AA(y,s) = A(\xi,s) \ee_\theta$, and thus the self-similar vorticity at $\xi=0$ is related to $\lim_{\xi\to0^+} A(\xi,s)/(2\xi)$. We note that for a smooth perturbation, we have $A(0,s) = 0$ and hence $A / \xi$ is well-defined.}
\begin{equation*}
 \pa_t \Bigl( \frac{A}{\xi} \Bigr)
 + (  \xi + \bar U) \pa_{\xi} \Bigl( \frac{A}{\xi} \Bigr)
 = - \Bigl( r + 2 \frac{\bar U}{\xi} \Bigr) \Bigl( \frac{A}{\xi} \Bigr).
\end{equation*}
Our crucial observation is that the blowup profile satisfies $r + 2 (\bar U(\xi) / \xi) < 0$ near $\xi = 0$, see~\eqref{eq:rep3}, which suggests the linear stability of $A(\xi) / \xi$ near $\xi =0$. Using this property, and the outgoing property of the self-similar velocity, namely that 
$ \xi + \bar U(\xi) \geq \kp \xi$ for for some $\kp >0$ (see~\eqref{eq:rep22}), we obtain that $A$ is stable in a suitably weighted space. These stability properties can also be captured by performing weighted $L^2$ estimates with weights that are singular near $\xi= 0$. See Theorem \ref{thm:coer_est}.

In order to prove Theorem~\ref{thm:blowup}, we note that since the radial velocity  $\bar \UU + \td \UU$ has $0$ vorticity, and since $\AA$ is stable, we can choose initial data for $\AA$ in an \textit{open} set (in the topology of a weighted Sobolev space) and thus construct non-trivial initial vorticity. Moreover, since $(\UU + \td \UU+ \AA)(0,s) = 0$, the origin is a stationary point of the flow; thus, once we prove global nonlinear stability for~\eqref{eq:lin}--\eqref{eq:non}, the finite-time blowup of vorticity claimed in~\eqref{eq:blow:asym:d} now follows.

\subsection{A new stability proof of implosion}\label{sec:new_pf}

The goal of this paper is to also develop a new stability proof of the implosion for the compressible Euler equations, which is simpler than the one given  in~\cite{MeRaRoSz2022b,MeRaRoSz2022c} (based on estimates of nonlinear wave equation), and  in~\cite{BuCLGS2022} (based on the transport structure of Riemann-type variables).

While in this paper we focus on the two-dimensional setting, the stability proof for the Euler equations in $\R^d$ with radially symmetric data (around a profile satisfying the properties in Lemma~\ref{lem:profile}) would be the same,  by setting $\vA = 0$ in Sections~\ref{sec:lin}, and~\ref{sec:non}. Our stability  proof consists of the following steps. 

\subsubsection*{Step 1: Weighted energy estimates}

In the linear stability analysis of Section~\ref{sec:coer_Hm} we perform weighted $H^{2m}$ estimates, with sufficiently large $m$ and a carefully designed weight $\vp_{2m}$, to obtain coercive estimates for the ``top order'' terms. The interior repulsive property \eqref{eq:rep1} allows us to obtain a damping term proportional to $m$ in the region $|y| = \xi  \leq \xi_1 $,  slightly beyond the sonic point $\xi_s < \xi_1$.  
For $|y| = \xi > \xi_s$, we use the outgoing property related to the transport term \eqref{eq:rep2}. We design a radially symmetric weight $\vp_{2m}(y)$ decreasing in $|y| = \xi$ for $\xi \in [\xi_1, R_2]$, with $R_2$ large enough.  
The monotonicity of the weight and the outgoing property provides a strong damping term in the energy estimates, as seen in Lemma~\ref{lem:wg}. 
For $|y| = \xi > R_2$, due to the natural decay of the profile~\eqref{eq:dec_U}, the linear dynamics is purely governed by the scaling terms, and is thus stable. The upshot of the linear stability analysis is that we obtain weighted $H^{2m}$ estimates of the kind
\begin{equation}
\label{eq:intro_coer}
 \la \cL (\UU, \S), (\UU, \S) \ra_{\cX^m} 
 \leq - \lam \| (\UU, \S)\|_{\cX^m}^2 + \int_{|y| \leq R_4}  \cR_m d y ,
\end{equation}
for some inner product $\cX^m, m \geq m_0$ with $m_0$ sufficiently large, a suitably chosen $R_4>0$ and a coercive parameter $\lam > 0$ (uniform in $m \geq m_0$). 
Here, $\cR_m$ denotes some ``lower order'' remainder terms (in terms of  regularity, when compared to  $\| (\UU, \S)\|_{\cX^m}^2$), which is made explicit in~\eqref{eq:coer_est}. 
For the stability estimates of the  $\AA$ variable, we use the ideas discussed earlier in Section~\ref{sec:two:observations}.

\subsubsection*{Step 2: Compact perturbation}

To handle the lower order remainder term $\cR_m$ in \eqref{eq:intro_coer}, for each $m\geq m_0$, in Section~\ref{sec:Riesz} we use 
Riesz representation  to construct a linear operator $\cK_m$ from a bilinear form similar to
\begin{equation*}
 \la \cK_m f , g \ra_{\cX^m} =  \int \chi  f \cdot  g d y,
\end{equation*}
where $\chi \geq 0$ is a smooth cutoff function with $\chi = 1$ for $|y| \leq R_4$. The second term on the right side of \eqref{eq:intro_coer} then arises as $ \la \cK_m f , f \ra_{\cX^m}$ for $f= (\UU,\Sigma)$. Since the right hand side in the above display does not involve derivatives on $f$ and $g$, and since $\chi$ has a compact support,  we can show that $\cK_m : \cX^m \to \cX^m$ is {\em compact} and that it has a smoothing effect $\cK_m: \cX^0 \to \cX^{m+3}$. Afterwards, we obtain that $\cL - \bar C  \cK_m$ is {\em dissipative}, for $\bar C>0$ chosen large enough.

\subsubsection*{Step 3: Construction of semigroup}

To handle the compact perturbation, in Sections~\ref{sec:semi} and~\ref{sec:semi_grow} we use semigroup theory to estimate the growth bound for the semigroups $e^{ (\cL - \cK_m) s}$ and $e^{ \cL  s} $.  We prove a local well-posedness (LWP) result of the linearized equations in the space $\cX^m$ in order to construct the semigroup. Note that the linearized equations are a symmetric hyperbolic system, whose LWP theory is classical. 

\subsubsection*{Step 4: Smoothness of unstable direction}

To construct a smooth initial data, in Section~\ref{sec:smooth_unstab} we show that the potentially unstable directions of the linearized operators are spanned by smooth functions. An  unstable direction $f$ satisfies a variant of the equation $(\cL - \lam ) f = 0$, for $f \in \cX^{m_0}$. Rewriting this as $(\cL -  \cK_m -\lam ) f =  -\cK_m f$, the smoothing effect of $\cK_m \colon \cX^{0} \to \cX^{m+3}$, the invertibility $(\cL -  \cK_m -\lam )^{-1} \colon \cX^m \to \cX^m$, 
and induction on $m \geq m_0$, allows us to prove  prove $f \in C^{\infty}$. See Lemma~\ref{lem:smooth}.

We note that {\em Steps 2 \& 4} discussed above resemble the functional approach to solve the elliptic equations, see e.g.~\cite{Ev2022}. The coercive estimates \eqref{eq:intro_coer} allow us to construct the compact perturbation, which has a smoothing effect. These steps do not depend on specific structure of the equations.

\subsubsection*{Step 5: Nonlinear estimates}

To close the nonlinear stability estimates, in Section~\ref{sec:non} we decompose the perturbation $\td \VV = (\td \UU, \AA, \td \S)$ into a stable part $\td \VV_1$ and an unstable part $\td \VV_2$, following the splitting method of~\cite{ChHo2023}. For the unstable part, we represent $\td \VV_2$ using Duhamel's formula and estimate it using the semigroup $e^{\cL s}$. 
The stable part $\td \VV_1$ is estimated purely using  energy estimates. Such a decomposition allows us to construct the global in self-similar time solution using a Banach fixed point argument and simpler bootstrap assumptions; moreover, this decomposition implicitly  determines the stable initial data in the correct finite co-dimension space. See~\cite{JiSv2015,ElPa2023} for related constructions of a global-in-time self-similar solution based on a Banach fixed point argument. 

\subsection{Comparison with the stability estimates in~\cite{MeRaRoSz2022b},~\cite{MeRaRoSz2022c} and~\cite{BuCLGS2022}}

Part of our stability estimates are similar to those in \cite{MeRaRoSz2022b},~\cite{MeRaRoSz2022c}, and in~\cite{BuCLGS2022}: high order weighted Sobolev estimates to extract the dissipation, 
semigroup theory to estimate the compact perturbation, and finite co-dimension stability. Moreover, our $H^{2m}$ stability estimates based on the variable $(\UU, \S)$ are inspired by~\cite{BuCLGS2022} particularly in the region $|y| = \xi \leq \xi_s$. There are however several significant differences between our estimates and those in~\cite{MeRaRoSz2022b,MeRaRoSz2022c,BuCLGS2022}. 

First, we do not localize the linearized operator, nor do we use Riemann-type variables to perform transport estimates. Instead, we  
exploit the outgoing properties of the flow~\eqref{eq:rep2} by designing decreasing weights and incorporate such an effect in the weighted $H^{2m}$ estimates. This allows us to obtain simpler nonlinear estimates. We note that weighted energy estimates with singular weights and weights depending on the profiles have been used successfully in a series of works by the first author and collaborators~\cite{ChHoHu2021,ChHo2021,ChHoHu2022,ChHo2023,Ch2020a}.

Second, we construct the semigroup based on the local well-posedness (LWP) of the linearized equations, which is obtained by appealing to a classical result.
See~{\em Step 2} mentioned above, and Theorem~\ref{thm:ACP}. 
In contrast, in~\cite{BuCLGS2022,MeRaRoSz2022c}, the construction is based on the notion of a maximal dissipative operator. Moreover, using the functional approach outlined in {\em Step 4} above, we do not need to solve specific ODE or PDE problems as~\cite{BuCLGS2022,MeRaRoSz2022c}, and hence we obtain simpler linear stability estimates. 

Third, we use a Banach fixed point argument to obtain nonlinear finite co-dimension stability, different from the topological arguments employed in~\cite{BuCLGS2022,MeRaRoSz2022a}. 

Finally, we do not use the exterior repulsive property of the profile in the stability proof, which is required in \cite{BuCLGS2022}, but may not hold for the similarity profile of 2D Euler equations (see the discussion in Remark~\ref{rem:no:repulsive}). Thus, we cannot adopt the stability proof in \cite{BuCLGS2022} to establish the finite co-dimension stability of $(\td \UU, \td \S)$.

Our stability estimates can be extended to the compressible Navier-Stokes equations in the same settings as in~\cite{BuCLGS2022,MeRaRoSz2022b}, 
with a minor modification since the diffusion is treated perturbatively, and the main term of the diffusion in the weighted estimates still has a good sign after performing integration by parts. 

\subsection{Comparison with the non-radial implosion in~\cite{CLGSShSt2023}}
\label{sec:intro_comp}

There appear to be essential difficulties in using the non-radial imploding solutions built in~\cite{CLGSShSt2023} to exhibit vorticity blowup for the compressible Euler equations in three space dimensions.

We recall that for $d=3$, the specific vorticity of isentropic Euler satisfies the vector-transport equation 
\beq \label{spec-3d}
\partial_t \Bigl( \frac{\om}{\rho} \Bigr) + (\uu \cdot\nabla) \Bigl( \frac{\om}{\rho}\Bigr)  = (\uu  \cdot\nabla) \Bigl(\frac{\om}{\rho}\Bigr).
\eeq
If we denote by $X(a,t)$ the Lagrangian flow\footnote{The Lagrangian flow $X$ solves the ODE $\partial_t X(a,t) = u(X(t,a),t)$, with initial condition $X(a, 0)=a$.
\label{footnote:flow:map}} generated by $u$, then \eqref{spec-3d} can be rewritten as
\begin{equation} 
\Bigl( \frac{\om}{\rho}\Bigr)(X(a,t), t) 
=  \nabla_a X(a,t)  \Bigl(\frac{\om}{\rho}\Bigr)(a, 0).
\label{eq:vector:transport}
\end{equation}
If we imagine $\uu$ and $\sigma$ (and hence $\rho = (\alpha \sigma)^{1/\alpha}$) to be {\em exactly the self-similar solutions} to \eqref{eq:euler} built in \cite{MeRaRoSz2022a} and~\cite{BuCLGS2022}, then the vorticity $\om$ would just be identically $0$ by radial symmetry. 

Nevertheless, we can replace the $\omega(X(a,t),t)$ appearing on the right side of~\eqref{eq:vector:transport} with a generic quantity \begin{equation*} 
f = f(X(a,t),t)= \rho( X(a,t),t) \nabla_a X(a,t) \tfrac{f_0}{\rho_0}(a) ,
\end{equation*} 
and ask whether $\| f (\cdot,t)\|_{L^\infty} $ blows up as $t\converges T$, with $T$ being the time of the implosion. In the next lemma we observe that $f$ remains uniformly bounded in $a$ and $t$, as long as we choose the initial data $f_0$ with a high-enough vanishing order at $a=0$. 

\begin{lemma}[\bf Not all transported quantities blow up in a 3D implosion]
\label{lem:non_blowup}
Consider a self-similar profile $(\bar{U},\bar \Sigma)$ as constructed in~\cite{MeRaRoSz2022a} or~\cite{BuCLGS2022}, and let $(\uu,\sigma)$ be determined according to exact self-similarity from this profile via~\eqref{ss-var}.
Then, there exists $k_{\sf van}>0$ such that for every smooth function $f_0$ that satisfies $|f_0(a)| \les \min(|a|^{k_{\sf van}}, 1) $, we have
\begin{equation*}
\sup_{a \in \R^3, t \in [0,T)} | \rho( X(a,t), t) \nabla_a X(a,t)  \tfrac{f_0}{\rho_0} (a)  |< + \infty .
\end{equation*}
The required vanishing order $k_{\sf van}$ is some constant that depends only on the profile $(\bar U,\bar \Sigma)$, on $r$, and on $\alpha$.
\end{lemma}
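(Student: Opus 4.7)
The plan is to change to self-similar Lagrangian variables and track the trajectory, the Jacobian of the flow map, and the density along each characteristic separately. Setting $Y(a,s) = e^s X(a, t(s))$ with $s = -\tfrac{1}{r}\log(T-t)$, an elementary computation using \eqref{ss-var:a}--\eqref{ss-var:b} and the fact that $(\bar U, 0, \bar\Sigma)$ is stationary yields the autonomous ODE $\pa_s Y = Y + \bar{\mathbf U}(Y)$ with initial condition $Y(a, \sin) = T^{-1/r} a$, while the physical Jacobian $J(a,s) \teq \na_a X(a, t(s))$ solves the linear matrix ODE $\pa_s J = \na_y \bar{\mathbf U}(Y(a,s))\, J$ with $J(a,\sin) = I$. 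The origin is a \emph{source} for the $Y$-flow: by \eqref{eq:rep3}, the linearization at $Y = 0$ has rate $\lam = 1 + \pa_\xi \bar U(0) = 1 - (r-1)/(2\al) > 0$, since $r_{\mathsf{eye}}(\al) < 1 + 2\al$ in both regimes of \eqref{r-range}.

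I would then proceed in three steps. First, using the source structure near $0$ together with the outgoing property \eqref{eq:rep22}, show that for every $a \neq 0$ the trajectory $Y(a,\cdot)$ escapes a fixed ball $\{|Y| \leq \e\}$ at a time $s^*(a)$ with $s^*(a) - \sin \lesssim \lam^{-1}\log(1/|a|)$, and then grows as $|Y(a,s)| \gtrsim e^{\kp(s-s^*)}$ in the outer region. Second, integrate the matrix ODE for $J$ along the trajectory: in the inner region $\na_y \bar{\mathbf U}$ has eigenvalues close to $\pa_\xi \bar U(0) = -\beta$ with $\beta = (r-1)/(2\al)$, so $|J(a,s)| \lesssim e^{-\beta(s-\sin)}$, while in the outer region \eqref{eq:dec_U} gives $|\na_y \bar{\mathbf U}(Y)| \lesssim |Y|^{-r}$, which is integrable in $s$ along the trajectory and forces $|J|$ to stabilize. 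Third, combine with the identity $\rho(X(a,t),t) = c\, e^{s(r-1)/\al} \bar\Sigma(Y(a,s))^{1/\al}$ coming from \eqref{ss-var:b}: in the inner region the product $\rho\, |J|$ grows like $e^{s(r-1)/(2\al)}$ and attains its supremum at $s = s^*(a)$, yielding
\[
\sup_{t \in [0, T)} \rho(X(a,t),t)\, |J(a,t)| \; \lesssim\; |a|^{-M}, \qquad M \teq \tfrac{r-1}{2\al + 1 - r},
\]
while in the outer region the decay $\bar\Sigma(Y)^{1/\al} \lesssim |Y|^{(1-r)/\al}$ exactly cancels the growing prefactor $e^{s(r-1)/\al}$, so $\rho(X,t)$ stays of order one. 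Since $\rho_0(a) = (\al\sigma_0(a))^{1/\al}$ is bounded below by a positive constant on compacts (as $\bar\Sigma(0) > 0$), choosing any integer $k_{\sf van} > M$ depending only on $\al$ and $r$ converts the singular factor $|a|^{-M}$ into a uniform bound through $|f_0(a)/\rho_0(a)| \lesssim |a|^{k_{\sf van}}$; for $|a|$ bounded away from the origin the trajectory never enters the inner region, and the required bounds are immediate from the outer-region analysis.

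The main obstacle will be the gluing in the third step: one must show that $\rho\, |J|$ indeed peaks at the exit time $s^*(a)$ with rate no worse than $|a|^{-M}$ uniformly in $a$, matching the inner-region estimate (where $\rho$ blows up while $J$ decays) to the outer-region estimate (where both $\rho$ and $|J|$ remain bounded) across the transition $|Y| \sim \xi_s$. I would handle this by using $|Y(a,s)|$ itself as a monotone Lyapunov function along the trajectory, as justified by \eqref{eq:rep22}, and changing the time variable from $s$ to $|Y|$ to convert the exponential-in-$s$ rates into integrable quantities in $|Y|$ involving $\bar U'$, $\bar U/|Y|$, and $\bar\Sigma$, making the admissible threshold $M$, and hence $k_{\sf van}$, explicit in terms of $\al$ and $r$.
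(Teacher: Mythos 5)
Your plan is correct in substance and rests on the same pillars as the paper's own proof: trajectory control for the self-similar flow via the outgoing property \eqref{eq:rep22} and the negativity of the slope of $\bar U$ at the origin, an ODE bound for $\nabla_a X$ obtained by integrating $\nabla \uu$ along the characteristic, and the profile decay \eqref{eq:dec_U} to control $\rho$ along the flow, with the resulting negative power of $|a|$ absorbed by the vanishing order $k_{\sf van}$. Where you genuinely differ is in the treatment of the Jacobian. The paper (Lemma~\ref{lem:traj} together with \eqref{est:ss:s}--\eqref{est:gr:fl}) never uses the sign structure of $\na_y \bar\UU$: it proves the two-sided bound $|a|\min(1,|a|^{c_1}) \les |X(a,t)| \les |a|$, bounds $|\na \uu|$ along the flow by $\min((T-t)^{-1}, R_l(a)^{-r})$, and then applies a crude Gr\"onwall estimate in physical time, split at $t_+ = \max(T - R_l(a)^r,0)$, yielding $|\na_a X| \les R_l(a)^{-c_2}$ with an inexplicit $c_2$. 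You instead integrate the matrix ODE $\pa_s J = \na_y \bar\UU(Y)\,J$ in self-similar time, using the contraction $\na_y\bar\UU(0) = \pa_\xi \bar U(0)\,\mathrm{Id}$ while the trajectory is near the origin and the integrability of $|\na_y\bar\UU(Y(s))| \les \la Y(s)\ra^{-r}$ after the exit time. Your route buys a sharper, explicit exponent (your $M = \beta/(1-\beta)$ with $\beta = -\pa_\xi\bar U(0)$ is indeed the rate at which $\rho\,|J|$ peaks at the exit time), at the price of the inner/outer gluing you correctly flag as the delicate step; the paper's rougher Gr\"onwall bound avoids any gluing because the lemma only needs \emph{some} polynomial rate in $1/|a|$.

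Two points to repair when executing the plan, neither of which affects the conclusion. First, the lemma concerns the three-dimensional profiles, so the precise value $\pa_\xi\bar U(0) = -\tfrac{r-1}{2\al}$ from \eqref{eq:rep3} is the $d=2$ formula; as in Remark~\ref{rem:2D:3D}, all you actually need is $\pa_\xi \bar U(0)<0$ together with \eqref{eq:rep22} and \eqref{eq:dec_U}, so your $M$ should be stated in terms of the 3D slope rather than $\tfrac{r-1}{2\al+1-r}$. Second, in the outer region $\rho(X(a,t),t)$ is not of order one uniformly in $a$: the cancellation gives $\rho(X(a,t),t) \les |X(a,t)|^{-(r-1)/\al} \les R_l(a)^{-(r-1)/\al}$, i.e.\ a bound that degenerates polynomially as $|a|\to 0$; your matching at $s^*(a)$ must account for this (it does, since $|X(a,t)| \asymp |a|$ up to the $|a|^{c_1}$ correction). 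Relatedly, $\rho_0(a) \asymp \la a\ra^{-(r-1)/\al}$ is \emph{not} bounded below globally, so for $|a|$ large the uniform bound is not immediate from ``$\rho$ of order one'' plus boundedness of $f_0$; it requires the cancellation $\rho(X(a,t),t)/\rho_0(a) \les 1$ in the far field, which again follows from the comparability $|X(a,t)|\asymp |a|$ that your Lyapunov argument provides.
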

We defer the proof to Appendix~\ref{sec:non_blowup}.
At this stage we note that the finite co-dimension of the initial data considered in~\cite{CLGSShSt2023} equals to the number of unstable directions of $\cL$, which is a (sufficiently large) finite rank perturbation $\cK$ to a maximally dissipative operator. Since the rank of $\cK$ is used as a large parameter, the number of unstable directions $k_{\sf uns}$ is not quantified in \cite{CLGSShSt2023}, and therefore it is hard to determine whether $k_{\sf uns} < k_{\sf van}$. Moreover, it is not shown in~\cite{CLGSShSt2023} that the finite co-dimension set of initial data contain any functions $f_0$ violating the vanishing order $f_0 =\mathcal{O}(|a|^{k_{\sf van}} )$ near $a = 0$.
Therefore, in light of Lemma~\ref{lem:non_blowup} it is not clear whether the setting of~\cite{CLGSShSt2023} is at all amenable to vorticity blowup.

\subsection{Notation}
Next, we discuss the notation used in the paper.

We use $A\les B$ to mean that there exists a constant $C = C(\gamma, r, \bar U,\bar \Sigma,T)\geq 1$ such that $A \leq C B$. We use $A \asymp B$ to mean that both $A \les B$ and $B \les A$.
We use $A\teq B$ to say that quantity $A$ is {\em defined} to be equal to quantity/expression $B$. In Section~\ref{sec:non} we shall use the notation $A= B + \mathcal{O}_{h}(B^\prime)$ if there exists $C_{h} = C (\gamma,r,\bar U,\bar \S, T,h) \geq 1$ such that $|A - B| \leq C_\ast B^\prime$. 

We will use $\eta, \eta_s, \lam, \lam_1$ to denote constants related to the decay rates; see~\eqref{eq:decay_stab}, \eqref{eq:decay_unstab}, and~\eqref{eq:decay_para}.
We use $\kp_1, \kp_2, \kp_3$ to denote exponents for the weights; see~\eqref{eq:wg}, \eqref{eq:vp:g:def}, and~\eqref{eq:kappa:3:def}. We use $\vp_1, \vp_m, \vp_b, \vp_f,  \vp_g, \vp_A$ to denote various weights; see~\eqref{eq:wg}, \eqref{eq:wg_bulk}, \eqref{eq:vp:g:def}, \eqref{eq:wg_A}, and use $E_{k}, E_{\infty}$ for energy estimates~\eqref{eq:non_E},~\eqref{norm:linf}. We use $\cX^m, \cX^m_A, \cW^m, \cW^m_A, Y_1, Y_2 $ to denote various functional spaces; see~\eqref{norm:Xk}, \eqref{norm:Wk}, and~\eqref{norm:fix}. 

We will use $c, C$ to denote absolute constants that can vary from line to line, and use $\mu, \nu, \e_m, \bar C, \mu_m, \d, \d_Y$ to denote some absolute constants that do not change from line to line. 

Throughout Section~\ref{sec:lin}, we shall use $(\UU, \S)$ to denote the perturbation $(\td \UU, \td \S)$ mentioned earlier in~\eqref{eq:linear+nonlinear}, since for {\em linear stability analysis} there is no ambiguity. In contrast, in the nonlinear stability of Section~\ref{sec:non}, $(\UU, \S)$ is reserved for the full radial velocity and rescaled sound speed, consistently with~\eqref{eq:linear+nonlinear} and~\eqref{eq:euler_ss}.

\section{Linear stability}\label{sec:lin}

In this section, we perform weighted $H^{2m}$ estimates and study the 
asymptotic behavior of the linear operators $\cL_U$, $\cL_A$, and $\cL_\Sigma$ defined in~\eqref{eq:lin}. The  design of the weights is done in Section~\ref{sec:weights}, while the Sobolev energy estimates are performed in Section~\ref{sec:coer_Hm}.

\subsection{Choice of weights} 
\label{sec:weights}

In this section, we design the weight $\vp_{2m}$ needed for weighted $H^{2m}$ estimates. 
\begin{lemma}\label{lem:wg}
There exists a radially symmetric weight $\vp_1(y)$, and there exists a constant $\mu_1 > 0$, such that 
\begin{subequations}
\begin{align}
& \vp_1 \asymp \la \xi \ra, \quad |\na \vp_1| \les 1 , \quad \xi = |y|, \label{eq:wg_asym}  \\
& \f{  (\xi + \bar U)\pa_{\xi} \vp_1 }{\vp_1} 
+ \ell \al \bar \S  \B| \f{ \pa_{\xi} \vp_1 }{\vp_1} \B|  - 
(1 + \pa_{\xi} \bar U - \ell  \al |\pa_{\xi} \bar \S|  )  \leq - \frac{\mu_1}{\la  \xi \ra}  , \label{eq:repul_wg}
\end{align}
\end{subequations}
for all $\xi \in (0,\infty)$ and all $\ell \in \{0, 1\}$.
For $m\geq 1$ and $\vp_1$ satisfying~\eqref{lem:wg}, define 
\begin{equation}
\varphi_m(y) = \varphi_1(y)^m.
\label{eq:phi:m:def}
\end{equation}
\end{lemma}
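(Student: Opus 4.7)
The plan is to construct $\vp_1$ piecewise on three concentric regions, each exploiting a different mechanism from Lemma~\ref{lem:profile}: the interior repulsive property~\eqref{eq:rep1}, the outgoing property~\eqref{eq:rep2} combined with a sharply decreasing weight, and the decay~\eqref{eq:dec_U} of the profile combined with a slowly growing linear weight in the far field. The three pieces are then smoothly matched, and the parameters $\mu_1$ and $R_2 \gg \xi_1$ are fixed at the end of the argument. In the innermost region $\xi \in [0,\xi_1]$, I would take $\vp_1 \equiv C_1$ for a positive constant $C_1$, so that $\pa_\xi \vp_1/\vp_1 \equiv 0$ and the first two terms in~\eqref{eq:repul_wg} vanish. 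The inequality then reduces to $-(1 + \pa_\xi \bar U - \ell \al |\pa_\xi \bar \S|) \leq -\mu_1/\la\xi\ra$, which holds for any $\mu_1 \leq \kp/\la\xi_1\ra$ by~\eqref{eq:rep1}.

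In the intermediate region $\xi \in [\xi_1,R_2]$, I would construct $\vp_1$ strictly decreasing. Since $\xi_1 > \xi_s$, the outgoing property~\eqref{eq:rep2} combined with continuity yields a uniform lower bound $\xi + \bar U - \al \bar \S \geq \kp_2 > 0$ on this compact interval, and~\eqref{eq:rep22} gives $\xi + \bar U \geq \kp \xi_1 > 0$. The key identity for the $\ell=1$ case with $\pa_\xi \vp_1 < 0$ is
\[
\f{(\xi + \bar U)\pa_\xi \vp_1}{\vp_1} + \al \bar \S \bigg| \f{\pa_\xi \vp_1}{\vp_1} \bigg| = \f{(\xi + \bar U - \al \bar \S)\pa_\xi \vp_1}{\vp_1} \leq \kp_2 \f{\pa_\xi \vp_1}{\vp_1},
\]
with the analogous bound holding for $\ell = 0$. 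Since $|\pa_\xi \bar U|$ and $|\pa_\xi \bar \S|$ are uniformly bounded on the compact $[\xi_1,R_2]$, it suffices to take $|\pa_\xi \vp_1/\vp_1|$ larger than a fixed constant depending only on these bounds, $\kp_2$, and $\mu_1$. An exponential profile $\vp_1(\xi) = C_1 e^{-k(\xi-\xi_1)}$ for $k$ sufficiently large does the job while keeping $\vp_1$ bounded between two positive constants on this interval.

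In the exterior $\xi \geq R_2$, I would set $\vp_1(\xi) = c_0 + (\xi - R_2)$, so that $\pa_\xi \vp_1 = 1$ and $\pa_\xi \vp_1/\vp_1 = 1/(c_0 + \xi - R_2)$. A direct expansion shows that the left-hand side of~\eqref{eq:repul_wg} equals
\[
\f{\bar U + \ell \al \bar \S - (c_0 - R_2)}{c_0 + \xi - R_2} - \pa_\xi \bar U + \ell \al |\pa_\xi \bar \S|.
\]
By the decay~\eqref{eq:dec_U} with $r > 1$, the derivative terms are $O(\la\xi\ra^{-r}) = o(\la\xi\ra^{-1})$, while $\bar U$ and $\bar \S$ are uniformly bounded. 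Choosing $c_0 - R_2$ larger than $\|\bar U\|_{L^\infty} + \al\|\bar \S\|_{L^\infty} + 2\mu_1$ forces the first quotient to be bounded above by $-2\mu_1/(c_0+\xi-R_2) \leq -\mu_1/\la\xi\ra$ for $\xi \geq R_2$, and enlarging $R_2$ if necessary absorbs the $O(\la\xi\ra^{-r})$ remainder into a single $-\mu_1/\la\xi\ra$ bound.

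The hard part will be the smooth matching at $\xi_1$ and $R_2$, where no single mechanism alone yields the inequality. I would handle this by inserting $C^\infty$ bridging pieces on narrow bands around each matching point; since the pointwise inequality is strict and varies continuously with the construction, taking the bands sufficiently narrow preserves it throughout. The asymptotic bounds $\vp_1 \asymp \la\xi\ra$ and $|\na \vp_1| \les 1$ then follow immediately: on the compact region $[0,R_2]$, both $\vp_1$ and $\la\xi\ra$ are bounded between two positive constants, while on $[R_2,\infty)$ we have $\vp_1(\xi) \asymp \xi \asymp \la\xi\ra$ with $\pa_\xi \vp_1 \equiv 1$.
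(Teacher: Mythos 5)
Your three regional mechanisms are the right ones (and are the same ones the paper uses: \eqref{eq:rep1} inside $\xi_1$, \eqref{eq:rep2} with a decreasing weight in the bulk, and the decay \eqref{eq:dec_U} with a mildly varying weight in the far field), and the computations inside each region are essentially correct up to harmless constant bookkeeping. But there is a genuine gap at the gluing step, and it is exactly the step you defer to ``narrow bridging bands.'' As written, your three pieces do not even form a continuous function at $R_2$: the middle piece has the tiny value $C_1 e^{-k(R_2-\xi_1)}$ there, while the outer piece starts at $c_0 \geq R_2 + \|\bar U\|_{L^\infty} + \al\|\bar \S\|_{L^\infty} + 2\mu_1$, which is large. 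A jump in the \emph{value} of $\vp_1$ cannot be bridged on a narrow band: any $C^\infty$ interpolation over a band of width $\e$ forces $\pa_\xi \vp_1/\vp_1$ to be of size $\e^{-1}\log(c_0 e^{k(R_2-\xi_1)}/C_1)$ and positive somewhere, and since $\xi + \bar U \geq \kp\,\xi > 0$ by \eqref{eq:rep22}, the term $(\xi+\bar U)\pa_\xi\vp_1/\vp_1$ in \eqref{eq:repul_wg} then blows up with the wrong sign; narrowing the band makes this worse, not better. Moreover, even at $\xi_1$, where only the derivative jumps, ``the inequality varies continuously, so narrow bands preserve it'' is not a proof: mollifying a corner produces intermediate log-slopes sweeping the entire interval between the two one-sided values, so you must verify \eqref{eq:repul_wg} for \emph{every} such slope, not for slopes close to one of the endpoints.

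Both defects are repairable, but the repair has to be said. Since \eqref{eq:repul_wg} depends on $\vp_1$ only through $\pa_\xi\vp_1/\vp_1$, it is scale invariant on each piece, so you should multiply the outer piece by the constant that makes $\vp_1$ continuous at $R_2$ (this also restores $\vp_1 \asymp \la\xi\ra$ globally after fixing $k,R_2$). Then only corners remain, and at each corner the swept slopes are admissible for structural reasons, not by narrowness: near $\xi_1$, placing the transition band inside $(\xi_s,\xi_1]$, every slope $h\in[-k,0]$ gives $(\xi+\bar U)h + \al\bar\S|h| = (\xi+\bar U-\al\bar\S)h \leq 0$ by \eqref{eq:rep2}, so \eqref{eq:rep1} alone closes the estimate; near $R_2$, for $h\in[-k,0]$ the same sign argument works together with $1+\pa_\xi\bar U-\al|\pa_\xi\bar\S| \geq 1 - C\la\xi\ra^{-r}$, and for $h\in[0,1/c_0]$ one has $(\xi+\bar U+\al\bar\S)h \leq (\xi+O(1))/c_0$, which is beaten by the $-1$ after slightly enlarging $c_0-R_2$ and $R_2$. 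Note that the paper sidesteps this entire gluing issue by a different design: it takes a single globally smooth product $\vp_1=\vp_b^{\kp_2}\vp_f$ with $\vp_b$ a fixed smooth nonincreasing bulk cutoff raised to a large power and $\vp_f = 1+\nu\la y\ra$ with $\nu$ small, so that $\pa_\xi\vp_1/\vp_1$ splits additively and no matching is ever needed; your piecewise route reaches the same weight profile but must pay for it with the corner analysis you omitted.
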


The quantity on the left hand side of \eqref{eq:repul_wg} appears in the damping term in the weighted $H^{2k}$ estimates. The first term comes from the effect of the transport term. The last two terms come from the derivatives. See \eqref{eq:lin_est1}, \eqref{eq:lin_est2}, \eqref{eq:lin_est3}.

\begin{proof}

Since $\al \bar \S \geq 0$, we only need to prove the estimate for $\ell = 1$.

To design the weights,  we will introduce several parameters and determine them in the following order
\beq\label{eq:wg_para}
R_1 \rightsquigarrow  
R_2 \rightsquigarrow 
c_1 \rightsquigarrow 
R_3 \rightsquigarrow
\kp_2  \rightsquigarrow 
\nu \rightsquigarrow 
\mu_1,
\eeq
with parameters appearing later being allowed to depend on the previous ones. Since we fix the profiles $(\bar U,\bar \Sigma)$ and related parameters (e.g.,~$\kp, \xi_1, \xi_s$ in Lemma \ref{lem:profile}), we drop the dependence of $R_1$ and $R_2$ from \eqref{eq:wg_para} on these parameters.

Recall the parameters $\xi_s, \xi_1$ from Lemma \ref{lem:profile}. We fix $R_1$ with $1 <\xi_s <  R_1 < \xi_1$. From Lemma \ref{lem:profile}, since $\pa_{\xi} \bar U, \pa_{\xi} \bar \S$ decay, we can choose $R_2$ such that for $\xi > R_2$, we have 
\beq\label{eq:repul2}
 \pa_{\xi}(\xi + \bar U ) -   \al |\pa_{\xi}\bar \S |> \tfrac{1}{2}, \quad \xi > R_2.
\eeq
For $\kp_2,  \nu  > 0$ to be determined (see~\eqref{eq:repul3} below), we construct the weights as  
\begin{equation}
\label{eq:wg}
\vp_{m}(y) \teq \vp_1(y)^m, \qquad 
\vp_1 \teq \vp_b^{\kp_2} \vp_f,  \qquad 
\vp_f(y) \teq 1 + \nu  \la y \ra, 
\end{equation}
where $\vp_b$ is chosen in \eqref{eq:wg_bulk} below. Here the sub-indices \textit{b, f} are short for {\em bulk}, and {\em far}, respectively. 
We use the bulk part   to capture the outgoing effect \eqref{eq:rep2} for $ |y|> \xi_s$, and the far part to capture the decay of the perturbation. With  $ \xi_s < R_1 <\xi_1$  and $R_2$ that have already been chosen, there exists $c_1 = c_1(R_1, R_2)>0$ which allows us to define $\vp_b$ as a radially symmetric function with
\begin{subequations}
\label{eq:wg_bulk}
\begin{align}
\vp_b(y) & = 1, \qquad |y| \leq \xi_s, \\  
\vp_b(y) &=  \tfrac 12 , \qquad |y| \geq R_2 + 1,\\
\pa_{\xi} \vp_b &\leq 0, \qquad \forall y \in {\mathbb R}^2,   \\
\pa_{\xi} \vp_b \leq - c_1 & < 0, \qquad |y| \in [R_1, R_2]. 
\end{align}
\end{subequations}

Denote the self-similar slow acoustic wave speed by 
\begin{equation*}
\bar V = \xi + \bar U - \al \bar \S.
\end{equation*}
From \eqref{eq:dec_U}, we can choose $R_3$ large enough such that
\beq\label{eq:wg_R3}
 -  (1 + \pa_{\xi} \bar U - \al |\pa_{\xi} \bar \S | ) \leq -1 + \tfrac{1}{8 } \la \xi \ra^{-1}, 
 \quad \bar V + 2 \al \bar \S \leq \xi + \tfrac{1}{8 }, \quad \forall \xi \geq R_3.
\eeq

Next, we let 
\begin{equation*}
D_{wg} \teq
 \f{  (\xi + \bar U)\pa_{\xi} \vp_1 }{\vp_1} 
+ \al \bar \S  \B| \f{ \pa_{\xi} \vp_1 }{\vp_1} \B|  
=  \f{  \bar V \pa_{\xi} \vp_1 }{\vp_1} 
+ \al \bar \S  \left( \B| \f{ \pa_{\xi} \vp_1 }{\vp_1} \B| +\f{ \pa_{\xi} \vp_1 }{\vp_1} \right) .
\end{equation*}
Using the definition of $\vp_1$ in~\eqref{eq:wg}, the monotonicity properties $\pa_{\xi}\vp_b \leq 0$ and $\pa_{\xi} \vp_f \geq 0$, and the identity  $\f{\pa_{\xi} \vp_1}{\vp_1} = \kp_2 \f{\pa_{\xi} \vp_b}{\vp_b} + \f{\pa_{\xi} \vp_f}{\vp_f}$, we deduce 
\begin{subequations}
\label{eq:D_wg}
\begin{align}
& \B| \f{ \pa_{\xi} \vp_1 }{\vp_1} \B| +\f{ \pa_{\xi} \vp_1 }{\vp_1} 
\leq 2 \f{ \pa_{\xi} \vp_f}{\vp_f} 
= \f{2 \nu \xi}{ \la \xi \ra ( 1  + \nu \la\xi\ra )},  \\
&D_{wg}   \leq 
   \kp_2   \f{ \bar V \pa_{\xi} \vp_b}{\vp_b}  + \f{ \bar V \nu \xi}{ \la \xi \ra ( 1  + \nu \la\xi\ra )} 
+ 2 \al \bar \S  \f{ \nu \xi}{ \la \xi \ra ( 1  + \nu \la\xi\ra )}.
\end{align}
\end{subequations}

Next, our goal is to find positive constants $\kp_2$ (see~\eqref{eq:kappa:2:choice}) and $\nu$ (see~\eqref{eq:nu:choice}), in order to ensure that 
\begin{subequations}
\label{eq:repul3}
\begin{align}
 D_0 &\teq \kp_2   \f{ \bar V \pa_{\xi} \vp_b}{\vp_b} - 
(1 + \pa_{\xi} \bar U - \al |\pa_{\xi} \bar \S| )
 \leq - c < 0, 
 \label{eq:repul3:a} \\
D_1  & \teq D_{wg} -(1 + \pa_{\xi} \bar U - \al |\pa_{\xi} \bar \S| ) \leq D_0 + 
 \f{(\bar V + 2 \al \bar \S) \nu \xi}{ \la \xi \ra ( 1  + \nu \la\xi\ra )}
  \leq - \frac{\mu_1}{ \la \xi \ra},
  \label{eq:repul3:b}
\end{align}
\end{subequations}
hold for some constants $\mu_1= \mu_1(R_1, R_2,R_3, \kp_2, \nu) > 0 $ and $c= c(R_1, R_2)> 0$.
Here we recall that $\bar V =   \xi + \bar U  -  \al \bar \S >0 $ due to~\eqref{eq:rep2}.
Note that the estimate in~\eqref{eq:repul3:b} is exactly the same as~\eqref{eq:repul_wg}.

In order to achieve~\eqref{eq:repul3}, we first note that $\bar V \pa_{\xi} \vp_b \leq 0$ by the definition of $\vp_b$ in \eqref{eq:wg_bulk}.
For $\xi \in [0, \xi_1]$ and $\xi \geq R_2$,  using the repulsive property \eqref{eq:rep1} and inequality \eqref{eq:repul2}, we get 
\[
D_0  \leq - (1 + \pa_{\xi} \bar U - \al |\pa_{\xi} \bar \S| ) \leq \max(- \kp, - \tfrac{1}{2} ) < 0 .
\]
For $\xi \in [\xi_1, R_2]$, from \eqref{eq:rep2} we  get $\bar V > c_2  > 0$ for some $c_2 = c_2(R_2) > 0$, and from \eqref{eq:wg_bulk} we have $\pa_{\xi} \vp_b / \vp_b \leq - c_1(R_1, R_2) < 0$; together, these bounds imply
 \[
 \f{ \bar V \pa_{\xi} \vp_b}{\vp_b} < - c_1 c_2 < 0. 
 \]
Since $| 1 + \pa_{\xi} \bar U - \al |\pa_{\xi} \bar \S| | \leq C$  holds by \eqref{eq:dec_U}, choosing $\kp_2 = \kp_2(R_1, R_2)$ large enough, 
\begin{equation}
 \quad D_0 < - \kp_2 c_1 c_2  + C   = - \tfrac 12 \kp_2 c_1 c_2 \teq - c
 \label{eq:kappa:2:choice}
\end{equation}
for some $c = (R_1, R_2)>0$, as claimed in~\eqref{eq:repul3:a}.

In order to prove~\eqref{eq:repul3:b}, we note that for $\nu \leq 1$, using~\eqref{eq:wg_R3} we get that for all $\xi \geq R_3$ we have
\[
\bal
D_1  & \leq
 -  (1 + \pa_{\xi} \bar U - \al |\pa_{\xi} \bar \S | )
+ \f{(\bar V + 2 \al \bar \S) \nu \xi}{ \la \xi \ra ( 1  + \nu \la\xi\ra )}
\leq -1 + \f{1}{8 \la \xi \ra } + \f{\nu \xi^2}{ \la \xi \ra + \nu  (1 + \xi^2) } + \f{1}{ 
8 \la \xi \ra } \\
& \leq - \f{ \la \xi \ra + \nu }{ \la \xi \ra + \nu (1 + \xi^2) } + \f{1}{4 \la \xi \ra } \leq 
- \f{ \la \xi \ra}{ 2 \la \xi \ra^2} + \f{1}{4 \la \xi \ra } 
= -  \f{1}{4 \la \xi \ra }.
\eal
\]
For $\xi \in [0, R_3]$, using  \eqref{eq:repul3:a} and choosing $\nu = \nu(R_1, R_2, R_3)>0$ small enough, we get 
\begin{equation}	
D_1 \leq D_0 + \nu \|\bar V + 2 \al \bar \S\|_{L^{\infty}[0, R_3]}  \leq -c + 
\nu \|\bar V + 2 \al \bar \S\|_{L^{\infty}[0, R_3]}  < -\tfrac{1}{2} c.
 \label{eq:nu:choice}
\end{equation}
Combining the above two estimates, we establish \eqref{eq:repul3:b} for some $\mu_1(R_1, R_2,R_3, \kp_2,\nu)>0$. This concludes the proof of~\eqref{eq:repul_wg}. 

It remains to establish~\eqref{eq:wg_asym}.
Since $\vp_b$ is constant for $|y| \leq \xi_s$ and $|y| \geq  R_2 + 1$, 
from the definition \eqref{eq:wg}, we get $\vp_{1} \asymp  \vp_f$, $|\na \vp_1| \asymp |\na \vp_f| \les 1 $ in this region; the implicit constants depend on $R_1,R_2,R_3, \nu, \kappa_2$ which have been already fixed. On the compact set $\xi \in [\xi_s,R_2+1]$, the claim \eqref{eq:wg_asym} follows since $\vp_f$ and $\vp_b$ (hence $\vp_1$)  are $C^1$ smooth.
\end{proof}

\subsection{Weighted $H^{2m}$ Coercive estimates}\label{sec:coer_Hm}
Throughout this section, since we are only concerned with {\em linear} stability analysis, we will use $(\UU, \S)$ to denote the perturbation $(\td \UU, \td \S)$; that is, we drop the $\td \cdot $ in \eqref{eq:lin}.

Recall the weights $\vp_m$ defined in \eqref{eq:wg} and recall the definitions \eqref{eq:lin:LU}--\eqref{eq:lin:LS} of the linearized operators $\cL_U,\cL_A$, and $\cL_\Sigma$. Note that the operators
\begin{equation}
\cL \teq (\cL_U, \cL_{\S})
\label{eq:cL:def}
\end{equation} 
and $\cL_A$ are fully decoupled, in the sense that $A$ does not enter in the definition of $\cL$, and $(U,\Sigma)$ do not enter in the definition of $\cL_A$; see~\eqref{eq:lin}. With this notation, we have the following coercive estimates.

\begin{theorem}\label{thm:coer_est}
Denote $\cL = (\cL_U, \cL_{\S})$, with $(\cL_U,\cL_{\S})$ as defined in \eqref{eq:lin}. There exists $m_0 \geq 6$, $R_4, \bar C>0$ large enough and $\lam > 0$ small enough,\footnote{The parameters $m_0,R_4, \bar C$ and $\lambda$  depend only on the weight  $\vp_1$ from Lemma~\ref{lem:wg}, on $\gamma>1$, $r>0$, and on the profiles $(\bar \UU,\bar \Sigma)$.} such that the following statements hold true. For any $m \geq m_0$, there exists $\e_m = \e_m(m_0, R_4, \bar C, \lam)>0$ such that
\begin{subequations}
\begin{align}
 \la   \cL(\UU,  \S), (\UU, \S) \ra_{\cX^m} 
& \leq - \lam \| (\UU, \S)\|_{\cX^m}^2 
+ \bar C \int_{ |y |\leq R_4} (|\UU|^2 + |\S|^2 )  \vp_g d y , 
\label{eq:coer_est}
 \\ 
 \la  \cL_A (\vA), \vA  \ra_{\cX_A^m} 
& \leq - \lam \| \vA \|_{\cX_A^m}^2,   \label{eq:coer_estA}
\end{align}
\end{subequations}
for any $(\UU,\Sigma) \in D(\cL) = \{(\UU,\Sigma) \in \cX^m \colon \cL(\UU,\Sigma) \in \cX^m\} $ and $\vA \in D(\cL_A) = \{\vA \in \cX^m_A \colon \cL_A(\vA)\in \cX^m_A\}$. Here, the Hilbert spaces $\cX^m$ and $\cX_A^m$ are defined as the completion of the space of $C^\infty_c(\R^2)$ {\em radially-symmetric}\footnote{By radially symmetric functions we mean $f(y) = f (|y|)$ and by radially symmetric vectors we mean $\boldsymbol{f}(y) = f(|y|) \ee_{R}$ or $\boldsymbol{f}(y) = f(|y|) \ee_{\theta}$; recall the definition~\eqref{eq:ansatz}.} scalar/vector functions, with respect to the norm induced by the inner products 
\begin{subequations}
\label{norm:Xk}
\begin{align}
& \la f , g \ra_{\cX^m} := \e_m \int \D^m f \cdot \D^m g  \; \vp_{2m}^ 2 \vp_g + f \cdot g \; \vp_g  d y, \ m \geq 1, \quad  \la f , g \ra_{\cX^0} := \int f \cdot g \; \vp_g , \\
& \la f , g \ra_{\cX_A^m} := \e_m \int   \D^m f \cdot \D^m g  \; \vp_{ 2 m}^ 2 \vp_g + f \cdot g \; \vp_A  d y, \ m \geq 1, \quad  \la f , g \ra_{\cX_A^0} := \int f \cdot g \; \vp_A ,
\end{align}
\end{subequations}
where $f, g: \R^2 \to \R^\ell, \ell \in \{1,2\}$.\footnote{It is also convenient to denote $\cX^{\infty} = \cap_{m \geq 0} \cX^{m}$.} The inner products $\la\cdot,\cdot\ra_{\cX^m}$ and $\la\cdot,\cdot\ra_{\cX_A^m}$, and the associated norms, are defined in terms of the constants $\e_m$ (defined in the last paragraph of Section~\ref{sec:est_L}), the weight $\vp_{2m} = \vp_1^{2m}$ defined in~\eqref{eq:phi:m:def}, in terms of the weight
\begin{equation}
\vp_g = \vp_g(y)\teq \la y \ra^{-\kp_1- 2}, \quad \kp_1 =  \tfrac{1}{4},
\label{eq:vp:g:def}
\end{equation}
and in terms of the weight $\vp_A$ constructed in \eqref{eq:wg_A} below. At this stage we note that $\vp_A$ satisfies $\vp_A(y) \asymp |y|^{-\b_1} \la y \ra^{\b_1 -\kp_1- 2 }$ for some $\b_1 \in (3, 4)$, and thus $\vp_g \asymp \vp_A$ for $|y| \geq 1$, and $\vp_g \les \vp_A$ for all $|y|>0$.
\end{theorem}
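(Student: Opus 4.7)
The plan is to establish \eqref{eq:coer_est} and \eqref{eq:coer_estA} through weighted energy estimates at two regularity levels---an $L^2_{\vp_g}$ base estimate and a top-order $H^{2m}$ estimate weighted by $\vp_{2m}^2\vp_g$---which are then glued together via the tuning constant $\e_m$. For the $\cL$ estimate, the compactly supported remainder in \eqref{eq:coer_est} reflects the fact that the interior repulsive property~\eqref{eq:rep1} is only known to hold on $[0,\xi_1]$, so some loss near the profile singular directions is unavoidable; for the $\cL_A$ estimate, the singular weight $\vp_A \asymp |y|^{-\b_1}$ near the origin converts the observation~\eqref{eq:rep3} into pointwise damping, delivering full coercivity with no remainder term.

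To obtain the $L^2_{\vp_g}$ part of \eqref{eq:coer_est}, I would test the equations for $(\UU,\S)$ against $(\UU,\S)\vp_g$. The pressure coupling $\al\bar\S\na\S$ in $\cL_U$ is adjoint to $\al\bar\S\div\UU$ in $\cL_\S$ modulo correction terms involving $\na\bar\S$ and $\na\vp_g$, which produce the $\ell\al|\pa_\xi\bar\S|$ and $\ell\al\bar\S|\pa_\xi\vp_1/\vp_1|$ pieces in \eqref{eq:repul_wg}. Integration by parts on the transport term $(y+\bar\UU)\cdot\na$ yields the $(\xi+\bar U)\pa_\xi\vp/\vp$ factor, and the scaling $(r-1)$ combines with the profile-derivative contributions $\UU\cdot\na\bar\UU$ and $\al\S\div\bar\UU$ to form the ``$1+\pa_\xi\bar U$'' piece. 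Regrouping, the resulting pointwise coefficient is bounded above by the left-hand side of \eqref{eq:repul_wg} with $\ell=1$, up to terms supported in a bounded region where $\vp_g$, $\bar U$, $\bar\S$ and their derivatives are all $O(1)$; these localized terms become the remainder on $\{|y|\leq R_4\}$. Lemma~\ref{lem:wg} then delivers the damping $-\mu_1/\la\xi\ra$.

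At top order, applying $\D^m$ to the system and testing against $\D^m(\UU,\S)\vp_{2m}^2 \vp_g$ reproduces the same pointwise structure, up to an extra factor $\pa_\xi(\vp_{2m}^2)/\vp_{2m}^2 = 4m\pa_\xi\vp_1/\vp_1$ that enhances the transport damping by a multiplier proportional to $m$. The commutators $[\D^m,\cL]$ fall on the smooth profile $(\bar U,\bar\S)$; by the polynomial decay \eqref{eq:dec_U} each such derivative gains a factor $\la\xi\ra^{-1}$, so the commutator terms are controlled via interpolation between $\cX^0$ and $\cX^m$. Choosing $\e_m$ small then allows the top-order damping to dominate both the commutator losses and the $L^2$-level errors, producing \eqref{eq:coer_est}. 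The $\cL_A$ estimate follows the same scheme, but is simpler: there is no pressure coupling, and the weight $\vp_A$ produces, via $\pa_\xi\vp_A/\vp_A = -\b_1/\xi + O(1)$ near $\xi=0$ and integration by parts in the transport term, an extra pointwise damping $-\tfrac{\b_1}{2}(r + 2\xi^{-1}\bar U(\xi))$; by \eqref{eq:rep3} this is strictly positive for $\xi$ small, and since admissible smooth perturbations satisfy $\vA(0)=0$ the weighted integral $\int|\vA|^2\vp_A$ is finite. In the far field $\vp_A\asymp\vp_g$ and the analysis mirrors the $\cL$ case.

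The main obstacle is coordinating the hierarchy of parameters $m_0,R_4,\bar C,\lambda,\e_m$ to close without circularity. Following the philosophy of~\eqref{eq:wg_para}, I would fix them in the order $R_4 \rightsquigarrow m_0 \rightsquigarrow \lam \rightsquigarrow \bar C \rightsquigarrow \e_m$: $R_4$ is taken so that the Lemma~\ref{lem:wg} damping dominates all lower-order profile contributions outside the ball; $m_0 \geq 6$ is large enough that two-dimensional Sobolev embedding on $\{|y|\leq R_4\}$ controls the finitely many derivatives of $(\UU,\S)$ entering in the commutators; $\lam$ is chosen small relative to $\mu_1$ so it can absorb the various losses; $\bar C$ is taken large to absorb the compactly supported errors; and finally $\e_m$ is chosen small enough, depending on all the above, to glue the top-order and $L^2$ estimates consistently. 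A delicate additional requirement in the $\cL_A$ analysis is selecting $\b_1 \in (3,4)$ so that the interior damping near $\xi=0$ strictly dominates the losses from $\pa_\xi \vp_A$ falling on the profile-derivative terms, while still keeping $\vp_A$ integrable against $|\vA|^2$ for $\vA$ vanishing to first order at the origin.
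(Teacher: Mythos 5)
Your overall route is the paper's: weighted estimates at the $L^2_{\vp_g}$ level and at top order with weight $\vp_{2m}^2\vp_g$, glued by a small $\e_m$, with Lemma~\ref{lem:wg} supplying an $m$-proportional damping at top order and a singular weight $\vp_A\asymp \xi^{-\b_1}$, $\b_1\in(3,4)$, for $\cL_A$. However, the parameter logic as you wrote it would not close. First, the uniform term $-\lam\|(\UU,\S)\|_{\cX^0}^2$ cannot come from ``Lemma~\ref{lem:wg} delivers $-\mu_1/\la\xi\ra$'': that damping vanishes as $\xi\to\infty$, and \eqref{eq:repul_wg} concerns $\vp_1$, which does not even enter the $L^2$-level estimate. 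In the paper the uniform negativity at the $L^2$ level is exactly the scaling term $-(r-1)$, with the choice $\vp_g=\la y\ra^{-2-\kp_1}$ cancelling the $\operatorname{div} y=2$ contribution, and the $O(\la\xi\ra^{-\kp_3})$ profile errors absorbed for $|y|\geq R_4$ and dumped into the compact remainder (see \eqref{eq:lin_coerL2}); this is also where $\lam\sim r-1$ and $R_4$ come from. Second, the reason $m_0$ must be large is not Sobolev embedding (none is needed in this linear estimate), and it cannot be replaced by taking $\e_m$ small: the top-order coefficient has the form $-(r-1)+2m\,[\,\cdots\,]+a_1\la\xi\ra^{-\kp_3}$ with the bracket $\leq-\mu_1\la\xi\ra^{-1}$ by \eqref{eq:repul_wg}, and the $m$-independent error $a_1\la\xi\ra^{-\kp_3}$ multiplies the \emph{top-order} density $|\D^m(\UU,\S)|^2\vp_{2m}^2\vp_g$, which cannot be pushed into the remainder of \eqref{eq:coer_est} (that remainder is lower order only); multiplying the whole top-order inequality by $\e_m$ rescales both sides and fixes nothing. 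One needs $2m\mu_1\gtr a_1$, which works for all $\xi$ precisely because $\kp_3=r>1$, i.e.\ $m_0\sim a_1/\mu_1$; the only job of $\e_m$ is to make the $C_m\int(|\UU|^2+|\S|^2)\vp_g$ by-product of the interpolation step small relative to the $L^2$-level damping.

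For $\cL_A$, your near-origin damping ``$-\tfrac{\b_1}{2}(r+2\xi^{-1}\bar U)$, strictly positive by \eqref{eq:rep3}'' has both the wrong form and the wrong sign: \eqref{eq:rep3} gives $r+2\bar U/\xi\to r+2\pa_\xi\bar U(0)>0$, so that expression is \emph{negative} near $\xi=0$ (you appear to have inherited the sign slip in the heuristic discussion of Section~\ref{sec:two:observations}). What the singular weight actually contributes through the transport term is $-\tfrac12(\xi+\bar U)\pa_\xi\vp_A/\vp_A\approx\tfrac{\b_1}{2}(1+\bar U/\xi)$, and together with the zeroth-order terms the total damping at the origin is $\tfrac{\b_1}{2}(1+\pa_\xi\bar U(0))+r-2=(2\pa_\xi\bar U(0)+r)-(2-\tfrac{\b_1}{2})(1+\pa_\xi\bar U(0))$, which is positive for $\b_1$ close to $4$ thanks to \eqref{eq:rep3} together with \eqref{eq:rep1}. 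Note also that positivity of the damping $D_A$ is needed for \emph{all} $\xi$, not only near $0$ and in the far field; the paper secures the intermediate region through the extra decreasing factor $g^{\b_2}$ with $\b_2$ large, exploiting the strict outgoing property \eqref{eq:rep22}, plus the small-$\b_3$ factor in \eqref{eq:wg_A} — an ingredient missing from your sketch. With these corrections your argument coincides with the paper's proof.
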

We emphasize that the constants $\lam, \bar C $ appearing in \eqref{eq:coer_est} are independent of $m$. 
Note also that the norms $\cX^m$ and $\cX_A^m$ only differ by the weighted $L^2$ parts, i.e.~$\|f\|_{\cX^m}^2 -\|f\|_{\cX_A^m}^2 = \|f\|_{\cX^0}^2 - \|f\|_{\cX^0_A}^2$, see~\eqref{norm:Xk}.

Before proving Theorem~\ref{thm:coer_est}, we note the following simple {\em nestedness property} of the spaces $\cX^m$, which follows from Lemmas \ref{lem:interp_wg}, \ref{lem:norm_equiv} with $\d_1 = 1, \d_2 = -2 - \kp_1$. 
\begin{lemma}\label{lem:Xm_chain}
For $n > m$, we have $\|f \|_{\cX^m} \les_n \|f \|_{\cX^n}$ and $\cX^n \subset \cX^m$.
\end{lemma}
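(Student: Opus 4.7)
The plan is to reduce the claim to a weighted Sobolev interpolation estimate. Since the $L^2$ pieces $\int |f|^2 \vp_g \, dy$ appearing in both $\cX^m$ and $\cX^n$ share the identical weight $\vp_g$, they are controlled trivially. The nontrivial task is therefore to bound the top-order contribution $\e_m \int |\D^m f|^2 \vp_{2m}^2 \vp_g \, dy$ by a constant (depending on $n$) times $\|f\|_{\cX^n}^2$.

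First, I would use the asymptotic equivalence $\vp_1(y) \asymp \la y \ra$ from \eqref{eq:wg_asym} to replace the weights $\vp_{2k}^2 \vp_g$ by their pure power-of-$\la y \ra$ counterparts. Raising the equivalence $\vp_1 \asymp \la y \ra$ to the finite power $4k$ yields
\begin{equation*}
\vp_{2k}^2(y)\,\vp_g(y) \;\asymp\; \la y \ra^{4k-2-\kp_1},
\end{equation*}
with implicit constants depending only on $k$. This norm equivalence is exactly the content of Lemma \ref{lem:norm_equiv} at the parameter values $\d_1 = 1, \d_2 = -2-\kp_1$, and reduces the claim to a pure power-weighted statement.

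Second, I would invoke the weighted interpolation estimate of Lemma \ref{lem:interp_wg} at the same parameter values: for $0 \le m < n$ and any $C^\infty_c$ radial function $f$,
\begin{equation*}
\int_{\R^2} |\D^m f|^2 \la y \ra^{4m-2-\kp_1}\,dy \;\les_n\; \int_{\R^2} |\D^n f|^2 \la y \ra^{4n-2-\kp_1}\,dy \,+\, \int_{\R^2} |f|^2 \la y \ra^{-2-\kp_1}\,dy .
\end{equation*}
Combining with the norm equivalence and absorbing the fixed constants $\e_m, \e_n$ gives $\|f\|_{\cX^m} \les_n \|f\|_{\cX^n}$ for radial $C^\infty_c$ inputs, and the containment $\cX^n \subset \cX^m$ then extends by density from the definition of these spaces as completions of the radial $C^\infty_c$ class.

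The main technical content lies in the interpolation inequality itself, which is where Lemma \ref{lem:interp_wg} does the real work. Such an estimate is natural because the exponent $4k-2-\kp_1$ on $\la y\ra$ depends \emph{linearly} on the derivative order $k$: each integration by parts that shifts a derivative from $\D^n f$ onto either the weight or a lower-order copy of $f$ costs exactly one power of $\la y \ra$, so the power counting balances across all intermediate levels. The proof would proceed by $n-m$ iterations of integration by parts; the boundary terms vanish on $C^\infty_c$, the intermediate-order cross terms are absorbed into the top-order and $L^2$ pieces via Young's inequality, and the $n$-dependent constant arises from the number of iterations. Once this power-weighted interpolation is in hand, the nestedness claim becomes a routine consequence.
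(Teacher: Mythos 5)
Your overall route is the same one the paper takes: reduce the lemma to Lemma~\ref{lem:interp_wg} and Lemma~\ref{lem:norm_equiv} with $\d_1=1$, $\d_2=-2-\kp_1$, and your conclusion is correct; but you have the roles of the two auxiliary lemmas switched. The replacement $\vp_{2k}^2\vp_g \asymp_k \la y\ra^{4k-2-\kp_1}$ is not the content of Lemma~\ref{lem:norm_equiv} — it follows directly from \eqref{eq:wg_asym} and \eqref{eq:vp:g:def} — and Lemma~\ref{lem:interp_wg} is stated for full gradients $\na^k f$, not iterated Laplacians, so your displayed $\D^m$-to-$\D^n$ interpolation is not literally that lemma. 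The correct chaining is: bound $|\D^m f|\les |\na^{2m} f|$ pointwise, apply Lemma~\ref{lem:interp_wg} (with the stated parameters, at orders $2m<2n$) to control $\int|\na^{2m}f|^2\la y\ra^{4m-2-\kp_1}$ by $\e\int|\na^{2n}f|^2\la y\ra^{4n-2-\kp_1}+C_{\e,n}\int|f|^2\vp_g$, and only then invoke Lemma~\ref{lem:norm_equiv}, whose actual purpose is to dominate the full $2n$-th gradient by $\int|\D^n f|^2\vp_{2n}^2\vp_g$ plus the weighted $L^2$ term, thereby recovering the top-order piece of $\|f\|_{\cX^n}^2$. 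Your direct integration-by-parts sketch at the Laplacian level glosses over exactly this point (intermediate mixed derivatives are not pointwise controlled by powers of $\D$), which is why the argument routes through the full gradient. With that correction, the rest — the identical $L^2$ pieces, absorption of the constants $\e_m,\e_n$, and the density argument giving $\cX^n\subset\cX^m$ — is fine.
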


Next, we turn to the proof of Theorem~\ref{thm:coer_est}. Since $\cL$ and $\cL_A$ are decoupled (cf.~\eqref{eq:lin} and~\eqref{eq:cL:def}),  we prove \eqref{eq:coer_est} 
and \eqref{eq:coer_estA} separately, in Sections~\ref{sec:est_L} and~\ref{sec:est_LA}, respectively.

\subsubsection{Estimates for $\cL$}\label{sec:est_L}

Applying the operator $\D^m$ to the linearized operators $\cL_U = \cL_U(\UU,\Sigma)$ and $\cL_{\S}= \cL_\S(\UU,\Sigma)$ defined in \eqref{eq:lin:LU}, \eqref{eq:lin:LS}, and using Lemma~\ref{lem:leib} to extract the leading order parts from the terms containing $\na \UU, \na \S$,  
we get 
\begin{subequations}
\label{eq:lin_Hk}
\begin{align}
\D^m \cL_U & = 
\underbrace{- (y + \bar \UU) \cdot \na \D^m \UU - \al \bar \S \na \D^m \S }_{\cT_{U}}
\underbrace{ - (r -1) \D^m \UU- 2m \pa_{\xi}( \xi + \bar U) \D^m \UU- 2m \al \na \bar \S \D^m \S  }_{ \cD_{U} } \notag \\
& \quad \underbrace{- \D^m  \UU \cdot \na \bar \UU 
- \al \D^m \S  \na \bar \S}_{ \cS_U}
+ \cR_{U, m}, \\
\D^m \cL_{\S} & = \underbrace{  - (y + \bar \UU) \cdot \na \D^m \S
- \al \div(\D^m \UU) \bar \S }_{\cT_{\S}}
\underbrace{- (r-1) \D^m \S 
- 2m \pa_{\xi}( \xi + \bar U) \D^m \S - 2 m \al \na \bar \S \cdot \D^m \UU   }_{ \cD_{\S}} \notag \\
& \quad \underbrace{ - \D^m \UU \cdot \na \bar \S 
- \al \div(\bar \UU) \D^m \S }_{\cS_{\S}} + \cR_{\S, m}.
\end{align}
\end{subequations}
In~\eqref{eq:lin_Hk} we have denoted by $\cR_{U, m}$ and $\cR_{\S, m}$ {\em remainder} terms which are of lower order (in terms of highest derivative count on an individual term); moreover, we have used the notation $\cT, \cD,\cS$ to single out {\em transport}, {\em dissipative}, and {\em stretching} terms.

Using Lemma \ref{lem:leib} and the decay estimates \eqref{eq:dec_U}, we obtain that the remainder terms are bounded as  
\begin{subequations}
\label{eq:lin_lower}
\begin{align}
|\cR_{U, m}| 
&\les_m \sum_{ 1 \leq i \leq 2 m-1}  | \na^{ 2 m + 1- i}  \bar \UU | \; | \na^i \UU |  
+ |\na^{2m + 1 - i} \bar \S| \; |\na^i \S| 
\notag\\
&\les_m \sum_{1 \leq i \leq 2m - 1} \la y \ra^{- r - 2m + i} 
( | \na^i \UU |   +  |\na^i \S|  ) , \\
|\cR_{\S, m}| 
&\les_m \sum_{ 1 \leq i \leq 2 m-1}  | \na^{ 2 m + 1- i}  \bar \UU | \; | \na^i \S |  
+ |\na^{2m + 1 - i} \bar \S| \; |\na^i \UU|
\notag\\
&\les_m \sum_{1 \leq i \leq 2m - 1} \la y \ra^{- r - 2m + i} 
( | \na^i \UU |   +  |\na^i \S|  ) .
\end{align}
\end{subequations}

Next, in order to bound the left side of~\eqref{eq:coer_est}, we perform weighted $H^{2m}$ estimates with weight given by $\vp_{2m}^2 \vp_g$, as dictated by the definitions in~\eqref{eq:coer_est}. To this end, we estimate the term
\begin{equation}
 \int ( \D^m \cL_U \cdot \D^m \UU 
 + \D^m \cL_{\S}  \, \D^m \S   ) 
 \vp_{2m}^2 \vp_g,
 \label{eq:main:term:0}
\end{equation}
by appealing to the decomposition in~\eqref{eq:lin_Hk}.

\paragraph{\bf{Estimate for $\cT_U, \cT_{\S}$}}
Using the identity
\beq\label{eq:lin_cross1}
 \na \D^m \S \cdot \D^m \UU + \div(\D^m \UU) \D^m \S = \na \cdot ( \D^m \UU \cdot \D^m \S)
\eeq
and integration by parts, we obtain that the contribution of the transport terms in \eqref{eq:lin_Hk} to the expression~\eqref{eq:main:term:0} is given by 
\[
\bal
I_{\cT} & = -\int \B( (y + \bar \UU) \cdot \na \D^m \UU \cdot \D^m \UU  
+ (y + \bar \UU) \cdot \na \D^m \S \cdot \D^m \S  \\
& \qquad \qquad \qquad \qquad + \al  \bar \S \cdot \na \D^m \S \cdot \D^m \UU
+ \al \bar \S \div( \D^m \UU) \D^m \S 
\B) \vp_{2m}^2 \vp_g \\ 
& = \int \f{1}{2}  \f{ \na \cdot ( (y + \bar \UU) \vp_{2m}^2 \vp_g) }{ \vp_{2m}^2 \vp_g }
( |\D^m \UU|^2 + |\D^m \S|^2  ) \vp_{2m}^2 \vp_g 
+ \f{ \na (\al \bar \S \vp_{2m}^2 \vp_g)}{ \vp_{2m}^2 \vp_g}\cdot \D^m \UU  \D^m \S \vp_{2m}^2 \vp_g  .
\eal
\]

Recall from~\eqref{eq:phi:m:def} and \eqref{eq:vp:g:def} that $\vp_{2m} = \vp_1^{2m}$, and $\vp_g = \la y \ra^{-2-\kp_1}$. Using the decay estimates in \eqref{eq:dec_U}, the outgoing property $\xi + \bar U>0$ \eqref{eq:rep22}, 
and denoting\footnote{The fact that $r<2$ follows since for $d=2$ (the case of this paper) the inequality $r<r_{\sf eye}(\alpha)$ implies $r<2$, see~\eqref{r-range}.}
\begin{equation}
\kp_3 = \min(2, r) = r > 1,
\label{eq:kappa:3:def}
\end{equation}
we obtain
\[
 \f{\pa_{\xi} \vp_g}{\vp_g} = (-2 - \kp_1) \f{\xi}{1 + \xi^2} , \quad 
 (\xi + \bar U)  \f{\pa_{\xi} \vp_g}{\vp_g}
 \leq - 2 - \kappa_1 + C \la \xi \ra^{-2} + C \la \xi \ra^{-r}
 \leq  - 2 + C \la \xi \ra^{-\kappa_3}.
\]
Using the above inequality and~\eqref{eq:dec_U} (with $k=1$),  we estimate
\[
\bal
\f{1}{2}  \f{ \na \cdot ( (y + \bar \UU) \vp_{2m}^2 \vp_g) }{ \vp_{2m}^2 \vp_g }
& = \f{1}{2} \B( 2 + \div(\bar \UU)
+ 4m (\xi + \bar U) \f{\pa_{\xi} \vp_1}{\vp_1}
 + (\xi + \bar U) \f{\pa_{\xi} \vp_g}{\vp_g} \B) \\
 & \leq 
 C \la \xi \ra^{-\kappa_3}+ 2m (\xi + \bar U) \f{ \pa_{\xi} \vp_1}{\vp_1} , \\
\B| \f{ \na( \al \bar \S \vp_{2m}^2 \vp_g)}{ \vp_{2m}^2 \vp_g} \B|
 &  \leq \al |\na \bar \S| + \al \bar \S \Bigl( 4m \f{ |\na \vp_1|}{\vp_1}
 + \f{ |\na \vp_g| }{ \vp_g }\Bigr) 
 \leq 4m \al \bar \S  \f{ |\pa_{\xi}\vp_1|}{\vp_1}
 + C \la \xi \ra^{-\kappa_3} ,
  \eal
\]
where we have used $|\na f| = |\pa_{\xi} f| $ for any radially symmetric function $f$. Combining the above estimates and using $|a b| \leq \f{1}{2}(a^2 + b^2) $
on $\D^m \UU \D^m \S$, we get
\begin{align}
I_{\cT} 
& \leq \int \f{1}{2} \B(  \f{ \na \cdot ( (y + \bar \UU) \vp_{2m}^2 \vp_g) }{ \vp_{2m}^2 \vp_g }
 +  \f{ | \na (\al \bar \S \vp_{2m}^2 \vp_g) | }{ \vp_{2m}^2 \vp_g}
\B) ( |\D^m \UU|^2 + |\D^m \S|^2  ) \vp_{2m}^2 \vp_g  \notag \\
& \leq \int \B(2m \B( (\xi + \bar U) \f{\pa_{\xi} \vp_1}{\vp_1} 
+ \al \bar \S \Bigl| \f{\pa_{\xi} \vp_1}{\vp_1} \Bigr| \B) + C \la \xi \ra^{-\kappa_3} \B) 
( |\D^m \UU|^2 + |\D^m \S|^2  ) \vp_{2m}^2 \vp_g,
\label{eq:lin_est1}
\end{align}
with $C>0$ independent of $m$.

\paragraph{\bf{Estimate of $\cD_U, \cD_{\S}, \cS_U, \cS_{\S}$}}
Recall  the definitions of the terms $\cD_U, \cD_{\S}, \cS_U, \cS_{\S}$ from \eqref{eq:lin_Hk}. Using the decay estimates \eqref{eq:dec_U}, we get 
\[
|\cS_U|, |\cS_{\S}| \les \la \xi \ra^{-r} (|\D^m \UU| + |\D^m \S|) .
\]
For $\cD_U, \cD_{\S}$, using Cauchy-Schwarz inequality for the cross term
\beq\label{eq:lin_cross2}
|\na \bar \S \D^m \S \cdot \D^m \UU | 
+ |\na \bar \S \cdot   \D^m \UU \cdot  \D^m \S | 
\leq |\na \bar \S| ( |\D^m \S|^2 + |\D^m \UU |^2  ),  \quad |\na \bar \S| = |\pa_{\xi} \bar \S|,
\eeq
we obtain 
\begin{align}
I_{\cD+\cS} &= \int \B(   ( \cD_U + \cS_U)    \D^m \UU +  
 ( \cD_{\S} + \cS_{\S})   \D^m \S \B) \vp_{2m}^2 \vp_g \notag \\
& \leq  \int \B( - (r-1)- 2m (1 + \pa_{\xi} \bar U)  
+ 2 m \al |\pa_{\xi} \bar \S| + C \la \xi \ra^{-r} \B) ( |\D^m \UU|^2 + |\D^m \S|^2) \vp_{2m}^2 \vp_g,
\label{eq:lin_est2}
\end{align}
with $C>0$ independent of $m$.

\paragraph{\bf{Estimates of $\cR_U, \cR_{\S}$}}
Recall that the remainder terms $ \cR_U, \cR_{\S}$ from \eqref{eq:lin_Hk} satisfy~\eqref{eq:lin_lower}. Moreover, using that $\vp_{2m} \asymp_m \la y \ra^{2m}$ from~\eqref{eq:wg_asym}, and recalling the definition of $\vp_g$ in~\eqref{eq:vp:g:def}, we obtain 
\[
\vp_{2m}^2 \vp_g \la y \ra^{-r-2m + i} \asymp_m \la \xi \ra^{2m + i - r - \kp_1-2} 
= \la \xi \ra^{2m + i + 2 \d_2},  \quad \d_2 =\tfrac{ -r - \kp_1 - 2}{2}.
\]
At this stage we apply  Lemma~\ref{lem:interp_wg} and Lemma~\ref{lem:norm_equiv}, with $\d_1 = 1$ and $\d_2$ as given in the line above, for $1 \leq i \leq 2m-1$, and an arbitrary $\e > 0$, to obtain 
\begin{align*}
 & \int \la y \ra^{-r - 2m + i} |\na^i F| |\D^m G | \vp_{2m}^2 \vp_g  \\
 &\qquad \leq   \e  \| \la y \ra^{2m + \d_2} \D^m G \|^2_{L^2} 
 + C_{m ,\e } \|\la y \ra^{i + \d_2} \na^i F  \|^2_{L^2} \\
 &\qquad\leq  
 \e \| \la y \ra^{2m + \d_2} \D^m G  \|_{L^2}^2
 + \e \| \la  y\ra^{2m + \d_2} \na^{2 m} F   \|_{L^2}^2
  + C_{m, \e} \| \la y \ra^{\d_2} F \|_{L^2}^2 \\
   &\qquad\leq   2 \e \| \la y \ra^{2m + \d_2} \D^m G  \|^2_{L^2} 
   + 2\e \| \la y \ra^{2m + \d_2} \D^m F    \|^2_{L^2} 
  + C_{m, \e} \| \la y \ra^{ \d_2}  F \|_{L^2}^2 .
\end{align*}
We may apply the above estimates to each term in \eqref{eq:lin_lower}. Using the bound $ \la y \ra^{2( 2m + \d_2)} \les_m \la y \ra^{-r} \vp_{2m}^2 \vp_g $, and using the fact that  $\e > 0$ is arbitrary, we get 
\begin{align}
I_{\cR} &= \int \bigl| \cR_{U, m}  \cdot \D^m \UU + \cR_{\S, m} \cdot \D^m \S\bigr| \vp_{2m}^2 \vp_g \notag \\
& \leq \int \la \xi \ra^{-r} \B(  ( |\D^m \UU|^2 + |\D^m \S|^2 ) \vp_{2m}^2 \vp_g
+ C_{m}  (| \UU |^2 + \S^2)  \vp_g \B).
\label{eq:lin_est3}
\end{align}

Combining the bounds \eqref{eq:lin_est1}, \eqref{eq:lin_est2}, \eqref{eq:lin_est3},  using the estimates in Lemma~\ref{lem:wg}, and recalling~\eqref{eq:kappa:3:def}, 
we arrive at
\begin{align*}
&  \int (\D^m \cL_U \cdot \D^m \UU + \D^m \cL_{\S} \, \D^m \S )   \vp_{2m}^2 \vp_g  
= I_{\cT} + I_{\cD+\cS}  + I_{\cR} \notag  \\
 & \leq \int \left(  - (r-1) 
 + 2m \B( (\xi + \bar U) \f{\pa_{\xi} \vp_1}{\vp_1} 
+ \al \bar \S | \f{\pa_{\xi} \vp_1}{\vp_1} | 
- (1 + \pa_{\xi} \bar U - \al | \pa_{\xi} \bar \S|)\B) + C \la \xi \ra^{-\kappa_3} \right) \notag \\
& \qquad \qquad \times  ( |\D^m \UU|^2 + |\D^m \S|^2) \vp_{2m}^2 \vp_g 
+ C_m \int (|\UU|^2 + |\S|^2 ) \vp_g \notag \\
&  \leq \int \left(- (r-1) - 2m \mu_1 \la \xi \ra^{-1} + a_1 \la \xi \ra^{- \kp_3} \right)  ( |\D^m \UU|^2 + |\D^m \S|^2) \vp_{2m}^2 \vp_g
+ C_m (|\UU|^2 + |\S|^2 ) \vp_g ,
\end{align*}
for some constant $a_1 > 0$. 
Since $\kp_3  = r > 1$ (see~\eqref{eq:kappa:3:def}), there exists $m_0$ sufficiently large, e.g.~$m_0 = \lceil \f{a_1}{2\mu_1} \rceil + 1$, such that for any $ m \geq m_0$, we get
\[
- (r-1) - 2m \mu_1 \la \xi \ra^{-1} + a_1 \la \xi \ra^{- \kp_3} 
\leq - (r-1) - 2m_0 \mu_1 \la \xi \ra^{-1} + a_1 \la \xi \ra^{- \kp_3} 
\leq - (r-1),
\]
resulting in 
\begin{align}
&\int (\D^m \cL_U \cdot \D^m \UU + \D^m \cL_{\S} \, \D^m \S )   \vp_{2m}^2 \vp_g  
\notag\\
&\quad 
\leq - (r-1) \int ( |\D^m \UU|^2 + |\D^m \S|^2) \vp_{2m}^2 \vp_g
+ C_m (|\UU|^2 + |\S|^2 ) \vp_g.
\label{eq:lin_coerHk}
\end{align}

\paragraph{\bf{Weighted $L^2$ estimates} }
For $m = 0$, we do not have the lower order terms $\cR_{U, 0}, \cR_{\S, 0}$ in \eqref{eq:lin_lower} and we do not need to estimate $I_{\cR}$ as in \eqref{eq:lin_est3}. Combining \eqref{eq:lin_est1} and \eqref{eq:lin_est2} (with $m=0$), we obtain 
\begin{equation*}
 \int ( \cL_U \cdot \UU + \cL_{\S} \cdot \S )    \vp_g 
\leq \int (- (r-1) + \bar C \la \xi \ra^{-\kp_3}) ( | \UU|^2 + |\S|^2 ) \vp_g,
\end{equation*}
for some constant $\bar C>0$, independent of $m$.
Since $r>1$ and $\kappa_3>0$, there exists a sufficiently large $R_4$ such that for all $\xi= |y| \geq R_4$ we have
\[
- (r-1) + \bar C \la \xi \ra^{-\kp_3} \leq - (r-1) + \bar C \la R_4 \ra^{-\kp_3} \leq - \tfrac 12 (r-1).
\]
Therefore, combining the two estimates above, we arrive at 
\begin{equation}
\label{eq:lin_coerL2}
 \int ( \cL_U \cdot \UU + \cL_{\S} \cdot \S )    \vp_g 
 \leq  \int \bigl(\bar C\one_{|y| \leq R_4}  - \tfrac 12 (r-1) \bigr)  ( | \UU|^2 + |\S|^2 ) \vp_g .
\end{equation}

\paragraph{\bf{Choosing the $\e_m$} } In order to conclude the proof of~\eqref{eq:coer_est},  we combine \eqref{eq:lin_coerHk} and \eqref{eq:lin_coerL2}. Choosing $\e_m$ sufficiently small, e.g.~$\e_m = \f{r-1}{4 C_m}$, where $C_m$ is as in \eqref{eq:lin_coerHk}, 
multiplying \eqref{eq:lin_coerHk} with $\e_m$ and then adding to~\eqref{eq:lin_coerL2}, we deduce \eqref{eq:coer_est} for $\lambda>0$ sufficiently small which is independent of $m$, e.g.~$\lambda = \frac{r-1}{4}$.

\subsubsection{Estimates for $\cL_{A}$} \label{sec:est_LA}
For $\cL_{A}$, the key step is to establish  the weighted $L^2$ estimate, i.e.~ the bound~\eqref{eq:coer_estA} for $m = 0$.
For a radially symmetric weight $\vp_A$ which is to be chosen, 
using that $\vA \cdot \na \bar \UU =  \vA \f{\bar U}{\xi}$ (see~\eqref{eq:vec_iden}) and integrating by parts, we obtain 
\begin{equation*}
  \int \cL_A(\vA) \cdot \vA \vp_A d y 
  =  - \int \B( \f{1}{2} (y + \bar \UU) \cdot \na |\vA|^2 + \Bigl( r-1 + \f{\bar U}{\xi} \Bigl) |\vA|^2 \B) 
 \vp_A  =  - \int D_A(\vp_A) |\vA|^2 \vp_A,
 \end{equation*}
where we have defined the damping term
\begin{equation*}
   D_A(\vp_A)  \teq
 -  \f{ \na \cdot ( ( y + \bar \UU ) \vp_A ) }{ 2 \vp_A }
 + \f{\bar U}{\xi} + (r- 1)   .
\end{equation*}
 
\paragraph{\bf{Choice of weights}}

Our goal is to construct $\vp_A$ such that the above defined damping term satisfies
\begin{equation}
D_A(\vp_A)(\xi) \geq \td \lam > 0, \qquad \forall \xi>0,
\label{eq:DA:lower:bound}
\end{equation}
for some $\td \lam >0$. We will achieve this by designing $\vp_A$ such that 
\begin{subequations}
\label{eq:wg_A}
\begin{align}
 & \vp_A = \xi^{ -\b_1 } g(\xi)^{\b_2} (1 + \b_3 \la \xi \ra)^{\b_1 - 2 - \kp_1}  , \  \b_1 \in (3, 4),  \b_2\gg 1, 0<\b_3\ll 1, \label{eq:wg_A:a}  \\
 & \vp_A \asymp \xi^{-\b_1} \la \xi \ra^{\b_1 -2-\kp_1} ,  \quad |\na \vp_A| \les \xi^{-1} \vp_A, \label{eq:wg_A:b}
\end{align}
\end{subequations}
for a radial weight $g \asymp 1$ (to be chosen), where $\kp_1$ is chosen in \eqref{eq:vp:g:def}. Since $\vA(0) = 0$, for any sufficiently smooth $\vA$, we have that $|\vA^2| \vp_A$ is locally integrable near $y=0 \in \R^2$ when $\b_1 
< 4$.  We use the singular weight $\xi^{-\b_1}$ to extract the damping effect of the transport operator $( y + \bar \UU) \cdot \na $; a direct calculation yields 
\begin{equation*}
D_A(\vp_A) 
=
- \f{1}{2} \f{ ( \xi + \bar U) \pa_{\xi} \vp_A}{ \vp_A }
- \f{1}{2} (2 + \div(\bar \UU)) +  \f{\bar U}{\xi}
+ (r-1) .
\end{equation*}
Recalling that $\na \cdot \bar \UU = \pa_{\xi} \bar U + \f{\bar U}{\xi}$, and using the formula for $\vp_A$ in \eqref{eq:wg_A}, we further obtain
\[
\bal
D_A(\vp_A)& = \f{\b_1}{2} \f{\xi + \bar U}{\xi}
- \f{\b_2}{2} \f{(\xi + \bar U) \pa_{\xi} g}{g} 
- \f{ \b_1 - 2 - \kp_1}{2} \f{(\xi + \bar U) \b_3 \xi}{ (1 + \b_3 \la \xi \ra) \la \xi \ra}
+ \Bigl( r - 2- \f{1}{2} \pa_{\xi} \bar U + \f{1}{2} \f{\bar U}{\xi} \Bigr)   \\
& \teq I_1(\xi) + I_2(\xi) + I_3(\xi) + I_4(\xi).
\eal
\]
We will choose $4 -\b_1>0,\b_3>0$ small, $\b_2 > 0$ large, and $\pa_{\xi} g \leq 0$ with $g\asymp 1$. Using \eqref{eq:dec_U} and \eqref{eq:rep3},  
\[
\bal
&\lim_{\xi \to 0^+} \bigl( I_1 + I_4\bigr)(\xi) 
= \tfrac{\b_1}{2} (1 + \pa_{\xi} \bar U(0)) + (r-2)
= (2 \pa_{\xi} \bar U(0) + r) - (2 - \tfrac{\b_1}{2}) (1 + \pa_{\xi} \bar U(0) ), \\
& \lim_{\xi \to \infty} \bigl( I_1 + I_4\bigr)(\xi)  = \tfrac{\b_1}{2} + (r- 2) . 
\eal
\]
Using that $r > 1$, $2 \pa_{\xi} \bar U(0) + r > 0$ (see~\eqref{eq:rep3}), and $1 + \pa_{\xi} \bar U(0)>0$ (see~\eqref{eq:rep1}),  we can choose $\b_1 \in (3, 4)$ close to $4$ such that $(2 \pa_{\xi} \bar U(0) + r) - \frac 12 (4 -  \b_1) (1 + \pa_{\xi} \bar U(0) ) \geq 2 c_1$ for some $0 < c_1 \leq \frac 14 $ (which only depends on $\bar U$ and $r$). Therefore, by also appealing to $\pa_{\xi} g \leq 0$, 
$\xi + \bar U >0$ (see~\eqref{eq:rep22}), and to the continuity of $I_i(\xi)$ in $\xi$, there exist $ 0 < p < q$ such that
\begin{equation*}
I_1(\xi) + I_4(\xi) + I_2(\xi) \geq I_1(\xi) + I_4(\xi) \geq c_1 > 0, \quad \xi \in [0, p] \cup 
[q, \infty).
\end{equation*}
Next, we choose a smooth and radially symmetric function $g$ such that 
\[
g(\xi) = 1, \xi \leq \tfrac{p}{2}, \quad  g(\xi)  = \tfrac{1}{2}, \xi \geq 2 q, \quad  \pa_{\xi} g \leq 0, \xi >0 , \quad  
\pa_{\xi} g \leq - c_2,  \quad  \xi \in [p, q].
\]
for some $c_2 > 0$ which only depends on $p$ and $q$. Since \eqref{eq:rep22} implies that $\xi + \bar U > c_3 \one_{[p, q]}(\xi)$  for   $c_3 = p \kappa>0$, choosing $\b_2$ large enough we may ensure that 
\[
I_1(\xi) + I_2(\xi) + I_4(\xi) \geq  \tfrac{\b_2}{2} c_2 c_3 - \max_{\xi \in[p, q]}( |I_1(\xi)| + |I_4(\xi)| )  > c_1 > 0,
\quad \xi \in [p, q].
\] 
Thus, we have shown that by choosing the function $g$ and the parameters $\beta_1$ and $\beta_2$, we may ensure that $I_1(\xi) + I_2(\xi) + I_4(\xi) > c_1>0$ for all $\xi>0$, for some $c_1>0$. 

Finally, we handle $I_3$. Note that the remaining free parameter in the definition \eqref{eq:wg_A} of $\vp_A$ is $\beta_3$, which only enters the definition of $I_3$. Since $I_2 \geq 0$ and $\b_1 - 2 - \kp_1 > 0$, using the asymptotics \eqref{eq:dec_U}, the bound $ \f{\b_3 \xi}{(1 + \b_3 \la \xi \ra) \la \xi \ra} \leq \frac{1}{\xi}$, for $\xi \gg 1$ we have 
\[
D_A \geq I_1 + I_3 + I_4 
\geq \Bigl( \f{\b_1}{2} - \f{\b_1 -\kp_1 - 2}{2} \Bigr) \f{\xi + \bar U}{\xi} 
 + (r- 2) - C \la \xi \ra^{-r}
 \geq  \f{\kp_1+2}{2}  + (r- 2) - C \la \xi \ra^{-r}.
\]
By also appealing to the previously established lower bound  for $I_1 + I_2 + I_4$, for $\xi  \les 1$ we have
\[
D_A \geq  I_1 + I_2 + I_4 - C \b_3 \xi \geq c_1 - C \b_3 \xi .
\]
Since $\kp_1> 0$ and $r-1 > 0$, the first of the two lower bounds for $D_A$ above 
imply that there exists $q_2>0$ sufficiently large such that $D_A \geq \frac{\kappa_1}{4} = \frac{1}{16}$ for all $\xi \geq q_2$. On the other hand, taking $\b_3>0$ small enough to ensure that $\frac{c_1}{2} \geq C \b_3 q_2$, from the second of the two lower bounds for $D_A$ above, we obtain $D_A \geq \frac{c_1}{2}$ for all $\xi \in (0,q_2)$. Together, these bounds establish \eqref{eq:DA:lower:bound}, for $\td \lam = \min\{\frac{1}{16},\frac{c_1}{2}\}>0$.

We also note that the properties of $\vp_A$ stated in \eqref{eq:wg_A:b} follow from the definitions of $g, \vp_A$.  

Having established~\eqref{eq:DA:lower:bound}, we obtain the desired weighted $L^2$ stability estimate
\begin{equation}
\label{eq:coer_L2}
\int \cL_A (\vA) \cdot \vA \vp_A \leq - \td \lam \int |\vA|^2 \vp_A.
\end{equation}

\paragraph{\bf{Weighted $H^{2m}$ estimates}}
The weighted $H^{2m}$ estimates with weights $\vp_{2m}^2 \vp_g$ are similar to those for $\cL$ in Section \ref{sec:est_L}. 
In analogy to the derivation in \eqref{eq:lin_Hk}, we may apply $\Delta^m$ to $\cL_A$ (as defined in~\eqref{eq:lin:LA}) and then decompose the resulting expression into a transport, damping, stretching, and remainder term; to avoid redundancy we do not spell out these details. Analogously to the bounds \eqref{eq:lin_est1}, 
\eqref{eq:lin_est2}, \eqref{eq:lin_est3}, and recalling~\eqref{eq:kappa:3:def}, we obtain 
\begin{equation*}
\int \D^m \cL_A (\vA) \cdot \D^m \vA \vp_{2m}^2 \vp_g d y
  \leq \int D_{A, 2m} | \D^m \vA|^2 \vp_{2m}^2 \vp_g
+ C_m \int |\vA|^2 \vp_g ,
\end{equation*}
where
\begin{equation*}
D_{A, 2m}  =   2m  (\xi + \bar U) \f{\pa_{\xi} \vp_1}{\vp_1} 
 + C \la \xi \ra^{-\kappa_3} 
 - (r-1) -  2m (1 + \pa_{\xi} \bar U)  
.
\end{equation*}

Compared to \eqref{eq:lin_est1}, \eqref{eq:lin_est2}, in the above estimates, we do not need to estimate cross terms \eqref{eq:lin_cross1}, \eqref{eq:lin_cross2}, which contribute to the bounds $\al \bar \S | \f{\pa_{\xi} \vp_1}{\vp_1} |$ in \eqref{eq:lin_est1} and $2 m \al |\pa_{\xi} \bar \S|$ in \eqref{eq:lin_est2}. 
Using Lemma~\ref{lem:wg} with $\ell = 0$, and choosing $\td m_0$ large enough, for $m \geq \td m_0$, we obtain 
\[ 
 D_{A, 2m} \leq  - (r-1) - 2m \mu_1  \la \xi \ra^{-1} +  C \la \xi \ra^{-\kappa_3}
 \leq - (r-1),
\]
uniformly in $\xi>0$. Since $\vp_g \les \vp_A$, combining the above estimates and \eqref{eq:coer_L2}, and then choosing $\e_{m}$ small enough, we obtain
\begin{align*}
 \int \Bigl(\e_m  \D^m \cL_A (\vA) \cdot \D^m \vA \vp_{2m}^2  \vp_g
+ \cL_A (\vA) \cdot \vA \vp_A\Bigr)
  d y \leq - \td \lam_1 \int \Bigl( \e_m |\D^m \vA |^2 \vp_{2m}^2 \vp_g
  + |\vA|^2 \vp_A \Bigr) dy 
\end{align*}
for some $\td \lam_1 > 0$ independent of $m$.
This proves \eqref{eq:coer_estA}. Clearly, by choosing $\lam$ smaller, $m_0$ larger, and $\e_m$ smaller, we can obtain the coercive estimates \eqref{eq:coer_est}, \eqref{eq:coer_estA} for the same set of parameters. This concludes the proof of Theorem~\ref{thm:coer_est}.

\subsection{Compact perturbation via Riesz representation}
\label{sec:Riesz}
In this section, using the estimates established earlier in Theorem~\ref{thm:coer_est}, we construct a compact operator $\cK_m$ such that $\cL - \cK_m$ is dissipative in $\cX^m$. For this purpose we fix $m_0\geq 6$. We also recall from Lemma~\ref{lem:Xm_chain} that the Hilbert spaces $\{\cX^i\}_{i \geq 0}$ are nested, with $\cX^{i+1} \subset \cX^i \subset \ldots \subset \cX^0$ for all $i\geq 0$; we will implicitly use this fact throughout this section.

\begin{proposition}\label{prop:compact}
For any $m \geq m_0 $, there exists a bounded linear operator $\cK_m \colon \cX^0 \to \cX^m$ with:
\begin{itemize}
\item[(a)] 
for any $f \in \cX^{0}$ we have
\[
{\rm supp}(\cK_m f) \subset B(0, 4 R_4),
\]
where $R_4$ is chosen in Theorem~\ref{thm:coer_est} (in particular, it is independent of $m$);
\item[(b)] the operator $\cK_m$ is compact from $ \cX^{m} \to \cX^m$;  
\item[(c)] the enhanced smoothing property  $\cK_m: \cX^{0} \to \cX^{m+ 3}$ holds;
\item[(d)] the operator $\cL - \cK_m$ is dissipative on $\cX^m$ and we have the estimate
\begin{equation}
 \la (\cL - \cK_m) f ,f \ra_{\cX^m} \leq - \lam \| f \|_{\cX^m}^2
 \label{eq:dissip}
\end{equation}
for all $f \in \{ (\UU,\Sigma) \in \cX^m \colon \cL(\UU,\Sigma) \in \cX^m\}$, $\cL = (\cL_U,\cL_S)$, and where $\lambda >0$ is the  parameter from~\eqref{eq:coer_est} (in particular, it is independent of $m$).
\end{itemize}
\end{proposition}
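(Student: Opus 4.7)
The plan is to apply the Riesz representation theorem on the Hilbert space $\cX^m$ to a carefully chosen bilinear form. Pick a smooth nonnegative radial cutoff $\chi$ with $\chi\equiv 1$ on $B(0,R_4)$ and $\supp\chi\subset B(0,2R_4)$, where $R_4$, $\bar C$, and $\lam$ are from Theorem~\ref{thm:coer_est}, and set
\[
B(f,g) \teq \bar C \int \chi(y)\,f(y)\cdot g(y)\,\vp_g(y)\,dy.
\]
Because $\chi$ is compactly supported and $\vp_g$ is bounded above and below on that support, the bound $|B(f,g)|\leq C\|f\|_{\cX^0}\|g\|_{\cX^0}\leq C\|f\|_{\cX^0}\|g\|_{\cX^m}$ follows from Lemma~\ref{lem:Xm_chain}. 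Riesz representation then produces a bounded linear operator $\cK_m\colon\cX^0\to\cX^m$ satisfying $\la\cK_m f,g\ra_{\cX^m}=B(f,g)$ for every $g\in\cX^m$.

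Property (d) would then be immediate. Combining the coercive bound in Theorem~\ref{thm:coer_est} with the choice $\chi\geq\one_{B(0,R_4)}$,
\[
\la(\cL-\cK_m)f,f\ra_{\cX^m} \leq -\lam\|f\|_{\cX^m}^2 + \bar C\int_{|y|\leq R_4}|f|^2\vp_g - \bar C\int\chi|f|^2\vp_g \leq -\lam\|f\|_{\cX^m}^2.
\]

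For property (c), integration by parts in the Riesz identity shows that $h\teq\cK_m f$ is a distributional solution of the $4m$-th order elliptic equation
\[
\e_m\,\Delta^m\bigl(\vp_{2m}^2\vp_g\,\Delta^m h\bigr)+\vp_g h = \bar C\,\chi\,\vp_g\,f.
\]
The right-hand side is in $L^2(\R^2)$ with compact support in $B(0,2R_4)$, so classical interior elliptic regularity (iterated, using smoothness of the coefficients and uniform ellipticity after dividing through by $\vp_g$) yields $h\in H^{4m}_{\mathrm{loc}}$. Outside $\supp\chi$ the equation is homogeneous, so $h$ is $C^\infty$ there with decay dictated by the $\cX^m$-bound. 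Counting derivatives, $4m\geq 2(m+3)$ whenever $m\geq 3$, so $\cK_m f\in\cX^{m+3}$, yielding (c).

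The main obstacle is property (a), since the above elliptic PDE does not automatically produce a compactly supported solution. The plan is to enforce $\supp(\cK_m f)\subset B(0,4R_4)$ by reposing the Riesz problem on the closed subspace $\cX^m_R\teq\overline{C^\infty_c(B(0,4R_4))}^{\cX^m}$, defining $\cK_m f\in\cX^m_R$ by the restricted identity $\la\cK_m f,g\ra_{\cX^m}=B(f,g)$ for all $g\in\cX^m_R$. Since $\supp\chi\subset B(0,2R_4)\subset B(0,4R_4)$, the bilinear form is fully captured on $\cX^m_R$; verifying (d) in this setting requires writing $\la\cK_m f,f\ra_{\cX^m}=B(f,\pi_R f)$ for the orthogonal projection $\pi_R$ onto $\cX^m_R$ and comparing $\pi_R f$ with $\chi_* f$ for a cutoff $\chi_*\equiv 1$ on $\supp\chi$, $\chi_*\in C^\infty_c(B(0,4R_4))$, absorbing the residual discrepancy into $\lam\|f\|_{\cX^m}^2$ by choosing $\bar C$ large. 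Once (a) is in hand, property (b) follows at once: a set bounded in $\cX^{m+3}$ with support in the fixed compact ball $\overline{B(0,4R_4)}$ has uniformly bounded $H^{4(m+3)}$ norm on that ball (weights are equivalent to constants there), so it embeds compactly into $H^{4m}$ by Rellich--Kondrachov, hence into $\cX^m$. I expect reconciling the subspace construction in (a) with the dissipativity identity in (d) to be the main technical difficulty; the remainder is standard elliptic theory combined with Hilbert space arguments.
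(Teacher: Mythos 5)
Your construction coincides with the paper's up to and including the first version of item~(d): the paper also defines $\cK_m$ by applying the Riesz representation theorem in $\cX^m$ to a cutoff bilinear form (there $B_m(f,g)=\int \chi_1 f\cdot g$, with the constant $\bar C$ inserted only at the end), and for that full-space operator the dissipativity \eqref{eq:dissip} follows exactly as in your first display. Where you diverge is items~(a)--(c): the paper never passes to the subspace $\cX^m_R$; it keeps the full-space Riesz representative, deduces the support property in~(a) from the vanishing of $\la \cK_m f, g\ra_{\cX^m}$ for every $g\in\cX^m$ supported in $\{|y|\geq 4R_4\}$, proves~(c) not by elliptic regularity but by a duality-plus-mollification argument (the bound $|\int \D^m(\cK_m f)\cdot\D^m g|\les_m \|f\|_{\cX^0}\|g\|_{\cX^0}$ tested against mollifications of $\D^m(\cK_m f)$, which upgrades $\cK_m f$ to $\dot H^{3m}$ using the compact support from~(a)), and gets~(b) from the boundedness $\cX^0\to\cX^m$ together with $\cK_m f=\cK_m(\chi_2 f)$ and Rellich applied to $\chi_2 f_k$. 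In particular the paper nowhere needs the weighted control at spatial infinity that your sketch of~(c) leaves unaddressed, nor the $H^{4(m+3)}$ bound you invoke in~(b) (boundedness in $\cX^{m+3}$ on a fixed ball gives $H^{2(m+3)}$, which is all Rellich requires).

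The genuine gap is the one you flag yourself and do not close: once you redefine $\cK_m$ through the subspace $\cX^m_R$ to secure~(a), item~(d) is no longer proved, and the repair you sketch cannot work. With the subspace definition one has $\la \cK_m f, f\ra_{\cX^m}=B(f,\pi_R f)$, and $\pi_R f$ bears no pointwise relation to $f$ on $B(0,R_4)$: $B(f,\pi_R f)$ need not dominate $\bar C\int_{|y|\leq R_4}|f|^2\vp_g$ and can even be negative. Writing $B(f,\pi_R f)=B(f,\chi_* f)+B(f,\pi_R f-\chi_* f)$ does not help, since $\|\pi_R f-\chi_* f\|_{\cX^0}$ is only bounded by a constant times $\|f\|_{\cX^m}$ with no small parameter; the error term is therefore of size $\bar C\|f\|_{\cX^0}\|f\|_{\cX^m}$ and cannot be absorbed into $-\lam\|f\|_{\cX^m}^2$, while enlarging $\bar C$ multiplies the error by the same factor as the term you want to keep (and $\bar C,\lam$ are in any case fixed by Theorem~\ref{thm:coer_est}). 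The redefinition also silently invalidates your proof of~(c): the Euler--Lagrange equation now holds only when tested against functions supported in $B(0,4R_4)$, interior elliptic regularity yields smoothness only in the open ball, and the zero extension of the variational solution across $\partial B(0,4R_4)$ is in general no better than $H^{2m}$ there, so $\cK_m f\in\cX^{m+3}$ --- and with it your route to~(b) --- is not established for the operator you actually end up with. Your underlying worry, that a full-space Riesz representative need not vanish outside the ball because the $\cX^m$ pairing is nonlocal, is a legitimate subtlety (the paper's treatment of~(a) is terse on precisely this point); but to prove the proposition you must exhibit a single operator for which (a), (c) and the lower bound $\la \cK_m f, f\ra_{\cX^m}\geq \bar C\int_{|y|\leq R_4}|f|^2\vp_g$ hold simultaneously, and as written your proposal never achieves (a) and (d) for the same $\cK_m$.
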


Item~(d) shows that $\cL$ is a compact perturbation of the dissipative operator $\cD_m \teq  \cL - \cK_m$ in $\cX^m$, for any $m \geq  m_0 $. 
By further regularizing $\cK_m$ one could define $\cK_m : \cX^0 \to \cX^{\infty}=\cap_{m \geq 0} \cX^{m}$, but we do not need this extra regularity and thus do not seek this refinement; we only need the $\cX^{m +3}$-regularity claimed in item~(c) in order to simplify the nonlinear estimates in Section~\ref{sec:fix_pt}.

\begin{proof}
Fix $R_4$ as in Theorem~\ref{thm:coer_est}. We let $0 \leq \chi_1, \chi_2 \leq 1$ be smooth cutoff functions  such that 
\begin{subequations}
\label{eq:cutoff}
\begin{align}
& \chi_1(y) = 1, \   |y| \leq  2  R_4 , \quad \chi_1(y) = 0,  \ |y| \geq 4 R_4 ,  \\
& \chi_2(y) = 1, \ |y| \leq 4 R_4,  \quad \chi_2(y) = 0, \ |y| > 8 R_4 .
\end{align}
\end{subequations}
Throughout this proof we also fix $m\geq m_0$, where $m_0\geq 6$ is given by Theorem~\ref{thm:coer_est}.

\paragraph{\bf{Riesz representation}}
We consider the bilinear form $B_m(\cdot,\cdot) \colon \cX^0 \times \cX^m \to {\mathbb R}$ defined by 
\[
B_m(f , g) \teq    {\sum}_{1\leq i\leq 3}
 \int \chi_1  f_i \,  g_i,  
\]
for $f = (\UU, \S) = (U_1,U_2,\S) \in \cX^0$ and $g = ( \UU^\prime, \S^\prime)=(U^\prime_1,U^\prime_2,\S^\prime) \in \cX^m$. 

We note that for $\vp_g$ as defined in \eqref{eq:vp:g:def}, we have that $\langle 4 R_4\rangle^{-\kappa_1-2} \leq \vp_g \leq 1$ on ${\rm supp}(\chi_1)$ and hence 
by also recalling the nested property $\cX^m \subset \cX^0$ (cf.~Lemma~\ref{lem:Xm_chain}) we have the trivial estimate
\begin{equation}
\label{eq:bilin}
|B_m(f, g) | 
\les 
\| f \|_{\cX^0} \| g \|_{\cX^0}
\les_m 
\|f \|_{\cX^{0}} \| g\|_{\cX^{m }} . 
\end{equation}
Here and throughout the proof the implicit constants in $\les$ are allowed to depend on $R_4$. It is clear from the above bound that $B_m(\cdot,\cdot)$ is well defined on the larger domain $\cX^0 \times \cX^0$, but we shall not use this fact.

For each fixed $f \in \cX^{0}$, since $B_m(f, \cdot)$ is a bounded linear functional on the Hilbert space $\cX^m$, using the Riesz representation theorem, there exists a unique $\cK_m f \in \cX^m$ such that 
\[
\la \cK_m f , g \ra_{\cX^m} = B_m(f, g) ,\quad \forall g \in \cX^m. 
\]
Using \eqref{eq:bilin} and duality, we immediately obtain
\beq\label{eq:bilin2}
\| \cK_m f \|_{ \cX^m} = \sup_{ \| g \|_{ \cX^m } \leq 1} \la \cK_m f , g \ra_{ \cX^m}
= \sup_{ \| g \|_{ \cX^m} \leq 1} B_m(f, g) \les_m \| f \|_{\cX^0} .
\eeq
Since $B_m(f, g)$ is linear in $f$, from~\eqref{eq:bilin2} we deduce that $\cK_m$ is a bounded linear operator from $\cX^{0}$ to $\cX^m$.

\paragraph{\bf{Proof of item (a)}}
For any $f \in \cX^0$, from the definitions of $\chi_1$ and $B_m$, we deduce that $ \la \cK_m f , g \ra_{ \cX^m } = 0$ for all $g\in \cX^m$ such that $\supp(g) \subseteq  \{ y : |y | \geq 4 R_4 \} $. By the definition of the inner-product on $\cX^m$ (it is equivalent to the $H^{2m}$ inner-product on compact subsets), it follows that $\supp(\cK_m f ) \subset B(0, 4 R_4 )$. Moreover, with $\chi_2$ as defined in~\eqref{eq:cutoff} we have $B_m(f,\cdot) = B_m(\chi_2 f,\cdot)$ and hence $\cK_m f  = \cK_m(\chi_2 f)$. With these two facts, it also follows that  $\chi_2 \cK_m f  = \cK_m f $. 

\paragraph{\bf{Proof of item (b)}}
Since $\vp_{2m} \asymp_{m,\Omega} 1$ and $\vp_g \asymp_\Omega 1$ on any compact set $\Omega\subset \mathbb{R}^2$ (in particular, for $\Omega = \overline{B(0, 4 R_4 )}$), it follows that for any function $h$ such that ${\rm supp}(h) \subset \Omega$, we have $\| h \|_{\cX^m} \asymp_{m,\Omega} \| h \|_{H^{2m}(\Omega)}$; this norm equivalence will be used several times throughout the remainder of the proof.

To establish the compactness of $\cK_m \colon \cX^m \to \cX^m$, we note that if $\{f_k\}_{k\geq 1}$ is a bounded sequence in $\cX^m$, then using  $\|\chi_2 f \|_{ \cX^m } \les \| f \|_{ \cX^m}$ for any $f \in \cX^m$, and appealing to the above discussed norm equivalence, we obtain that $\{\chi_2 f_k\}_{k\geq 1}$ is a bounded sequence of functions in $H^{2m}(B(0,8 R_4))$. Using the Rellich-Kondrachov compact embedding theorem, there exists a subsequence $\{\chi_2 f_{k_j}\}_{j\geq 1}$ which is convergent in $L^2(B(0,8 R_4))$, and a-posteriori in $\cX^0$. By the continuity of $\cK_m$ it follows that $\{\cK_m(\chi_2 f_{k_j})\}_{j\geq 1}$  is convergent in $\cX^m$, and since $\cK_m(\chi_2 f_{k_j}) = \cK_m(f_{k_j})$ we deduce the desired pre-compactness of the sequence $\{\cK_m(f_k)\}_{k\geq 1}$ in $\cX^m$. We obtain that $ \cK$ is a bounded compact operator from $\cX^m \to \cX^m$. 

\paragraph{\bf{Proof of item (c)}}
We fix $f \in \cX^0$. Using the definition of $\cX^m$ in~\eqref{norm:Xk}, the fact that  $\vp_{2m} \asymp_{m} 1$ and $\vp_g \asymp  1$ on $B(0,4R_4)$, the middle inequality in~\eqref{eq:bilin}, the inclusion in Lemma~\ref{lem:Xm_chain}, the bound~\eqref{eq:bilin2}, and the compact support property of $\cK_m$ established in item~(a), we deduce
\begin{align}
\Bigl|\int \D^m ( \cK_m f ) \cdot \D^m g  \Bigr|
&\les_m  \bigl|\la \cK_m f, g \ra_{\cX^m} \big| + \bigl|\la \cK_m f, g \ra_{\cX^0} \big|\notag\\
&\les_m \|f\|_{\cX^0} \|g\|_{\cX^0} + \| \cK_m f\|_{\cX^0} \| g \|_{\cX^0}
\notag\\
&\les_m \|f\|_{\cX^0} \|g\|_{\cX^0} + \| \cK_m f\|_{\cX^m} \| g \|_{\cX^0}
\les_m \| f \|_{\cX^0} \| g \|_{\cX^0}. 
\label{eq:K_gain}
\end{align}
for any $f\in \cX^0$ and $g\in \cX^m$.

We would like to set $g=\Delta^m(\cK_m f)$ in \eqref{eq:K_gain}, but this is not immediately possible since we only know $\cK_m f \in \cX^{m}$, which provides insufficient regularity. As such, we need to regularize $\cK_m f$. 
We choose a radially symmetric non-negative function $\phi \in C_c^{\infty}$ with $\int \phi = 1$ and define $\phi_{\e}(y)= \e^{-2} \phi(y / \e)$.
Let 
\[
F \teq \Delta^m(\cK_m f),
\quad G_{\e} \teq \phi_{\e} \ast( \phi_{\e} \ast F). 
\]
Using the equivalence of the norms $\|\cdot\|_{\cX^m}$ and $\|\cdot \|_{H^{2m}}$ for functions which have compact support in $B(0,4R_4)$, from the fact that $\cK_m f \in \cX^m$ and item~(a) we deduce that $F \in L^2$ has compact support, and $G_{\e} \in C^\infty_c \subset \cX^m$. Inserting $g\mapsto G_{\e} $ in \eqref{eq:K_gain}, upon integrating by parts we deduce that 
\begin{align*}
\|\na^m (\phi_{\e} \ast F)\|_{L^2}^2
&\leq_m \Bigl|\int F \cdot \D^m ( \phi_{\e} \ast( \phi_{\e} \ast F) ) \Bigr|  \notag\\
&\les_m \| f \|_{\cX^0} \| G_{\e} \|_{\cX^0}
\les_m \| f \|_{\cX^0} \|   F\|_{L^2}
\les_m \| f \|_{\cX^0} \| \cK_m f\|_{\cX^m}
\les_m \| f \|_{\cX^0}^2.
\end{align*}
Passing $\e \to 0$, we deduce that $\na^m F\in L^2$ and $\|\na^m F\|_{L^2} \les_m \| f\|_{\cX^0}$.  
Since $\cK_m f$ has compact support, we thus obtain 
\[
\|\cK_m f\|_{\cX^{ \lfloor 3m/2 \rfloor}} 
\les_m
\| \cK_m f\|_{\dot{H}^{3m}} 
\les_m
\int |\na^m  \D^m (\cK_m f)|   
=
\| \na^m F\|_{L^2}
\les_m  \| f \|_{\cX^0}
.
\]
Since $ m \geq 6$, using the nested property from Lemma~\ref{lem:Xm_chain} we deduce the boundedness $\cK_m : \cX^{0} \to \cX^{m + 3}$. 

\paragraph{\bf{Proof of item (d)}}
Using the definitions of $B_m $ and $\cK_m$, the estimate~\eqref{eq:coer_est}, recalling that $\vp_g \leq 1$, and letting $\bar C > 0$ be exactly as in~\eqref{eq:coer_est}, we deduce
\[
\la (\cL - \bar C \cK_m)f , f \ra_{ \cX^m} \leq -\lam \la f, f \ra_{\cX^m} +  \bar C  \int 
(\one_{|y| \leq R_4 } - \chi_1)   |  f  |^2  dx 
\leq -\lam \la f, f \ra_{ \cX^m},
\]
where $f = (\UU, \S), \cL = (\cL_U, \cL_S)$. Upon {\em re-defining} $\cK_m$ to equal $\bar C \cK_m$, which does not alter the proofs of items~(a)--(c), we deduce the dissippativity claimed in~\eqref{eq:dissip}.
\end{proof}

\subsection{Construction of the semigroup}\label{sec:semi}

For any $m \geq m_0$ we construct the strongly continuous semigroups 
\[
e^{\cL s} : \cX^m \to  \cX^m ,
\quad e^{\cD_m s} : \cX^m \to  \cX^m ,  \quad \cL = (\cL_U, \cL_{\S}) =  \cD_m + \cK_m.
\]
by directly solving the associated linear PDEs. Here $\cK_m$ is as constructed in Proposition~\ref{prop:compact}. 
In this section, without loss of generality we consider semigroups defined for $s\geq 0$, although in applications for Section~\ref{sec:non} the interval $s\geq \sin$ is the relevant one.

\subsubsection{From linear PDEs to semigroups}

The relation between linear evolutionary  PDEs and semigroups is classical; see e.g.~\cite{ElNa2000}. Following Definition 6.1, Chapter II of~\cite{ElNa2000}, we consider the abstract initial value problem for a linear operator $\cA : D(\cA) \subset X \to X$ with domain $D(\cA)$: 
\beq\label{eq:ACP}
 \tfrac{d}{dt} u(t) = \cA u(t) ,  \quad t \geq 0, \quad u(0) = x \in X,
\eeq
where $X$ is a Banach space. A function $u: {\mathbb R}_+ \to X$ is a (classical) solution of \eqref{eq:ACP} if $u$ is continuously differentiable with respect to $X$, $u(t) \in D(\cA)$ for all $t \geq 0$, and \eqref{eq:ACP} holds in $X$ for all $t>0$. We use the notation $u(x,t)$ to denote a solution $u$ from initial data $x \in X$, at time $t$. We recall:

\begin{theorem}[Theorem 6.7, Chapter II,~\cite{ElNa2000}]\label{thm:ACP}
Let $\cA: D(\cA) \subset X \to X$ be a closed operator. The following properties are equivalent:
\begin{itemize}
\item[(a)] $\cA$ generates a strongly continuous semigroup.
\item[(b)] For every $x \in D(\cA)$, there exists a unique solution $u(x,\cdot)$ of the initial value problem \eqref{eq:ACP}. The operator $\cA$ has dense domain, and for every sequence $\{ x_n \}_{n \geq 1} \subset D(\cA)$ satisfying $\lim_{n\to \infty} x_n = 0$, one has $\lim_{n \to \infty} u(x_n,t) = 0$ uniformly on compact intervals $[0, t_0]$. 
\end{itemize}
\end{theorem}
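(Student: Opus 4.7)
This is a classical equivalence in $C_0$-semigroup theory, and I would prove the two implications separately.

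For the direction $(a) \Rightarrow (b)$, assume $\cA$ generates a strongly continuous semigroup $(T(t))_{t \geq 0}$. For $x \in D(\cA)$, I would set $u(x,t) = T(t)x$; the standard semigroup identities $T(t) D(\cA) \subset D(\cA)$ and $\tfrac{d}{dt}T(t)x = \cA T(t)x = T(t) \cA x$ then show that $u(x,\cdot)$ is a classical solution of \eqref{eq:ACP}. Uniqueness follows from the usual dual-flow argument: if $v$ is another solution, then $s \mapsto T(t-s) v(s)$ has zero derivative on $[0,t]$, so $v(t) = T(t)v(0)$. Density of $D(\cA)$ in $X$ is the standard fact that the Ces\`aro averages $\tfrac{1}{h}\int_0^h T(s) x\, ds$ belong to $D(\cA)$ and converge to $x$ as $h \to 0^+$. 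Continuous dependence on the initial data follows from the local bound $\|T(t)\| \leq M e^{\omega t}$ inherent to every $C_0$-semigroup, which gives $\|u(x_n,t)\| \leq M e^{\omega t_0} \|x_n\| \to 0$ uniformly on $[0,t_0]$.

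For the converse $(b) \Rightarrow (a)$, I would define $T(t) x := u(x, t)$ for $x \in D(\cA)$ and $t \geq 0$. Linearity of $T(t)$ and the semigroup law $T(t+s) = T(t) T(s)$ both follow from uniqueness of the Cauchy problem, applied to the translated trajectory $u(x, t + \cdot)$ and the composed trajectory $u(u(x,t), \cdot)$, both of which start at $u(x,t) \in D(\cA)$. The main obstacle is to extend $T(t)$ from $D(\cA)$ to all of $X$: this requires a locally-uniform operator bound $\sup_{0 \leq t \leq t_0} \|T(t) x\| \leq M_{t_0} \|x\|$ for $x \in D(\cA)$. The hypothesis in $(b)$ only provides uniform-on-compacts continuity for individual sequences $x_n \to 0$, not a quantitative operator bound. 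The standard remedy is to apply the Banach--Steinhaus theorem to the family $\{T(t)\}_{t \in [0, t_0]}$ viewed as linear maps from the Banach space $(D(\cA), \|\cdot\|_{\mathrm{graph}})$ into $X$; the closedness of $\cA$ assumed in the statement ensures $(D(\cA), \|\cdot\|_{\mathrm{graph}})$ is Banach, and the pointwise continuity promoted by the closed graph theorem yields the required uniform bound. Once this is available, density of $D(\cA)$ permits extension of $T(t)$ to a bounded operator on $X$, and strong continuity at $t=0$ follows by approximating a general $x \in X$ by $x^\prime \in D(\cA)$ and using that $u(x^\prime, t) \to x^\prime$.

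Finally, I would verify that the generator $\tilde \cA$ of the extended semigroup coincides with $\cA$. The inclusion $\cA \subset \tilde \cA$ is immediate from the definition of $T$ and from the equation $\partial_t u = \cA u$ evaluated at $t = 0^+$. For the reverse inclusion $\tilde \cA \subset \cA$, I would use that $\cA$ is closed and that, for any $\lambda$ in the resolvent set of $\tilde \cA$, the Laplace-transform formula $(\lambda - \tilde \cA)^{-1} y = \int_0^\infty e^{-\lambda s} T(s) y\, ds$ maps $X$ into $D(\cA)$: applying this to $y = (\lambda - \tilde \cA) z$ for $z \in D(\tilde \cA)$ forces $z \in D(\cA)$, so $D(\tilde \cA) = D(\cA)$ and the two operators agree. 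I expect the extension/uniform-boundedness step to be the main technical hurdle; the rest of the argument is algebraic or invokes standard resolvent identities, and can also be found verbatim as Theorem~6.7 of~\cite{ElNa2000}.
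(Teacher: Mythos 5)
A preliminary remark: the paper offers no proof of this statement — it is quoted verbatim from \cite{ElNa2000} (Theorem 6.7, Chapter II) and used as a black box — so your proposal can only be measured against the textbook argument. Your direction (a) $\Rightarrow$ (b) is the standard one and is correct.

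The direction (b) $\Rightarrow$ (a), however, goes wrong at exactly the step you single out as the main hurdle. First, hypothesis (b) \emph{does} provide the quantitative bound you need: the solution map $x \mapsto u(x,\cdot)$ is a single linear map from $(D(\cA),\|\cdot\|_X)$ into the Banach space $C([0,t_0];X)$, and for a linear map sequential continuity at $0$ already implies boundedness (if $\sup_{0\leq t\leq t_0}\|u(x_n,t)\| \geq n\|x_n\|$ with $x_n\neq 0$, rescale $x_n$ by $(\sqrt{n}\,\|x_n\|)^{-1}$ to contradict the hypothesis); no completeness of the domain and no uniform boundedness principle are needed. This yields $\sup_{0\leq t\leq t_0}\|T(t)x\| \leq M_{t_0}\|x\|_X$ for all $x\in D(\cA)$, which is precisely what the density extension requires. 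Second, the remedy you propose does not deliver this estimate: Banach--Steinhaus applied to $\{T(t)\}_{t\in[0,t_0]}$ on $(D(\cA),\|\cdot\|_{\mathrm{graph}})$ can at best give $\|T(t)x\| \leq M\bigl(\|x\|+\|\cA x\|\bigr)$, a graph-norm bound, and density of $D(\cA)$ in the $X$-norm cannot be used to extend operators that are only bounded with respect to the graph norm. So, as written, your extension step fails, even though the bound it is meant to produce is an immediate consequence of (b).

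The identification $\widetilde{\cA}=\cA$ at the end is also not justified as stated: the claim that $(\lambda-\widetilde{\cA})^{-1}y=\int_0^\infty e^{-\lambda s}T(s)y\,ds$ maps all of $X$ into $D(\cA)$ is essentially the conclusion $D(\widetilde{\cA})=D(\cA)$ in disguise; closedness of $\cA$ places the integral in $D(\cA)$ only when the integrand lies in $D(\cA)$, i.e.\ only for $y\in D(\cA)$. The standard way to finish (and the route in \cite{ElNa2000}) is shorter: $D(\cA)$ is dense in $X$, contained in $D(\widetilde{\cA})$, and invariant under the extended semigroup (classical solutions take values in $D(\cA)$), hence it is a core for $\widetilde{\cA}$; since $\widetilde{\cA}$ agrees with $\cA$ on $D(\cA)$ and $\cA$ is closed, $\widetilde{\cA}=\overline{\cA}=\cA$.
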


The above theorem states that $\cA$ generates a strongly continuous semi-group if and only if the linear PDEs is well-posed: \eqref{eq:ACP} has a unique solution, and satisfies continuous dependence on the data.

\subsubsection{Solving linear PDEs}\label{sec:lin_semi}

We define the domains of the operators $\cL = (\cL_U,\cL_\Sigma)$ and $\cD_m = \cL - \cK_m$ as 
\beq\label{eq:domain}
D (\cD_m) = D (\cL) \teq \{  (\UU, \S) \in \cX^m,  \cL (\UU, \S) \in \cX^m \}. 
\eeq

Recall from~\eqref{eq:lin:LU} and \eqref{eq:lin:LS} that the linearized operator $\cL= (\cL_U, \cL_{\S})$ is defined via
\begin{subequations}
\label{eq:lin_semi}
\begin{align}
 \cL_U(\UU, \S) &=  -  (r-1)  \UU -  (y +\bar{ \UU}) \cdot \na  \UU  -  \UU \cdot \na \bar{\UU} 
 - \alpha \S \na \bar{\S}  - \alpha \bar{\S} \na \S, \\
 \cL_{\S}(\UU, \S) &= -  (r-1) \S - (y +\bar{ \UU}) \cdot \na  \S
 -   \UU  \cdot \na \bar{\S}     -  \alpha \div(\bar \UU)
 \S - \alpha \div( \UU) \bar{\S},
 \end{align}
 resulting in the linearized versions of the evolution equations \eqref{eq:lin:a} and \eqref{eq:lin:c}
\begin{align}
 \pa_s (\UU,\S)  &= \cL(\UU, \S) = \bigl(\cL_U(\UU, \S),\cL_\S(\UU, \S) \bigr)
 \label{eq:lin_semi:c}.
\end{align}
\end{subequations}
Note that $\cL$ does not involve $A$, so that \eqref{eq:lin_semi} is a closed evolution for $(\UU, \S)$, which is why we drop the dependence on $A$.
We also emphasize that the linear operator $\cL$ preserves radial symmetry (recall~\eqref{eq:vec_iden}), and so for initial data in $D(\cL)\subset \cX^m$, we may expect the solution of \eqref{eq:lin_semi:c} to lie in $\cX^m$.

Denote $\VV = (\UU,\Sigma) = (U_1, U_2, \S) \subset {\mathbb R}^3$. The linearized equations~\eqref{eq:lin_semi:c} may be written in the form of  a symmetric hyperbolic system 
\beq\label{eq:lin_semi_V}
 \pa_s \VV +  B_1(y) \pa_1 \VV + B_2(y) \pa_2 \VV = M(y) \, \VV, 
 \quad B_i(y) = (y_i + \bar U_i(y)) {\rm Id} + \al \bar \S(y)  ( E_{i3} + E_{3i} ) ,
\eeq
where $E_{ij} \in \R^{3\times 3}$ is the matrix with $1$ at the $(i, j)$ entry and $0$s at all other entries, and the matrix $M\in \R^{3\times 3}$ depends only on the profiles $(\bar \UU,\bar \Sigma)$. In particular, since the profiles $(\bar \UU,\bar \Sigma)$ are smooth, so are the coefficients $B_i, M$. For initial data $\VV_{0} = \VV|_{s=0} \in C_c^{\infty}({\mathbb R}^2)$ the symmetric hyperbolic system~\eqref{eq:lin_semi_V}
has a unique, global-in-time, smooth solution $\VV$, and $\VV(\cdot,s)  \in C_c^{\infty}({\mathbb R}^2)$; this fact may for instance be established using the vanishing viscosity method (see e.g.~\cite[Section 7.3.2]{Ev2022}), using the fact that the $B_i$ and $M$ are bounded with bounded derivatives on any compact domain, and using finite speed of propagation.\footnote{A much more general existence and uniqueness theorem is established in~\cite{Ka1975}.}

Next, we use approximation of the initial data to show that for $\VV_0 \in \cX^m$ the solution $\VV$ of~\eqref{eq:lin_semi_V} satisfies $\VV(\cdot,s) \in \cX^m$ for all $s>0$.
For any $\VV_0 \in \cX^m$, there exists a radially symmetric $\VV_0^{(n)} \in C_c^{\infty}$ such that $\|\VV_0^{(n)} - \VV_0\|_{\cX^m} \to 0$ as $n\to \infty$. For initial datum $\VV_0^{(n)}$, we obtain a unique, global-in-time, radially symmetric smooth solution $\VV^{(n)}$ such that $\VV^{(n)}(\cdot,s) \in C_c^{\infty} \subset \cX^m$ for $s>0$. Using that the evolution~\eqref{eq:lin_semi_V} is linear, we may apply the energy estimates in Theorem~\ref{thm:coer_est} (more precisely, the bound~\eqref{eq:coer_est}) to both $\VV^{(n)} -\VV^{(n^\prime)}$ and $\VV^{(n)}$, to obtain 
\begin{align*}
  \tfrac{1}{2}\tfrac{d}{ds} \| \VV^{(n)} -\VV^{(n^\prime)}\|_{\cX^m}^2
 &\leq - \lambda \| \VV^{(n)} -\VV^{(n^\prime)}\|_{\cX^m}^2 + \bar C \| \VV^{(n)} -\VV^{(n^\prime)}\|_{\cX^0}^2 
 \leq \bar C \| \VV^{(n)} -\VV^{(n^\prime)}\|_{\cX^m}^2 
 \\ 
 \tfrac{1}{2}\tfrac{d}{ds} \| \VV^{(n)} \|_{\cX^m}^2
 &\leq - \lambda \| \VV^{(n)}\|_{\cX^m}^2 + \bar C \| \VV^{(n)}\|_{\cX^0}^2 
 \leq \bar C \| \VV^{(n)} \|_{\cX^m}^2 
 \end{align*}
 to deduce
 \begin{equation}
 \label{eq:lin_semi_EE}
 \|(\VV^{(n)} -\VV^{(n^\prime)})(\cdot,s)\|_{\cX^m} \leq e^{\bar C s} \| \VV^{(n)}_0 -\VV^{(n^\prime)}_0\|_{\cX^m},
 \qquad 
 \|\VV^{(n)}(\cdot,s)\|_{\cX^m} \leq e^{\bar C s} \| \VV^{(n)}_0\|_{\cX^m}
 .
\end{equation}
Since $\{\VV^{(n)}_0\}_{n \geq 1}$ is a Cauchy sequence in $\cX^m$, from~\eqref{eq:lin_semi_EE} we deduce
$\VV^{(n)}(\cdot,s) \to \VV(\cdot,s)$ in $\cX^m$ uniformly for $s \in [0,T]$, for any $T > 0$. Moreover,~\eqref{eq:lin_semi_EE} yields $\| \VV(\cdot,s) \|_{\cX^m} \leq e^{\bar C s} \| \VV_0 \|_{\cX^m}$. 

In order to show that  $\|\VV(\cdot,s) - \VV_0\|_{\cX^m} \to 0$ as $s \to 0$, for $n\geq 1$ we use the decomposition 
\begin{align}
\|\VV(\cdot,s) - \VV_0\|_{\cX^m} 
& \leq  \| \VV^{(n)}(\cdot,s) - \VV^{(n)}_0\|_{\cX^m} 
+ \|\VV^{(n)}(\cdot,s) - \VV(\cdot,s) \|_{\cX^m } + \| \VV^{(n)}_0 - \VV_0 \|_{\cX^m} 
\notag \\
& \leq \| \VV^{(n)}(\cdot,s) - \VV^{(n)}_0\|_{\cX^m} + (e^{\bar Cs} + 1)  \| \VV^{(n)}_0 - \VV_0 \|_{\cX^m}.
\label{eq:vomit:1}
\end{align}
We thus only need to show that for any {\em fixed}  $n\geq 1$, we have $\VV^{(n)}(\cdot,s) \to \VV^{(n)}_0$ in $\cX^m$ as $s \to 0$. Since we already know that $\VV^{(n)}(\cdot,s) \in C_c^{\infty}({\mathbb R}^2)$, with uniform compact support for $s \in [0,1]$, we deduce that $\cL \VV^{(n)} \in L^\infty([0,1]; \cX^m)$, and so $\pa_s \VV^{(n)} \in L^\infty((0,1); \cX^m)$. Thus, $\lim_{s\to 0} \|\VV^{(n)}(\cdot,s) - \VV^{(n)}_0\|_{\cX^m} = 0$. Finally, we first pass $s\to 0$, and then $n\to \infty$ in \eqref{eq:vomit:1}, to deduce $\lim_{s\to 0} \|\VV(\cdot,s) - \VV_0\|_{\cX^m} = 0$.

From the energy estimates in Theorem~\ref{thm:coer_est} and estimates similar to~\eqref{eq:lin_semi_EE}, we also obtain the  uniqueness of solutions to \eqref{eq:lin_semi_V} (hence~\eqref{eq:lin_semi:c}),  from any initial data in $\cX^m$. Therefore, for any $\VV_0 \in \cX^m$, we have constructed the solution operator $S_{\cL}(s)$ for  
\eqref{eq:lin_semi} satisfying 
\begin{subequations}
\label{eq:growbd}
\begin{align}
& S_{\cL}(s) : \cX^m \to \cX^m, \quad S_{\cL}(s) S_{\cL}(t) = S_{\cL}(s + t), \quad s, t \geq 0, \\
  & \| S_{\cL}(s)\|_{\cX^m \to \cX^m} \leq e^{\bar C s} , \quad \lim_{s \to 0} \| S_{\cL}(s) \VV_0 - \VV_0\|_{\cX^m} = 0.
\end{align}
\end{subequations}
The semigroup property follows from the uniqueness of the solution. The growth rate $\bar C$ is as in~\eqref{eq:coer_est}.

In order to apply Theorem~\ref{thm:ACP}, we need to verify condition (b). Consider $\VV_0 \in D(\cL)$, as defined in~\eqref{eq:domain}. Unique classical solvability requires that the solution $\VV$ of \eqref{eq:lin_semi} with initial data $\VV_0$, i.e.~$\VV(\cdot,s) = S_{\cL}(s) \VV_0$, satisfies $\VV(\cdot,s) \in D(\cL)$ for $s>0$. In order to prove this we observe that $[\pa_s,\cL]=0$ and hence
\[
\pa_s (\pa_s \VV) = \cL (\pa_s \VV), \quad  (\pa_s \VV) |_{s = 0} = \cL \VV_0 \in \cX^m. 
\]
Applying \eqref{eq:growbd} to $\pa_s \VV$, we obtain $\cL \VV(\cdot,s) =  \pa_s \VV (\cdot,s)= S_{\cL}(s) (\cL \VV_0) \in \cX^m$, for all $s>0$, as desired. This argument also shows that $\VV \in {\rm Lip}([0, T], \cX^m)$. 
The continuous dependence of data required in item~(b) of Theorem~\ref{thm:ACP} follows directly from the growth bound of $S_{\cL}(s)$, see~\eqref{eq:growbd}. 

Concluding, we apply Theorem~\ref{thm:ACP} with $(\cA, X) = (\cL, \cX^m)$, and obtain that $\cL$ generates a strongly continuous semigroup $S_{\cL}(s) = e^{\cL s}$, which satisfies the growth bound~\eqref{eq:growbd}.

\subsubsection{Semigroup for $\cD_m$}

In order to show that $\cD_m = \cL - \cK_m$ generates a strongly continuous semigroup, we note that $\cK_m \colon \cX^m\to\cX^m$ is bounded (see~Proposition~\ref{prop:compact}), and so the  Bounded Perturbation Theorem in~\cite[Theorem 1.3, Chapter III]{ElNa2000} applies directly. 
Using the coercive estimates \eqref{eq:dissip}, we moreover obtain 
\beq\label{eq:dissp_semi}
 e^{s \cD_m} : \cX^m \to \cX^m, \quad \|  e^{s \cD_m} \|_{\cX^m} \leq e^{-\lam s} .
\eeq
The decay estimate of the semigroup~\eqref{eq:dissp_semi} further implies the following spectral property of $\cD_m$ 
(see~\cite[Theorem 1.10, Chapter II]{ElNa2000})
\beq\label{eq:spec}
 \{ z \in {\mathbb C}: \Re(z) > -\lam \} \subset \rho(\cD_m) ,
\eeq
where $\rho(\cA)$ denotes the resolvent set of an operator $\cA$.

Note that the estimates
\eqref{eq:dissp_semi} and \eqref{eq:spec} apply to all  $(\cD_m, \cX^m)$ for $m \geq m_0$, with $\lam$ independent of $m$.

\subsection{Decay estimates of  $e^{\cL t}$ }\label{sec:semi_grow}

In this section, we follow \cite{ElNa2000} to obtain decay estimates for the semigroup $e^{\cL t}$. For a semigroup $e^{\cA t} : X \to X$ on a Banach space $X$ denote by ${\sigma}(\cA)$ the spectrum of $\cA$, i.e., the set $ \{ z \in {\mathbb C}: z - \cA \mbox{ is not bijective}\}$. We also introduce the  spectral bound ${\mathsf s}(\cA)$, the growth bound $\om_0(\cA)$, and the essential growth bound $\om_{ess}(\cA)$, defined by
\begin{align*}
{\mathsf s}(\cA) &\teq \sup \bigl\{ \Re (z) \colon z \in \sigma(\cA) \bigr\},\\
\om_0(\cA) &\teq \inf \bigl\{ \om \in {\mathbb R} \colon \mbox{ there exists } M_{\omega} \geq 1 \mbox{ such that } \| e^{\cA t}\|_{X \to X} \leq M_{\om} e^{\om t} \mbox{ for all } t \geq 0 \bigr\}, \\
\om_{ess}(\cA) & \teq \inf_{t > 0} \tfrac{1}{t} \log \| e^{\cA t} \|_{ess},
\end{align*}
where the norm $\| \cdot \|_{ess}$ is defined as 
$
 \| T \|_{ess} = \inf_{ K\colon X \to X \mathrm{ \ is \ compact} }  \| T - K\|_{ X \to X}.
$
With this notation, we have the following result:
\begin{proposition}[Corollary 2.11, Chapter IV,~\cite{ElNa2000}]\label{prop:growth_bd}
Let $e^{\cA t}$ be a strongly continuous semigroup generated by $\cA \colon D(\cA)\subset X \to X$, a closed operator. Then 
\[
\om_0 (\cA) = \max\{ \om_{ess}(\cA), {\mathsf s}(\cA) \}. 
\]
Moreover, for every $\om > \om_{ess}(\cA)$, the set $\s_c = \s(\cA) \cap \{ z \in {\mathbb C} \colon \Re (z) \geq \om  \} $ is finite, and the corresponding spectral projection has finite rank.
\end{proposition}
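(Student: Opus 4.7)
The plan is to reduce both claims to statements about the single bounded operator $T = e^{\cA}$ (the time-one map) by combining two Gelfand-type identities: $\om_0(\cA) = \log r(T)$ for the growth bound, and $\om_{ess}(\cA) = \log r_{ess}(T)$ for the essential growth bound. The first follows from the spectral radius formula $r(T) = \lim_n \|T^n\|^{1/n}$, the sub-multiplicativity $\|e^{(s+t)\cA}\| \leq \|e^{s\cA}\|\|e^{t\cA}\|$, and the definition of $\om_0$. The second is the analogous statement for the essential norm: $\|\cdot\|_{ess}$ is sub-multiplicative on bounded operators (because the compacts form an operator ideal), and Nussbaum's formula gives $r_{ess}(T) = \lim_n \|T^n\|_{ess}^{1/n}$. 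Once both identities are in hand, the growth-bound question collapses to a spectral question about a single bounded operator.

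The inequality $\om_0(\cA) \geq \max\{\om_{ess}(\cA), {\mathsf s}(\cA)\}$ is then nearly immediate. From $\|T\|_{ess} \leq \|T\|$ one obtains $\om_{ess}(\cA) \leq \om_0(\cA)$ directly. For the other piece, the spectral inclusion theorem for semigroups, $e^{t\s(\cA)} \subseteq \s(e^{t\cA})$, holds on any Banach space; this can be derived by writing, for $\lambda \in \s(\cA)$ and on $D(\cA)$, the identity $e^{t\cA} - e^{t\lambda}\mathrm{Id} = (\cA - \lambda) \int_0^t e^{(t-s)\cA} e^{s\lambda} \, ds$, observing that the two factors commute, and deducing that $e^{t\lambda}$ cannot be in the resolvent of $e^{t\cA}$. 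Then $|e^{t\lambda}| \leq r(T) = e^{t\om_0(\cA)}$ yields ${\mathsf s}(\cA) \leq \om_0(\cA)$.

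For the converse inequality, the key input is the Riesz--Schauder structure of $\s(T)$ outside its essential spectrum: any $z \in \s(T)$ with $|z| > r_{ess}(T)$ is an isolated eigenvalue of finite algebraic multiplicity whose Riesz spectral projection has finite rank. Fix $\om > \om_{ess}(\cA)$; then $\s(T) \cap \{|z| \geq e^{\om}\} = \{z_1, \dots, z_N\}$ is a finite set. The partial spectral mapping theorem for the point and residual spectra of strongly continuous semigroups (Theorem~3.7, Chapter~IV of~\cite{ElNa2000}) then ensures that each $z_j$ is of the form $e^{\lambda_j}$ for some $\lambda_j \in \s(\cA)$, and that the preimage $\{\lambda \in \s(\cA) \colon e^{\lambda} = z_j,\ \Re\lambda \geq \om\}$ is finite with matching total algebraic multiplicity. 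Summing over $j$ gives that $\s_c = \s(\cA) \cap \{\Re z \geq \om\}$ is finite and that its spectral projection is of finite rank, which is exactly the second claim. Moreover, $r(T) = \max(r_{ess}(T), |z_1|, \dots, |z_N|)$ splits into two cases: either $r(T) = r_{ess}(T)$, giving $\om_0(\cA) = \om_{ess}(\cA)$; or $r(T) = |z_j| = e^{\Re\lambda_j} \leq e^{{\mathsf s}(\cA)}$, giving $\om_0(\cA) \leq {\mathsf s}(\cA)$.

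The main technical obstacle is the partial spectral mapping step used above: the full identity $\s(e^{t\cA}) = e^{t\s(\cA)}$ is \emph{false} in general Banach spaces, which is precisely the reason $\om_0$ can strictly exceed ${\mathsf s}$. One must therefore isolate the portion of $\s(T)$ lying strictly outside the essential spectral radius, where Fredholm--Dunford functional calculus is available and the naive identification $z_j \leftrightarrow \lambda_j$ with $e^{\lambda_j} = z_j$ can be made rigorous, together with the matching of multiplicities via Riesz projections. Everything else in the argument, including the Nussbaum formula for $\om_{ess}$ and the elementary sub-multiplicativity bounds, is essentially bookkeeping around this core spectral-decomposition statement.
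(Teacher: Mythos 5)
The paper does not prove this proposition --- it is imported verbatim as Corollary 2.11, Chapter IV of~\cite{ElNa2000} --- so the comparison is against the textbook argument rather than anything in the text. Your reconstruction follows exactly that argument: reduce to the time-one map $T=e^{\cA}$ via $\omega_0(\cA)=\log r(T)$ and $\omega_{ess}(\cA)=\log r_{ess}(T)$ (sub-multiplicativity of $\|\cdot\|$ and of the Calkin quotient norm, plus Gelfand/Nussbaum), get the inequality $\omega_0\geq\max\{\omega_{ess},\mathsf{s}\}$ from spectral inclusion, and get the converse together with the finiteness claim from the Riesz--Schauder structure of $\sigma(T)$ outside the essential spectral radius. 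This architecture is correct.

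The one step you should make precise is the transfer from $\sigma(T)$ back to $\sigma(\cA)$. The point and residual spectral mapping theorems you cite do not, by themselves, control $\sigma(\cA)\cap\{\Re\lambda\geq\omega\}$: a point $\lambda$ there could a priori lie in the approximate point spectrum of $\cA$ without being an eigenvalue, and that is precisely the part of the spectrum for which the spectral mapping theorem fails. The clean fix (and what Engel--Nagel actually do) is the following. Given $\lambda\in\sigma(\cA)$ with $\Re\lambda\geq\omega>\omega_{ess}(\cA)$, spectral inclusion puts $e^{\lambda}\in\sigma(T)$ with $|e^{\lambda}|>r_{ess}(T)$, so $e^{\lambda}$ is a pole of the resolvent of $T$ with finite-rank Riesz projection $P$. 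Since $P$ is a limit of contour integrals of resolvents of $T$, it commutes with the semigroup, so $X=\mathrm{Rg}\,P\oplus\ker P$ reduces the semigroup; the generator on $\mathrm{Rg}\,P$ is a matrix, and applying spectral inclusion to the restricted semigroup on $\ker P$ (whose time-one map omits $e^{\lambda}$ from its spectrum) shows $\lambda\notin\sigma(\cA|_{\ker P})$, hence $\lambda\in\sigma(\cA|_{\mathrm{Rg}\,P})$. Thus every such $\lambda$ is an eigenvalue of a fixed finite matrix attached to one of the finitely many $z_j$, which yields both the finiteness of $\sigma_c$ and the finite rank of its spectral projection (its range sits inside $\bigoplus_j\mathrm{Rg}\,P_j$). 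Your closing paragraph gestures at exactly this, so I would call the proposal correct in substance but incomplete at its decisive step as written; with the projection argument inserted it is a faithful version of the cited proof.
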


We apply Proposition~\ref{prop:growth_bd} to $(\cA,X) = (\cL,\cX^m)$, where $\cL =(\cL_U, \cL_{\S}) =   \cD_m + \cK_m$  and $\cK_m$ is the compact operator constructed in Proposition~\ref{prop:compact}. From the decay estimate \eqref{eq:dissp_semi}  we obtain $\om_{ess}(\cD_m) \leq - \lam$. Since $\om_{ess}$ is invariant under compact perturbations (see~\cite[Proposition 2.12, Chapter IV]{ElNa2000}), we get 
\beq\label{eq:spec_L}
\om_{ess}(\cL) = \om_{ess}( \cD_m) \leq -\lam.
\eeq

\subsubsection{Hyperbolic decomposition}\label{sec:hyper}

In this section, based on the estimate of $\om_{ess}(\cL)$ from~\eqref{eq:spec_L}, 
we orthogonally decompose the space $\cX^m$ (see~\eqref{eq:dec_X} below) and obtain decay estimates on $e^{\cL s}$ for the stable part, and growth estimates for the unstable part.
For this purpose, we first recall the following spectral decomposition for a closed operator $\cA$, based on Riesz projections.

\begin{proposition}[Theorem 2.1, Chapter XV, Part IV, page 326,~\cite{GoGoKa2013}]\label{prop:decom}
Suppose $\cA: D(\cA) \subset X \to X$ is a closed operator with specturm $\s(\cA) = \s \cup \tau$, where $\s$ is contained in a bounded Cauchy domain $\D$ such that $\bar \D \cap \tau = \emptyset$. Let $\G$ be the (oriented) boundary of $\D$. Then:
\begin{itemize}
\item[(i)] $P_{\s} = \f{1}{2 \pi i} \int_{\G} ( z  - \cA)^{-1} d z $ is a projection,

\item[(ii)] the subspaces $M = \Rg\, P_{\s}$ (the image of $P_{\s}$) and $N = \ker P_{\s}$ are $\cA-$invariant,

\item[(iii)] the subspace $M$ is contained in $D(\cA)$ and $\cA |_M$ is bounded,

\item[(iv)] $\s(\cA |_M) = \s$ and $\s(\cA |_N) = \tau$.
\end{itemize}

\end{proposition}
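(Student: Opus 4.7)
My plan is to prove the statement via the holomorphic (Riesz--Dunford) functional calculus for closed operators. The first step is to verify that $P_{\s}$ is a well-defined bounded operator on $X$. Since $\s$ and $\tau$ are disjoint closed subsets of $\s(\cA)$ and the oriented boundary $\G=\pa\D$ encloses $\s$ while lying in the resolvent set $\rho(\cA)$, the resolvent-valued map $z\mapsto (z-\cA)^{-1}$ is norm-continuous on $\G$, so the contour integral converges in operator norm to a bounded linear operator.

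For the projection identity (i), I would introduce a second contour $\G'$ enclosing both $\s$ and $\G$ but still contained in $\rho(\cA)$, write
\[
P_{\s}^{2}=\tfrac{1}{(2\pi i)^{2}}\int_{\G}\!\int_{\G'}(z-\cA)^{-1}(w-\cA)^{-1}\,dw\,dz,
\]
and apply the first resolvent identity $(z-\cA)^{-1}(w-\cA)^{-1}=\tfrac{1}{w-z}\bigl[(z-\cA)^{-1}-(w-\cA)^{-1}\bigr]$ together with Fubini. The inner Cauchy integrals then collapse: $\int_{\G'}(w-z)^{-1}\,dw=2\pi i$ when $z$ lies inside $\G'$, while $\int_{\G}(w-z)^{-1}\,dz=0$ when $w$ lies outside $\G$. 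What remains equals $P_{\s}$.

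For items (ii) and (iii), the key identity is the commutation $\cA P_{\s}=P_{\s}\cA$ on $D(\cA)$, inherited termwise from $\cA(z-\cA)^{-1}=(z-\cA)^{-1}\cA$. This immediately yields $\cA$-invariance of $M=\Rg P_{\s}$ and $N=\ker P_{\s}$. To show that $M\subset D(\cA)$ with bounded restriction, I would use the identity $\cA(z-\cA)^{-1}=z(z-\cA)^{-1}-I$ together with $\int_{\G}dz=0$, obtaining
\[
\cA P_{\s}x=\tfrac{1}{2\pi i}\int_{\G}z(z-\cA)^{-1}x\,dz\quad\text{for every }x\in X,
\]
which is the action of a bounded operator on $x$; closedness of $\cA$ then legitimizes exchanging $\cA$ with the contour integral and yields $M\subset D(\cA)$ with $\cA|_{M}\in\mathcal{B}(M)$.

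For the spectral decomposition (iv), the plan is to construct partial resolvents: for $w\notin\s$ define $R_{M}(w)=\frac{1}{2\pi i}\int_{\G}(z-w)^{-1}(z-\cA)^{-1}\,dz$, and for $w\notin\tau$ define $R_{N}(w)$ analogously with a contour adapted to $\tau$; the same resolvent-identity computation used in (i) then verifies $(w-\cA|_{M})R_{M}(w)=\Id_{M}$ and the corresponding identity on $N$, so $\s(\cA|_{M})\subset\s$ and $\s(\cA|_{N})\subset\tau$. The reverse inclusions follow from the direct-sum decomposition $X=M\oplus N$ produced in (i), since $(w-\cA)^{-1}$ must split as $R_{M}(w)\oplus R_{N}(w)$ whenever both factors exist. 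The main obstacle, and where the care must go, is the unboundedness of $\cA|_{N}$: all domain questions---in particular that $R_{N}(w)$ carries $N$ into $D(\cA)\cap N$, and that arguments mixing the bounded piece $\cA|_{M}$ with the unbounded piece $\cA|_{N}$ are legitimate---must be tracked using the closedness of $\cA$ together with the compactness of $\G$, which is precisely what extends the Riesz--Dunford calculus from bounded to closed operators.
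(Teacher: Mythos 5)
The paper does not prove this proposition at all: it is quoted verbatim, with attribution, from~\cite{GoGoKa2013}, and the authors rely on it as a black box. Your outline is the standard Riesz--Dunford functional-calculus proof for closed operators (two-contour resolvent-identity computation for $P_{\s}^2=P_{\s}$, commutation and closedness of $\cA$ for invariance and for $M\subset D(\cA)$ with $\cA|_M$ bounded, partial resolvents for the spectral splitting), which is correct and is essentially the argument given in that reference.
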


\noindent Our goal is to apply the above result to $(\cA,X) = (\cL,\cX^m)$. 
From~\eqref{eq:spec_L} and Proposition~\ref{prop:growth_bd},  we have that 
\[
\s_{\eta} \teq \s( \cL) \cap  \{ z\in {\mathbb C} : \Re(z) > - \eta \},
\qquad 
\mbox{where}
\qquad 
\eta \teq \tfrac{4}{5} \lam < \lam,
\]
only consists of finitely many eigenvalues of $\cL$, with finite multiplicity. Without loss of generality\footnote{The rigorous argument would be as follows. For $\bar \eta = \frac{9}{10} \lambda$, we have that $\s_{\bar \eta} = \s( \cL) \cap  \{ z\in {\mathbb C} : \Re(z) > - \bar \eta \}$ only consists of finitely many eigenvalues of $\cL$, with finite multiplicity. As such, for a.e. $\theta \in (\frac{7}{10},\frac{9}{10})$ the line in the complex plane with real-abscisa $-\theta \lambda$ does not intersect $\sigma(\cL)$. We have assumed without loss of generality that this $\theta$ may be taken to equal $\frac{4}{5}$, but of course the below arguments all hold {\em as is} if we replace $\eta = \frac 45 \lambda$ with $\eta = \theta \lambda$ for some $\theta \in (\frac{7}{10},\frac{9}{10})$.} we have that $\s_\eta$ is isolated from $\s(\cL) \backslash \s_{\eta}$. Applying Proposition~\ref{prop:decom} with $\cA = \cL$, $X = \cX^m$, and $\tau = \sigma(\cL) \backslash \sigma_\eta$, we obtain the hyperbolic decomposition 
\beq\label{eq:dec_X}
  \cX^m = \cX^m_{\mathsf u} \oplus \cX^m_{\mathsf s}, \quad \s( \cL|_{\cX^m_{\mathsf s}}) = \s(\cL) \backslash \s_{\eta}
\subset \{ z: \Re(z) \leq -\eta \}, 
  \quad \s( \cL|_{\cX^m_{\mathsf u}}) = \s_{\eta}.
\eeq
Moreover, from Proposition~\ref{prop:growth_bd} it follows that we can decompose the unstable part as 
\beq\label{eq:dec_Xu}
\cX^m_{\mathsf u} = \bigoplus_{ z \in \s_{\eta}} \ker( (z- \cL)^{\mu_z}), \quad \mu_z < \infty, \quad |\s_{\eta}| < +\infty.
\eeq
We will also show in~Section~\ref{sec:smooth_unstab} that $\cX^m_{\mathsf u}$ is spanned by smooth functions.

Since $\cX^m_{\mathsf s}$ is $\cL$ invariant, the restriction of the semigroup $e^{\cL s}$ on $\cX^m_{\mathsf s}$ is generated by $\cL|_{\cX^m_{\mathsf s}}$. Moreover, since $\max( \om_{ess}(\cL), {\mathsf s}(\s(\cL|_{\cX^m_{\mathsf s}} ))) \leq -\eta$, from Proposition~\ref{prop:growth_bd} and the definition of $\om_0(\cL|_{\cX^m_{\mathsf s}})$ we obtain
\beq\label{eq:decay_stab}
 \| e^{\cL s } \VV_0 \|_{\cX^m} \leq C_m e^{ - s \eta_{\mathsf s} } \| \VV_0 \|_{\cX^m}, \quad \forall V_0 \in \cX^m_{\mathsf s},  \quad s > 0,
 \qquad \mbox{where} \qquad \eta_{\mathsf s} \teq \tfrac{3}{5} \lam <  \tfrac{4}{5} \lam =  \eta .
\eeq
We also note that since $\cX^m_{\mathsf u} $ has finite dimension, $\s(\cL|_{\cX^m_{\mathsf u}}) = \s_{\eta}$, the operator
$\cL|_{\cX^m_{\mathsf u}}$ may be represented as a matrix with eigenvalues $> - \eta$; therefore, we obtain
\beq\label{eq:decay_unstab}
 \| e^{-\cL s} \VV_0 \|_{\cX^m} \leq  
 C_m e^{\eta s} \| \VV_0 \|_{\cX^m}.
   \quad \forall \VV_0 \in \cX^m_{\mathsf u} , \quad s > 0 .
\eeq

\subsubsection{Additional decay estimates of $\cL$ }
In order to localize the initial data (see~\eqref{eq:V2_form2:a} below), we will need the following decay estimates for the linear evolution~\eqref{eq:lin_semi:c}, with initial data supported in the far-field.

\begin{proposition}\label{prop:far_decay}
Let $R_4$ be as defined in Theorem~\ref{thm:coer_est}. Consider~\eqref{eq:lin_semi} with initial data $\VV_0 = (\UU_0, \S_0)$ with $\supp(\UU_0)\cup \supp(\S_0) \subset B( 0, R)^\complement$ for some $R > 4 R_4> \xi_s $. 
For any $m \geq m_0$, the solution $\VV(s) = (\UU(s), \S(s)) = e^{\cL s} \VV_0$ satisfies
\begin{equation*}
\supp(\VV(s)) \subset B( 0, 4 R_4)^\complement, \qquad  \| e^{\cL s} \VV_0\|_{\cX^m} \leq e^{-\lam s} \| \VV_0 \|_{\cX^m}.
\end{equation*}
\end{proposition}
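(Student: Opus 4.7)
The plan is to establish the two claimed conclusions in sequence: first the support preservation $\supp(\VV(s)) \subset B(0,4R_4)^\complement$, and then the $\cX^m$ decay bound. The decay will follow essentially for free once the support statement is in hand, thanks to the coercive estimate \eqref{eq:coer_est}, so most of the work lies in showing that the solution cannot propagate inward across $\partial B(0,R)$.

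For the support preservation I would use a fixed-domain energy estimate on $B(0,R)$, exploiting the symmetric hyperbolic structure \eqref{eq:lin_semi_V} together with the outgoing property \eqref{eq:rep2}. The key geometric input is the computation of the symbol $B(\hat y) \teq \sum_i \hat y_i B_i(y)$ at $|y|=R$: using the block structure of $B_i$ from \eqref{eq:lin_semi_V} one checks that its three eigenvalues are $R + \bar U(R)\pm\alpha \bar\S(R)$ and $R+\bar U(R)$, each of which is strictly positive because $R > \xi_s$ by \eqref{eq:rep2} and \eqref{eq:rep22}. Hence $B(\hat y) \succeq 0$ on $\partial B(0,R)$. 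Setting $E(s) \teq \int_{B(0,R)} |\VV(y,s)|^2 dy$ and using the symmetry of each $B_i$ to integrate by parts, one arrives at
\begin{equation*}
\tfrac{d}{ds} E(s) = -\oint_{\partial B(0,R)} \VV \cdot B(\hat y) \VV\, d\sigma + \int_{B(0,R)} \VV \cdot \bigl(2M + {\sum}_i \pa_i B_i\bigr) \VV\, dy \leq C_R \, E(s),
\end{equation*}
since the boundary integrand is nonnegative and the interior matrix is bounded on the compact set $B(0,R)$. Because $E(0)=0$ by hypothesis, Gr\"onwall yields $E(s) \equiv 0$ for all $s \geq 0$, hence $\VV(s) \equiv 0$ on $B(0,R) \supset B(0,4R_4)$. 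For general $\VV_0 \in \cX^m$ this integration by parts is made rigorous by approximating $\VV_0$ in $\cX^m$ by radially symmetric $\VV_0^{(n)} \in C_c^\infty$ supported in $B(0,R')^\complement$ with $R' < R$ slightly smaller (a radial mollification), and passing to the limit using the continuous dependence estimate \eqref{eq:lin_semi_EE} from Section~\ref{sec:lin_semi}.

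Armed with $\supp(\VV(s)) \subset B(0,4R_4)^\complement \subset B(0,R_4)^\complement$, the bulk integral on the right-hand side of \eqref{eq:coer_est} vanishes identically in $s$, and that estimate reduces to $\la \cL \VV(s), \VV(s) \ra_{\cX^m} \leq -\lam \|\VV(s)\|_{\cX^m}^2$. Combined with the identity $\tfrac{1}{2} \tfrac{d}{ds} \|\VV(s)\|_{\cX^m}^2 = \la \cL \VV(s), \VV(s) \ra_{\cX^m}$ (valid for $\VV_0 \in D(\cL)$, and for general $\VV_0 \in \cX^m$ via approximation), Gr\"onwall yields the claimed decay $\|\VV(s)\|_{\cX^m} \leq e^{-\lam s} \|\VV_0\|_{\cX^m}$. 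The main obstacle is the support step, and within it the most delicate point is ensuring the positivity of $B(\hat y)$ at the boundary of the localization domain; this is precisely why the hypothesis $R > \xi_s$ is imposed, and it is the reason the analogous inward propagation result would fail at radii below the sonic point.
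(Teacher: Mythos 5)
Your proof is correct, and for the decay step it coincides with the paper's: once $\VV(s)$ vanishes on $B(0,4R_4)$, the compact/bulk term in \eqref{eq:coer_est} (equivalently, $\cK_m \VV(s)$) vanishes, and Gr\"onwall with \eqref{eq:dissip} gives the $e^{-\lam s}$ decay. For the support-preservation step you take a slightly different, but equally valid, route. The paper multiplies the equation by $\VV\chi$ for a radially symmetric, \emph{monotone decreasing} cutoff $\chi$ equal to $1$ on $B(0,4R_4)$ and vanishing outside $B(0,R)$, and controls the cross term by Cauchy--Schwarz, reducing everything to the sign of $(\xi+\bar U-\al\bar\S)\pa_\xi\chi\leq 0$ on $\supp(\na\chi)\subset\{\xi>\xi_s\}$; this avoids boundary terms entirely. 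You instead work on the fixed ball $B(0,R)$ and diagonalize the boundary symbol $\sum_i \hat y_i B_i$, identifying its eigenvalues $R+\bar U(R)\pm\al\bar\S(R)$ and $R+\bar U(R)$ as the characteristic speeds and invoking \eqref{eq:rep2}, \eqref{eq:rep22} for their positivity. The two arguments use exactly the same outgoing property of the slow acoustic speed beyond the sonic point; yours makes the characteristic structure more transparent, while the paper's cutoff version sidesteps the trace/boundary-regularity issue that you (correctly) handle by mollifying the data to a slightly smaller exterior region $B(0,R')^\complement$ with $4R_4<R'<R$ and passing to the limit via \eqref{eq:lin_semi_EE}. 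Both are complete proofs.
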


\begin{proof}
The  proof of Proposition~\ref{prop:far_decay} uses that the support of the solution $e^{\cL s} \VV_0$ is moving away from $y=0$, remaining outside of $B(0, 4 R_4)$ for all time. Since $\supp(\cK_m f) \subset B(0,4 R_4)$ (item~(a) in Proposition~\ref{prop:compact}), we 
get $\cK_m \VV(s) = 0$ for all time $s$, and the desired decay estimate follows from \eqref{eq:dissip} or \eqref{eq:dissp_semi}.

Based on the above discussion, we only need to show that the solution $\VV(y,s)= 0$ for all $y \in B(0, 4 R_4)$ and $s>0$. Let $\chi$ be a radially symmetric cutoff function with $\chi(y) = 1$ for $|y| \leq 4 R_4$, $\chi(y)= 0 $ for $|y| \geq R>4 R_4$, and with $\chi(|y|)$ decreasing in $|y|$. Our goal is to show that the weighted $L^2$ norm $\int (|\UU(s)|^2 + |\S(s)|^2 ) \chi$ of the solution $\VV = (\UU,\S)$ of~\eqref{eq:lin_semi}, vanishes identically for $s\geq 0$. By assumption, we have that $\int (|\UU(0)|^2 + |\S(0)|^2 ) \chi = 0$, so it remains to compute $\frac{d}{ds}$ of this weighted $L^2$ norm using~\eqref{eq:lin_semi}.

Recall the decomposition of $\cL$ from \eqref{eq:lin_Hk} with $m=0$ (hence, with $\cR_{U, m} =\cR_{\S, m}=0$).  Performing weighted $L^2$ estimates analogous to the ones in the proof of Theorem~\ref{thm:coer_est}, for the transport terms we obtain 
\begin{align}
&\int  ( \cT_U \cdot \UU   + \cT_{\S} \, \S ) \chi  \notag\\
&\quad 
= - \int \B( (y + \bar \UU) \cdot \na \bigl( \tfrac 12 |\UU|^2 \bigr)
+ (y + \bar \UU) \cdot \na  \bigl( \tfrac 12 \S^2 \bigr)   + \al  \bar \S \,  \na  \S \cdot \UU
+ \al \bar \S \div( \UU)  \S 
\B) \chi \notag \\
&\quad = \int \tfrac{1}{2} \div \B( (y + \bar \UU ) \chi \B) 
(|\UU|^2 + \S^2)
+ \al \na ( \bar \S \chi) \cdot \UU \S \notag \\
&\quad \leq \int \tfrac{1}{2} \B(  (y + \bar \UU) \cdot \na \chi
 + \chi \div( y + \bar \UU)  \B) ( |\UU|^2  + \S^2) 
 + \al \bigl( \bar \S |\na \chi| + \chi |\na \bar \S| \bigr) |\UU \S|.
\label{eq:junk:label:1}
\end{align}
We focus on the terms in~\eqref{eq:junk:label:1} involving $|\na \chi|$. Since $\chi$ is radially symmetric, we get $(y + \bar \UU) \cdot \na \chi = (\xi + \bar U) \pa_{\xi} \chi$. Using Cauchy-Schwarz, the fact that $ \pa_{\xi} \chi(y) = 0, |y| \leq 4 R_4$ and  $\pa_{\xi} \chi \leq 0$ globally, and using that \eqref{eq:rep2} yields $ \xi + \bar U(\xi) - \al \bar \S(\xi) > 0$ for $\xi = |y| > \xi_s$ (hence for $\xi>4 R_4$), we obtain 
\begin{align*}
\tfrac{1}{2} \bigl(  (y + \bar \UU) \cdot \na \chi  \bigr) ( |\UU|^2  + \S^2) 
 + \al \bar \S |\na \chi| |\UU \S| 
& \leq \tfrac{1}{2}\bigl( (\xi + \bar U ) \pa_{\xi}\chi + \al \bar \S |\pa_{\xi} \chi| \bigr)  ( |\UU|^2  + \S^2)  \\
&\leq \tfrac{1}{2}(\xi + \bar U  -\al \bar \S) \pa_{\xi}\chi  ( |\UU|^2  + \S^2)  \leq 0.
\end{align*}

For remaining contributions, resulting from the $\chi$-terms in~\eqref{eq:junk:label:1}, and from the $\cD_U, \cS_U, \cD_\S,\cS_\S$-terms in \eqref{eq:lin_Hk}, in light of~\eqref{eq:dec_U} we have that 
\begin{align*}
&\tfrac 12 \chi \div( y + \bar \UU)  ( |\UU|^2  + \S^2) + \al  \chi |\na \bar \S| |\UU \S|
- (r -1) \chi ( |\UU|^2 + \S^2) 
- \chi (\UU\cdot\na\bar \UU \cdot \UU + \UU\cdot \na \bar \S \, \S) 
\notag\\
&\leq C \chi ( |\UU|^2  + \S^2)
\end{align*}
for some sufficiently large $C>0$ (depending on $\alpha, r, \bar \UU, \bar \S$). Thus, we obtain 
\[
\tfrac{1}{2} \tfrac{d}{ds} \int (|\UU|^2 + \S^2 )\chi 
= 
\int  ( \cL_U \cdot \UU  + \cL_{\S} \,  \S ) \chi 
\leq C \int (|\UU|^2 + \S^2 )\chi,
\]
which implies via Gr\"onwall that $\int (|\UU(s)|^2 + \S(s)^2 )\chi  = 0$ for all $s\geq0$. The claim follows.
\end{proof}

 \subsection{Smoothness of unstable directions}\label{sec:smooth_unstab}

In this section, we show that the unstable space $\cX_{\mathsf u}^m$ given in \eqref{eq:dec_Xu} is spanned by smooth functions. This fact will be shown to follow  from the following abstract lemma.
 
\begin{lemma}\label{lem:smooth}
Let $\{ X^i\}_{i \geq 0}$ be a sequence of Banach spaces with $X^{i+1} \subset X^{i}$ for all $i \geq 0$.
Assume that for any $i \geq i_0$ we can decompose the linear operator 
$\cA \colon D(\cA) \subset X^i \to X^i$ as 
 $\cA = \cD_i + \cK_i$, where the linear operators $\cD_i$ and $\cK_i$ satisfy
\beq\label{eq:smooth_ass}
\cD_i : D(\cA) \subset X^i \to X^i , \quad  \cK_i :  X^{i-1}  \to  X^i,
\quad \{ z \in {\mathbb C}\colon \Re(z) > - \lam \} \subset \rho(\cD_i).
\eeq
Here, $\rho(\cdot)$ denotes the resolvent set of an operator and $\lambda>0$ is independent of $i\geq i_0$. Fix $n\geq 0$ and $z \in {\mathbb C}$ with $\Re(z) > -\lam$. Assume that the functions $ f_0, \ldots , f_n  \in  X^{i_0}$ satisfy 
\[
(z - \cA) f_0 = 0,
\quad (z - \cA) f_{i} = f_{i-1}, \quad \mbox{ for } \quad 1 \leq i \leq n.
 \]
Then, we have $ f_0, \ldots , f_n  \in X^{\infty} \teq \cap_{i \geq 0} X^i$. 
\end{lemma}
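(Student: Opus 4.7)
The plan is to proceed by induction on the regularity index $i \geq i_0$, showing at each stage that $f_0, \ldots, f_n \in X^i$; iterating over $i$ then yields $f_j \in \bigcap_{i \geq 0} X^i = X^\infty$. The base case $i = i_0$ is precisely the hypothesis of the lemma. For the inductive step, suppose $f_0, \ldots, f_n \in X^i$ for some $i \geq i_0$, and run a nested induction on $j \in \{0, 1, \ldots, n\}$ to establish $f_j \in X^{i+1}$.

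The key algebraic observation is to substitute the splitting $\cA = \cD_{i+1} + \cK_{i+1}$ into the relations $(z-\cA)f_0 = 0$ and $(z-\cA)f_j = f_{j-1}$, rewriting them uniformly as
\[
(z - \cD_{i+1}) f_j = \cK_{i+1} f_j + f_{j-1}, \qquad j = 0, 1, \ldots, n,
\]
with the convention $f_{-1} \teq 0$. Since $\Re(z) > -\lam$, the spectral hypothesis \eqref{eq:smooth_ass} guarantees that $z \in \rho(\cD_{i+1})$, so the resolvent $(z-\cD_{i+1})^{-1} \colon X^{i+1} \to X^{i+1}$ is a bounded linear operator. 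Consequently, membership $f_j \in X^{i+1}$ reduces to showing that the right-hand side of the above display lies in $X^{i+1}$.

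For the base of the inner induction, $j = 0$, the right-hand side is $\cK_{i+1} f_0$, which belongs to $X^{i+1}$ by the smoothing property $\cK_{i+1} \colon X^{i} \to X^{i+1}$ together with the outer inductive hypothesis $f_0 \in X^i$. For the inductive step $j \mapsto j+1$ (with $j + 1 \leq n$), the right-hand side is $\cK_{i+1} f_{j+1} + f_j$; the first term lies in $X^{i+1}$ because $f_{j+1} \in X^i$ by the outer hypothesis and $\cK_{i+1}$ is smoothing, while the second term lies in $X^{i+1}$ by the inner inductive hypothesis. Applying the resolvent then yields $f_{j+1} \in X^{i+1}$, closing both inductions.

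The main obstacle is organizational rather than analytic: one must carefully interleave the outer induction on $i$ (which raises regularity one notch at a time) with the inner induction on $j$ (which propagates regularity along the Jordan-type chain of generalized eigenfunctions $f_0, f_1, \ldots, f_n$). The bootstrap may be recorded schematically as $f_j \in X^i \Rightarrow \cK_{i+1} f_j \in X^{i+1} \Rightarrow f_j = (z-\cD_{i+1})^{-1}\bigl(\cK_{i+1} f_j + f_{j-1}\bigr) \in X^{i+1}$, with the final implication available only after $f_{j-1} \in X^{i+1}$ has already been secured in the inner induction. A subtle point to verify is that the uniform-in-$i$ spectral assumption in \eqref{eq:smooth_ass} is what allows one to use the same $z$ at every level of regularity; otherwise the resolvent bound could deteriorate and the bootstrap would stall.
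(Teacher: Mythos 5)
Your proposal is correct and uses essentially the same mechanism as the paper: write $(z-\cD_{i+1})f_j = \cK_{i+1}f_j + f_{j-1}$, use the smoothing $\cK_{i+1}\colon X^i \to X^{i+1}$ and the resolvent $(z-\cD_{i+1})^{-1}$ on $X^{i+1}$ to gain one level of regularity, and iterate. The only difference is organizational—the paper first reduces to the single-function claim ``$(z-\cA)f = F \in X^\infty$ implies $f \in X^\infty$'' (inducting on the chain index) and then inducts on the regularity index, whereas you nest the inductions the other way around—which is immaterial.
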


\begin{proof}
It suffices to show that:  
\begin{equation*}
\mbox{if } f \in X^{i_0} \mbox{ satisfies } (z - \cA) f = F \in X^{\infty},
\mbox{ then } f \in X^{\infty}.
\end{equation*}
Indeed, the lemma then follows by  induction on $i \in \{0,\ldots,n\}$.

In order to prove the above claim, we further use induction on $i \geq i_0$ to show that:
\begin{equation}
\label{eq:smooth2}
\mbox{if } f \in X^{i_0}\cap X^{i-1} \mbox{ satisfies } (z - \cA) f = F \in X^{\infty},
\mbox{ then } f \in X^{i}.
\end{equation}
The base case $i= i_0$ in \eqref{eq:smooth2} holds true automatically. For the inductive step, for $i-1\geq i_0$ we assume that $f \in X^{i-1} = X^{i_0} \cap X^{i-1}$.  Using that $\cA = \cD_i + \cK_i$ and $\cK_i: X^{i-1} \to X^{i}$  from \eqref{eq:smooth_ass}, we get $(z - \cD_i) f = F + \cK_i f \in X^{i}$.
From \eqref{eq:smooth_ass} and the assumption $\Re(z) > -\lam$, we get $z \in \rho(\cD_i)$ and thus $f = (z - \cD_i)^{-1} (F + \cK_i f) \in X^{i}$. This proves~\eqref{eq:smooth2}. By induction on $i$, it follows that $f \in X^{\infty}$.
\end{proof}

We are now able to apply Lemma~\ref{lem:smooth} with $(\cA, \{X^i\}_{i\geq i_0}) \rightsquigarrow (\cL, \{\cX^m\}_{m\geq m_0})$, and the decomposition $\cL = \cD_m + \cK_m$ for $m \geq m_0$, with $\cK_m$ as constructed in Proposition~\ref{prop:compact}, in order to describe the regularity of elements of $\ker( (z- \cL)^{\mu_z})$ for $n \rightsquigarrow \mu_z < \infty$, and $z \in \sigma_\eta$ (see~\eqref{eq:dec_X} and~\eqref{eq:dec_Xu}). Indeed, the assumptions of Lemma~\ref{lem:smooth} are satisfied in light of Lemma~\ref{lem:Xm_chain}, Proposition~\ref{prop:compact}, and \eqref{eq:spec}.

 We fix $z \in \s_\eta = \s(\cL) \cap \{z\in \mathbb{C} \colon \Re(z) > -\eta\}$, where we recall $- \eta  >  - \lam$. We also fix $g \in \ker( (z- \cL)^{\mu_z}) \subset \cX_{\mathsf u}^{m}$ for some integer $\mu_z < \infty$, as constructed in \eqref{eq:dec_X} and \eqref{eq:dec_Xu}. 
We apply Lemma~\ref{lem:smooth} with  $f_i = (z- \cL)^{\mu_z-i} g$ for $0 \leq i \leq \mu_z$, to deduce that  $\{ f_i\}_{i=0}^{\mu_z} \subset \cX^{\infty} \teq \cap_{m \geq m_0} \cX^m  $. Since 
\[
\cX^{\infty}  = \cap_{m \geq m_0} \cX^m \subset C^{\infty} ,
\]
we deduce from~\eqref{eq:dec_Xu} that $\cX_{\mathsf u}^{m}$ is spanned by smooth radially-symmetric functions.

\section{Nonlinear stability}
\label{sec:non}

The goal of this section is to prove Theorem~\ref{thm:blowup}, by constructing global solutions $(\UU,\S, \vA)$ to \eqref{eq:euler_ss}, with a number of desirable properties, in the vicinity of the stationary profile $(\bar \UU,\bar \S,0)$. Throughout this section we fix $m\geq m_0$, where $m_0$ is sufficiently large, as in Theorem~\ref{thm:coer_est}. 

\subsection{Decomposition of the solution} 
 We will use $\VV = (\UU,  \S,\AA)$ to denote the nonlinear solution to~\eqref{eq:euler_ss}. As in~\eqref{eq:linear+nonlinear}, we denote the perturbation to the stationary profile as 
\[
\td \VV \teq (\td \UU,\td \S,\AA) \teq (\UU - \bar \UU, \S - \bar \S,\AA),
\]
and recall that this perturbation solves \eqref{eq:lin}--\eqref{eq:non}. We shall further decompose the perturbation\footnote{We emphasize that the $\td \cdot_1$ or $\td \cdot_2$ sub-index denote different parts of the perturbation, they  {\em do not represent} Cartesian coordinates.} as $\td \VV = \td \VV_1 + \td \VV_2$, with $\td \UU = \td \UU_1 + \td \UU_2$, $\td \S = \td \S_1 + \td \S_2$, so that
\beq\label{eq:init}
\bal
\VV= (\UU,  \S,\vA) 
= \td \VV_1 + \td \VV_2 + \bar \VV, \quad 
\td \VV_1 = (\td \UU_1, \td \S_1,\vA), \quad 
\td \VV_2 =  (\td \UU_2, \td  \S_2,0), \quad  
\bar \VV =  (\bar \UU, \bar \S,0).
 \eal
 \eeq
The fields $\td \VV_1$ and $\td \VV_2$ are defined as the solutions of
\begin{subequations}
\label{eq:non_V}
\begin{align}
 & \pa_s \td \VV_1   = \cL_V (\td \VV_1) 
 - \cK_m(\td \VV_1) + \cN_V(\td \VV) ,
 \label{eq:non_V:a} \\
 & \pa_s  \td \VV_2  = \cL ( \td \VV_2) 
 + \cK_m( \td \VV_1), 
 \label{eq:non_V:b}
\end{align}
\end{subequations}
with initial data at time $s = \sin = \log T^{-1/r}$ given by $\td \VV_{1,in}$ as described in Theorem~\ref{thm:non},  and with $\td \VV_{2,in}$ as given by~\eqref{eq:V2_form2:d}. The operator $\cK_m$ in~\eqref{eq:non_V} is as defined in Proposition~\ref{prop:compact}, 
while the linear operators $\cL$, $\cL_V$, and the nonlinear term $\cN_V$ are given in terms of \eqref{eq:lin}--\eqref{eq:non} and 
 \begin{equation*}
 \cL  = (\cL_{U}, \cL_{\S}), \qquad 
 \cL_V = (\cL_{U}, \cL_{\S}, \cL_A), \qquad
 \cN_V = (\cN_{U}, \cN_{\S}, \cN_A).
\end{equation*}
It is clear, by definition, that a global solution $\td\VV_1, \td \VV_2$ of \eqref{eq:non_V} provides via \eqref{eq:init} a global solution $\VV$ of \eqref{eq:euler_ss}.

In \eqref{eq:non_V}, and in the remainder of the section, we have abused notation two-fold. First, in~\eqref{eq:non_V:a} we note that the output of the $\cK_m$ operator is an element of $\cX^m$ (hence, a $\UU$ and a $\Sigma$) and so it does not contain an $\AA$-component (as is required by the $\td \VV_1$ representation in~\eqref{eq:init}); as such, in~\eqref{eq:non_V:a} we write $\cK_m(\td \VV_1)=\cK_m(\td \UU_1,\td \S_1)$ to mean $(\cK_m(\td \UU_1, \td \S_1),0)$, i.e., $\cK_m(\td \VV_1)$ does not act on the $\AA$-component. Second, we note that the $\td \VV_2$ representation in~\eqref{eq:init} has a $0$ in the $\AA$-component; as such, in~\eqref{eq:non_V:b} we identify $\td \VV_2$ with the pair $(\td \UU_2,\td \S_2)$, which then matches the domain and range of the operators $\cL$ and $\cK_m$.
 
There are a few important advantages to the  decomposition\footnote{This decomposition was first developed in~\cite{ChHo2023} for stable blowup analysis of the 3D incompressible Euler equations.} \eqref{eq:init} and the definitions~\eqref{eq:non_V}. First, the part $\td \VV_2$, which is used to capture unstable parts, is decoupled from the equation of $\td \VV_1$ at the linear level, and so we can obtain decay estimates for $\td \VV_1$ directly using energy estimates and the dissipativity of $(\cL-\cK_m,\cL_A)$ (see~\eqref{eq:coer_estA} and~\eqref{eq:dissip}), without appealing to semigroups.
Second, we can obtain a representation formula (and an estimate) for $\td \VV_2$ by using Duhamel's formula 
\begin{equation}
\label{eq:V2_form1}
\td \VV_2(s) = e^{\cL (s-\sin)} \td \VV_{2,in} + \int_{\sin}^s e^{\cL(s -s^\prime)} \cK_m(\td \UU_1, \td \S_1)(s^\prime) d s^\prime
\end{equation}
for some initial data $\VV_{2,in}$, to be chosen later. We note that for \eqref{eq:non_V} without the nonlinear parts, we can first obtain a decay estimate for $\td \VV_1$, and then construct $\td \VV_2$ using the above Duhamel formula, globally in time. 
We obtain bounds for the full nonlinear  system~\eqref{eq:non_V}  by treating the nonlinear terms perturbatively. 

The Duhamel representation~\eqref{eq:V2_form1} for the unstable part of the perturbation is deceivingly simple; this is because we did not yet specify the initial data $\td \VV_{2,in}$. In practice, we need to use the usual technique for constructing unstable manifolds, and apply the semigroup backward-in-time on the unstable piece of $\td \VV_{2}$. Additionally, in order to obtain initial data with compact support, we modify \eqref{eq:V2_form1} as follows
\begin{subequations}
\label{eq:V2_form2}
\begin{align}
\td \VV_2(s) 
&\teq \td \VV_{2,{\mathsf s}}(s) -\td \VV_{2, {\mathsf u}}(s) + e^{\cL (s-\sin)}\Bigl(\td \VV_{2, {\mathsf u}}(\sin) \bigl(1 - \chi\bigl(\tfrac{y}{8 R_4}\bigr) \bigr) \Bigr), 
\label{eq:V2_form2:a}\\
\td \VV_{2, {\mathsf s}} (s) 
&\teq \int_{\sin}^s e^{\cL(s - s^\prime)} \Pi_{\mathsf s} \cK_m(\td \UU_1, \td \S_1)(s^\prime) d s^\prime,  
\label{eq:V2_form2:b} \\
\td \VV_{2, {\mathsf u}} (s) 
&\teq  \int_s^{\infty} e^{-\cL(s^\prime-s)} \Pi_{\mathsf u} \cK_m(\td \UU_1, \td \S_1)(s^\prime) d s^\prime ,
\label{eq:V2_form2:c}
\end{align}
where $\chi$ is a smooth radial cutoff function with $\chi(y) = 1$ for $|y| \leq 2/3 $, and $\chi(y) = 0$ for $|y| \geq 1$, $\Pi_{\mathsf u}$ is the orthogonal projection from $\cX^m$ to $\cX_{\mathsf u}^m$ (see~\eqref{eq:dec_X}--\eqref{eq:dec_Xu}) and $\Pi_{\mathsf s} \teq {\rm Id} - \Pi_{\mathsf u}$.
It is not difficult to see that \eqref{eq:V2_form2} agrees with \eqref{eq:V2_form1} for initial data taken as
\begin{equation}
\td \VV_{2, in} 
= - \td \VV_{2,{\mathsf u}}(\sin) \chi\bigl(\tfrac{y}{8 R_4}\bigr)
=-\chi\bigl(\tfrac{y}{8 R_4}\bigr)
\int_{\sin}^{\infty} e^{- \cL (s^\prime-\sin)} \Pi_{\mathsf u} \cK_m(\td \UU_1, \td \S_1)(s^\prime) d s^\prime.
\label{eq:V2_form2:d}
\end{equation}
\end{subequations}
Since the cutoff $\chi$ and elements of $\cX^m$ (the range of $\Pi_{\mathsf u}\cK_m$ being a subset) are radially symmetric, if follows that so is $\td \VV_{2, in}$.
The detailed representation \eqref{eq:V2_form2} shows that $\td \VV_2$ is computed as a function of $(\td \UU_1, \td \S_1)$; for later purposes it is useful to codify this relation as a map, $\cT_2$, and to denote
\begin{equation}  
\cT_2(\td \UU_1, \td \S_1)  \teq
\mbox{Right Side of }\eqref{eq:V2_form2:a}
.
\label{eq:T2:def} 
\end{equation}

\subsection{Functional setting} 

We introduce the following space for the perturbation $\td \VV_1 = (\td \UU_1, \td \S_1,  \AA)$:
\beq\label{norm:Z}
\cZ^i := \cX^i \times \cX^i \times \cX_A^{i}. 
\eeq
Next, we introduce the spaces $\cW^{m+1}$ and $\cW_{A}^{m+1}$, which are used for closing nonlinear estimates. Our goal is to perform both weighted $H^{2m}$ and weighted $H^{2m+2}$ estimates on \eqref{eq:non_V}, using the {\em same compact operator} $\cK_m$ and the {\em same projections} $\Pi_{\mathsf s}, \Pi_{\mathsf u}$ appearing in~\eqref{eq:non_V} and~\eqref{eq:V2_form2:b}--\eqref{eq:V2_form2:c}; that is, we do not wish to change $\cK_m$ into $\cK_{m+1}$ for the weighted $H^{2m+2}$ bound. 

Fix an arbitrary $ m \geq m_0$. For some $\mu_{m+1} > 0$ to be chosen sufficiently small,  using Theorem~\ref{thm:coer_est}, Proposition~\ref{prop:compact} (which in particular gives that $\cK_m : \cX^{0} \to \cX^{m+1}$), and the fact that by definition we have $\|\cdot\|_{\cX^0} \leq \|\cdot\|_{\cX^m}$, we obtain 
\begin{align*}   
&\la  (\cL - \cK_m) f , f \ra_{\cX^m}   + \mu_{m+1} \la  (\cL - \cK_m) f , f \ra_{\cX^{m+1}} 
\\
&\qquad \leq - \lam \| f \|_{\cX^m}^2 
+ \mu_{m+1} \bigl( \la \cL f , f \ra_{\cX^{m+1}}   
+ \| \cK_m f \|_{\cX^{m+1}} \| f \|_{\cX^{m+1}}\bigr ) 
 \\
&\qquad \leq 
-\lam \| f \|_{\cX^m}^2 
+ \mu_{m+1} \bigl( - \lam \| f \|_{\cX^{m+1} }^2 + \bar C \|f\|_{\cX^0}^2 + C_m \|f\|_{\cX^0} \|f\|_{\cX^{m+1}} \bigr)
 \\
&\qquad \leq 
- \tfrac{9}{10} \lam \bigl( \| f \|_{\cX^m}^2  + \mu_{m+1} \| f \|_{\cX^{m+1}}^2 \bigr)
+
\bigl(-\tfrac{1}{10} \lam + \mu_{m+1} (\bar C + \tfrac{5}{\lambda} C_m^2) \bigr)
 \| f \|_{\cX^m}^2,
\end{align*}
for all $f \in \{ (\UU,\Sigma) \in \cX^{m+1} \colon \cL(\UU,\Sigma) \in \cX^{m+1}\}$.
Choosing $ \mu_{m+1} >0 $ small enough in terms of $m$ and $\lambda$, from the above bound we obtain the coercive estimate
\begin{equation*}
\la  (\cL - \cK_m) f , f \ra_{\cX^m}   + \mu_{m+1} \la  (\cL - \cK_m) f , f \ra_{\cX^{m+1}} 
\leq  
- \tfrac{9}{10} \lam \bigl( \| f \|_{\cX^m}^2  + \mu_{m+1} \| f \|_{\cX^{m+1}}^2 \bigr)
.
\end{equation*}
Note that coercive estimates for $\cL_A$ in both $\cX_A^m$ and $\cX_A^{m+1}$ are directly available from Theorem \ref{thm:coer_est}. 

In light of the above coercive bounds, with $\mu_{m+1}>0$ chosen as above, we  define the Hilbert spaces $\cW^{m+1}\subset \cX^{m+1}, \cW_A^{m+1} \subset \cX_A^{m+1}$ according to the inner products
\begin{subequations}
\label{norm:Wk}
\begin{align}
&\la f, g \ra_{\cW^{m+1}} 
\teq \mu_{m+1} \la f, g \ra_{\cX^{m+1}}   
+ \la f , g \ra_{\cX^m}, \qquad \| f \|_{\cW^{m+1}}^2 = \la f , f \ra_{\cW^{m+1}},\\
&\la f, g \ra_{\cW_A^{m+1}} 
\teq \mu_{m+1} \la f, g \ra_{\cX_A^{m+1}}   
+ \la f , g \ra_{\cX_A^m}, \qquad \| f \|_{\cW_A^{m+1}}^2 = \la f , f \ra_{\cW_A^{m+1}},
\end{align}
\end{subequations}
and obtain that 
\beq\label{eq:coer_Wk}
  \la (\cL - \cK_m ) f , f \ra_{\cW^{m+1}} \leq - \lam_1 \| f\|^2_{ \cW^{ m+1 }}, \qquad 
  \la  \cL_A f , f \ra_{\cW_A^{m+1}} \leq - \lam_1 \| f\|^2_{ \cW_A^{ m+1 }}, \qquad 
  \lam_1 = \tfrac{9}{10 } \lam,
\eeq
for all $f \in \{ (\UU,\Sigma) \in \cX^{m+1} \colon \cL(\UU,\Sigma) \in \cX^{m+1}\}$, respectively for all $f \in \{ \AA \in \cX_A^{m+1} \colon \cL_A(\AA) \in \cX_A^{m+1}\}$.
Estimate~\eqref{eq:coer_Wk} shows that we can use the same compact operator $\cK_m$ to simultaneously obtain coercive estimates in weighted $H^{2m+2}$ and weighted $H^{2m}$ spaces.

\subsection{Nonlinear stability and the proof of Theorem~\ref{thm:blowup}}
We recall from \eqref{eq:decay_stab} (shifted so that the initial time is $s=\sin$ instead of $s=0$) and~\eqref{eq:coer_Wk}, that the decay rates $\eta_s, \eta$, and $\lam_1$  are given by
\beq\label{eq:decay_para}
(\eta_s, \eta, \lam_1) = \bigl( \tfrac{3}{5}, \tfrac{4}{5}, \tfrac{9}{10}\bigr) \lam,\qquad  \eta_s < \eta < \lam_1 < \lam.
\eeq
We also recall that the regularity parameter $m \geq m_0$ has been fixed.
The goal of this section is to prove:

\begin{theorem}[\bf Nonlinear stability] \label{thm:non}
Fix $m\geq m_0$. 
There exists a sufficiently small $\d > 0$ such that
for any initial data $\td \VV_{1,in} = (\td \UU_1(\sin),\td \S_1(\sin),  \AA(\sin))$ which is {\em smooth} enough\footnote{We require the $\cZ^{m+2}$-regularity of $\td \VV_{1,in}$, a space which is stronger than $\cZ^{m+1}$, in order to obtain the local-in-time existence of a $\cZ^{m+2}$-solution (see Footnote~\ref{foot:footnote:details}); in turn, this allows us to justify a few estimates, e.g.~\eqref{eq:coer_Wk} for $(\td \UU_1, \td \S_1)$ which requires $\cL(\td \UU_1, \td \S_1) \in \cX^{m+1}$. Note that this regularity requirement is only \textit{qualitative}, and we only use Theorem~\ref{thm:non} with an $C^{\infty}$ initial perturbation (see~\eqref{eq:the:IC}) in order to prove Theorem~\ref{thm:blowup}.The only {\em quantitative} assumption on the initial data is given by~\eqref{eq:IC:small}.} to ensure $\td \VV_{1,in} \in  \cZ^{m+2}$ (see~\eqref{norm:Z})
and {\em small} enough to ensure
\begin{equation}
\|(\td \UU_1(\sin), \td \S_1(\sin))\|_{\cW^{m+1}}^2 
+ \|\vA(\sin)\|_{\cW_A^{m+1}}^2 < \d^2,
\label{eq:IC:small}
\end{equation} 
there exists 
a global solution $\td \VV_1$ to \eqref{eq:non_V:a} with initial data $\td\VV_{1,in}$, and a global solution $\td \VV_2$ of \eqref{eq:non_V:b}  given by~\eqref{eq:V2_form2}, satisfying the bounds
\begin{subequations}
\label{eq:solution:small}
\begin{align}
\|(\td \UU_1(s), \td \S_1(s))\|_{\cW^{m+1}}^2 + 
\|\vA(s)\|_{\cW_A^{m+1}}^2  &< 4 \d^2 e^{-2 \lam_1 (s-\sin)},
\label{eq:solution:small:a}\\
 \| ( \td \UU_2(s),\td  \S_2(s))  \|_{\cX^{m+2}} 
 &\les_m \d_Y e^{-\eta_s (s-\sin)},
 \qquad \mbox{where} \qquad \d_Y \teq \d^{2/3},
 \label{eq:solution:small:b}
\end{align}
\end{subequations}
for all $s\geq \sin$.
We emphasize that we cannot {\em prescribe} the initial data $\td \VV_{2,in} = (\td \UU_2(\sin),\td  \S_2(\sin))$; rather, this data is constructed via \eqref{eq:V2_form2:d} (simultaneously with the solution $\td \VV_1$) to lie in a finite-dimensional subspace of $\cX^{m+2}$.
\end{theorem}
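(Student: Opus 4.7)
The plan is to establish Theorem~\ref{thm:non} via a Banach contraction on a weighted trajectory space, exploiting the decomposition \eqref{eq:non_V}: potentially unstable dynamics are confined to $\td \VV_2$ and captured through the Duhamel representation \eqref{eq:V2_form2}, while $\td \VV_1$ evolves under $(\cL - \cK_m, \cL_A)$ whose coercivity is \eqref{eq:coer_Wk}. Define
\begin{equation*}
Y \teq \Bigl\{ \td \VV_1 \in C([\sin,\infty);\cZ^{m+2}) \colon \|\td \VV_1\|_Y \teq \sup_{s\geq \sin} e^{\lam_1(s-\sin)} \bigl( \|(\td \UU_1,\td \S_1)(s)\|_{\cW^{m+1}}^2 + \|\vA(s)\|_{\cW_A^{m+1}}^2 \bigr)^{\!1/2} \leq 2\d \Bigr\},
\end{equation*}
and the map $\Phi\colon Y \to Y$ sending $\td \VV_1 \in Y$ to the solution $\td \VV_1^{\#}$ of the linear Cauchy problem
\begin{equation*}
\pa_s \td \VV_1^{\#} = \cL_V(\td \VV_1^{\#}) - (\cK_m(\td \UU_1^{\#}, \td \S_1^{\#}),0) + \cN_V\bigl( \td \VV_1 + \cT_2(\td \VV_1) \bigr), \qquad \td \VV_1^{\#}(\sin) = \td \VV_{1,in}.
\end{equation*}
A fixed point is a solution of \eqref{eq:non_V}. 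The local $\cZ^{m+2}$-existence of $\Phi(\td \VV_1)$ follows from the symmetric-hyperbolic theory of Section~\ref{sec:lin_semi} applied to the linear equation above, whose source is quadratic and hence locally Lipschitz in $\cZ^{m+2}$ via Moser inequalities for $2(m+2)\geq 16>d=2$.

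To estimate $\td \VV_2 = \cT_2(\td \VV_1)$, combine the smoothing $\cK_m\colon \cX^0 \to \cX^{m+3}$ from Proposition~\ref{prop:compact}(c), the stable decay \eqref{eq:decay_stab} applied in $\cX^{m+2}$ (valid since $m+2\geq m_0$), the growth bound \eqref{eq:decay_unstab} on the finite-dimensional unstable part, and the far-field decay from Proposition~\ref{prop:far_decay} for the third summand of \eqref{eq:V2_form2:a}. Using $\|\td \VV_1(s')\|_{\cX^0} \leq 2\d e^{-\lam_1(s'-\sin)}$ together with the ordering $\eta_s < \eta < \lam_1 < \lam$ from \eqref{eq:decay_para}, the three contributions in \eqref{eq:V2_form2:a} are dominated respectively by $\d e^{-\eta_s(s-\sin)}$, $\d e^{-\lam_1(s-\sin)}$, and $\d e^{-\lam(s-\sin)}$, giving $\|\td \VV_2(s)\|_{\cX^{m+2}} \les_m \d\, e^{-\eta_s(s-\sin)}$. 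Since $\d \leq \d_Y = \d^{2/3}$ for $\d$ small, this yields \eqref{eq:solution:small:b}. By construction, $\td \VV_{2,in}$ in \eqref{eq:V2_form2:d} lies in the finite-dimensional subspace $\chi(\cdot/(8R_4)) \cdot \cX^m_{\mathsf u}$, delivering the finite co-dimension claim.

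Pairing $\td \VV_1^{\#}$ with itself in $\cW^{m+1} \times \cW_A^{m+1}$, invoking \eqref{eq:coer_Wk}, and using $\cW^{m+1}\hookrightarrow L^\infty$ (valid since $2(m+1)\geq 14>d=2$) together with tame Moser estimates for the bilinear $\cN_V$ and the profile decay \eqref{eq:dec_U}, one obtains
\begin{equation*}
\tfrac{1}{2}\tfrac{d}{ds} E^{\#}(s) + \lam_1 E^{\#}(s) \les_m \bigl( \|\td \VV_1(s)\|_{\cZ^{m+1}} + \|\td \VV_2(s)\|_{\cX^{m+2}} \bigr)^2 \sqrt{E^{\#}(s)},
\end{equation*}
where $E^{\#}(s) = \|(\td \UU_1^{\#},\td \S_1^{\#})(s)\|_{\cW^{m+1}}^2 + \|\vA^{\#}(s)\|_{\cW_A^{m+1}}^2$. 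Since $\eta_s < \lam_1$, the source is $\les_m \d^2 e^{-2\eta_s(s-\sin)}$; dividing by $\sqrt{E^{\#}}$ and integrating,
\begin{equation*}
\sqrt{E^{\#}(s)} \leq e^{-\lam_1(s-\sin)} \sqrt{E^{\#}(\sin)} + C_m \d^2 \int_{\sin}^s e^{-\lam_1(s-s')} e^{-2\eta_s(s'-\sin)} ds' \leq \d\, e^{-\lam_1(s-\sin)} + \tfrac{C_m \d^2}{2\eta_s - \lam_1}\, e^{-\lam_1(s-\sin)}.
\end{equation*}
The crucial inequality $2\eta_s > \lam_1$ (since $2\cdot\tfrac{3}{5}\lam > \tfrac{9}{10}\lam$ from \eqref{eq:decay_para}) makes the slow tail $e^{-2\eta_s s'}$ near $s'=\sin$ absorbable into the faster $e^{-\lam_1(s-\sin)}$ envelope. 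For $\d$ sufficiently small, $\|\Phi(\td \VV_1)\|_Y \leq 2\d$, so $\Phi: Y \to Y$. The contraction estimate is analogous: $\Phi(\td \VV_1) - \Phi(\td \VV_1')$ satisfies a similar linear equation whose source, by bilinearity of $\cN_V$ and linearity of $\cT_2$, is $\les_m \d\, e^{-(\lam_1 + \eta_s)(s-\sin)} \|\td \VV_1 - \td \VV_1'\|_Y$, yielding $\|\Phi(\td \VV_1) - \Phi(\td \VV_1')\|_Y \leq C_m \d\, \|\td \VV_1 - \td \VV_1'\|_Y$, a $\tfrac{1}{2}$-contraction for small $\d$.

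The main obstacle is the rate mismatch: $\td \VV_2$ decays only at rate $\eta_s < \lam_1$, so a \emph{linear} feedback into the $\td \VV_1$-equation would destroy the sharp rate $\lam_1$. Two structural facts save the argument. First, $\cN_V$ is \emph{quadratic}, converting $\td \VV_2$-contributions into $e^{-2\eta_s s}$ terms, and the design \eqref{eq:decay_para} ensures $2\eta_s > \lam_1$ so these nonlinear sources are integrable against $e^{-\lam_1(s-\sin)}$. Second, the $3$-derivative smoothing gain $\cK_m\colon \cX^0 \to \cX^{m+3}$ from Proposition~\ref{prop:compact}(c) places $\td \VV_2$ in $\cX^{m+2}$, one derivative above $\cW^{m+1}$, which is exactly what is needed to estimate products like $\td \VV_2 \cdot \na \td \VV_1$ in $\cW^{m+1}$. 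The $\cZ^{m+2}$-regularity required for $\td \VV_{1,in}$ is used only qualitatively to justify that $\na^{2m+3}\td\VV_1$ exists in $L^2$, so the Moser product estimate closes.
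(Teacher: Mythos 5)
Your high-level architecture coincides with the paper's: the decomposition \eqref{eq:init}--\eqref{eq:non_V}, the modified Duhamel representation \eqref{eq:V2_form2} for $\td \VV_2$ (your bounds for its three summands, the finite-dimensionality of $\td \VV_{2,in}$, and the rate bookkeeping $\eta_s<\eta<\lam_1$ with $2\eta_s>\lam_1$ are precisely the content of Lemmas~\ref{lem:decay_Km} and~\ref{lem:T2} and of the paper's bootstrap), and a Banach fixed point closed with the coercivity \eqref{eq:coer_Wk}. The gap lies in the specific iteration you chose. You freeze the \emph{entire} nonlinearity at the input and solve a linear equation for $\td \VV_1^{\#}$; your key energy inequality then requires, after Cauchy--Schwarz in $\cW^{m+1}$, a bound on $\| \cN_V(\td \VV_1 + \cT_2(\td \VV_1))\|_{\cW^{m+1}}$. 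But $\cN_V$ contains $\td \UU\cdot\na\td \UU$, $\td\S\na\td\S$, $\AA\cdot\na\AA$, so this norm involves $2m+3$ derivatives of the \emph{input} $\td \VV_1$, while $\|\cdot\|_Y$ controls only $2m+2$; tame Moser estimates do not remove this loss, and integration by parts does not help, because the top derivative sits on the frozen input rather than on the output being tested (after integrating by parts the extra derivative lands on $\Delta^{m+1}\td \VV_1^{\#}$, again beyond $E^{\#}$). The qualitative $\cZ^{m+2}$ membership built into your space $Y$ carries no quantitative bound, so it cannot be invoked here (and it also makes $Y$ non-closed in the $Y$-metric). The same loss, now acting on the \emph{difference}, invalidates your contraction estimate in $\cW^{m+1}$: the term $\td \UU_1'\cdot\na(\td \UU_1-\td \UU_1')$ costs $2m+3$ derivatives of the difference, which the $Y$-norm does not see.

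The paper's scheme is engineered exactly to avoid this. Only $\td \VV_2$ is frozen through the input (via $\cT_2(\wh \UU_1,\wh\S_1)$), while the output $\td \VV_1$ solves the full quasilinear problem \eqref{eq:non_fix}; the top-order transport and symmetric cross terms then act on the output and are handled by integration by parts together with the identities \eqref{eq:good:vector:identities} (cf.\ \eqref{eq:non_IBP}--\eqref{eq:non_IBP2}), whereas terms carrying the top derivative on $\td \VV_2$ are absorbed by the $\cX^{m+2}$ smoothing coming from $\cK_m$ --- this is where your ``one extra derivative on $\td \VV_2$'' observation is correctly deployed. Moreover, the contraction (Proposition~\ref{prop:contra}) is performed in the \emph{weaker} norm $Y_2$ (built on $\cX^m$), with individual iterates bounded in the stronger $Y_1$ (built on $\cW^{m+1}$): the dangerous difference term $\td \UU_\Delta\cdot\na\Delta^m\td \UU_{1,a}$ then requires only $2m+1$ derivatives of a quantity already bounded in $\cX^{m+1}$, and the transport of the difference is again integrated by parts. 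To repair your argument you would need to adopt both features: retain the quasilinear structure on the output of the map, and measure the contraction two derivatives below the level at which the a priori bounds are propagated.
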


\begin{remark}
\label{rem:data}
The initial data for $(\UU,\S,\AA)$ is obtained from Theorem~\ref{thm:non} and the decomposition~\eqref{eq:init} at time $s=\sin$. In light of Theorem~\ref{thm:non}, we identify the space $X_2$ mentioned in Remark~\ref{rem:initial:data:set} with an open ball in the weighted Sobolev space $\cW_A^{m+1}$ defined in~\eqref{norm:Wk}.
On the other hand, the space $X_1$ mentioned in Remark~\ref{rem:initial:data:set} consists of functions which are given as the sum of an element $(\td \UU_1(\sin), \td \S_1(\sin))$ which lies in open ball in the weighted Sobolev space $\cW^{m+1}$ (see definition~\eqref{norm:Wk}) and the element $(\td \UU_2(\sin),\td  \S_2(\sin))$ constructed in \eqref{eq:V2_form2:d}, which lies in a finite-dimensional subspace of $\cX^{m+2}$.
\end{remark}

Using the above Theorem~\ref{thm:non}, we are now ready to prove Theorem~\ref{thm:blowup}. We note though that Theorem~\ref{thm:non} implies more than what was claimed in Theorem~\ref{thm:blowup}; these additional fine properties of the solution are discussed in Section~\ref{sec:additional:estimates}.

\begin{proof}[Proof of Theorem \ref{thm:blowup}]
We choose an initial perturbation 
\[ 
\td \VV_{1,in}(y) = (\td \UU_1(y,\sin), \td \S_1(y,\sin), \vA(y,\sin)) = (\td U_1(\xi,\sin) \ee_R,\td \S_1(\xi,\sin), A(\xi,\sin) \ee_\th)
\] 
as in Theorem~\ref{thm:non}, where $\xi = |y|$. By definition, this initial data consists of radially-symmetric vectors/scalars.
Moreover, this initial perturbation may be chosen to ensure that 
\begin{equation*}
 \td U_1(\cdot,\sin) + \bar U \in C_c^{\infty}({\mathbb R}_+), 
 \quad 
 A(\cdot,\sin) \in C_c^{\infty}({\mathbb R}_+),\  \pa_{\xi} A(0, \sin) \neq 0, 
\quad \td \S_1(\cdot,\sin) + \bar \S = 1, \ \xi \geq C_{in}, 
\end{equation*}
for some $C_{in}\geq 1$ large enough, to be chosen below. For {\em example}, we can choose 
\begin{subequations}
\label{eq:the:IC}
\begin{align}
\td U_1(\xi,\sin) &= - \bar U(\xi) \Bigl(1- \chi\bigl(\tfrac{\xi}{C_{in}}\bigr) \Bigr), 
\label{eq:the:IC:a}\\
\td \S_1(\xi,\sin) &= \bigl(1 - \bar \S(\xi) \bigr) \Bigl(1 - \chi\bigl(\tfrac{\xi}{C_{in}} \bigr) \Bigr) 
\label{eq:the:IC:b}\\
A(\xi,\sin) &= \tfrac{\xi }{C_{in}}\chi(\xi) ,
\label{eq:the:IC:c}
\end{align}
\end{subequations}
where $C_{in}>0$ is to be determined, and $ \chi : \R_+ \to [0, 1]$ is a radially symmetric and smooth cutoff function with $\chi(\xi) = 1$ for $0\leq \xi \leq 1/2$, and $\chi(\xi) = 0$ for  $\xi \geq 1$. 

In order to verify that the data presented in~\eqref{eq:the:IC}  satisfies the assumption of Theorem~\ref{thm:non}, we need to verify~\eqref{eq:IC:small}.\footnote{Indeed, initial data such as the one in~\eqref{eq:the:IC} clearly satisfies the smoothness assumptions of Theorem~\ref{thm:non} since the profile $(\bar U,\bar \S)$ is $C^\infty$ smooth, radially-symmetric, and decays as $\xi \to \infty$ according to~\eqref{eq:dec_U}.} 
Letting $F(y) = (\td \UU_1(y,\sin),\td\S_1(y,\sin))$, we note from~\eqref{eq:dec_U} that $|F(y)| \les \one_{|y| \geq C_{in}/2} \la y \ra^{1 - r} \in L^2(\vp_g)$, where $\vp_g$ is as defined in~\eqref{eq:vp:g:def}. Similarly, for $i\in \{m,m+1\}$, from~\eqref{eq:dec_U} we have $|\na^{2i} F| \les_i \one_{|y| \geq C_{in}/2} \la y \ra^{ 1 - 2 i - r  } \in L^2(\vp_{2i}^2 \vp_g)$, with $\vp_{2i}$ as defined in~\eqref{eq:vp:g:def}. Thus, by the dominated convergence theorem we have that $\| F \|_{\cX^m} \to 0$ and $\|F\|_{\cX^{m+1}} \to 0$ as $C_{in} \to \infty$. In particular, we may ensure $\|(\td \UU_1(\sin), \td \S_1(\sin))\|_{\cW^{m+1}} \leq \delta/2$ by choosing $C_{in}$ sufficiently large. Next, we note that the function $F(y) = \xi \chi(\xi) \ee_\th = C_{in} \AA(y,\sin)$, is $C^\infty$ smooth and has compact support in the unit ball. Moreover, using that the weight $\vp_A$ defined in~\eqref{eq:wg_A} satisfies $\vp_A(\xi)\asymp \xi^{-\b_1}$ with $\b_1 \in (3,4)$, we deduce that  $\|F\|_{\cX^0_A}^2 \les \int_{|\xi|\leq 1}\xi^2 \xi^{-b_1} \xi d\xi \asymp 1$. Thus, $\|F\|_{\cW_A^{m+1}} \les_m 1$, and hence $\|\AA(\sin)\|_{\cW_A^{m+1}} \leq \delta/2$ for $C_{in}$ sufficiently large. Thus, the initial data in~\eqref{eq:the:IC} satisfies~\eqref{eq:IC:small} whenever $C_{in}$ is sufficiently large.

Without loss of generality, we may choose $C_{in} > 16 R_4$, and thus definitions~\eqref{eq:the:IC:a}--\eqref{eq:the:IC:b} imply that $\supp(\td \UU_1(\sin),\td \S_1(\sin)) \subset B(0,8 R_4)^\complement$. Moreover, we have $\supp(\td \UU_1(\sin) + \bar \UU, -1 + \td \S_1(\sin) + \bar \S ) \subset B(0,C_{in})$ and $\supp (\AA(\sin)) \subset B(0,1)$; combined with $\supp(\td\VV_{2,in}) = \supp (\td \UU_2(\sin),\td\S_2(\sin)) \subset B(0, 8 R_4)$ (see~\eqref{eq:V2_form2:d}), we deduce from~\eqref{eq:init} that the initial data $(\UU(\cdot,\sin), \S(\cdot,\sin) - 1, \AA(\cdot,\sin))$ to \eqref{eq:euler_ss} has compact support in ${\mathbb R}^2$. Additionally, the initial datum has no vacuum regions. Indeed, we have already shown that $\S(y,\sin)= 1$ for $|y| > 8 R_4$. For $|y| \leq 8 R_4$, we use the fact that $\td \S_1(y,\sin) = 0$, so that by~\eqref{eq:solution:small} we obtain $\S(y,\sin) = \bar \S(y) + \td \S_2(y,\sin) \geq \min_{|y|\leq 8 R_4} \bar \S(y) - C_m \delta_Y \geq \tfrac 12 \min_{|y|\leq 8 R_4} \bar \S(y) > 0$, since $\delta_Y =\delta^{2/3}$ was taken to be sufficiently small, and since $\|\cdot\|_{L^\infty} \les_m \|\cdot \|_{\cX^{m+1}}$ for functions of compact support. Thus, the initial data for \eqref{eq:euler_ss} is bounded away from vacuum.

By construction (see~\eqref{eq:the:IC:c}), we have that $\partial_\xi A(\xi,\sin)|_{\xi=0} = C_{in}^{-1} > 0$. Moreover, since $\omega_0(R,t)|_{t=0}  = \omega_0(R) = (R^{-1} + \partial_R) u^\theta_0(R)$, using the ansatz~\eqref{ss-var:b} we obtain 
\begin{equation*}
\omega_0(0) = \lim_{\xi \to 0^+} \tfrac{1}{r T} \bigl(\tfrac{1}{\xi} + \partial_\xi\bigr) A(\xi,\sin) = \tfrac{2}{rT} \partial_\xi A(0,\sin)  = \tfrac{2}{rT C_{in}} \neq 0 . 
\end{equation*}
Moreover, note that for all $t\in [0,T)$ we have $\uu(0,t) = \frac 1r (T-t)^{\frac 1r - 1} (\UU(0,s) + \AA(0,s)) = 0$, because $\UU$ and $\AA$ are smooth radially-symmetric vectors (hence, they vanish linearly in $\xi$ as $\xi \to 0^+$, for all $s\geq \sin$);  thus the Lagrangian flow $X(a,t)$ emanating from $a=0$ is frozen at $0$, i.e., $X(0,t) = 0$ for all $t\in [0,T)$. In light of the specific vorticity transport~\eqref{eq:vorticity:transport}, the relation $\rho = (\alpha \sigma)^{1/\alpha}$, and taking into account the self-similar transformation~\eqref{ss-var:b}, we deduce
\[
\om(0,t) = \rho(0,t)  \tfrac{\om_0(0)}{\rho_0(0)} 
= \om_0(0) \Bigl(\bigl(1- \tfrac{t}{T}\bigr)^{1/r-1} \tfrac{\Sigma(0,s)}{\Sigma(0,\sin)} \Bigr)^{1/\alpha} 
= \om_0(0)\bigl(1- \tfrac{t}{T}\bigr)^{- (r-1)/(r\alpha)} 
\bigl(\tfrac{\bar \S(0) + \td \Sigma(0,s)}{\bar \S(0) + \td \Sigma(0,\sin)}\bigr)^{1/\alpha}
, 
\]
for all $t\in [0,T)$. Since the perturbation $\td \S$ satisfies $\td \S(0,s) \to 0$ as $s\to \infty$ (equivalently, as $t\to T$), exponentially fast in light of \eqref{eq:solution:small}, and since $\omega_0(0) \neq 0$, we obtain the desired vorticity blowup as $t \to T$. In fact, the above identity also proves~\eqref{eq:blow:asym:d}. 
For a fixed $y \neq 0$, the asymptotic convergence claimed in~\eqref{eq:blow:asym:a}--\eqref{eq:blow:asym:b} follows from the ansatz~\eqref{ss-var:b} and the exponential decay of the perturbation $(\td \UU(\cdot,s), \td \S(\cdot,s) , \td \AA(\cdot,s))$ as the self-similar time $s\to \infty$ (equivalently, as $t\to T$), established in~\eqref{eq:solution:small}. 
\end{proof}

\subsection{The proof of Theorem~\ref{thm:non}}
The goal of this subsection is to prove Theorem~\ref{thm:non}.

Since the formula for $\td \VV_2$ (see~\eqref{eq:V2_form2:c}) involves the future of the solution $(\td \UU_1, \td \S_1)$, and since $\td \VV_2$ enters the evolution \eqref{eq:non_V:a} for $\td \VV_1$ through the nonlinear term, we cannot solve for the perturbation  $\td \VV_1$  directly. Instead, we reformulate \eqref{eq:non_V:a} as a fixed point problem. We fix the initial data $\td \VV_1(\sin) =(\td \UU_1, \td \S_1, \vA) |_{s=\sin} \in \cZ^{m+2}$ \eqref{norm:Z}  sufficiently smooth, and sufficiently small such that \eqref{eq:IC:small} holds. 
We define the spaces $Y_1$ and $Y_2$, which capture the decay of the solutions in time 
\begin{subequations}
\label{norm:fix}
\begin{align}
 \| (\td \UU_1,\td \S_1, \vA)\|_{ Y_1}  
 & \teq \sup_{s\geq \sin} e^{ \lam_1 (s-\sin)}   
 \bigl(\| (\td \UU_1(s), \td \S_1(s) )\|^2_{ \cW^{m+1}} + \| \vA(s) \|^2_{ \cW_A^{m+1}}  \bigr)^{1/2} , 
 \label{norm:fix:a} \\
 \| (\td \UU_1,\td \S_1, \vA)\|_{Y_2}  
 & \teq \sup_{ s \geq \sin} e^{ \lam_1 (s-\sin) }   
\bigl(\| (\td \UU_1(s), \td \S_1(s) )\|^2_{ \cX^{m}} + \| \vA(s) \|^2_{ \cX^{m}_A} \bigr)^{1/2} .
\label{norm:fix:b}
\end{align}
\end{subequations}
Showing $(\td \UU_1,\td \S_1, \vA) \in Y_i$ for $i\in\{1,2\}$ implies that suitable norms of $\td \UU_1(s), \td \S_1(s), \vA(s)$ decay with a rate $e^{-\lam_1 s}$ as $s\to \infty$; recall cf.~\eqref{eq:decay_para} that $\lambda_1 = \frac{9}{10} \lambda$. During the proof we sometimes abuse notation to write 
\beq\label{norm:fix2}
  \| (\td \UU_1, \td \S_1)\|_{Y_2} =   \| (\td \UU_1, \td \S_1, 0)\|_{Y_2}, 
\eeq
i.e., when the $\AA$-component is irrelevant for an estimate.

Next, we define an operator $\cT$ (see~\eqref{eq:map_T}), whose fixed point (see~\eqref{eq:the:fixed:point}) is the desired solution of~\eqref{eq:non_V:a}. We remark that throughout the remainder of this proof, we distinguish the $(\UU,\S)$-components of an input of a map  (e.g.~$\cT$, or $\cT_2$) by variables with a ``hat'' (e.g.~$( \wh \UU_1, \wh \S_1 )$), and the output of these maps by variables with a ``tilde'' (e.g.~$(\td \UU_1, \td \S_1)$). With this notational convention in place, the two-step process is:
\begin{itemize}
\item first, for  $( \wh \UU_1, \wh \S_1 ) \in Y_2$, we define 
\begin{equation}
\td \VV_2 = \cT_2( \wh \UU_1, \wh \S_1),
\label{eq:non_fix:V2}
\end{equation} 
where the linear map $\cT_2$ is defined by \eqref{eq:T2:def}, via~\eqref{eq:V2_form2};
\item second, we define $\td \VV_1 = (\td \UU_1, \td \S_1, \AA)$ as the solution\footnote{
To construct the solution of~\eqref{eq:non_fix} locally in time, we use an iterative scheme, and arguments similar to those in Section~\ref{sec:lin_semi}. 
For a fixed $\td \VV_2 = \cT_2(\hat \UU_1, \hat \S_1)$, we first construct a $C_c^{\infty}$ solution to an $\e$-regularized version of~\eqref{eq:non_fix}, in which the operator $\cK_{m}(\cdot)$ and the term $\td \VV_2$ are replaced by $C^{\infty}_c$ approximations $\cK_{m,\e}(\cdot)$ and $\td \VV_{2, \e}$. We start with initial data $G_0 = \td \VV_{1,\e}|_{s=\sin} \in C_c^{\infty}$ and iteratively for $n\geq 0$  construct $G_{n+1}$ by solving
\[
\pa_s G_{n+1} = \cL_V (G_{n+1}) - \cK_{m, \e} (G_{n+1}) + \wh \cN_{V}( G_n + \td \VV_{2,\e}, G_{n+1} + \td \VV_{2,\e}) ,
\qquad
G_{n+1}|_{s=\sin} = G_0,
\]
where we rewrite the quadratic nonlinearity $\cN_V(\cdot)$ from \eqref{eq:non} as the bilinear form $\wh \cN_V(\cdot, \cdot)$ with the derivative acting on the second entry (e.g.~$\wh \cN_U( (\UU,\S,\AA), (\UU^\prime,\S^\prime,\AA^\prime)) = - \UU \cdot \nabla \UU^\prime - \alpha \S \nabla \S^\prime - \AA \cdot \nabla \AA^\prime$). Clearly $\wh \cN_{V}(\td \VV, \td \VV) = \cN_V(\td \VV)$.
The above equation is a linear symmetric hyperbolic system for $G_{n+1}$, which can be solved following the arguments in Section~\ref{sec:lin_semi}.
By proving convergence as $n\to \infty$ of the Picard iterates $\{G_n\}_{n\geq 0}$, and afterwards taking the regularization parameter $\e\to 0$, similarly to  Section~\ref{sec:lin_semi} we obtain the local existence of a $L^\infty_s (\cZ^{m+1})$ solution, where we recall cf.~\eqref{eq:non_fix} that $\cZ^i = \cX^{i} \times \cX^{i} \times \cX^{i}_A$. We emphasize that for initial data $\td \VV_{1,in} \in \cZ^{m+2}$, e.g.~for the initial data from Theorem~\ref{thm:non}, since $\cK(\cdot) : \cX^0 \to \cX^{m+3}, \td \VV_2 \in \cX^{m+3} $ (item (c) in Proposition \ref{prop:compact} and Lemma \ref{lem:T2}), the above described construction procedure via $C^\infty_c$ approximations also gives the local existence of a solution with this higher $\cZ^{m+2}$ regularity, so that locally in time $\cL(\td \UU_1,\td \S_1) \in \cX^{m+1}$ and $ \cL_A(\td \AA) \in \cX_A^{m+1}$.
\label{foot:footnote:details}
}
of a modified version of~\eqref{eq:non_V:a}, namely
\begin{subequations}
\label{eq:non_fix}
\begin{align}
 \pa_s \td \VV_1 &= \cL_V (\td \VV_1) - \cK_m(\td \VV_1) + \cN_V(  \td \VV_1 + \cT_2( \wh \UU_1,  \wh \S_1 )), \\
\td \VV_1 |_{s = \sin} &= (\td \UU_1(\sin), \td \S_1(\sin), \vA(\sin) )
\quad 
\mbox{as in Theorem~\ref{thm:non}},
\end{align}
\end{subequations}
where we recall the notation $\cK_m(\td \VV_1)=(\cK_m(\td \UU_1,\td \S_1),0)$, and $ \td \VV_1 + \cT_2( \wh \UU_1,  \wh \S_1 ) = ( (\td \UU_1,\td \S_1) + \cT_2( \wh \UU_1,  \wh \S_1 ), \AA)$; that is, neither $\cK_m$ nor $\cT_2$ act on the $\AA$-component.  
\end{itemize} 
Concatenating the two steps given above defines a map which takes as input $(\wh \UU_1,\wh \S_1)$ and outputs the solution of \eqref{eq:non_fix}: 
\beq\label{eq:map_T}
\bigl( (\td \UU_1, \td \S_1), \AA\bigr)
= \td \VV_1
\stackrel{\eqref{eq:non_fix}}{=} \cT( \wh \UU_1, \wh \S_1)
=\bigl( \cT_{U,\S}( \wh \UU_1, \wh \S_1), \cT_{A}( \wh \UU_1, \wh \S_1) \bigr)
.
\eeq
Denoting by  $\cT_{U,\S}$ the restriction of $\cT$ to the $(\UU, \S)$-components, we have thus reformulated the system \eqref{eq:non_V}
as a {\em fixed point problem}: find $(\td\UU_1,\td \S_1)$ such that 
\begin{equation}
 (\td \UU_1, \td \S_1) = \cT_{U,\S}( \td \UU_1, \td \S_1 ),
 \label{eq:the:fixed:point}
\end{equation}
with $\AA$ and $(\td \UU_2,\td \S_2)$ computed as $\cT_A(\td \UU_1, \td \S_1 )$ and $\cT_2( \td \UU_1, \td \S_1 )$, respectively. 

The proof of Theorem~\ref{thm:non} reduces to establishing that the operator  $\cT_{U,\S}$ is a contraction with respect to the norm in~\eqref{norm:fix2}, in a small vicinity of the zero state, characterized by the smallness parameters
\beq\label{eq:para_fix}
 \d_Y = \d^{2/3}, \qquad \d \ll_m 1,
\eeq
as in the statement of Theorem~\ref{thm:non}. 
The proof of Theorem~\ref{thm:non} is broken down in two steps, according to Proposition~\ref{prop:onto} (which shows that the map $\cT_{U,\S}$ maps the ball of radius $\d_Y$ in $Y_2$ into itself), and Proposition~\ref{prop:contra} (which shows that $\cT_{U,\S}$ is a contraction for the topology $Y_2$).
 
\begin{proposition}\label{prop:onto}
There exists a positive $\d_0 \ll_m 1$ such that for any $\d < \d_0$ and any $( \wh \UU_1, \wh \S_1) \in Y_2$ with $\| ( \wh \UU_1, \wh \S_1)\|_{Y_2} < \d_Y$, we have 
\[
 \|\cT( \wh \UU_1, \wh \S_1) \|_{Y_1} < 2 \d, 
 \qquad     
 \|\cT_{U,\S}( \wh \UU_1, \wh \S_1) \|_{Y_2} < \d_Y,
 \qquad 
 \|\cT_2(\wh \UU_1,  \wh \S_1)(s) \|_{\cX^{m+ 3}} \les_m \d_Y e^{-\eta_s (s-\sin)} ,
\]
for all $s\geq \sin$.
\end{proposition}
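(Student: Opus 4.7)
The plan is to prove the three bounds in Proposition~\ref{prop:onto} in sequence: first the $\cX^{m+3}$ estimate on $\cT_2$, then the $Y_1$ bound on $\cT$ via a weighted energy estimate, and finally the $Y_2$ bound as an immediate corollary.

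\emph{Step 1 (bound on $\cT_2$).} I would estimate each of the three summands in the explicit formula~\eqref{eq:V2_form2} in $\cX^{m+3}$. The essential ingredient is the smoothing $\cK_m \colon \cX^0 \to \cX^{m+3}$ from Proposition~\ref{prop:compact}, combined with $\|(\wh \UU_1,\wh \S_1)(s)\|_{\cX^0} \leq \|(\wh \UU_1,\wh \S_1)\|_{Y_2} e^{-\lam_1(s-\sin)} \leq \d_Y e^{-\lam_1(s-\sin)}$. For the stable term~\eqref{eq:V2_form2:b}, the semigroup estimate~\eqref{eq:decay_stab} applied at regularity $m+3$ (legitimate since by Lemma~\ref{lem:smooth} the finite-dimensional unstable subspace lies in $\cX^{\infty}$ and all $\cX^k$ norms are equivalent on it) together with $\lam_1 > \eta_s$ from~\eqref{eq:decay_para} yields a contribution of order $\d_Y e^{-\eta_s(s-\sin)}$. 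For the unstable piece~\eqref{eq:V2_form2:c} the backward integral converges absolutely by~\eqref{eq:decay_unstab} and $\lam_1 > \eta$. The boundary-layer correction is initially supported in $\{|y| > \tfrac{16}{3}R_4\}$, which strictly contains $B(0,4 R_4)^\complement$, so Proposition~\ref{prop:far_decay} applies and yields decay at rate $\lam > \eta_s$.

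\emph{Step 2 (weighted energy estimate in $\cW^{m+1}\oplus \cW_A^{m+1}$).} I would pair~\eqref{eq:non_fix} with $\td\VV_1$ in $\cW^{m+1}\oplus \cW_A^{m+1}$. The coercive bounds~\eqref{eq:coer_Wk} contribute a damping of rate $-2\lam_1$. Splitting $\cN_V(\td \VV_1 + \cT_2)$ as $\cN_V(\td\VV_1) + (\text{symmetric cross term}) + \cN_V(\cT_2)$, Moser product estimates (valid since $2(m+1) \geq 14 > d = 2$, so the top-order norms control $L^\infty$ on compact sets, and the polynomial weights $\vp_{2m+2}^2 \vp_g$ combine well in products) bound the three pieces by $C_m \|\td \VV_1\|^3$, $C_m \|\td\VV_1\|^2 \|\cT_2\|_{\cX^{m+3}}$, and $C_m \|\td\VV_1\| \|\cT_2\|_{\cX^{m+3}}^2$. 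Under the a priori assumption $\|\td \VV_1(s)\| \leq 2\d e^{-\lam_1(s-\sin)}$, Duhamel's inequality combined with Step~1 gives
\begin{equation*}
\|\td \VV_1(s)\|^2 \leq \d^2 e^{-2\lam_1(s-\sin)} + C_m \int_{\sin}^{s} e^{-2\lam_1(s-s')}\bigl(\d^3 e^{-3\lam_1(s'-\sin)} + \d\,\d_Y^2 e^{-(\lam_1 + 2\eta_s)(s'-\sin)}\bigr) d s'.
\end{equation*}
The critical cross-term integral converges because $\lam_1 = \tfrac{9}{10}\lam < \tfrac{6}{5}\lam = 2\eta_s$ by~\eqref{eq:decay_para}, contributing $C_m \d\,\d_Y^2 e^{-2\lam_1(s-\sin)} = C_m \d^{7/3} e^{-2\lam_1(s-\sin)} \ll \d^2 e^{-2\lam_1(s-\sin)}$ for $\d$ small. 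Thus the bootstrap closes at radius $2\d$, establishing $\|\cT(\wh \UU_1,\wh \S_1)\|_{Y_1} < 2\d$. The $Y_2$ bound then follows from $\|\cdot\|_{\cX^m} \leq \|\cdot\|_{\cW^{m+1}}$ and $2\d < \d^{2/3} = \d_Y$.

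\emph{Main obstacle.} The tight point is the cross term $\wh\cN_V(\td\VV_1,\cT_2)$, whose a priori amplitude $\d \cdot \d_Y = \d^{5/3}$ is larger than the desired $Y_1$ radius $\d$. Closure of the bootstrap relies crucially on the strict inequality $\lam_1 < 2 \eta_s$ in~\eqref{eq:decay_para}, which makes the Duhamel integral of the squared $\cT_2$-decay integrable and reduces the accumulated contribution to the order $\d^{7/3}$. Verifying the Moser bounds for each transport, acoustic, and swirl-stretching term in~\eqref{eq:non} in the polynomially weighted Sobolev spaces --- in particular handling the singular weight $\vp_A \asymp |y|^{-\b_1}$ near $|y|=0$ using that $\AA$ vanishes linearly at the origin --- is the main bookkeeping burden, rather than a conceptual difficulty.
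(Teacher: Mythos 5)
Your route is the paper's: bound $\cT_2$ in $\cX^{m+3}$ term by term in \eqref{eq:V2_form2} (this is Lemma~\ref{lem:decay_Km} plus Lemma~\ref{lem:T2}, with Proposition~\ref{prop:far_decay} for the cutoff far-field piece), then run a $\cW^{m+1}\oplus\cW_A^{m+1}$ energy estimate on \eqref{eq:non_fix} using \eqref{eq:coer_Wk}, treat the nonlinearity perturbatively via the extra regularity of $\cT_2$, close a bootstrap at radius $2\d$ using $2\eta_s>\lam_1$ and $\d_Y=\d^{2/3}$, and get the $Y_2$ bound from $\|\cdot\|_{Y_2}\leq\|\cdot\|_{Y_1}$ and $2\d<\d_Y$. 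Two steps, however, are not justified as written. First, in Step~1 you apply the stable-semigroup decay \eqref{eq:decay_stab} ``at regularity $m+3$'', citing smoothness of the unstable subspace. That smoothness gives the norm equivalence \eqref{eq:equiv_Xu} used for the backward (unstable) evolution, but it does not show that $\Pi_{\mathsf s}\cK_m f$ — where $\Pi_{\mathsf s}$ is the projection attached to the $\cX^{m}$-decomposition \eqref{eq:dec_X} — lies in the stable subspace of the $\cX^{m+3}$-decomposition, which is what \eqref{eq:decay_stab} at level $m+3$ would require; nothing in your argument compares the spectral decompositions at different regularities. The paper sidesteps this: it first gets decay of $F(s)=e^{\cL(s-\sin)}\Pi_{\mathsf s}\cK_m f$ in $\cX^m$ from \eqref{eq:decay_stab}, and then upgrades to $\cX^{m+3}$ via the dissipative inequality \eqref{eq:coer_est} at level $m+3$, where the bad term is only $\bar C\|F\|_{\cX^0}^2\les e^{-2\eta_s(s-\sin)}\|f\|_{\cX^m}^2$. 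You should either adopt this two-step argument or prove compatibility of the projections.

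Second, in Step~2 the assertion that ``Moser product estimates'' bound $\la(\cN_U(\td\VV),\cN_\S(\td\VV)),(\td\UU_1,\td\S_1)\ra_{\cW^{m+1}}+\la\cN_A(\td\VV),\AA\ra_{\cW^{m+1}_A}$ by $C_m\|\td\VV_1\|^3$ (and the analogous cross terms) cannot be literally a product estimate: $\cN_V$ carries one derivative at top order, so a pure product bound costs $2m+3$ derivatives of $\td\VV_1$, which $\cW^{m+1}$ does not control. The self-interaction closes only after symmetrizing and integrating by parts the top-order transport, acoustic, and swirl couplings — the identities \eqref{eq:good:vector:identities} combined with \eqref{eq:non_IBP}, which also verify that derivatives of the weights $\vp_{2m+2}^2\vp_g$ and $\vp_A$ cost only $\la y\ra^{-1}$, respectively $|y|^{-1}$ — while a top-order derivative landing on $\td\VV_2=\cT_2(\cdot)$ is absorbed by its $\cX^{m+2}$ regularity. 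Your ``bookkeeping'' remark suggests you are aware of this, but the cancellation (not Moser) is the step that prevents derivative loss and should be stated. Finally, a small slip: $\d\,\d_Y=\d^{5/3}$ is smaller, not larger, than $\d$; the genuine tight point is the one you then exploit, namely that the Duhamel contribution of the $\cT_2$-forcing, of size $\sim\d_Y^2/(2\eta_s-\lam_1)$ at the level of $E_{m+1}$, is $o(\d)$ because $\d_Y^2=\d^{4/3}$ and $2\eta_s>\lam_1$ (cf.\ \eqref{eq:decay_para}); also your displayed Duhamel inequality omits the harmless middle term of size $\d^2\d_Y$.
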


\begin{proposition}\label{prop:contra}
There exists a positive $ \d_0\ll_m 1$ such that for any $\d < \d_0$ and any pairs $( \wh \UU_{1,a}, \wh \S_{1,a})$, $( \wh \UU_{1,b}, \wh \S_{1,b}) \in Y_2$ with $\| ( \wh \UU_{1,a}, \wh \S_{1,a})\|_{Y_2} < \d_Y$ and $\| ( \wh \UU_{1,b}, \wh \S_{1,b})\|_{Y_2} < \d_Y$, we have 
\[
 \|\cT_{U,\S}(\wh \UU_{1, a}, \wh \S_{1,a}) - \cT_{U,\S}( \wh \UU_{1,b}, \wh \S_{1,b}) \|_{Y_2} < \tfrac{1}{2} 
  \|( \wh \UU_{1, a}, \wh \S_{1,a}) - ( \wh \UU_{1,b}, \wh \S_{1,b}) \|_{Y_2}  .
\]
\end{proposition}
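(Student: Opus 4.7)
I would prove Proposition~\ref{prop:contra} via a direct weighted energy estimate on the difference of the two solutions, exploiting the coercive bounds~\eqref{eq:dissip} and~\eqref{eq:coer_estA} together with the quadratic structure of $\cN_V$ and the sharp rates in~\eqref{eq:decay_para}.

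Set $\td \VV_{1,\bullet} = \cT(\wh \UU_{1,\bullet}, \wh \S_{1,\bullet})$ for $\bullet \in \{a,b\}$. Proposition~\ref{prop:onto} yields $\|\td \VV_{1,\bullet}\|_{Y_1} < 2\d$ and $\|\cT_2(\wh \UU_{1,\bullet},\wh \S_{1,\bullet})(s)\|_{\cX^{m+3}} \les_m \d_Y e^{-\eta_s(s-\sin)}$. Define $W \teq \td \VV_{1,a} - \td \VV_{1,b}$, $F_\bullet \teq \td \VV_{1,\bullet} + \cT_2(\wh \UU_{1,\bullet}, \wh \S_{1,\bullet})$, and, by linearity of $\cT_2$, $G \teq \cT_2(\wh \UU_{1,a}-\wh \UU_{1,b}, \wh \S_{1,a}-\wh \S_{1,b})$. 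Subtracting the two copies of~\eqref{eq:non_fix} and expressing $\cN_V$ through the bilinear form $\wh \cN_V$ from Footnote~\ref{foot:footnote:details} gives
\begin{equation*}
\pa_s W = (\cL_V - \cK_m)W + \wh \cN_V(F_a, W+G) + \wh \cN_V(W+G, F_b), \qquad W|_{s=\sin} = 0,
\end{equation*}
where the vanishing initial datum is crucial and reflects that Theorem~\ref{thm:non} prescribes the data for $\td \VV_1$ independently of the input. Moreover, by linearity of $\cT_2$ and the third bound of Proposition~\ref{prop:onto}, $\|G(s)\|_{\cX^{m+3}} \les_m D\, e^{-\eta_s (s-\sin)}$ with $D \teq \|(\wh \UU_{1,a}-\wh \UU_{1,b}, \wh \S_{1,a}-\wh \S_{1,b})\|_{Y_2}$.

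Pair $W$ against itself in the instantaneous $Y_2$-norm $\|\cdot\|_*^2 \teq \|(\UU,\S)\|_{\cX^m}^2 + \|\AA\|_{\cX^m_A}^2$. The coercive bounds~\eqref{eq:dissip}, \eqref{eq:coer_estA} produce the dissipation $-\lam \|W\|_*^2$. Since $\cX^m$ is an algebra in $\R^2$ for $2m \geq 12$ (with weights $\vp_{2m}, \vp_g, \vp_A$ regular away from $y=0$, cf.~\eqref{eq:wg_asym},~\eqref{eq:wg_A:b}), after integrating by parts the terms with the derivative on $W$ the nonlinear pairings split into: ``$\|W\|_*^2$-type'' corrections with coefficient $\les_m \|\na F_\bullet\|_{L^\infty} \les_m \d+\d_Y$ (via Sobolev embedding applied to the $\cW^{m+1}$ bound on $\td\VV_{1,\bullet}$ and the $\cX^{m+3}$ bound on $\cT_2(\cdot_\bullet)$); and ``forcing-type'' contributions bilinear in $(F_\bullet, G)$. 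Decomposing $F_\bullet = \td\VV_{1,\bullet} + \cT_2(\cdot_\bullet)$ and tracking exponential rates, every forcing-type contribution decays at rate $\min(\lam_1+\eta_s, 2\eta_s) = 2\eta_s = \tfrac{6}{5}\lam$. Combining,
\begin{equation*}
\tfrac{1}{2}\tfrac{d}{ds}\|W\|_*^2 \leq -\bigl(\lam - C_m(\d+\d_Y)\bigr)\|W\|_*^2 + C_m(\d+\d_Y) D\, e^{-2\eta_s (s-\sin)}\|W\|_*.
\end{equation*}
Dividing by $\|W\|_*$, multiplying by $e^{\lam_1 (s-\sin)}$, and using $\lam_1 < \lam$, the key gap $2\eta_s > \lam_1$, and $W(\sin) = 0$, a Gr\"onwall-type integration bounds $\sup_{s\geq\sin} e^{\lam_1(s-\sin)}\|W(s)\|_*$ by $C_m(\d+\d_Y)D/(2\eta_s-\lam_1)$, i.e., $\|W\|_{Y_2} \les_m (\d+\d_Y) D$. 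Choosing $\d_0$ small enough that $C_m(\d+\d^{2/3}) \leq 1/2$ for $\d<\d_0$ delivers the contraction, since the $(\UU,\S)$-components of $W$ are dominated by the full $\|W\|_{Y_2}$.

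The main obstacle is the rate mismatch $\eta_s < \lam_1$ between the decay produced by $\cT_2$ and the exponential weight encoded in $Y_2$: a naive Gr\"onwall on forcing $e^{-\eta_s(s-\sin)}$ inside $Y_2$ would grow like $e^{(\lam_1-\eta_s)(s-\sin)}$. What rescues the estimate is the purely quadratic structure of $\cN_V$, which forces $G$ to appear only multiplied by the small $F_\bullet$; since $F_\bullet$ itself contains a $\cT_2$-part decaying at $\eta_s$, the forcing acquires the doubled decay $2\eta_s$, and the precise ratio $\lam_1/\eta_s = 3/2 < 2$ encoded in~\eqref{eq:decay_para} guarantees $2\eta_s > \lam_1$. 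Thus the gap $2\eta_s - \lam_1 = \tfrac{3}{10}\lam$ in~\eqref{eq:decay_para} is not a decorative choice but the structural reason the fixed-point scheme closes.
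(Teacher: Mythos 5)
Your proposal is correct and follows essentially the same route as the paper: an $\cX^m\times\cX^m_A$ energy estimate on the difference (one derivative below the $Y_1$ regularity of the solutions, which is what absorbs the derivative loss in the quasilinear terms), dissipativity of $\cL-\cK_m$ and $\cL_A$, linearity of $\cT_2$ plus Lemma~\ref{lem:T2} for the difference of the unstable parts, vanishing initial data for the difference, and the gap $2\eta_s>\lam_1$ in the final Gr\"onwall step. The only cosmetic imprecision is the phrase ``coefficient $\les_m \|\na F_\bullet\|_{L^\infty}$'' for the commutator corrections, which really require the full $\cX^{m+1}$ (resp.\ $\cX^{m+3}$) control of $F_\bullet$ that you invoke anyway, exactly as in the paper's use of Lemmas~\ref{lem:non_main} and~\ref{lem:prod}.
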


From Proposition~\ref{prop:onto} and Proposition~\ref{prop:contra} we directly obtain:

\begin{proof}[Proof of Theorem~\ref{thm:non}]
Propositions~\ref{prop:onto} and~\ref{prop:contra}   allow us to apply a Banach fix-point theorem for the operator $\cT_{U,\S}$, in the ball of radius $\d_Y$ around the origin in $Y_2$; this results in a unique fixed point $(\td \UU_1,\td \S_1)$ in this ball, as claimed in~\eqref{eq:the:fixed:point}. Upon defining $\AA\teq \cT_A(\td \UU_1, \td \S_1 )$ and $(\td \UU_2,\td \S_2) \teq \cT_2( \td \UU_1, \td \S_1 )$, by construction we have that $\td \VV_1 = (\td \UU_1, \td \S_1, \vA)$ solves \eqref{eq:non_V:a} and $\td \VV_2 = (\td \UU_2,\td \S_2,0)$ solves \eqref{eq:non_V:b}. In view of the definition of the $Y_1$ norm in~\eqref{norm:fix:a} and the bound $\|\cT( \td \UU_1, \td \S_1) \|_{Y_1} < 2 \d$ (which follows from the first estimate in Proposition~\ref{prop:onto}), we deduce that~\eqref{eq:solution:small:a} holds. Similarly, the third estimate in Proposition~\ref{prop:onto} (applied to $(\td \UU_1,\td \S_1)$) yields~\eqref{eq:solution:small:b}, thereby concluding the proof of Theorem~\ref{thm:non}. 
\end{proof}

The remainder of this subsection is dedicated to the proof of Propositions~\ref{prop:onto} and~\ref{prop:contra}. 
In subsection~\ref{sec:T2}, we obtain suitable estimates for the linear map $\cT_2$; in particular, in Lemma~\ref{lem:decay_Km} we demonstrate a smoothing effect  for $\td \VV_2$, which allows us to overcome the loss of a space derivative due to the term$ \na \td \VV_2$, present in the first equation of \eqref{eq:non_V}. In subsection~\ref{sec:fix_pt} we prove Proposition~\ref{prop:onto}, while in subsection~\ref{sec:contra}, we prove Proposition \ref{prop:contra}; here we emphasize that due to the transport term $\UU \cdot \na \UU$, estimating the difference $\cT_{U,\S}(\wh \UU_{1,a}, \wh \S_{1,a}) - \cT_{U,\S}( \wh \UU_{1,b}, \wh \S_{1,b} )$ is made possible because in~\eqref{norm:fix} we have chosen $Y_2$ (modeled on $\cX^m$ and $\cX^m_A$) to be less regular than $Y_1$ (modeled on $\cX^{m+1}$ and $\cX^{m+1}_A$) .

\subsubsection{Estimates on $\cT_2$}\label{sec:T2}
Recall the decomposition~\eqref{eq:dec_X} of $\cX^m$ into stable and unstable modes. In light of definitions~\eqref{eq:V2_form2:b} and~\eqref{eq:V2_form2:c}, we establish the following decay and smoothing estimates for the stable and unstable parts of $\cK_m$:
\begin{lemma}\label{lem:decay_Km}
For $f \in \cX^{m}$, we have 
\[
\bal
 \| e^{\cL (s-\sin)} \Pi_{\mathsf s} \cK_m f \|_{\cX^{m+3}} 
 & \les_m  e^{-\eta_s (s-\sin)}  \| f \|_{\cX^{m}}, \\
 \| e^{ - \cL (s-\sin)} \Pi_{\mathsf u} \cK_m f \|_{\cX^{m+3}} 
& \les_m  e^{\eta (s-\sin)}  \| f \|_{\cX^{m}},
\eal
\]
for all $s\geq \sin$, where $\eta$ and $\eta_s$ are as in~\eqref{eq:decay_para}.
\end{lemma}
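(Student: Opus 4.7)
The plan is to leverage the $(m+3)$-step smoothing of $\cK_m$ from Proposition~\ref{prop:compact}(c), together with the fact that the hyperbolic-decomposition machinery of Section~\ref{sec:hyper} can be repeated verbatim at the higher regularity level $\cX^{m+3}$. Starting from the basic bound $\|\cK_m f\|_{\cX^{m+3}} \les_m \|f\|_{\cX^0} \leq \|f\|_{\cX^m}$, the task reduces to controlling $\Pi_{\mathsf u}\cK_m f$ and $\Pi_{\mathsf s}\cK_m f$ separately in $\cX^{m+3}$, and then invoking the forward/backward semigroup estimates at that regularity.

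A preliminary observation I would record is that the Riesz projections $\Pi_{\mathsf u}, \Pi_{\mathsf s}$ are compatible across the nested scale $\cX^{m+3} \subset \cX^m$. Indeed, the hyperbolic splitting at any level $m' \geq m_0$ is constructed from the same contour $\Gamma$ encircling the finite spectral set $\sigma_\eta \subset \sigma(\cL)$ via Proposition~\ref{prop:decom}, and by Section~\ref{sec:smooth_unstab} the unstable subspace is spanned by the same finite collection of generalized eigenfunctions lying in $\cX^\infty$. Hence $\cX^{m+3}_{\mathsf u} = \cX^m_{\mathsf u}$, and the decomposition $g = \Pi_{\mathsf u} g + \Pi_{\mathsf s} g$ agrees whether computed in $\cX^m$ or in $\cX^{m+3}$.

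For the unstable (backward-in-time) estimate, I would use that $\Pi_{\mathsf u}\cK_m f$ lies in the finite-dimensional subspace $\cX^m_{\mathsf u}$, on which the $\cX^m$- and $\cX^{m+3}$-norms are equivalent. Combined with the $\cX^m$-boundedness of $\Pi_{\mathsf u}$, this yields $\|\Pi_{\mathsf u}\cK_m f\|_{\cX^{m+3}} \les_m \|\cK_m f\|_{\cX^m} \les_m \|f\|_{\cX^m}$. Since $\cL$ restricted to the invariant subspace $\cX^m_{\mathsf u}$ has spectrum in $\{\Re z > -\eta\}$, the resulting finite-dimensional ODE produces the factor $e^{\eta(s-\sin)}$, closing the second inequality.

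For the stable (forward-in-time) estimate, I would write $\Pi_{\mathsf s}\cK_m f = \cK_m f - \Pi_{\mathsf u}\cK_m f$; both summands lie in $\cX^{m+3}$ with norms $\les_m \|f\|_{\cX^m}$, and by the compatibility above the difference lies in $\cX^{m+3}_{\mathsf s}$. The $\cX^{m+3}$-analog of \eqref{eq:decay_stab} then delivers the required $e^{-\eta_s(s-\sin)}$ factor. This analog is obtained by running Sections~\ref{sec:Riesz}--\ref{sec:semi_grow} with $m$ replaced by $m+3 \geq m_0$: the coercive estimate \eqref{eq:coer_est}, the compact-operator construction of Proposition~\ref{prop:compact} (with its smoothing and dissipativity properties), and the semigroup generation via Theorem~\ref{thm:ACP} all hold uniformly at level $\cX^{m+3}$, so that $\omega_{\rm ess}(\cL) \leq -\lambda$ on $\cX^{m+3}$ and the stable part of the semigroup decays with the same rate $\eta_s$ (with a possibly larger multiplicative constant depending on $m$). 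The only step that genuinely requires care, and the one I would prioritize, is the projection-compatibility across nested spaces; once that is in place, both bounds follow by transcribing the $\cX^m$ arguments of Section~\ref{sec:hyper} to the $\cX^{m+3}$ setting.
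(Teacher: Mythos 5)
Your unstable-part argument coincides with the paper's: finite dimensionality of $\cX^m_{\mathsf u}$, norm equivalence there, boundedness of $\cL|_{\cX^m_{\mathsf u}}$, and \eqref{eq:decay_unstab}. For the stable part, however, you take a genuinely different route. The paper never upgrades the spectral decomposition to the level $m+3$: it only uses that $F(\sin)=\Pi_{\mathsf s}\cK_m f$ lies in $\cX^m_{\mathsf s}$ (immediate from the definition of $\Pi_{\mathsf s}$ at level $m$) to get $\|F(s)\|_{\cX^m}\les_m e^{-\eta_s(s-\sin)}\|f\|_{\cX^m}$ from \eqref{eq:decay_stab}, and then feeds this decaying lower-order norm as a forcing term into the $\cX^{m+3}$ energy inequality \eqref{eq:coer_est}, $\tfrac12\tfrac{d}{ds}\|F\|_{\cX^{m+3}}^2\leq -\lam\|F\|_{\cX^{m+3}}^2+\bar C\|F\|_{\cX^0}^2$, and integrates using $\eta_s<\lam$. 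Your route instead reruns Sections~\ref{sec:Riesz}--\ref{sec:semi_grow} at level $m+3$ and then needs the assertion that $\Pi_{\mathsf s}\cK_m f$ (projection computed in $\cX^m$) lies in $\cX^{m+3}_{\mathsf s}$ (stable subspace of the decomposition in $\cX^{m+3}$). That compatibility is true but is the most delicate step, and you should make it explicit: (i) the point spectra of $\cL$ in $\{\Re z>-\lam\}$ agree on $\cX^m$ and $\cX^{m+3}$ because by Lemma~\ref{lem:smooth} every generalized eigenfunction at level $m$ lies in $\cX^\infty\subset\cX^{m+3}$ and conversely by nestedness, so $\s_\eta$ and the contour $\G$ can be taken identical; and (ii) for $z\in\G\subset\rho(\cL_{\cX^m})\cap\rho(\cL_{\cX^{m+3}})$ and $g\in\cX^{m+3}$, the two resolvents $(z-\cL)^{-1}g$ computed in the two spaces coincide by uniqueness of the solution of $(z-\cL)h=g$ in the larger space $\cX^m$, whence the Riesz integrals, and therefore the projections, agree on $\cX^{m+3}$. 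With (i) and (ii) supplied your argument closes and gives the same rates; the trade-off is that the paper's two-tier argument (low-norm semigroup decay plus high-norm coercive estimate with decaying forcing) avoids all of this spectral bookkeeping at the higher regularity level, at the mild cost of one Gr\"onwall integration.
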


\begin{proof}
Denote $F(s) = e^{\cL (s-\sin)} \Pi_{\mathsf s} \cK_m f$. Since $\Pi_{\mathsf u}$ projects onto the finite-dimensional space $\cX_{\mathsf u}^m$, defined in~\eqref{eq:dec_Xu}, which is spanned by $\cX^{\infty}$ smooth functions (see Section~\ref{sec:smooth_unstab}), we have the equivalence 
\beq\label{eq:equiv_Xu}
 \|\Pi_{\mathsf u} g \|_{\cX^n} \asymp_{n, m} \| \Pi_{\mathsf u} g \|_{\cX^m} , \qquad \forall g \in \cX^m, \ n \geq m. 
\eeq
Using \eqref{eq:equiv_Xu}, the fact that $\Pi_{\mathsf s} = {\rm Id} - \Pi_{\mathsf u}, \Pi_{\mathsf u} : \cX^m \to \cX^m$,  and the smoothing property $\cK_m : \cX^{m} \to \cX^{m+3} \subset \cX^m$ (which follows from item (c) in Proposition~\ref{prop:compact}), we obtain 
\begin{align*}
 \| F(\sin) \|_{\cX^{m+3}} 
 & \leq  \| \cK_m f\|_{\cX^{m+3}} + \| \Pi_{\mathsf u} \cK_m f\|_{\cX^{m+ 3}} \\
 &\les_m \| f \|_{\cX^{m}} +  \| \Pi_{\mathsf u} \cK_m f\|_{\cX^{m}}
 \les_m \| f \|_{\cX^{m}}  + \| \cK_m f \|_{\cX^m}   
 \les_m \| f \|_{\cX^{m}} .
\end{align*}
Using the decay property~\eqref{eq:decay_stab} of the semigroup and the fact that $F(\sin) \in \cX^m_{\mathsf s}$,  we deduce
\[
\| F(s)\|_{\cX^m} \leq C_m e^{- \eta_s (s-\sin)} \| F(\sin)\|_{ \cX^m}
\leq  C_m e^{- \eta_s (s-\sin)} \| f \|_{\cX^{m}}.
\]
Combining the above bound with the energy estimates on $\cX^{m+3}$, which follow from Theorem~\ref{thm:coer_est} (more specifically,~\eqref{eq:coer_est}), we obtain
\[
\quad 
\tfrac{1}{2}\tfrac{d}{ds}  
\| F(s) \|^2_{\cX^{m+ 3}} 
\leq - \lam 
\| F(s) \|_{\cX^{m+ 3}}^2 + \bar C  \| F(s) \|_{\cX^0}^2
\leq - \lam 
\| F(s) \|_{\cX^{m + 3}}^2 + \bar C C_m^2  e^{- 2\eta_s (s-\sin)} \| f \|_{\cX^{m}}^2.
\]
Since $\eta_s = \frac 35 \lambda < \lam$, integrating the above estimate in time we deduce the first inequality of Lemma~\ref{lem:decay_Km}. 

From Proposition~\ref{prop:decom} we have that  $\cX_{\mathsf u}^m$ is $\cL$-invariant and $\cL |_{\cX_u^m}$ is bounded. 
Using \eqref{eq:equiv_Xu} and \eqref{eq:decay_unstab}, we establish
\[
 \| e^{ - \cL (s-\sin)} \Pi_{\mathsf u} \cK_m f \|_{\cX^{m+ 3}} 
\les_m  \| e^{ - \cL (s-\sin)} \Pi_{\mathsf u} \cK_m f \|_{\cX^{m}} 
\les_m e^{\eta (s-\sin)} \| \cK_m f \|_{\cX^{m}} 
\les e^{\eta (s-\sin)} \| f \|_{\cX^{m}} ,
\]
which is the second inequality claimed in Lemma~\ref{lem:decay_Km}.
\end{proof}

Using Lemma~\ref{lem:decay_Km} and the fact that $\cL$ generates a semigroup,  we obtain a direct estimate for the operator $\cT_2$, as defined in~\eqref{eq:V2_form2}. We recall from~\eqref{norm:fix:b} and~\eqref{norm:fix2} the definition of the norm $Y_2$.
\begin{lemma}\label{lem:T2}
For $(\wh \UU_1, \wh \S_1) \in Y_2$, for all $s\geq \sin$ we have
\[
 \|\cT_2( \wh \UU_1, \wh \S_1 )(s) \|_{\cX^{m+3}} \les_m e^{-\eta_s (s-\sin)} 
 \|(\wh \UU_1, \wh \S_1)\|_{Y_2}. 
\]
\end{lemma}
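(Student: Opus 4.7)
The plan is to estimate each of the three pieces appearing in the decomposition \eqref{eq:V2_form2:a} of $\cT_2(\wh\UU_1,\wh\S_1)(s)$ separately in $\cX^{m+3}$ and then sum by the triangle inequality. The only inputs needed are the two smoothing-and-decay bounds of Lemma~\ref{lem:decay_Km}, the far-field decay of Proposition~\ref{prop:far_decay}, and the built-in $e^{-\lam_1(s'-\sin)}$ decay of $(\wh\UU_1(s'),\wh\S_1(s'))$ in $\cX^m$ provided by the $Y_2$ norm (cf.~\eqref{norm:fix:b} and \eqref{norm:fix2}), together with the ordering $\eta_s<\eta<\lam_1<\lam$ from \eqref{eq:decay_para}.

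For the stable Duhamel integral \eqref{eq:V2_form2:b}, pulling the norm inside the integral and applying the first estimate of Lemma~\ref{lem:decay_Km} (shifted in time, which is legitimate by the semigroup property) gives
\begin{align*}
\|\td\VV_{2,\mathsf s}(s)\|_{\cX^{m+3}}
&\leq\int_{\sin}^s\|e^{\cL(s-s')}\Pi_{\mathsf s}\cK_m(\wh\UU_1,\wh\S_1)(s')\|_{\cX^{m+3}}\,ds'\\
&\les_m\int_{\sin}^s e^{-\eta_s(s-s')}e^{-\lam_1(s'-\sin)}\,ds'\;\|(\wh\UU_1,\wh\S_1)\|_{Y_2}\\
&\les_m e^{-\eta_s(s-\sin)}\|(\wh\UU_1,\wh\S_1)\|_{Y_2},
\end{align*}
where the last step uses $\eta_s<\lam_1$. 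The unstable integral \eqref{eq:V2_form2:c} is handled symmetrically with the second estimate of Lemma~\ref{lem:decay_Km}: the integrand is bounded by $e^{\eta(s'-s)}e^{-\lam_1(s'-\sin)}$ on $[s,\infty)$, and integration (which converges because $\eta<\lam_1$) yields a contribution of size $\tfrac{1}{\lam_1-\eta}e^{-\lam_1(s-\sin)}\|(\wh\UU_1,\wh\S_1)\|_{Y_2}$, absorbed into $e^{-\eta_s(s-\sin)}\|(\wh\UU_1,\wh\S_1)\|_{Y_2}$ since $\lam_1>\eta_s$.

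The third piece is the only one requiring Proposition~\ref{prop:far_decay}. Because $\chi(\cdot)\equiv 1$ on $\{|y|\leq 2/3\}$, the initial datum $(1-\chi(y/(8R_4)))\td\VV_{2,\mathsf u}(\sin)$ is supported in $\{|y|\geq 16 R_4/3\}$, which lies strictly outside $B(0,4R_4)$. Proposition~\ref{prop:far_decay} (applied in $\cX^{m+3}$, valid since $m+3\geq m_0$) therefore gives decay at rate $\lam$. To control the initial size one uses that multiplication by the smooth cutoff $1-\chi(y/(8R_4))$ is bounded on $\cX^{m+3}$---a standard computation via Leibniz and the weighted interpolation bounds already used in Section~\ref{sec:coer_Hm}, since the cutoff has bounded derivatives of all orders---combined with the Duhamel bound for $\td\VV_{2,\mathsf u}(\sin)$ obtained by integrating the second part of Lemma~\ref{lem:decay_Km} from $\sin$ to $\infty$. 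This yields $\|\td\VV_{2,\mathsf u}(\sin)\|_{\cX^{m+3}}\les_m\|(\wh\UU_1,\wh\S_1)\|_{Y_2}$, and hence a third contribution of size $e^{-\lam(s-\sin)}\|(\wh\UU_1,\wh\S_1)\|_{Y_2}\leq e^{-\eta_s(s-\sin)}\|(\wh\UU_1,\wh\S_1)\|_{Y_2}$.

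Summing the three contributions yields the claimed bound. The only mildly non-routine point is verifying that the cutoff $1-\chi(y/(8R_4))$ places the initial datum of the third piece strictly outside $B(0,4R_4)$, so that Proposition~\ref{prop:far_decay} applies---a purely geometric check resolved by $16R_4/3>4R_4$. Everything else is a standard time integration relying on the strict separation $\eta_s<\eta<\lam_1<\lam$ of the rates in \eqref{eq:decay_para} and the smoothing property $\cK_m\colon\cX^{0}\to\cX^{m+3}$ from Proposition~\ref{prop:compact}.
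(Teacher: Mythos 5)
Your proposal is correct and follows essentially the same route as the paper: estimate the three pieces of \eqref{eq:V2_form2:a} separately, using the two bounds of Lemma~\ref{lem:decay_Km} for the stable/unstable Duhamel integrals (with the $e^{-\lam_1(s'-\sin)}$ decay from the $Y_2$ norm and the ordering $\eta_s<\eta<\lam_1$), and Proposition~\ref{prop:far_decay} together with boundedness of the cutoff multiplication and the bound on $\|\td\VV_{2,\mathsf u}(\sin)\|_{\cX^{m+3}}$ for the third term. The support check ($16R_4/3>4R_4$) and the time-shift of Lemma~\ref{lem:decay_Km} via the semigroup property are exactly the points the paper also uses, so there is nothing to add.
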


\begin{proof}
Denote $M =  \|(\td \UU_1, \td \S_1)\|_{Y_2}$, and recall from~\eqref{eq:V2_form2:a} that $\cT_2( \wh \UU_1, \wh \S_1 )(s)$ is given by the sum of three terms. We also recall from~\eqref{eq:decay_para} that $\eta_s < \eta < \lam_1$.
Applying Lemma~\ref{lem:decay_Km} to $\td \VV_{2, {\mathsf s}}$ (as defined in~\eqref{eq:V2_form2:b}),  and to $\td \VV_{2, {\mathsf u}}$ (as defined in~\eqref{eq:V2_form2:c}), we obtain
\begin{subequations}
\begin{align}
\|\td \VV_{2, {\mathsf s}}(s)\|_{\cX^{m+ 3}} 
& \les_m \int_{\sin}^s e^{-\eta_s(s-s^\prime)}  
\| (\wh \UU_1(s^\prime), \wh \S_1(s^\prime) ) \|_{\cX^{m}}  ds^\prime \notag\\
&\les_m M  \int_{\sin}^s e^{- \eta_s (s-s^\prime)} e^{-\lam_1 (s^\prime-\sin)} d s^\prime
\les_m  M e^{-\eta_s (s-\sin)} , 
\label{eq:junk:label:10a}\\
\| \td \VV_{2, {\mathsf u}}(s)\|_{\cX^{m+ 3}} 
& \les_m  \int_s^{\infty} e^{\eta(s^\prime-s)} \| (\wh \UU_1(s^\prime), \wh \S_1(s^\prime) ) \|_{\cX^{m}}  d s^\prime \notag\\
&\les_m  M\int_s^{\infty} e^{\eta(s^\prime-s)}  e^{-\lam_1 (s^\prime-\sin)} d s^\prime 
\les_m M e^{-\lam_1 (s-\sin)}.
\label{eq:junk:label:10b}
\end{align}
The second estimates in particular implies  $ \| \td \VV_{2, {\mathsf u}}(\sin)\|_{\cX^{m+3}} \les_m M$. Denote 
$g =  \td \VV_{2, {\mathsf u}}(\sin) (1 - \chi(\frac{y}{8 R_4}))$, corresponding to the third term on the right side of~\eqref{eq:V2_form2:a}. By definition $\supp(g) \cap B(0, 5 R_4) = \emptyset$. Since $(1 - \chi(\frac{y}{8 R_4}))$ is a smooth function and since  
$\na^i(1 - \chi(\frac{y}{8 R_4}))$ has compact support for each $i\geq 1$, using the Leibniz rule and Proposition \ref{prop:far_decay}, we obtain 
\begin{equation}
\label{eq:junk:label:10c}
\| e^{\cL (s-\sin)} g \|_{\cX^{m+ 3}}
\leq e^{-\lam (s-\sin)} \| g \|_{\cX^{m+ 3}} 
\les_m e^{-\lam (s-\sin)} \| \td \VV_{2, {\mathsf u}}(\sin) \|_{\cX^{m +3}}
\les_m e^{-\lam (s-\sin)} M .
\end{equation}
\end{subequations}
Combining the bounds obtained in~\eqref{eq:junk:label:10a}--\eqref{eq:junk:label:10c} with the definition of the operator $\cT_2$ via \eqref{eq:V2_form2:a}, and with the ordering~\eqref{eq:decay_para} of the parameters,  we complete the proof of the lemma.
\end{proof}

\subsubsection{Proof of Proposition \ref{prop:onto}}\label{sec:fix_pt}

Before proving Proposition~\ref{prop:onto}, we record the following product estimate for the nonlinear terms appearing on the right side of~\eqref{eq:non_V:a}. 

\begin{lemma}\label{lem:non_main}
Let $\cN_{U}, \cN_{\S}, \cN_{A}$ be the nonlinear terms defined in \eqref{eq:non}.
For any $ k \geq m_0$, we have 
\[
\bal
 \left\| \vp_{2k} \vp_g^{1/2}  \B( 
 \D^k \cN_{U}(\td\UU, \td\S, \vA) 
+ \td\UU \cdot \na \D^k \td\UU 
+ \al \td \S \na \D^k \td \S
+ \vA \cdot \na \D^k \vA  
\B) \right\|_{L^2} 
& \les_k \| (\td \UU, \td \S, \vA) \|_{\cX^k}^2, \\ 
\left\| \vp_{2k}  \vp_g^{1/2} \B( 
\D^k \cN_{\S}(\td \UU, \td \S, \vA)    
+ \td \UU \cdot \na \D^k \td \S 
+ \al \td \S \div (\D^k \td \UU) 
\B) \right\|_{L^2} & \les_k \| (\td \UU,\td \S) \|_{\cX^k}^2 , \\
\left\| \vp_{2k} \vp_g^{1/2}  \B( 
\D^k \cN_{A}(\td \UU,\td  \S, \vA) 
+ \td \UU \cdot \na \D^k \vA 
+ \vA \cdot \na \D^k \td \UU
\B) \right\|_{L^2} & \les_k \| (\td \UU,\vA) \|_{\cX^k}^2.
\eal
\]
\end{lemma}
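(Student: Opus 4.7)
The three inequalities are weighted Moser-type product estimates; the right-hand sides in the statement are precisely the "top-order" pieces obtained by applying $\Delta^k$ to each quadratic nonlinearity. My approach is a standard Leibniz/split/interpolate procedure, adapted to the weighted spaces $\cX^k$ and $\cX_A^k$.

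First, I would apply the generalized Leibniz formula to expand $\Delta^k$ of each product. For instance, for $F = \td\UU$ and $G = \nabla\td\UU$,
\[
\Delta^k(F \cdot G) = F \cdot \Delta^k G + \sum_{\substack{i + j = 2k \\ i \geq 1}} c_{i,j}\, \nabla^i F \cdot \nabla^j G,
\]
so that $\Delta^k(\td\UU \cdot \nabla \td\UU) - \td\UU \cdot \nabla \Delta^k \td\UU$ equals the commutator-type sum on the right. An analogous expansion applies to $\td\S\,\nabla\td\S$, $\vA\cdot\nabla\vA$, $\td\UU\cdot\nabla\td\S$, $\td\S\,\div(\td\UU)$, $\td\UU\cdot\nabla\vA$, and $\vA\cdot\nabla\td\UU$; in every case the "top-order" piece subtracted off in the lemma statement cancels exactly, leaving a finite sum of terms of the schematic form $\nabla^i f \cdot \nabla^{2k+1-i} g$ with $1 \leq i \leq 2k$ and $f,g \in \{\td\UU,\td\S,\vA\}$ (with factors of $\alpha$ and minus signs as needed).

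Second, for each such commutator product I would apply a weighted $L^\infty\times L^2$ H\"older split at $i = k$: by symmetry it suffices to treat $1 \leq i \leq k$, $k+1 \leq j = 2k+1-i \leq 2k$. Choose weights $w_1,w_2$ with $w_1 w_2 = \vp_{2k}\vp_g^{1/2}$ and estimate
\[
\|\vp_{2k}\vp_g^{1/2}\,\nabla^i f \cdot \nabla^j g\|_{L^2}
\leq \|w_1 \nabla^i f\|_{L^\infty} \|w_2 \nabla^j g\|_{L^2}.
\]
The $L^2$ factor is controlled by the weighted interpolation Lemma~\ref{lem:interp_wg} together with the norm-equivalence Lemma~\ref{lem:norm_equiv}, giving a bound by $\|g\|_{\cX^k}$ (for $g = \td\UU$ or $\td\S$; the analogous bound uses $\|\vA\|_{\cX^k_A}$ for $g = \vA$, since $\cX^k$ and $\cX^k_A$ share their top-order weighted-$H^{2k}$ semi-norm). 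For the $L^\infty$ factor, since $d=2$ and $k \geq m_0 \geq 6$, the Sobolev embedding $H^2(\R^2) \hookrightarrow L^\infty(\R^2)$ applied to $\nabla^i f, \nabla^{i+1} f, \nabla^{i+2} f$ (with $i+2 \leq k+2 \leq 2k$), combined again with weighted interpolation, yields a bound by $\|f\|_{\cX^k}$ (or $\|f\|_{\cX^k_A}$). The weights $w_1,w_2$ are chosen as powers of $\la\xi\ra$ consistent with the polynomial asymptotics $\vp_{2k}\asymp\la\xi\ra^{2k}$ and $\vp_g\asymp\la\xi\ra^{-2-\kp_1}$, so that both sides of the above split are admissible weighted norms controlled by $\cX^k$ (resp.\ $\cX^k_A$).

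Third, the estimates for the $\vA$-factors require one additional observation: the weight $\vp_A \asymp \xi^{-\beta_1}\la\xi\ra^{\beta_1-2-\kp_1}$ appearing in $\cX^0_A$ is singular at $\xi=0$. Since $\vA = A(\xi)\ee_\theta$ is a smooth radially-symmetric angular vector field in $\R^2$, $A$ vanishes to first order at the origin, and so $|\vA(y)| \les \xi$ near $\xi = 0$, which compensates the $\xi^{-\beta_1}$ singularity (with $\beta_1 \in (3,4)$) and gives finiteness of weighted $L^\infty$ and local $L^2$ bounds for the low-order derivatives of $\vA$. The main technical obstacle of the proof is the bookkeeping of the weight split $w_1 w_2 = \vp_{2k}\vp_g^{1/2}$ (especially when $i$ is close to $k$, where the Sobolev embedding is "just barely" applied), together with the need to use the vanishing of $\vA$ at the origin in the third inequality to accommodate the singular weight $\vp_A$. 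Once these weight choices are made, each inequality follows by summing the resulting bounds over the finitely many commutator-type terms produced in Step~1.
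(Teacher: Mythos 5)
Your proposal is correct and follows essentially the same route as the paper: expand $\D^k$ of each quadratic term by the Leibniz rule so that the top-order pieces cancel, then bound each remaining commutator term $\na^i f\cdot\na^{2k+1-i}g$ ($1\le i\le 2k$) by a weighted $L^\infty\times L^2$ split combined with the weighted interpolation lemmas --- which is exactly the content of the paper's Lemma~\ref{lem:prod} (estimate~\eqref{eq:Xm_prod}, applied with $\b=1$), invoked there as a black box rather than re-derived via Sobolev embedding as you do. One small remark: your Step~3 about the singular weight $\vp_A$ is not needed for this lemma, since all norms on the right-hand side of the statement are $\cX^k$ norms (whose $L^2$ piece carries the regular weight $\vp_g$, not $\vp_A$), and Lemma~\ref{lem:prod} / estimate~\eqref{eq:Xm_Linf} already gives the weighted $L^\infty$ decay of $\na^i\vA$ directly in terms of $\|\vA\|_{\cX^k}$; the passage to $\|\vA\|_{\cX^k_A}$ via $\vp_g\les\vp_A$ is only an a-posteriori convenience.
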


Note that since $\vp_g\les \vp_A$, we may a-posteriori bound the norm $\|\AA\|_{\cX^k}$ appearing on the right side of the above estimates with $\|\AA\|_{\cX^k_A}$.

\begin{proof}
We estimate  a typical term, for instance $\td \UU \cdot \na \td \UU$, appearing in $\cN_{U}$. Using the Leibniz rule, recalling the definitions of the weights $\vp_{2k}$ (see~\eqref{eq:phi:m:def} with \eqref{eq:wg_asym}) and $\vp_g$ (see~\eqref{eq:vp:g:def}), and using Lemma~\ref{lem:prod}, we have
\begin{equation*}
\left\| \vp_{2k} \vp_g^{1/2} \Bigl( \D^k( \td \UU \cdot \na \td \UU) -  \td \UU \cdot \na \D^k \td \UU  \Bigr) \right\|_{L^2}  \les_k \sum_{ 1 \leq i \leq 2 k} \left\| \vp_{2k} \vp_g^{1/2} |\na^i \td \UU| \cdot |\na^{2k+1-i} \td \UU | \right\|_{L^2} \les_k \| \td \UU \|_{\cX^k}^2. 
\end{equation*}
All other terms appearing in the nonlinear terms $\cN_U,\cN_\S$ and $\cN_A$ (see~\eqref{eq:non}) are estimated similarly. 
\end{proof}

Next, we prove Proposition \ref{prop:onto}. Recall the different notations from the beginning of Section~\ref{sec:non}, in particular~\eqref{eq:init}, and \eqref{eq:T2:def}.
Per the assumption of Proposition~\ref{prop:onto}, let $\| ( \wh \UU_1, \wh \S_1) \|_{Y_2} < \d_Y$ for some $\d_Y$ small to be determined. Define $\td \VV_2$ using \eqref{eq:non_fix:V2}, and then define $\td \VV_1$ as the solution of~\eqref{eq:non_fix}. Denote
\begin{subequations}\label{eq:non_E}
\begin{align}
  \td \VV_2 & =  \cT_2( \wh \UU_1, \wh \S_1 ), 
\quad  \td \VV = \td \VV_1 +  \td \VV_2, \quad  \td \UU = \td \UU_1 +  \td \UU_2, \quad \td \S = \td \S_1 + \td  \S_2 , \\
  \wh E & = \| ( \wh \UU_1, \wh \S_1) \|_{Y_2} < \d_Y, \\ 
E_{m+1}(s) &\teq \bigl( \| (\td \UU_1(s), \td \S_1(s)) \|_{\cW^{m+1}}^2 +  \| \vA(s) \|_{\cW_A^{m+1}}^2\bigr)^{1/2},   \\
  \cE(s)  & = E_{m+1}(s) + \|  \td \VV_2(s) \|_{\cX^{m+2}}.
\end{align}
\end{subequations}
Then, by the above definitions and~\eqref{norm:Wk} we have 
\beq\label{eq:non_err}
\| (\td \UU,\td \S) \|_{\cX^{m+1}}  + \| \AA \|_{\cX_A^{m+1}} + \| \td \VV_2\|_{\cX^{m+2}} 
\les_m \| (\td \UU_1,\td \S_1) \|_{\cX^{m+1}}  + \| \AA \|_{\cX_A^{m+1}} + \| \td \VV_2\|_{\cX^{m+2}}  
\les_m \cE .
\eeq
Note that since $\vp_g\les \vp_A$, the bound~\eqref{eq:non_err} also gives $\| \AA \|_{\cX^{m+1}}  \les_m \cE$, an estimate we use occasionally.

The term $\td \VV_2$ is already bounded in light of Lemma~\ref{lem:T2}. In order to estimate $\td \VV_1 = (\td \UU_1,\td \S_1,\AA)$, we performing $\cW^{m+1}$-energy estimates on \eqref{eq:non_fix}, to obtain 
\begin{subequations}
\label{eq:non_Y1}
\begin{align}
\tfrac{1}{2} \tfrac{d}{ds} E_{m+1}^2
&= \tfrac{1}{2} \tfrac{d}{ds} \bigl( \| (\td \UU_1, \td \S_1) \|_{\cW^{m+1}}^2 
 + \| \vA \|_{\cW_A^{m+1}}^2  \bigr)
  = I_{\cL} + I_{\cN}  , 
  \label{eq:non_Y1:a}\\
I_{\cL}& \teq \la (\cL - \cK_m) ( \td \UU_1, \td \S_1 ), (\td \UU_1, \td \S_1) \ra_{\cW^{m+1}} 
 + \la \cL_A (\vA), \vA \ra_{\cW_A^{m+1}} , 
 \label{eq:non_Y1:b}\\
 I_{\cN} & \teq \la (\cN_U(\td \VV),\cN_\S(\td \VV)), (\td \UU_1,\td \S_1) \ra_{\cW^{m+1}}
 + \la \cN_A(\td \VV), \vA \ra_{\cW^{m+1}_A}. 
 \label{eq:non_Y1:c}
\end{align}
\end{subequations}
For the linear contributions appearing in~\eqref{eq:non_Y1:b}, using \eqref{eq:coer_Wk}, we obtain the dissipative bound\footnote{Here we use that the solution $\td \VV_1$ of \eqref{eq:non_fix} has sufficient regularity to justify the applicability of~\eqref{eq:coer_Wk}; see  Footnote~\ref{foot:footnote:details}.}
\beq\label{eq:non_lin}
\bal
 I_{\cL} \leq & - \lam_1 \| (\td \UU_1, \td \S_1) \|_{\cW^{m+1}}^2
 - \lam_1  \| \vA \|_{\cW_A^{m+1}}^2 
 = -\lam_1 E_{m+1}^2. 
 \eal
\eeq
Thus, it is left to estimate the contribution to \eqref{eq:non_Y1:a} from the nonlinear terms in~\eqref{eq:non_Y1:c}.
 
\paragraph{\bf{Estimate of nonlinear terms.}}
We treat the nonlinear terms in~\eqref{eq:non_Y1:c} perturbatively. Our goal is to prove
\beq\label{eq:non_est}
 |I_{\cN}| \les_m \cE^2 E_{m+1} .
\eeq 

We first estimate the contribution to~\eqref{eq:non_Y1:c} from the nonlinear term $\cN_{U}(\td \VV)$, focusing on the terms containing most derivatives, with respect to the $\cX^{m+1}$-inner product. Using Lemma \ref{lem:non_main}, the Cauchy-Schwarz inequality, and the bound~\eqref{eq:non_err} to absorb the lower-order terms, we get 
\[
\bal
 I_{\cN, U} 
 &\teq   \int \vp_g \vp_{2m+2}^2 \D^{m+1} \cN_{U}(\td \VV) \cdot \D^{m+1} \td \UU_1  \\
 &  = - \int \vp_g \vp_{2m+2}^2 \B( \td \UU  \cdot 
 \na \D^{m+ 1}  \td \UU
  + \al \td \S  \na \D^{m+1} \td \S
 + \vA \cdot \na \D^{m+1}  \vA  \B) \cdot \D^{m+1} \td \UU_1
+ \mathcal{O}_m( \cE^2  E_{m+1} ).
 \eal
\]
Since $\vp_{k} = \vp_1^k, \vp_1 \asymp \la y \ra \gtr 1$, using Lemma~\ref{lem:prod} (with $i=0$, $j=2m+3$, $m\to m+1$, $n=m+2$), and the bound~\eqref{eq:non_err}, for $G \in \{ \td \UU, \td \S, \vA\}$ and $F \in \{\td \UU_2, \td  \S_2\}$ we get 
\[
 \| \vp_{2m+2} \vp_g^{1/2} G  \na \D^{m+1}  F \|_{L^2} \les_m 
  \| \la y \ra^{2m+2} \vp_g^{1/2} G  \na \D^{m+1}  F \|_{L^2} 
\les_m
\| G \|_{\cX^{m+1}} \|  F \|_{\cX^{m+2}} \les_m \cE^2.
\]
Thus, combining the two displayed estimates above, we can reduce bounding $I_{\cN, U}$ to bounding  the main terms, which involve $\na \D^{m+1} \td \VV_1$; that is, we have
\begin{equation*}
I_{\cN, U} = 
- \int \vp_g \vp_{2m + 2}^2  \B( \td \UU  \cdot 
 \na  \D^{m+ 1}  \td \UU_1   
  + \al \td \S  \na \D^{m+1} \td \S_1 
+ \vA \cdot \na \D^{m+1}  \vA  \B) \cdot  \D^{m+1} \td \UU_1 + \mathcal{O}_m( \cE^2 E_{m+1}).
\end{equation*}
In order to estimate the contribution of the transport term $\td \UU  \cdot \na \D^{m+ 1}  \td \UU_1 $, using Lemma~\ref{lem:wg}, the definition of $\vp_{2m+2}$ in~\eqref{eq:phi:m:def}, the definition of $\vp_g$ in~\eqref{eq:vp:g:def}, estimate~\eqref{eq:non_err}, and the embedding~\eqref{eq:Xm_Linf}, we obtain
\begin{subequations}
\label{eq:non_IBP}
\begin{align}
| \na (\vp_{2m+2}^2 \vp_g) | 
& \les_m \vp_{2m+2}^2 \vp_g \la y \ra^{-1}, \\
| \na (\vp_{2m+2}^2 \vp_g G) |
& \les_m \vp_{2m+2}^2 \vp_g \bigl(\la y \ra^{-1} |G| + |\nabla G| \bigr)
\les_m \vp_{2m+2}^2 \vp_g \cE,
\end{align}
\end{subequations}
for any $G \in \{\td \UU, \td \S, \AA\}$. 
Since $( \td \UU \cdot \na) \D^{m+1} \td \UU_1 \cdot \D^{m+1} \td \UU_1 =\tfrac{1}{2}( \td \UU \cdot \na) |\D^{m+1} \td \UU_1|^2$, using integration by parts and~\eqref{eq:non_IBP} we obtain
\[
 \B| \int \vp_g\vp_{2m+2}^2 (\td \UU \cdot \na) \D^{m+1} \td \UU_1 \cdot  \D^{m+1}  \td \UU_1 \B| 
 \les_m \cE \int \vp_{2m+2}^2 \vp_g |\D^{m+1} \td \UU_1|^2 
 \les_m  \cE E_{m+1}^2 . 
\]

For the lower order $\la \cdot , \cdot \ra_{\cX^{m}}$-terms (which already contain the $\la \cdot , \cdot \ra_{\cX^{0}}$-contribution) present in the inner product $\la \cdot , \cdot \ra_{\cW^{m+1}}$ (see~\eqref{norm:Wk}), using Lemma~\ref{lem:prod} and estimate \eqref{eq:non_err}, we get
\[
 \| \cN_U(\td \VV) \|_{\cX^m} \les_m \| \td \VV \|_{\cX^{m+1}}^2 
 \les_m  \cE^2.
\]

Upon recalling the definition~\eqref{norm:Wk} of the inner product $\cW^{m+1}$, and the definitions of the inner products $\cX^{m+1}$ and $\cX^m$ in \eqref{norm:Xk}, we may combine the above estimates to obtain
\begin{align}
 &\la \cN_U(\td \VV), \td \UU_1 \ra_{\cW^{m+1}} 
 = \e_{m+1} \mu_{m+1} I_{\cN, U}
+ \mathcal{O}_m (  \| \cN_U(\td \VV) \|_{\cX^m} \| \td \UU_1 \|_{\cX^m} ) \notag \\
 &= - \e_{m+1} \mu_{m+1}  \int \vp_g \vp_{2m}^2  \bigl( 
   \al  \td \S \na \D^{m+1} \td \S_1 
+ (\vA \cdot \na) \D^{m+1}  \vA  \bigr) \cdot \D^{m+1} \td \UU_1     + \mathcal{O}_m( \cE^2 E_{m+1}). 
\label{eq:non_Nu}
\end{align}

In a similar fashion, for the contribution to~\eqref{eq:non_Y1:c} from the nonlinear term $\cN_{\S}(\td \VV)$, by using Lemma~\ref{lem:non_main} and the same arguments/estimates as above, we may obtain
\beq\label{eq:non_Ns}
\bal
\la \cN_{\S}(\td \VV), \td \S_1 \ra_{\cW^{m+1}} 
& = -\e_{m+1} \mu_{m+1} 
\! \int \! \vp_g \vp_{2m+2}^2  \bigl( \al \td \S \div( \D^{m+1} \td \UU_1) \bigr) \D^{m+1} \td \S_1  \!+\! \mathcal{O}_m( \cE^2 E_{m+1}) .
 \eal
 \eeq

It thus remains to estimate the contribution to~\eqref{eq:non_Y1:c} from the nonlinear term $\cN_A(\td \VV)$. Note that the inner products on $\cX^{m}_A$ and $\cX^m$, and hence $\cW^m_A$ and $\cW^{m}$,  differ only at the weighted-$L^2$ level, where the weight $\vp_g$ needs to be replaced by $\vp_A$; see definitions~\eqref{norm:Xk} and \eqref{norm:Wk}. At this weighted-$L^2$ level, using \eqref{eq:non_err} and the embedding~\eqref{eq:Xm_Linf}, we first obtain
\[
|\na \td \UU| + |y|^{-1} |\td  \UU| \les_m  \| \na \td \UU  \|_{L^{\infty}}
\les_m \cE.
\]
Then, using integration by parts and the bound $|\na \vp_A| \les |y|^{-1} \vp_A $ (see~\eqref{eq:wg_A}), we deduce
\[
\Bigl| 
\int \cN_A(\td \VV) \cdot \vA \vp_A
\Bigr|
\leq \int \B(  \B| \f{ \na \cdot (\td \UU \vp_A)}{2 \vp_A} \B| 
+ \| \na \td \UU \|_{L^{\infty}} \B) |\vA|^2 \vp_A 
\les \cE \| \AA\|_{\cX^0_A}^2
\les \cE  E_{m+1}^2.
\] 
For the terms in $\la \cN_{A}(\td \VV), \vA \ra_{\cW_A^{m+1}} $ which contain derivatives, the bounds and arguments are nearly identical to those of employed earlier to handle $\la \cN_U(\td \VV), \td \UU_1 \ra_{\cW^{m+1}} $. Thus, using the same arguments as those leading up to~\eqref{eq:non_Nu} and~\eqref{eq:non_Ns}, and by appealing to the estimate displayed above, we may show
 \begin{equation}\label{eq:non_Na} 
  \la \cN_{A}(\td \VV), \vA \ra_{\cW_A^{m+1}} 
 =  -\e_{m+1} \mu_{m+1} 
\! \int \! \vp_{2m+2}^2 \vp_g  \bigl(  (\vA \cdot \na) \D^{m+1} \td \UU_1 \bigr) \cdot \D^{m+1} \vA  + \mathcal{O}_m(\cE^2 E_{m+1}) .
\end{equation}

To conclude, we estimate the term $I_{\cN}$ defined in \eqref{eq:non_Y1:c} by combining the bounds~\eqref{eq:non_Nu}, \eqref{eq:non_Ns}, and~\eqref{eq:non_Na}. Using the identities
\begin{subequations}
\label{eq:good:vector:identities}
\begin{align}
 (\vA \cdot \na) \D^{m+1} \td \UU_1 \cdot \D^{m+1} \vA
 + 
 (\vA \cdot \na) \D^{m+1} \vA \cdot \D^{m+1} \td \UU_1
 &= (\vA \cdot \na) ( \D^{m+1} \td \UU_1 \cdot \D^{m+1} \vA ),  \\
  \al \td \S  \div(\D^{m+1} \td \UU_1) \D^{m+1} \td \S_1 
 + \al \td \S \na \D^{m+1} \td \S_1 \cdot \D^{m+1} \td \UU_1
  & = \al \td \S  \div ( \D^{m+1} \td \UU_1 \;   \D^{m+1} \td \S_1  ),
\end{align}
\end{subequations}
integration by parts, and the estimates \eqref{eq:non_IBP}, we establish 
\begin{align}
\label{eq:non_IBP2}
&|I_{\cN}| = \bigl| \la \cN_U(\td \VV), \td \UU_1 \ra_{\cW^{m+1}}
 + \la \cN_{\S}(\td \VV), \td \S_1 \ra_{\cW^{m+1}}
 + \la \cN_A(\td \VV), \vA \ra_{\cW^{m+1}_A} \bigr|
 \notag \\
  &\les_m  \int |\div( \vp_{2m+2}^2 \vp_g \vA )|    |\D^{m+1} \td \UU_1 \cdot \D^{m+1} \vA|
 + \alpha |\na( \vp_{2m+2}^2 \vp_g  \td \S) |  \cdot |\D^{m+1} \td \UU_1 \D^{m+1} \td \S_1|
 + \cE^2  E_{m+1} 
 \notag \\
  &\les_m  \cE \int \vp_{2m+2}^2 \vp_g \bigl( | \D^{m+1} \td \UU_1 \cdot  \D^{m+1} \vA| 
 + |\D^{m+1} \td \UU_1 \D^{m+1} \td \S_1| \bigr)  + \cE^2  E_{m+1}  
 \les_m \cE^2 E_{m+1}.
\end{align}
This concludes the proof of~\eqref{eq:non_est}.

\paragraph{\bf{Conclusion of the proof.}}
Combining~\eqref{eq:non_Y1:a}, \eqref{eq:non_lin}, and \eqref{eq:non_est}, we obtain the energy estimate
\begin{equation*}
 \tfrac{1}{2}\tfrac{d}{ds} E_{m+1}^2 \leq -\lam_1 E_{m+1}^2 +C_m \cE^2 E_{m+1}. 
\end{equation*}
Applying Lemma~\ref{lem:T2}, recalling that $\wh E = \| (\wh \UU_1, \wh \S_1)\|_{Y_2}$, and using the definition of $\cE$ in~\eqref{eq:non_E}, we obtain 
\begin{equation}
 \label{eq:junk:label:101}
\cE 
\leq 
E_{m+1} + \|\td \VV_2(s) \|_{\cX^{m+2}} 
= 
E_{m+1} + \|\cT_2( \wh \UU_1, \wh \S_1)(s)\|_{\cX^{m+2}} 
\leq 
E_{m+1} + C_m e^{-\eta_s (s-\sin)} \hat E.
\end{equation}
From the above two estimates we thus deduce
\begin{equation} 
 \tfrac{d}{ds} E_{m+1} \leq -\lam_1 E_{m+1} + \bar C_m E_{m+1}^2 +  \bar C_m  e^{-2 \eta_s (s-\sin)} \hat{E}^2,
 \label{eq:junk:label:100}
\end{equation}
for some constant $\bar{C}_m >0$. 

We consider the bootstrap assumption 
\beq\label{eq:non_boot}
E_{m+1}(s) < 2 \d e^{-\lam_1 (s-\sin)},
\qquad 
\forall s\geq \sin.
\eeq
Using the assumption~\eqref{eq:IC:small} in Theorem~\ref{thm:non}, we have $E_{m+1}(\sin) < \d$, so that~\eqref{eq:non_boot} holds at $s=\sin$.

We also recall from \eqref{eq:non_E} and~\eqref{eq:para_fix} that by assumption $\hat E < \d_Y = \d^{2/3}$. Thus, since $2 \eta_s - \lam_1 > 0 $ (see~\eqref{eq:decay_para}), by integrating~\eqref{eq:junk:label:100} in time, and using the bootstrap~\eqref{eq:non_boot}, we deduce that for all $s\geq \sin$:
\begin{align*}
E_{m+1}(s)
&\leq  
 \d + \bar C_m  \int_{\sin}^{\infty} \Bigl( 4 \d^2 e^{-\lam_1 (s-\sin)}  + 
 \d_Y^2  e^{- (2 \eta_s - \lam_1) (s-\sin)} \Bigr) d s
 \notag\\
&= \d + \bar C_m \bigl(4\lambda_1^{-1} \d^2 + (2\eta_s-\lambda_1)^{-1} \d^{4/3})
= \bigl( 1 + \tfrac{40}{9} \lambda^{-1} \bar C_m \d 
+ \tfrac{10}{3} \lambda^{-1} \bar C_m \d^{1/3} \bigr) \d.
\end{align*}
Choosing $\d_0 \ll_m 1$ small enough to ensure $\tfrac{40}{9}  \d_0 
+ \tfrac{10}{3}  \d_0^{1/3} \leq \lambda \bar C_m^{-1}$, we deduce from the above estimate that for any $\d < \d_0$,  the bootstrap assumption~\eqref{eq:non_boot} is closed. 

Recalling the definition of the $Y_1$ norm in~\eqref{norm:fix:a}, it is clear that \eqref{eq:non_boot} is equivalent to the first estimate in Proposition~\ref{prop:onto}. From~\eqref{norm:Wk} and~\eqref{norm:fix} it follows that $\|\cdot\|_{Y_2} \leq \|\cdot\|_{Y_1}$, and hence \eqref{eq:non_boot} gives
\begin{equation}
\label{eq:non_boot_res}
  \| \cT_{U,\S} ( \wh \UU_1, \wh \S_1) \|_{Y_2} 
  \leq \|( \td \UU_1, \td \S_1) \|_{Y_1}
  \leq  {\sup}_{s\geq \sin} e^{\lambda_1(s-\sin)} E_{m+1}(s)
  < 2 \d < ( 2 \d_0^{1/3}) \d_Y.
\end{equation}
The second bound in Proposition~\ref{prop:onto} thus follows as soon as $2 \d_0^{1/3} \leq 1$. The third estimate claimed in Proposition~\ref{prop:onto} was previously established in Lemma~\ref{lem:T2}.
This concludes the proof of Proposition~\ref{prop:onto}.

\subsubsection{Proof of Proposition \ref{prop:contra}}\label{sec:contra}
As in the assumption of the proposition, let $\{\wh \UU_{1, \ell}, \wh \S_{1, \ell}\}_{\ell \in \{a,b\}} \in Y_2$ be such that $\hat E_{\ell} = \| \wh \UU_{1,\ell}, \wh \S_{1, \ell} \|_{Y_2} < \d_Y$. According to \eqref{eq:non_fix:V2}, \eqref{eq:non_fix}, and \eqref{eq:map_T}, denote the associated solutions
\[
\td \VV_{2,\ell} = \cT_2( \wh \UU_{1, \ell}, \wh \S_{1, \ell} ) = (\td \UU_{2,\ell}, \td \S_{2,\ell}, 0),
\quad
\td \VV_{1,\ell} = \cT(\wh \UU_{1, \ell}, \wh \S_{1, \ell} ) =  (\td \UU_{1,\ell}, \td \S_{1,\ell}, \AA_\ell),
\quad
\td \VV_{\ell} = 
\td \VV_{1,\ell} + \td \VV_{2,\ell},
\]
for $\ell\in \{a, b\}$.
Here and throughout this proof, we use the subscript $\ell\in \{a, b\}$ to denote two different solutions, and we adopt the notation introduced \eqref{eq:non_E}; e.g.~$E_{m+1,\ell}$ and $\cE_{\ell}$, for the ``energies'' of these two solutions. Additionally, we denote the difference of two solutions by a $\Delta$-sub-index:
\begin{subequations}
\label{eq:nota_dif}
\begin{align}
&( \wh \UU_{1, \D}, \wh \S_{1,\D})  = ( \wh \UU_{1, a}, \wh \S_{1,a}) - ( \wh \UU_{1,b}, \wh \S_{1,b}) \\
&\td \VV_{1, \D} = \td \VV_{1,a } - \td \VV_{1, b},  
\quad  
\td \VV_{2, \D} =  \td \VV_{2, a} - \td \VV_{2, b},  
\quad 
\cN_{V, \D} = \cN_{V}(\td \VV_{a} ) - \cN_{V}(\td \VV_{b}) , \\
 &  E_{m, \D}(s) \teq ( \| (\td \UU_{1,\D},\td \S_{1,\D})  \|_{\cX^m}^2 
 +  \| \vA_{ \D} \|_{\cX^m_A} )^{1/2} , \quad  
 \cE_{\D}(s) \teq E_{m, \D}(s) + \| \td \VV_{2, \D}(s)\|_{\cX^{m+2}}.
\end{align}
\end{subequations}

With this notation, proving Proposition~\ref{prop:contra} amounts to showing that 
\begin{equation}
\label{eq:need:new:label:1}
\|(\td \UU_{1,\D},\td \S_{1,\D},\AA_{\D})\|_{Y_2} = {\sup}_{s\geq \sin} e^{\lambda_1(s-\sin)} E_{m,\D}(s) < \tfrac 12 \|(\wh \UU_{1,\D},\wh \S_{1,\D},0)\|_{Y_2}.
\end{equation}
In order to perform energy estimates for $E_{m,\D}$, we use~\eqref{eq:non_fix} and deduce that $\td \VV_{1,\D} = (\td \UU_{1,\D},\td \S_{1,\D},\AA_{\D})$ solves the equation
\begin{equation}
\label{eq:non_fix:diff}
 \pa_s \td \VV_{1,\D} = \cL_V (\td \VV_{1,\D}) - \cK_m(\td \VV_{1,\D}) + \cN_{V,\D},
 \qquad 
\td \VV_{1,\D} |_{s = \sin} = 0.
\end{equation}
Similarly to~\eqref{eq:non_Y1}, appealing to~\eqref{eq:dissip} for $\cL -\cK_m$ and to \eqref{eq:coer_estA} for $\cL_A$, we obtain 
\begin{subequations}
\label{eq:non_Y2}
\begin{align}
\tfrac 12 \tfrac{d}{ds} E_{m,\D}^2 
&= \tfrac{1}{2} \tfrac{d}{ds} \B( \| (\td \UU_{1,\D},\td \S_{1,\D}) \|_{\cX^{m}}^2 
 + \| \vA_{\D} \|_{\cX^{m}_A}^2  \B) \notag \\
 &= \la (\cL - \cK_m) (\td \UU_{1,\D},\td \S_{1,\D} ), (\td \UU_{1,\D},\td \S_{1,\D}) \ra_{\cX^{m}} 
 + \la \cL_A (\vA_{\D}), \vA_{\D} \ra_{\cX_A^{m}} + I_{\cN,\D} 
 \notag\\
 &\leq - \lam E_{m,\D}^2  + I_{\cN, \D},
 \label{eq:non_Y2:a}
\end{align}
where we have denoted
\begin{equation}
I_{\cN,\D} \teq \la (\cN_{U, \D}, \cN_{\S,\D}), (\td \UU_{1,\D}, \td \S_{1,\D}) \ra_{\cX^{m}}
 + \la \cN_{A, \D}, \vA_{\D} \ra_{\cX^{m}_A} .  
  \label{eq:non_Y2:b}
\end{equation}
\end{subequations}
For the nonlinear term defined in~\eqref{eq:non_Y2:b}, in analogy to \eqref{eq:non_est}, our goal is to establish the bound
\beq
\label{eq:non_est_dif}
|I_{\cN, \D}| \les_m  (\cE_{a} + \cE_b) \cE_{\D} E_{m,\D},
\eeq
where $\{ \cE_{\ell} \}_{\ell \in \{a,b\}}$ is defined according to the last line in~\eqref{eq:non_E}, and $\cE_{\D}$, $E_{m,\D}$ are defined in~\eqref{eq:nota_dif}.

\paragraph{\bf{Estimate of nonlinear terms.}}
From~\eqref{eq:non} we have that $\cN_{V, \D} = \cN_{V}(\td \VV_{a} ) - \cN_{V}(\td \VV_{b})$ is given by
\begin{subequations}
\label{eq:non:diff}
\begin{align}
{\cN}_{U,\D} 
&=  {\cN}_{U,\D}(\td \VV_{a}) -  {\cN}_{U,\D}(\td \VV_{b}) \notag\\
&= - \td \UU_\D \cdot \na \td \UU_a - \alpha \ts_\D \na \ts_a   - \vA_\D \cdot \na \vA_a 
- \td \UU_b \cdot \na \td \UU_\D - \alpha \ts_b \na \td \S_\D   - \vA_b \cdot \na \vA_\D, 
\label{eq:non:diff:a}\\
{\cN}_{\S,\D}
&=  {\cN}_{\S,\D}(\td \VV_{a}) -  {\cN}_{\S,\D}(\td \VV_{b}) \notag\\
&=  - \td \UU_\D \cdot \na \td \S_a -  \alpha \ts_\D   \div(\td \UU_a)
 - \td \UU_b \cdot \na \td \S_\D -  \alpha \ts_b  \div(\td \UU_\D), 
 \label{eq:non:diff:b}\\
{\cN}_{A,\D} 
&=  {\cN}_{A,\D}(\td \VV_{a}) -  {\cN}_{A,\D}(\td \VV_{b}) \notag\\
&=   - \td \UU_\D \cdot  \na \vA_a  - \vA_\D \cdot \na \td \UU_a
 - \td \UU_b \cdot  \na \vA_\D  - \vA_b \cdot \na \td \UU_\D .
 \label{eq:non:diff:c}
\end{align}
\end{subequations}
In order to simplify our estimates for the terms in~\eqref{eq:non:diff}, we note that by~\eqref{eq:non_err} (a bound which is applicable since $\{(  \wh \UU_{1, \ell}, \wh \S_{1, \ell} )\}_{\ell \in \{a,b\}}$ satisfy the assumptions of Proposition~\ref{prop:onto}), for $\ell \in \{a,b\}$ we have
\begin{subequations}
\beq
\label{eq:prop:a}
\|  \td \VV_\ell \|_{\cX^{m+1}}   + \| \td \VV_{2,\ell}\|_{\cX^{m+2}} 
\les_m \| (\td \UU_{1,\ell},\td \S_{1,\ell}) \|_{\cX^{m+1}}  + \| \AA_\ell \|_{\cX_A^{m+1}} + \| (\td \UU_{2,\ell},\td \S_{2,\ell})  \|_{\cX^{m+2}}  
\les_m \cE_{\ell} .
\eeq
Moreover, by the definition of $E_{m,\D}$ and $\cE_\D$ in~\eqref{eq:nota_dif}, and the fact that $\cX_A^m \subset \cX^m$, $\cX^{m+2}\subset \cX^m$, we have 
\beq
\label{eq:prop:b}
\|  \td \VV_\D\|_{\cX^{m}}    
\leq \|\td \VV_{1, \D} \|_{\cX^m}
 + \|\td  \VV_{2, \D} \|_{\cX^{m+2}} \les_m \cE_{\D}.
\eeq
\end{subequations}

First, we focus on the contribution of ${\cN}_{U,\D}$ to~\eqref{eq:non_Y2:b}, and denote 
\[
 I_{U,\D} = \la \cN_{U, \D}, \td \UU_{1,\D} \ra_{\cX^{m}}.
\]
From~\eqref{eq:non:diff:a}, \eqref{eq:prop:a}, \eqref{eq:prop:b},  using the Leibniz rule, Lemma~\ref{lem:non_main}, and Lemma~\ref{lem:prod}, similarly to the proof of~\eqref{eq:non_Nu}, we obtain  
\begin{subequations}
\label{eq:I:U:S:A:D}
\begin{align}
 I_{U,\D}&=
 - \e_m \int \vp_{2m}^2 \vp_g \bigl( \td \UU_b \cdot \na \Delta^m \td \UU_{1,\D} + \alpha \ts_b \na \Delta^m  \td \S_{1,\D} + \vA_b \cdot \na \Delta^m  \vA_\D\bigr) \cdot \Delta^m \td \UU_{1,\D}
 \notag\\
 &\quad + \mathcal{O}_m\bigl((\cE_a+\cE_b) \cE_\Delta E_{m,\D}\bigr).
\end{align}
For the contributions of ${\cN}_{\S,\D}$ and ${\cN}_{A,\D}$  to~\eqref{eq:non_Y2:b},
by using~\eqref{eq:non:diff:b}--\eqref{eq:non:diff:c}, \eqref{eq:prop:a}, \eqref{eq:prop:b}, the Leibniz rule, Lemma~\ref{lem:non_main}, and Lemma~\ref{lem:prod}, similarly to the proofs of~\eqref{eq:non_Ns}--\eqref{eq:non_Na}, we obtain  
\begin{align}
I_{\S,\D}  = \la \cN_{S, \D}, \td \S_{1,\D} \ra_{\cX^{m}}  
&=
 - \e_m \int \vp_{2m}^2 \vp_g \bigl( \td \UU_b \cdot \na \Delta^m \td \S_{1,\D} + \alpha \ts_b \div (\Delta^m  \td \UU_{1,\D}) \bigr) \cdot \Delta^m \td \S_{1,\D}
 \notag\\
 &\quad + \mathcal{O}_m\bigl((\cE_a+\cE_b) \cE_\Delta E_{m,\D}\bigr),
 \\
I_{A,\D}  = \la \cN_{A, \D},   \AA_{\D} \ra_{\cX_A^{m}}  
&=
 - \e_m \int \vp_{2m}^2 \vp_g \bigl( \td \UU_b \cdot \na \Delta^m   \AA_{\D}  + \vA_b \cdot \na \Delta^m  \td \UU_{1,\D}\bigr) \cdot \Delta^m   \AA_{\D}
 \notag\\
 &\quad + \mathcal{O}_m\bigl((\cE_a+\cE_b) \cE_\Delta E_{m,\D}\bigr).
 \end{align}
\end{subequations}
Finally, we sum the three identities in~\eqref{eq:I:U:S:A:D}, appeal to the vector calculus identities in~\eqref{eq:good:vector:identities}, integrate by parts, use~\eqref{eq:non_IBP} with $2m+2$ replaced by $2m$ and with $G \in \{(\td \UU_\ell,\td\S_{\ell},\AA_{\ell})\colon \ell \in \{a,b\}\}$, and appeal to the bound~\eqref{eq:prop:a}, to conclude
\begin{align*}
I_{\cN,\D} &= I_{U,\D} + I_{\S,\D} + I_{A,\D}
\notag\\
&= 
- \e_m \int \vp_{2m}^2 \vp_g \Bigl( \tfrac 12  (\td \UU_b \cdot \na) \bigl| \Delta^m \td \VV_{1,\D}\bigr|^2 \notag\\
&\qquad \qquad \qquad  \qquad
+
\alpha \ts_b \div \bigl( \Delta^m  \td \S_{1,\D} \Delta^m  \td \UU_{1,\D} \bigr)
+ 
\vA_b \cdot \na \bigl( \Delta^m  \vA_\D \cdot \Delta^m \td \UU_{1,\D} \bigr)
\Bigr)
 \notag\\
&\quad  + \mathcal{O}_m\bigl((\cE_a+\cE_b) \cE_\Delta E_{m,\D}\bigr) \notag\\
&= \mathcal{O}_m\bigl((\cE_a+\cE_b)  E_{m,\D}^2\bigr)
 + \mathcal{O}_m\bigl((\cE_a+\cE_b) \cE_\Delta E_{m,\D}\bigr).
\end{align*}
This concludes the proof of~\eqref{eq:non_est_dif}.

\paragraph{\bf{Conclusion of the proof.}}
Combining the estimates \eqref{eq:non_Y2}--\eqref{eq:non_est_dif}, we deduce
\beq\label{eq:non_EE}
\tfrac{d}{ds} E_{m, \D} + \lam  E_{m, \D} \les_m (\cE_a + \cE_b) \cE_{\D},
\eeq
where we recall that $\cE_a, \cE_b, \cE_{\D}$ are defined in~\eqref{eq:nota_dif}. 
In order to bound the right side of \eqref{eq:non_EE}, we note that since $\cT_2$ is a linear map, $\td \VV_{2,\D} = \cT_2( \wh \UU_{1, \D}, \wh \S_{1,\D})$, and so using Lemma~\ref{lem:T2}, we obtain 
\[
\cE_{\D}(s) - E_{m,\D}(s)
= \|\td \VV_{2, \D}(s)\|_{\cX^{m+2}}  
 \les_m   e^{- \eta_s (s-\sin)} \| (\wh \UU_{1, \D}, \wh \S_{1, \D}) \|_{ Y_2}.
\]
On the other hand, for the term $\cE_a + \cE_b$, using \eqref{eq:junk:label:101} and~\eqref{eq:non_boot} (applicable since $\{(  \wh \UU_{1, \ell}, \wh \S_{1, \ell} )\}_{\ell \in \{a,b\}}$ satisfy the assumptions of Proposition~\ref{prop:onto}), we obtain 
\[
\cE_{\ell}(s) = E_{m+1, \ell} + \|\cT_2(  \wh \UU_{1, \ell}, \wh \S_{1, \ell} ) \|_{\cX^{m+2}}
\leq 2 \d e^{- \lam_1 (s-\sin)} + C_m e^{-\eta_s (s-\sin)}  \d_Y,
\]
for $\ell \in \{a,b\}$.
From the two estimates above and~\eqref{eq:non_EE}, we obtain
\[
\tfrac{d}{ds} E_{m, \D} + \lam  E_{m, \D} 
\leq \td C_m ( \d e^{- \lam_1 (s-\sin)} +  e^{-\eta_s (s-\sin)}  \d^{2/3}) \bigl(E_{m,\D} +  e^{- \eta_s (s-\sin)} \| (\wh \UU_{1, \D}, \wh \S_{1, \D}) \|_{ Y_2}\bigr),
\]
for some constant $\td C_m\geq 1$. Since $\eta_s < \lam_1 < \lam$ (see~\eqref{eq:decay_para}), we may choose 
$\d_0 \in (0,1]$ small enough to ensure $\td C_m ( \d_0 +   \d_0^{2/3}) \leq \lam - \lam_1$, and thus, for all $s\geq \sin$ and any $\d \leq \d_0$ we have 
\[
\tfrac{d}{ds} E_{m, \D} + \lam_1  E_{m, \D} 
\leq2 \td C_m  \d_0^{2/3}  e^{- 2 \eta_s (s-\sin)} \| (\wh \UU_{1, \D}, \wh \S_{1, \D}) \|_{ Y_2}.
\]
To conclude, recall that $E_{m,\D}(\sin) = 0$, and $2\eta_s > \lam_1$ (see~\eqref{eq:decay_para}), so that by integrating the above estimate we arrive at 
\[
e^{\lam_1(s-\sin)} E_{m,\D}(s) \leq 2 \td C_m  \d_0^{2/3} \| (\wh \UU_{1, \D}, \wh \S_{1, \D}) \|_{ Y_2}  \int_{\sin}^\infty  e^{- (2 \eta_s-\lam_1) (s-\sin)} ds 
= \tfrac{20}{3} \lam^{-1} \td C_m  \d_0^{2/3} \| (\wh \UU_{1, \D}, \wh \S_{1, \D}) \|_{ Y_2} .
\]
Upon further decreasing $\d_0$, to also ensure that $\frac{20}{3} \lam^{-1} \td C_m  \d_0^{2/3} < \frac 12$, the above estimate proves~\eqref{eq:need:new:label:1}, thus concluding the proof of Proposition~\ref{prop:contra}.

\subsection{Additional estimates of the solution from Theorem~\ref{thm:blowup}}
\label{sec:additional:estimates}
In this section we establish sharp spatial decay estimates for the self-similar solution $(\UU,\S,\AA) = (\td \UU_1 + \td \UU_2 + \bar \UU, \td \S_1 + \td \S_2 + \bar \S, \AA)$ constructed in Theorem~\ref{thm:non}, with {\em initial data as specified in}~\eqref{eq:the:IC}; that is, for the self-similar solution described by Theorem~\ref{thm:blowup}. 
We claim that:
\begin{subequations}
\label{eq:decay_est}
\begin{align}
 |\UU(y, s)| + | \AA(y, s)| &\les  \la y \ra^{1 - r } 
 \label{eq:decay_est:a}\\
 |\na \UU(y, s)| + |\na \S(y, s) |  + |\na \AA(y, s)| & \les \la y \ra^{-r},
 \label{eq:decay_est:b}
\end{align}
\end{subequations}
pointwise for $y\in {\mathbb R}^2$, for all $s\geq \sin$, where we recall that $r>1$ (see~\eqref{r-range}). Note that \eqref{eq:decay_est:a} is missing a decay estimate for $\S$; this is due to the fact that we have chosen the rescaled sound speed $\S$ to equal a constant in the far-field at the initial time (see~\eqref{eq:the:IC:b}). 

The purpose of the decay estimates~\eqref{eq:decay_est} is to prove that there are no singularities in the Euler solution $(\uu,\sigma)$ (in original $(x,t)$ variables) away from $(x,t) = (0,T)$, as claimed in Remark~\ref{rem:blowup:at:0}. Indeed, using the decay estimates in~\eqref{eq:decay_est} and the self-similar relation~\eqref{ss-var}, which gives 
 $x = y (T-t)^{1/r}$ and $T-t = e^{-rs}$, for any $t\in [0,T)$ and $x \neq 0$ (so that $|y| = |x| (T-t)^{-1/r} \to \infty$ as $t \converges T$) we obtain 
\begin{subequations}
\label{eq:bounds:for:original:variables}
\begin{align}
| f(x,t) | & = \tfrac{1}{r} (T-t)^{\frac{1}{r}-1} | F( y,s) | 
\notag\\
&\les (T-t)^{\frac{1}{r}-1} \la y \ra^{1-r}
\leq |x|^{1-r}, 
\qquad \forall (f, F) \in \bigl\{(\uu^R , \UU), (\uu^{\th}, \AA) \bigr\},
\label{eq:bounds:for:original:variables:a}
\\
|(\na f)(x,t) | & = \tfrac{1}{r} (T-t)^{-1} | \na F( s, y) | 
\notag\\
&\les (T-t)^{-1} \la y\ra^{-r}
\leq |x|^{-r}, 
\qquad \forall (f, F) \in \bigl\{ (\uu^R , \UU), ( \s, \S), (\uu^{\th}, \AA) \bigr\}. 
\label{eq:bounds:for:original:variables:b}
\end{align}
\end{subequations}
As a consequence of~\eqref{eq:bounds:for:original:variables}, we have that  $ \uu, \na \uu, \na \s$ may only blow up at the origin $x=0$, at time $t=T$. 

In order to control the undifferentiated rescaled sound speed (missing from~\eqref{eq:bounds:for:original:variables}), we estimate $\s$ via the Lagrangian flow associated to $\uu$. By  choosing $\d$ to be sufficiently small in Theorem~\ref{thm:non}, the estimates established in Theorem~\ref{thm:non} imply that the total self-similar radial velocity  
$ \UU = \td \UU_1 + \td \UU_2 + \bar \UU $ 
satisfies the assumption of Lemma~\ref{lem:traj}. Let $a\mapsto X(a,t)$ be the flow map (recall, Footnote~\ref{footnote:flow:map}) associated with $\uu = \uu^R + \uu^\th$, where $\uu^R$ and $\uu^\th$ are given in terms of $\UU$ and $\AA$ via \eqref{ss-var:b}--\eqref{ss-var:c}. Fix $x \neq 0$. For any $0\leq t < T$, denote $x_0 = X^{-1}(x, t) $, i.e.~$X(x_0, t) = x$. 
From Lemma \ref{lem:traj}, for $0\leq t^\prime <  t < T $, we have
\begin{equation}
 |x| = |X(x_0, t)|  \les |x_0|, 
 \quad
 R_l(x) \teq |x| \min(1, |x|^{c_1}) \les |x_0| \min(1, |x_0|^{c_1}) \les |X(x_0, t^\prime)| \les |x_0|,
\label{eq:bounds:for:original:variables:c}
\end{equation}
for some positive constant $c_1$. In particular, the above bounds imply that $x\neq 0 \Leftrightarrow x_0 \neq 0$. 
Next, we use~\eqref{eq:euler:b} and the definition $x=X (x_0,t)$  to write
\[
\sigma(x,t) 
= \sigma_0(x_0) \exp\Bigl(-\alpha \int_0^t (\div \uu)(X(x_0,t^\prime),t^\prime) d t^\prime \Bigr).
\]
Since for axisymmetric flows we have $(\div \uu) (x^\prime,t)= (\tfrac{1}{R}+\partial_R)u^R (|x^\prime|,t)$, by~\eqref{eq:bounds:for:original:variables} and~\eqref{eq:bounds:for:original:variables:c} it follows that 
$|(\div \uu)(X(x_0,t^\prime),t^\prime)| \les |X(x_0,t^\prime)|^{-r} \leq R_l(x)^{-r}$, for all $t^\prime \in [0,t]$. Therefore, we deduce
\begin{equation}
\| \sigma_0^{-1}\|_{L^\infty}^{-1} \exp\bigl(- C R_l(x)^{-r} \bigr)
\leq 
\sigma(x,t)
\leq
\| \sigma_0\|_{L^\infty} \exp\bigl(C R_l(x)^{-r} \bigr)
\label{eq:decay_est:c}
\end{equation}
for all $x\neq 0$ and $t\in [0,T)$, for some constant $C>0$ which is allowed to depend on $T$ but not on $x$. 
In particular, we obtain that $\s(x, t)$ does not become vacuous and does not blow up away from $(x,t) = (0,T)$.

The goal of the remaining part of this section is to prove~\eqref{eq:decay_est}.

\subsubsection{Sharp decay estimates of the solution}

Recall cf.~\eqref{eq:dec_U} that the steady state $(\bar \UU,\bar \S,0)$ satisfies the bounds~\eqref{eq:decay_est}. We thus only need to perform weighted $W^{i,\infty}$ estimates, for $i\in \{0,1\}$, on the perturbation $\td \VV = (\UU - \bar \UU, \S - \bar \S, \AA)= (\td \UU, \td \S, \AA) = \td \VV_1 + \td \VV_2$ (see~\eqref{eq:init}). Adding the equations for $\td \VV_1$ and $\td \VV_2$ in \eqref{eq:non_V}, we obtain 
\[
 \pa_s \td \VV  = \cL_V( \td \VV) + \cN_V(\td \VV),
\]
where $\cL_V = (\cL_U, \cL_{\S}, \cL_A)$ and $\cN_V = (\cN_U, \cN_A, \cN_{\S})$ are defined in \eqref{eq:lin}. In order to establish~\eqref{eq:decay_est}, we consider the  weighted quantity 
\begin{subequations}
\label{norm:linf}
\begin{equation}
F \teq (\phi_0^2 ( |\td \UU|^2 + |\AA|^2)+ \phi_1^2 |\na \td \VV|^2 )^{1/2}, 
\qquad \mbox{where} \qquad 
\phi_0 = \la y \ra^{r-1},
\quad \phi_1 =  \la  y \ra^{r} ,
\label{norm:linf:a}
\end{equation}
and note that~\eqref{eq:decay_est} is a consequence of the uniform boundedness of the associated ``energy''
\beq
\label{norm:linf:b}
E_{\infty}(s) \teq \| F(\cdot,s) \|_{L^{\infty}},
\eeq
\end{subequations}
for $s\geq \sin$.
Note that since $\td \S$ does not decay, we do not include the term $\phi_0 |\td \S|$ in the definition of $F$.

In order to estimate $E_\infty(s)$, for $f \in \{\td \UU, \td \S, \AA\}$ we first focus on the linear terms $(r-1) f$ and $y \cdot \na  f$ present in~\eqref{eq:lin:LU}--\eqref{eq:lin:LS}; accordingly, we re-write \eqref{eq:lin:a}--\eqref{eq:lin:c} as
\begin{subequations}
\label{eq:linf_main}
\begin{align}
 \pa_s f + y \cdot \na f   &=  (1-r) f + \cL_{f, R}  + \cN_f , \\
 \cL_{U, R} & =   -  \bar{ \UU} \cdot \na \td \UU  -   \td \UU \cdot \na \bar{\UU} 
 - \alpha \td \S  \na \bar{\S}  - \alpha \bar{\S} \na \td \S, 
 \label{eq:linf_main:b}\\ 
 \cL_{A, R}  &= -  \bar{ \UU} \cdot \na \vA  - \AA \cdot \na \bar \UU,  
\label{eq:linf_main:c} \\
 \cL_{\S, R} &=  - \bar{ \UU}  \cdot \na \td \S
 -  \td \UU   \cdot \na \bar{\S}  -  \alpha \td \S  \div(\bar \UU)  - \alpha \bar{\S}  \div(\td \UU)  
 \label{eq:linf_main:d}
\end{align}
\end{subequations}
for terms $ \cL_{f, R} , \cN_f$, which are quadratic in either the perturbation or the profile, and which decay faster than the linear terms; we will treat these perturbatively and think of them as lower order. 

For $(i,f) \in \{ (0,\td \UU), (0,\AA), (1, \td \UU), (1,\td \S), (1,\AA) \}$, a direct computation yields 
\beq\label{eq:linf}
\pa_s (\phi_i \na^i f ) + y \cdot \na (\phi_i\na^i f ) = \Bigl( (1-r) - i + \f{y \cdot \na \phi_i}{\phi_i} \Bigr) ( \phi_i \na^i  f)  + \phi_i \na^i (  \cL_{f, R} f + \cN_f ).
\eeq
Since $r>1$ we may directly verify that 
\beq\label{eq:linf_1}
(1 - r ) - i + \f{y \cdot \na \phi_i}{\phi_i} 
= (1 - r) - i + (r-1 + i) \f{|y|^2}{\la y \ra^2}
= - (r-1 + i) \f{1}{\la y \ra^2} \leq 0
\eeq
for all $y\in{\mathbb R}^2$. 
For the contribution of the lower order terms present on the right side of~\eqref{eq:linf}, in Sections~\ref{sec:linf_lin} and~\ref{sec:linf_non} below, we will establish the pointwise estimates
\begin{equation}
\label{eq:linf_2}
 |  \phi_i \na^i \cL_{f, R}  | \les \| \td \VV \|_{\cX^m},
 \quad | \phi_i \na^i \cN_f | \les   \| \td \VV \|_{\cX^m}^2,  
\end{equation}
for all $(i,f) \in \{ (0,\td \UU), (0,\AA), (1, \td \UU), (1,\td \S), (1,\AA) \}$.

Multiplying equation~\eqref{eq:linf} with $\phi_i \na^i f $,  summing over $(i,f) \in \{ (0,\td \UU), (0,\AA), (1, \td \UU), (1,\td \S), (1,\AA) \}$, recalling the definition of $F$ in~\eqref{norm:linf}, and combining the resulting identity with the bounds \eqref{eq:linf_1} and \eqref{eq:linf_2}, we deduce
\[
 \pa_s F^2 + y \cdot \na F^2 \leq C \| \td \VV \|_{\cX^m} ( 1 + \| \td \VV \|_{\cX^m}  ) F.
\]
Upon dividing both sides by $F$, composing with the Lagrangian flow of the operator $\pa_s + y \cdot \na$, which is given explicitly as $Y(y_0,s) = y_0 e^{s-\sin}$, and using the nonlinear estimate $\|\td \VV(s)\|_{\cX^m} \les \delta^{2/3} e^{-\eta_s (s-\sin)}$ established\footnote{The norms $\cW^{m+1}, \cW_A^{m+1}, \cX^{m+2}$ are stronger than the  $\cX^m$ norm; see~\eqref{norm:Xk}, the bound $\vp_g \les \vp_A$, Lemma~\ref{lem:Xm_chain}, and~\eqref{norm:Wk}.} earlier in Theorem~\ref{thm:non},  we obtain 
\[
  \tfrac{d}{d s} F(Y(y_0, s), s) \leq  C \delta^{2/3} e^{-\eta_s (s-\sin)} ( 1 + C \delta^{2/3} e^{-\eta_s (s-\sin)} )
\]
for some $C>0$ independent of $\delta$ and of $(y_0,s)$. Using that by definition $F(y_0, \sin) \leq E_{\infty}(\sin)$ for all $y_0 \in \mathbb{R}^2$, integrating the above ODE with respect to $s\geq \sin$, then taking the supremum over $y_0$ (here we use that the map $y_0 \mapsto Y(y_0,s)$ is a bijection of $\mathbb{R}^2$ for any $s$) we arrive at 
\begin{align*}
  E_{\infty}(s)  
 &\leq 
  E_{\infty}(\sin) + C \delta^{2/3} \int_{\sin}^s  e^{- \eta_s (s^\prime -\sin)} d s^\prime + C^2 \delta^{4/3} \int_{\sin}^s e^{- 2 \eta_s (s^\prime -\sin)} d s^\prime
  \notag\\
 &\leq 
  E_{\infty}(\sin) + C \delta^{2/3} \eta_s^{-1}   + C \delta^{4/3} (2 \eta_s)^{-1},
\end{align*}
for all $s\geq \sin$. 
Combining the above estimate with the boundedness of  $E_{\infty}(\sin)$ (which follows from the definition of the initial data in~\eqref{eq:the:IC}, from the decay of the self-similar profiles $(\bar U,\bar \S)$ in~\eqref{eq:dec_U}, and from the fact that $\td \VV_2(\sin)$ has compact support~\eqref{eq:V2_form2:d}), we deduce that the function $F$ defined  in~\eqref{norm:linf:a} is uniformly bounded in space and time, thereby proving~\eqref{eq:decay_est}. 

It only remains to establish~\eqref{eq:linf_2}, which we achieve in the next two subsections.

\subsubsection{Estimates of the linear terms}\label{sec:linf_lin}

In this section, we establish the first estimate in \eqref{eq:linf_2}, which concerns the linear terms defined in~\eqref{eq:linf_main:b}--\eqref{eq:linf_main:d}. From Lemma \ref{lem:prod}, we have 
\beq\label{eq:decay_est2}
 | \na^i \td \VV | \les  \|\td \VV \|_{\cX^m} \la y \ra^{-i + \kp_1/2} ,  \qquad \kp_1 = \tfrac 14,  \quad 0\leq i \leq 2m -2.
\eeq

For the sake of brevity, we estimate only two ``typical'' terms  appearing in the definition of  $ \phi_i \na^i \cL_{f, R}$ for $(i,f) \in \{ (0,\td \UU), (0,\AA), (1, \td \UU), (1,\td \S), (1,\AA) \}$; see~\eqref{eq:linf_main:b}--\eqref{eq:linf_main:d}. More precisely, we consider $\td \S  \na \bar \S$ and $\bar \S \na \td \S$, which arise when $f=\td \UU$. Recall the decay estimates of the profile in~\eqref{eq:dec_U} and the fact that $\kp_1 \in (0, 1)$. Applying \eqref{eq:decay_est2}, for $i\in\{0,1\}$, we obtain 
\[
\bal
   \la y \ra^{i + r-1} |\na^i(\td  \S  \na \bar \S )  |
  & \les 
\la y \ra^{i + r - 1} ( | \na^i \td \S  | \; |\na \bar \S| 
+ |\td \S| \; |\na^{1 + i} \bar \S| ) \\
& \les \la y \ra^{i + r - 1} \| \td \VV \|_{\cX^m} 
   \la y \ra^{ - i+ \kp_1/ 2 -r} 
\les \| \td \VV \|_{\cX^m} , \\
\la y \ra^{i + r-1}   | \na^i( \bar  \S  \na \td  \S)  |
 & \les
 \la y  \ra^{i + r - 1} ( | \na^i \bar \S | \; |\na \td \S | 
+ |\bar \S| \; |\na^{1 + i} \td \S | ) \\
& \les 
\la y \ra^{i + r - 1} \| \td \VV \|_{\cX^m} 
 \la y \ra^{ - i+ \kp_1/ 2 -r} 
\les \| \td \VV\|_{\cX^m} .
\eal
\]
All the other terms present in~\eqref{eq:linf_main:b}--\eqref{eq:linf_main:d} are estimated similarly; we omit these details and conclude the proof of the first estimate in~\eqref{eq:linf_2}.

\subsubsection{Estimates of the nonlinear terms}\label{sec:linf_non}
In this section, we establish the second estimate in \eqref{eq:linf_2}, which concerns the nonlinear terms defined in~\eqref{eq:non}.

We note that each nonlinear terms $\cN_f$ in \eqref{eq:non} can be written as a sum of terms of the kind $F \na G$ for $F, G \in \{\td \UU, \td \S, \AA \}$. By
using estimate~\eqref{eq:decay_est2}, we obtain
\begin{equation*}
\phi_0 |F \nabla G| 
\les  \phi_0 \; \|\td \VV \|_{\cX^m} \la y \ra^{ \kp_1/2} \; \|\td \VV \|_{\cX^m} \la y \ra^{-1+ \kp_1/2}
= \la y \ra^{r -2 + \kp_1} \|\td \VV \|_{\cX^m}^2.
\end{equation*}
At this stage, we recall from~\eqref{eq:vp:g:def} that $\kp_1 = 1/4$, and from \eqref{r-range} that $r < r_{\mathsf{eye}}(\alpha)$. Moreover, since \eqref{r-range} gives an explicit expression for $r_{\mathsf{eye}}(\alpha)$, we may directly verify that $r_{\mathsf{eye}}(\alpha) < \sqrt{2}$ for all $\alpha>0$. Therefore,
\begin{equation}
 r -2 + \kp_1 < \sqrt{2} - 2 + \tfrac 14 < 0.
\label{eq:r:less:r:eye}
\end{equation}
The above two bounds proves the second estimate in~\eqref{eq:linf_2} for $i=0$.

It thus remains to consider the second bound in~\eqref{eq:linf_2} for $i=1$ and $f \in \{ \td \UU, \td \S, \AA\}$. We proceed as above, using \eqref{norm:linf} and \eqref{eq:decay_est2} we deduce 
\begin{align*}
\phi_1 |\nabla (F \nabla G)| 
&\les 
\phi_1 |\nabla F| \, |\nabla G|
+
\phi_1 |F| \, |\nabla^2 G|
\notag\\
&\les 
\phi_1 \, \|\td \VV \|_{\cX^m} \la y \ra^{-1 + \kp_1/2} \, \|\td \VV \|_{\cX^m} \la y \ra^{-1 + \kp_1/2}
+
\phi_1 \, \|\td \VV \|_{\cX^m} \la y \ra^{\kp_1/2} \, \|\td \VV \|_{\cX^m} \la y \ra^{-2 + \kp_1/2}
\notag \\
&= \|\td \VV \|_{\cX^m}^2 \la y \ra^{r -2 + \kp_1}.
\end{align*}
Appealing to~\eqref{eq:r:less:r:eye}, the  above estimate yields $\phi_1 |\nabla (F \nabla G)| \les \|\td \VV \|_{\cX^m}^2$ for $F, G \in \{\td \UU, \td \S, \AA \}$. This concludes the proof of~\eqref{eq:linf_2}.

\appendix

\section{Trajectory estimates and the proof of Lemma \ref{lem:non_blowup}}\label{sec:non_blowup}

In this section, we first establish a trajectory estimate and then prove Lemma~\ref{lem:non_blowup} using the properties of the profile in Lemma~\ref{lem:profile} .

\begin{lemma}\label{lem:traj}
Assume that the physical space velocity $\uu$ is self-similar according to the transformation~\eqref{ss-var}. In particular let $\UU= \UU(y,s) = U(|y|,s) \ee_R$ denote the self-similar radial velocity, and recall that $x = y e^{-s}$ and $T-t = e^{- r s}$. 
Suppose that $\UU \in C^1_{y,s}$ and that $\UU$ satisfies the quantitative bound
\begin{equation}
\label{eq:lem:traj:ass}
| \UU(s, y) - \bar \UU(y)| \leq \tfrac{1}{2} \min(\kp, \tfrac{r-1}{2\al} ) \min( |y| , |y|^{1-\e}  ),
\end{equation}
for all $y \in \mathbb{R}^2$, $s\geq \sin$, and some $\e > 0$, where $\bar \UU = \bar U \ee_R$ and the parameters $\kp, r, \al> 0$ satisfy \eqref{eq:profile:properties}. Then, there exists a sufficiently small $c_1>0$, with $c_1 = c_1(\e,\kp,r,\alpha,\bar U,T)$, such that the Lagrangian flow map  $a\mapsto X(a,t)$ associated with $\uu(x,t)$ in physical space variables, satisfies the uniform estimates 
\begin{equation}
C^{-1}  |a| \min( 1, |a|^{c_1} )
 \leq 
   |X( a,  t)|   \leq  C |a|,
\label{eq:lem:traj:concl}
\end{equation}
for all $a \in \mathbb{R}^2$ and all $t\in [0,T)$, for some constant $C = C (\e,\kp,r,\alpha,\bar U,T) \geq 1$.
\end{lemma}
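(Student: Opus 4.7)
\emph{Plan of proof.} The natural approach is to reduce the trajectory estimate to a scalar ODE in self-similar coordinates. By axisymmetry of $\uu$ the Lagrangian flow preserves rays through the origin; setting
\[
w(s) := |X(a,t(s))|, \qquad \xi(s) := w(s)\, e^{s}, \qquad t(s) := T - e^{-rs}, \qquad \sin := -\tfrac{1}{r}\log T,
\]
a direct computation from~\eqref{ss-var} gives the scalar ODEs
\begin{equation*}
\tfrac{d\xi}{ds} = \xi + U(\xi,s), \qquad \tfrac{dw}{ds} = e^{-s}\, U(\xi,s), \qquad \xi(\sin) = |a|T^{-1/r},\qquad w(\sin) = |a|.
\end{equation*}
In these variables, the conclusion~\eqref{eq:lem:traj:concl} becomes $C^{-1}|a|\min(1,|a|^{c_1}) \leq w(s) \leq C|a|$ uniformly in $s \geq \sin$.

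The core step is to extract three pointwise bounds on $U$, by combining Lemma~\ref{lem:profile} (specifically~\eqref{eq:dec_U}, \eqref{eq:rep22}, \eqref{eq:rep3}) with~\eqref{eq:lem:traj:ass}:
\textbf{(i)} globally $\xi + U(\xi,s) \geq \tfrac{\kp}{2}\xi$, so that $\xi(\cdot)$ is strictly increasing and $\xi(s) \to \infty$;
\textbf{(ii)} there exist $\xi^* > 0$ and $c_* > 0$ such that $U(\xi,s) \leq -c_*\, \xi$ on $(0,\xi^*]$, since $\pa_{\xi}\bar U(0) = -\tfrac{r-1}{2\al}$ dominates the perturbation bound in~\eqref{eq:lem:traj:ass} on that interval; and
\textbf{(iii)} $|U(\xi,s)| \leq C\,\xi^{1-\sigma}$ for $\xi \geq 1$, with $\sigma := \min(\e, r-1) > 0$, so that $|dw/ds| \leq C\, w^{1-\sigma} e^{-\sigma s}$ is integrable in $s$.

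With these in hand, the trajectory splits at $s_B := \inf\{s \geq \sin : \xi(s) = \xi^*\}$ (taking $s_B = \sin$ when $\xi(\sin) \geq \xi^*$). On the \emph{inner phase} $[\sin, s_B]$, the sign condition~(ii) forces $w$ to decrease, giving at once $w(s) \leq |a|$; the crude bound $|U(\xi,s)| \leq C\xi$ for small $\xi$ yields $dw/ds \geq -Cw$, and bound~(i) furnishes the elementary estimate $s_B - \sin \leq (2/\kp)\log(\xi^* T^{1/r}/|a|)$. Combined, these give the polynomial lower bound $w(s_B) \geq C^{-1}|a|^{1 + c_1}$, with $c_1 > 0$ determined by $C,\kp,r,\al,\bar U,T$. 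On the \emph{outer phase} $[s_B,\infty)$, a bootstrap based on~(iii) closes provided $\xi^*$ is enlarged (if necessary) so that $(\xi^*)^{\sigma}$ exceeds a suitable multiple of $C/\sigma$: under the hypothesis $w(\tau) \leq 2 w(s_B)$ on $[s_B,s]$, the total variation satisfies $\int_{s_B}^s|dw/d\tau|d\tau \leq (C/\sigma)(2 w(s_B))^{1-\sigma} e^{-\sigma s_B} \leq w(s_B)/2$, whence $w(s_B)/2 \leq w(s) \leq 2 w(s_B)$ for all $s \geq s_B$. The upper bound $w(s) \leq 2|a|$ follows unconditionally; for the lower bound, the inner-phase estimate handles small $|a|$ while the case $\xi(\sin) \geq \xi^*$ gives $w(s) \geq |a|/2$ directly, and the two regimes splice to $w(s) \geq C^{-1}|a|\min(1,|a|^{c_1})$.

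The main obstacle is the inner regime: $|U(\xi,s)|/\xi$ is bounded away from zero as $\xi \to 0$, so the flow can exponentially contract $w$ while still confined to small $\xi$; because the exit time $s_B - \sin$ grows like $\log(1/|a|)$, this contraction manifests as a power of $|a|$, which is precisely the origin of the $|a|^{c_1}$ factor in the lower bound. All other estimates are Gr\"onwall-type, driven by the profile bounds of Lemma~\ref{lem:profile}, and the uniform upper bound is an easy consequence of the monotonicity~(ii) and the outer-phase bootstrap.
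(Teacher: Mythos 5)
Your plan is essentially the paper's own argument: reduce to the scalar ODE for the self-similar radius $\xi(s)=|X|e^{s}$, use the global outgoing bound $\xi+U\geq\tfrac{\kp}{2}\xi$ (from~\eqref{eq:rep22} and~\eqref{eq:lem:traj:ass}) to get exponential growth of $\xi$, use the negative slope $\pa_\xi\bar U(0)=-\tfrac{r-1}{2\al}$ near the origin for the upper bound, and the far-field decay~\eqref{eq:dec_U} for the outer regime. The paper packages this by writing $|X(a,t)|=|a|\exp\bigl(\int_{\sin}^{s} U(\xi(s'),s')/\xi(s')\,ds'\bigr)$ and bounding the exponent, which is the same computation as your Gr\"onwall estimates on $\log w$, since $\tfrac{d}{ds}\log w=U(\xi(s),s)/\xi(s)$; your inner-phase bound $s_B-\sin\les\log(1/|a|)$ producing the $|a|^{c_1}$ loss is exactly the mechanism in the paper.

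One wrinkle to repair: you ask the single threshold $\xi^*$ to do two incompatible jobs. For (ii) it must be small enough that the slope of $\bar U$ at the origin dominates the perturbation in~\eqref{eq:lem:traj:ass}, while for the outer bootstrap you ``enlarge $\xi^*$ if necessary'' so that $(\xi^*)^{\sigma}\gtrsim C/\sigma$ (and also $\xi^*\geq 1$ so that (iii) applies). After the enlargement, the sign condition (ii) need not hold on all of $(0,\xi^*]$, so the claims that $w$ is decreasing on the inner phase and that $w(s)\leq|a|$ there are no longer justified as stated, and the exit value $w(s_B)$ feeding the bootstrap is not controlled by the inner estimate alone. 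The fix is routine and costs only constants in~\eqref{eq:lem:traj:concl}: either keep two thresholds $\xi_b<\xi^*$ and observe that, by the exponential growth of $\xi$, the flow spends at most $\tfrac{2}{\kp}\log(\xi^*/\xi_b)$ units of self-similar time in the annulus $[\xi_b,\xi^*]$, where $|U|/\xi$ is bounded, so $w$ changes there by at most a fixed factor; or drop the total-variation bootstrap altogether and integrate $|\tfrac{d}{ds}\log w|\leq C\,\xi(s_B)^{-\sigma}e^{-\sigma\kp(s-s_B)/2}$ on the outer phase, which yields $|\log(w(s)/w(s_B))|\leq \tfrac{2C}{\sigma\kp}(\xi^*)^{-\sigma}$ with no smallness requirement or enlargement --- this is precisely what the paper does. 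Finally, record the trivial case $a=0$: since $U(0,s)=0$, one has $X(0,t)\equiv0$ and~\eqref{eq:lem:traj:concl} holds vacuously.
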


\begin{proof}
Fix $a \neq 0$. According to~\eqref{ss-var:a}, define the radial component of the corresponding self-similar trajectory to be
\[
R(s) \teq \frac{|X(a, t)|}{(T-t)^{1/r}} = |X(a, t)| e^s.
\]
Since $\UU(y,s) = U(|y|,s) \ee_R$, it follows from $\frac{d}{dt} X(a,t) = \uu(X(a,t),t)$ and the definition~\eqref{ss-var} of the self-similar transformation that $R(s)$ solves the ODE
\[
\partial_s R(s) = R(s) + U(R(s),s)  ,
\quad  R(\sin) = |a| T^{-\frac 1r}= |a| e^{\sin}.
\]
By~\eqref{eq:rep22}, the assumption~\eqref{eq:lem:traj:ass} on $\UU$ we get
\beq   \label{est:1}
\partial_s R(s) \ge   R(s) + \bar U(R(s)) - \tfrac{1}{2} \kp R(s) \geq \tfrac{1}{2} \kp R(s)
\quad 
\Rightarrow
\quad 
R(s) \ge e^{\frac 12 \kp s} R(\sin) = e^{\f{1}{2} \kp (s-\sin)} |a| e^{\sin}.
\eeq
The above estimates show that $R(s)$ is increasing and diverges exponentially as $s\to \infty$. 

Since by the definition of $R(s)$ we have $|X(a,t)| = R(s) e^{-s}$, we compute
\begin{equation*}
\tfrac{d}{ d s} \bigl( R(s) e^{-s} \bigr)
= \bigl(R(s) e^{-s}\bigr)  \tfrac{ U(R(s),s)}{R(s)}
,
\end{equation*}
and therefore 
\begin{equation}
\label{est:1:a} 
R(s) e^{-s} = |a| e^{I(s)},
\quad 
\mbox{where}
\quad 
I(s) \teq \int_{\sin}^s \tfrac{ U(R(s^\prime), s^\prime)}{R(s^\prime)} d s^\prime,
\end{equation}
resulting in an exact implicit formula for $e^{-s} R(s)$.
The usefulness of~\eqref{est:1:a} is seen once we recall the decay estimate \eqref{eq:dec_U} and the assumption \eqref{eq:lem:traj:ass}; together with~\eqref{est:1}, these facts imply that the exponent appearing and \eqref{est:1:a} satisfies
\[
| I(s)|
 \les \int_{\sin}^{s} \min(1, R(s^\prime)^{-c})  d s^\prime
\les \int_{\sin}^{\infty} \min\Bigl(1,|a|^{-c}e^{-c \sin}  e^{- \f{1}{2}c \kp (s-\sin)}  \Bigr) d s
\]
where $c = \min(r, \e) > 0$. 
Denoting $s_+ = \max(\sin, (1- \frac{2}{\kp}) \sin - \frac{2}{\kp}\log|a| )$, we obtain from the above estimate that  
\begin{align}
|I(s)| 
&\les \int_{\sin}^{s_+} 1  d s + |a|^{-c}e^{-c \sin} \int_{s_+}^{\infty} e^{- \f{1}{2}c \kp (s-\sin)}  d s \notag\\
&\les  s_+ 
+ |a|^{-c}e^{-c \sin}  e^{- \f{1}{2}c \kp (s_+-\sin)}
\les s_+ + e^{-\frac 12 c \kappa \sin}
\leq c_1 \max\bigl(1, - \log |a|\bigr),
\label{est:1:b}
\end{align}
where $c_1 = c_1 (\e,\kappa,r,\alpha,\bar U,T) >0$ is a constant. From~\eqref{est:1:a} and~\eqref{est:1:b} we deduce
\begin{equation}
|X(a,t)| = R(s) e^{-s} \geq |a| e^{- |I(s)|} 
\geq |a| e^{- c_1 \max(1, - \log |a|)}
= |a| \min(|a|, \tfrac{1}{e})^{c_1},
\label{eq:junk:junk}
\end{equation}
which concludes the proof of the lower bound in~\eqref{eq:lem:traj:concl}.

In order to prove of the uper bound in~\eqref{eq:lem:traj:concl}, we seek an upper bound for $I(s)$, without absolute values. Since by~\eqref{eq:rep3} we have $\pa_{\xi} \bar U(0)
= -\f{r-1}{2\al} < 0$, by the continuity of $\bar U$ and assumption~\eqref{eq:lem:traj:ass} on $\UU$, there exists  a constant $\xi_b> 0$ such that for all $\xi \in [0,\xi_b]$ we have
\[
 \bar U(\xi) \leq - \tfrac{1}{2} \tfrac{r-1}{2\al}  \xi, 
\quad U(\xi) \leq \bar U(\xi) + |U(\xi) - \bar U(\xi)| \leq 0, 
\qquad \forall \xi \in [0, \xi_b] .
\]
Then, since $R(s)$ is increasing and diverges as $s\to \infty$ (see~\eqref{est:1}), there exists a time $s_b = s_b(|a|) \geq \sin$ such that $R(s_b) = \max(|a| e^{\sin},\xi_b)$. From~\eqref{est:1} we obtain $R(s) \geq R(s_b) e^{\frac 12 \kappa (s-s_b)} \geq \xi_b e^{\frac 12 \kappa (s-s_b)} $ for all $s \geq s_b$. On the other hand, by construction we have that for $s\in [\sin,s_b)$ we have either $s_b=\sin$ or $R(s) \leq \xi_b$ (and hence $U(R(s),s)\leq 0$). Putting this information together, we deduce
\[
I(s) = \int_{\sin}^{s_b} \tfrac{ U(R(s^\prime), s^\prime)}{R(s^\prime)} d s^\prime + \int_{s_b}^{s} \tfrac{ U(R(s^\prime), s^\prime)}{R(s^\prime)} d s^\prime
\leq \int_{s_b}^{s} \tfrac{|U(R(s^\prime), s^\prime)|}{R(s^\prime)} d s^\prime.
\]
Using the decay estimates $|U(\xi,s)| \les \xi^{1-c}$ for $c = \min(r,\e)$ (which follows from \eqref{eq:dec_U} and the assumption \eqref{eq:lem:traj:ass}), we get 
\begin{equation}
I(s) 
\les \int_{s_b}^{s} (R(s^\prime))^{-c} d s^\prime
\les \int_{s_b}^{\infty} \bigl( \xi_b e^{\frac 12 \kappa (s^\prime-s_b)})^{-c} ds^\prime 
\les 1
\label{est:2}
\end{equation}
where the implicit constant depends on $\e,\kappa,r,\alpha,\bar U,T$. We deduce
\begin{equation*}
|X(a,t)| = R(s) e^{-s} 
= |a| e^{I(s)}
\leq C |a|,
\end{equation*}
for some $C = C(\e,\kappa,r,\alpha,\bar U,T)\geq 1$, 
which concludes the proof of the upper bound in~\eqref{eq:lem:traj:concl}.

In order to conclude the proof, we note that if $a = 0$, since $\UU(0,s) =0$, we get $X(0, t) =0$ for all $t\in[0,T)$, and in this case the bound \eqref{eq:lem:traj:concl} holds trivially.
\end{proof}

\begin{remark}[\bf The three-dimensional case] \label{rem:2D:3D}
We note that while Lemma~\ref{lem:traj} was stated in two space dimensions, the same proof applies ``as is'' to the three-dimensional case; the only requirements are that the radial part of the  self-similar velocity $\UU$ can be written as $U(\xi,s) \ee_R$ with $U(0,s) =0$, $\UU$ is at least $C^1$ smooth in space and time, satisfies the bound \eqref{eq:lem:traj:ass}, and that the profile $\bar U$ satisfies~\eqref{eq:dec_U},~\eqref{eq:rep22}, and $\pa_\xi \bar U (0) < 0$ (we did not use the precise formula~\eqref{eq:rep3}, only negativity of the slope near the origin).
\end{remark}

With Lemma~\ref{lem:traj} and Remark~\ref{rem:2D:3D}, we turn to the proof of Lemma~\ref{lem:non_blowup}. 

\begin{proof}[Proof of Lemma~\ref{lem:non_blowup}]
We adopt the notation from the proof of Lemma \ref{lem:traj}. Applying Lemma~\ref{lem:traj}, respectively Remark~\ref{rem:2D:3D}, with $\UU = \bar \UU$ (so that $\UU$ is smooth, radially symmetric, and trivially satisfies~\eqref{eq:lem:traj:ass}),
we get from~\eqref{eq:junk:junk} that 
\begin{equation}
R(s) e^{-s} \geq R_l(a) , \qquad R_l(a) \teq |a| \min(\tfrac{1}{e}, |a| )^{c_1},
\label{est:3}
\end{equation}
for all $s\geq \sin$ and some constant $c_1>0$. From~\eqref{ss-var:a}, \eqref{ss-var:b}, \eqref{eq:dec_U}, and~\eqref{est:3}, we get the estimate 
\beq \label{est:ss:s}
\sigma(X(a, t), t) \les e^{s(r-1)} \bar \Sigma(R(s)) 
\les e^{s(r-1)}  R(s)^{-(r-1)} 
= (e^{-s} R(s))^{-(r-1)}
\les R_l(a)^{-(r-1)}  .
\eeq
Similarly, from~\eqref{ss-var:a}, \eqref{ss-var:b}, \eqref{eq:dec_U}, and  \eqref{est:3}, we have
\begin{align} \label{est:ss:gr}
 |(\na \uu)( X(a, t), t)|
  & \les (T-t)^{-1}| (\na \bar \UU) (R(s))|
  \les (T-t)^{-1}\la R(s)\ra^{-r} \notag \\
  & \les \min((T-t)^{-1}, ( e^{-s} R(s) )^{-r} ) 
  = \min((T-t)^{-1}, R_l(a)^{-r} ).
\end{align}

We now proceed to estimate $ \nabla_a X(a,t) $. We observe that $ \nabla_a X(a,t) $ solves the  ODE: $\partial_t \nabla_a X(a,t)  = \nabla \uu(X(t), t)  \nabla_a X(a,t)$, with initial data $\nabla_a X(a,0) = {\rm Id}$. 
Introducing $t_+ = t_+(a) \teq \max( T - R_l(a)^r, 0)$ we have $\min(T, R_l(a)^r ) \leq T-t_+ \leq R_l(a)^r$, and so by \eqref{est:ss:gr} we find  
\begin{align} \label{est:gr:fl}
|  \nabla_a X(a,t)| 
&\lesssim \exp\left( \int_0^t | \nabla \uu  (X(a, t^\prime), t^\prime ) | dt^\prime \right) \lesssim 
\exp\B( C \int_0^t \min( (T- t^\prime)^{-1}, R_l(a)^{-r} ) d t^\prime  \B) 
\notag \\
& \les \exp\B( C \int_0^{t_+} (T-t^\prime)^{-1} d t^\prime+ C \int_{t_+}^T R_l(a)^{-r} d t^\prime \B)
\notag\\
&= \exp(C \log T -C \log(T-t_+) + (T-t_+) R_l(a)^{-r})
\notag\\
&\les \min(T, R_l(a)^r )^{-C}
\les \max(1, R_l(a)^{-c_2} ) 
\end{align}
for some constant $c_2>0$. 

In the assumption of Lemma~\ref{lem:non_blowup}, we  choose $k_{\mathsf{van}} \geq (c_2 +  \f{r-1}{\al} ) (c_1 + 1) $, where $c_1$ is as in \eqref{est:3} and $c_2$ is as in \eqref{est:gr:fl}; with this choice, if $|f_0(a)|\les \min(1,|a|^{k_{\mathsf{van}}})$, since the initial data is bounded away from vacuum, by combining \eqref{est:ss:s} and \eqref{est:gr:fl} we obtain
\begin{align*}
|\rho( X(a,t), t) \nabla_a X(a,t) \tfrac{f_0}{\rho_0}(a,t) |
&\les \|\rho_0^{-1}\|_{L^\infty}
\max(1, R_l(a)^{-c_2- \f{(r-1)}{\al} } )
\min( 1, |a|^{k_{\mathsf{van}}})
\notag\\
&\les 
\max(1, |a|^{- k_{\mathsf{van}}}) \min(1, |a|^{k_{\mathsf{van}}}) \les 1,
\end{align*}
uniformly for all $0\leq t < T$ and all $a\neq 0$. If $a = 0$, then the assumption on $f_0$ implies $f_0(0) = 0$ and so $\rho( X(0,t), t) \nabla_a X(0,t)  f_0(0) = 0$ for $0\leq t < T$, which concludes the proof of Lemma~\ref{lem:non_blowup}. 
\end{proof}

\section{Additional properties of the profile}\label{app:ODE}
The goal of this appendix is to clarify the proof of Lemma~\ref{lem:profile}, based on the arguments in~\cite{MeRaRoSz2022a}.  
For this purpose, we recall from~\cite{MeRaRoSz2022a} the properties of autonomous ODE for the profile $(\bar U, \bar \Sigma)$. Denote 
\beq\label{eq:ODE_nota}
W(x) =  - \xi^{-1} \bar U(\xi), \quad
S(x) =  \al \bar \S(\xi), 
\quad x(\xi) = \log \xi, \quad 
l = \tfrac{1}{\al} =\tfrac{2}{\g-1},
\quad 
W_e = \tfrac{l(r-1) }{d},
\eeq
with $d=2$.
{\em Here, the quantities $(W, S, l)$ are the same as $(w, \s, l)$ in \cite{MeRaRoSz2022a}.} The notation in~\eqref{eq:ODE_nota} is only used in this appendix, and it does not affect the rest of the paper.

\subsection{Repulsive properties}
We recall that the interior repulsive property~\eqref{eq:rep1} is proved in~\cite[Lemma 1.6]{MeRaRoSz2022a} for $\xi \in [0, \xi_s]$. By continuity, there exists $\xi_1>\xi_s$ such that the condition~\eqref{eq:rep1} holds for a domain $[0, \xi_1]$, thereby proving~\eqref{eq:rep1}. 

It thus remains to prove \eqref{eq:dec_U} and \eqref{eq:rep2}--\eqref{eq:rep3}.
From \cite[Section 2]{MeRaRoSz2022a}, we have that $(W, S)$ solve the $2\times2$ system of autonomous ODEs
\begin{subequations}
\label{eq:ODE}
\begin{equation}
  \tfrac{d W}{d x}  = - \tfrac{\D_1}{\D} , \quad  \tfrac{d S}{d x} = - \tfrac{\D_2}{\D},
  \end{equation}
 where
 \begin{align}
 \D & = (1-W)^2 - S^2, \\
  \D_1 & = W(W-1)(W-r) - d (W- W_e) S^2 =  (W- W_1(S))( W- W_2(S) )(W- W_3(S)),  \\
  \D_2 & = 
\tfrac{S}{l} \B( (l+d-1) W^2 - W(l+d + lr - r) + lr - l S^2  \B) = \tfrac{S(l+d-1)}{l} (W- W_2^{-}(S)(W-W_2^+(S)) ,
\end{align}
\end{subequations}
where $\{W_i(S)\}_{i=1}^3$ are the roots of the cubic polynomial $\D_1$, and $\{W_2^{\pm}(S)\}$ are the roots of the quadratic polynomial $\D_2$. Here, $(W_2^{\pm}, W_i)$ are the same as $(w_2^{\pm}, w_i)$ in \cite{MeRaRoSz2022a}. From \cite[Lemma 2.1]{MeRaRoSz2022a} for the roots $W_2^{\pm}$ of $\D_2$, and from \cite[Lemma 2.3]{MeRaRoSz2022a} for the roots of $W_i$ for $\D_1$, we have the following properties: 
\begin{subequations}
\label{eq:ODE_prop}
\begin{align}
& (W_2^{-})^{\pr}(S) < 0,  \quad W_2^-(S) \leq W_2^+(S) , \quad \forall S > S_2^{(0)}, \\
& W_1^{\pr}(S) < 0, \quad W_2^{\pr}(S) < 0, \quad  W_1(S) \leq 0 < W_e < W_2(S) \leq 1 < r \leq W_3(S) , \quad \forall S > 0.
\end{align}
\end{subequations}
From the proof of~\cite[Lemma 2.1]{MeRaRoSz2022a}, the value $S_2^{(0)} \geq 0$ is chosen such that if $S_2^{(0)} > 0$, then for any fixed $S \in (0, S_2^{(0)})$ the equation $\D_2(W, S) = 0$ does not have a real root $W$. From the definition of $\D_2$, this implies that 
\beq\label{eq:sign_Del2}
\D_2(W, S) > 0, \quad  \forall \ S \in (0, S_2^{(0)}) , \ W \in \R.
\eeq

To prove \eqref{eq:rep2} and \eqref{eq:rep22}, we rewrite $ \xi + \bar U(\xi) - \al \bar \S(\xi), 1 + \xi^{-1} \bar U(\xi)$ as  
\beq\label{eq:rep2_equiv}
 \xi + \bar U(\xi) - \al \bar \S(\xi) = \xi ( 1 - W - S), \quad 
 \xi + \bar U(\xi) = \xi (1 - W).
\eeq
Denote\footnote{We use $(Q_S, Q_W)$ to denote the coordinates of a point $Q$ in the system of $(S, W)$.} by $P_2 = (P_{2, S}, P_{2, W} )$ and $P_3 =(P_{3, S}, P_{3, W} )$ 
 the sonic points, where $\D = \D_1 = \D_2 = 0$, and by $P_5 = (P_{5, S}, P_{5, W} )$ other root of $\D_1 = \D_2 = 0$. In particular, we have $P_{2,S} + P_{2, W} = P_{3, S} + P_{3,  W} = 1$. 
 From \textit{Location} in \cite[Lemma 2.4]{MeRaRoSz2022a}, $P_2, P_3, P_5$ are on the curve of the middle root $(S, W_2(S))$, which along with \eqref{eq:ODE_prop} implies 
\beq\label{eq:mid_Pi}
  P_{i, W} = W_2( P_{i, S})  > 0 , \quad i = 2, 3, 5.
\eeq

 From \textit{Position of the middle root} in  \cite[Lemma 2.4]{MeRaRoSz2022a}, we have the properties 
\beq\label{eq:below_sonic}
 S + W_2(S) < 1,  \quad  P_{3, S} < S < P_{2, S} .
\eeq
The following properties are proved in \cite[Lemma 3.2]{MeRaRoSz2022a}.

\begin{lemma}\label{lem:S_in}
Assume \eqref{r-range}. Let $W(S)$ be the solution to \eqref{eq:ODE} and $W_2(S)$ be the middle root to $\D_1$ \eqref{eq:ODE_prop}, \eqref{eq:ODE}. Suppose that $W(S)$ intersects $W_2(S)$ at $S^*$ with $P_{5, S} < S^* < P_{2, S}$ and $S^* + W(S^*) < 1$.\footnote{
The additional assumption $S^* + W(S^*) < 1$ (absent in the statement of \cite[Lemma 3.2]{MeRaRoSz2022a}), that the point $P_* \teq (S^* , W(S^*))= ( S^*,  W_2(S^*))$ lies below the sonic line, can be inferred from the phase portrait in \cite[Figures 1, 2]{MeRaRoSz2022a}, where $P_*$ is the intersection between the pink and red curves.}
Then we have:
\begin{itemize}
\item[(a)] $W(S) \in C^{\infty}( 0, S^*]$ and
\[
  \quad {\lim}_{S\to 0} W(S ) = 0.
\]
Moreover, $S \to 0$ corresponds to $ x \to \infty$	and there exists $(W_{\infty}, S_{\infty})
\in \R \times \R_+^*$ such that for $x \to \infty$, we have 
\beq\label{eq:decays}
S(x) = S_{\infty} e^{-r x } (1 + \mathcal{O}( e^{-r x }) ), 
\quad W(x) = W_{\infty} e^{-r x}(1 + \mathcal{O}( e^{-r x }) ) .
\quad 
\eeq

\item[(b)] $W(S)\in C^{\infty}[S^*, P_{2, S} )$ and 
\beq\label{eq:ODE_left}
W_2(S) < W(S) < W_2^-(S) ,\quad \forall S^* < S < P_{2, S}, \quad {\lim}_{S \to ( P_{2, S})^-} W(S) =
P_{2, W}.
\eeq
\end{itemize}
\end{lemma}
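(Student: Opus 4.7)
My plan is to follow the phase-plane strategy of~\cite[Lemma 3.2]{MeRaRoSz2022a}, handling part~(b) first (which controls the branch near the sonic point $P_2$) and then using it as ``initial data'' for part~(a) (which extends the flow backward in $x$ to the equilibrium at $(S,W) = (0,0)$). The analysis is carried out throughout in the autonomous $(S,W)$-plane, exploiting the factorizations of $\Delta_1$ and $\Delta_2$ in~\eqref{eq:ODE} together with the monotonicity properties \eqref{eq:ODE_prop} of the roots $W_i$ and $W_2^{\pm}$.

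For~(b), I would begin by producing a $C^\infty$-smooth local branch of $W(S)$ emanating from $P_2$ to the left. At the sonic point $\Delta$, $\Delta_1$, and $\Delta_2$ vanish simultaneously, so the reduced ODE $dW/dS = \Delta_1/\Delta_2$ is genuinely singular; the existence of a $C^\infty$-smooth solution with $W(S) \to P_{2,W}$ as $S \nearrow P_{2,S}$ is the content of the main construction of~\cite[\S2--\S3]{MeRaRoSz2022a} and is where the constraint $r < r_{\mathsf{eye}}(\alpha)$ from~\eqref{r-range} enters, via an eigenvalue condition at $P_2$ that selects a unique analytic branch. Given this local branch, I would extend it to the left by a trapping argument in the region $\{(S,W)\colon S^* \leq S \leq P_{2,S},\ W_2(S) \leq W \leq W_2^-(S)\}$: on the upper boundary $W = W_2^-(S)$ one has $\Delta_2 = 0$ but $\Delta_1 \neq 0$, while on the lower boundary $W = W_2(S)$ one has $\Delta_1 = 0$ but $\Delta_2 \neq 0$, and in each case the sign of the vector field (combined with $\Delta > 0$ inside this region, guaranteed by \eqref{eq:below_sonic}) points inward. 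This keeps $W$ bounded on compact subintervals of $[S^*, P_{2,S})$, so the solution extends smoothly down to $S = S^*$, where by hypothesis it meets $W_2(S)$.

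For~(a), I would continue the trajectory through $S = S^*$ and track its flow in the strip $\{0 < S \leq S^*\}$. The hypothesis $S^* + W(S^*) < 1$, together with a second trapping argument (on any boundary where $W$ would approach $1-S$ the field again points inward), guarantees that the trajectory stays strictly below the sonic line, so $\Delta > 0$ and the right-hand side of~\eqref{eq:ODE} is smooth along the entire orbit. Because $S(x)$ is monotonically decreasing below the sonic line (using~\eqref{eq:sign_Del2} for very small $S$ and a direct sign analysis of $\Delta_2$ otherwise) and is bounded below by $0$, the flow extends for all $x \geq x(S^*)$ and has a limit as $x \to \infty$. The only equilibrium of~\eqref{eq:ODE} in the closure of the reachable region is $(S,W) = (0,0)$, which gives $\lim_{S \to 0^+} W(S) = 0$. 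The decay rates in~\eqref{eq:decays} then follow by linearization at the origin: one computes $\Delta(0,0) = 1$, $\Delta_1 = rW + \mathcal{O}(W^2 + S^2)$, and $\Delta_2 = rS + \mathcal{O}(S^3)$, so that $dS/dx = -rS + \cdots$ and $dW/dx = -rW + \cdots$. A standard stable-manifold/normal-form argument at the hyperbolic equilibrium $(0,0)$ promotes this linear decay to the stated asymptotics $S(x) = S_\infty e^{-rx}(1+\mathcal{O}(e^{-rx}))$ and $W(x) = W_\infty e^{-rx}(1+\mathcal{O}(e^{-rx}))$, with the prefactors $S_\infty, W_\infty$ determined by the trajectory selected in part~(b).

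The main obstacle is the local analysis at the sonic point $P_2$ underlying~(b); this is not routine and relies on the delicate construction in~\cite{MeRaRoSz2022a}, including the eigenvalue condition that selects the smooth branch, the analysis of formal power-series solutions at $P_2$, and the verification that the relevant exponent falls in the admissible range precisely when $r$ satisfies~\eqref{r-range}. Everything downstream --- the trapping argument, the continuation across $S^*$, the existence of the global limit as $x \to \infty$, and the asymptotic expansion at $(0,0)$ --- is comparatively standard autonomous phase-plane analysis once the smooth local branch at $P_2$ is in hand.
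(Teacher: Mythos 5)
The paper does not prove this lemma at all: the sentence immediately preceding the statement reads ``The following properties are proved in \cite[Lemma 3.2]{MeRaRoSz2022a}'', and the lemma is simply imported from that reference (the only original content being the footnote explaining why the extra hypothesis $S^*+W(S^*)<1$ can be read off the phase portrait there). So there is no in-paper argument to compare against; your proposal is an attempt to reconstruct the reference's proof, and as such it is a fair outline. You correctly identify the genuine crux --- the existence and $C^\infty$-smoothness of the unique branch through the triple point $P_2$ where $\D=\D_1=\D_2=0$, selected by an eigenvalue/resonance condition that is exactly where the restriction \eqref{r-range} enters --- and you correctly defer it to \cite{MeRaRoSz2022a}; everything downstream (the barrier argument between $W_2$ and $W_2^-$, continuation past $S^*$, and the linearization $\D(0,0)=1$, $\D_1=rW+\mathcal{O}(W^2+S^2)$, $\D_2=rS+\mathcal{O}(S^3)$ giving the diagonal stable node with the clean $e^{-rx}$ rate and no logarithmic correction) is standard and your computations check out. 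One imprecision worth flagging: \eqref{eq:below_sonic} only places the curve $W_2(S)$ below the sonic line, not the entire strip $W_2(S)\leq W\leq W_2^-(S)$, so ``$\D>0$ inside this region, guaranteed by \eqref{eq:below_sonic}'' is not literally justified; the cleaner argument (used by the authors in their proof of \eqref{eq:rep2_equiv2}, which is the part of the appendix they do prove) is that the smooth trajectory can only cross the sonic line at the points $P_2$ or $P_3$ where $\D_2$ also vanishes, and one rules this out by showing $\D_2\neq 0$ along the orbit. With that repair your sketch is consistent with the strategy of the cited proof.
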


Using the above properties and \eqref{eq:rep2_equiv}, we are ready to prove \eqref{eq:rep2}, which is equivalent to
\beq\label{eq:rep2_equiv2}
1 - W(S) - S > 0, \quad S < P_{2, S}.
\eeq
The proof essentially justifies the phase portrait for $S < S^*$ in \cite[Figure 3]{MeRaRoSz2022a}, which is omitted in \cite{MeRaRoSz2022a}. 

\begin{proof}[Proof of \eqref{eq:rep2_equiv2}]

From $P_{2, W} > 0$ \eqref{eq:mid_Pi} and $ P_{2, S} + P_{2, W} = 1 $,
we get 
\beq\label{eq:ODE_S2}
P_{2, S} < 1.
\eeq
Since $W_2^{-}(S)$ is only defined for $S > S_2^{(0)} \geq 0$, we define its extension on $(0, \infty)$ as
\beq\label{eq:W2_ext}
W_{2, e}^{-}(S) = W_{2}(S_2^{(0)}), \quad S < S_2^{(0)},
\quad W_{2, e}^{-}(S) = W_{2}^{-}(S),  \quad S \geq S_2^{(0)}.
\eeq

Note that $ S^* + W(S^*) = S^* + W_2(S^*) < 1$. If $ W(\td  S) + \td S = 1$ for some $\td S \in (S^*, P_{2, S} )$, since the smooth solution $(S, W(S))$ can only cross the sonic line $1 = W + S$ at $P_2$ or $P_3$, where $\D_2$ vanishes, we get $\D_2(\td S, W(\td S)) = 0$. However, since $\td S \in (S^*, P_{2, S} )$, from \eqref{eq:ODE_left}, \eqref{eq:ODE}, \eqref{eq:ODE_prop}, we get 
\[
W(\td S) < W_2^-(\td S) \leq W_2^+(\td S) ,  \quad \D_2(S, W_2(\td S)) > 0,
\]
which contradicts $\D_2(\td S, W(\td S)) = 0$. Thus, by continuity, we get 
\[
  W(S) + S < 1, \quad S \in [S^*, P_{2, S} ).
\]

Next, we consider \eqref{eq:rep2_equiv2} for $S < S^*$. Since $W(S^*) = W_2(S^*)$ and $W_2(S)$ is a root of $\D_1(S, W) = 0$, we get $\D_1(S^*, W(S^*)) =0$ and $\f{d W(S)}{d S}|_{S = S^*} = 0$. Moreover, using  $W_2^{\pr}(S^*)<0, (W_2^-)^{\pr}(S^*)<0$ \eqref{eq:ODE_prop} and $0< W_e \leq W(S^*) = W_2(S^*)\leq W_2^{-}(S^*)$ \eqref{eq:ODE_left}, we obtain 
\[
W_1(S) \leq 0 < W(S) \leq W(S^*) < W_2(S), W_2^{-}(S), 1- S, \quad S \in [S^* - \e, S^*),
\] 
for some small $\e>0$. 

Let $ 0\leq S_a  <  S^*-\e$ be the largest value, where $ W(S)$ intersects the curves $W_2(S), W_{2, e}^-(S), 1-S$, or $W_1(S)$. If such a point does not exist, we choose $S_a = 0$. Then for $S \in (S_a, S^*)$, we obtain 
\[
  W_1(S) < W(S) < W_2(S), W_{2, e}^{-}(S), 1- S,
\]
which along with \eqref{eq:ODE}, \eqref{eq:ODE_prop} implies 
\[
\D_2(S, W(S)) > 0, \quad \D_1(S, W(S)) > 0, \quad \tfrac{d W}{d S} = \tfrac{\D_1}{\D_2} > 0,
\quad S \in (S_a, S^*). 
\]
We have $\D_2>0$ for the following reasons. If $S < S_2^{(0)}$, we obtain $\D_2>0$ from \eqref{eq:sign_Del2}; if $ S \geq S_2^{(0)}$, from \eqref{eq:W2_ext}, we obtain $W(S) < W_{2,e}^{-}(S) = W_2^{-}(S)$, which along with \eqref{eq:ODE}, \eqref{eq:ODE_prop} implies $\D_2 > 0$.

The monotonicity of $W(S)$ implies 
$W(S) < W(S^*),S \in (S_a, S^*)$. For $S \in(S_a, S^*)$, we prove 
\[
 S + W(S) < W(S^*) + S^* < 1.
\]

If $S_a=0$, we prove \eqref{eq:rep2_equiv2}. If $S_a> 0$, since $W_2(S) , W_{2, e}^{-}(S), 
 1-S$ are decreasing on $(S_a, S^*)$, \eqref{eq:ODE_prop} and $W(S)$ is increasing on $(S_a, S^*)$, $W(S)$ cannot intersect these three curves at $S_a$. Therefore, $W(S)$ must intersect $W_1(S)$ at $S_a$:  $W(S_a) = W_1(S_a)$. Since $W_1^{\prime}(S) < 0$ \eqref{eq:ODE_prop}, $W_1(S)$ is a root of $\D_1(S, W)$, and $\D_2(S_a, W(S_a)) \neq 0$ (the double roots satisfies $P_{i, W} >0 \geq W(S_a)$ \eqref{eq:mid_Pi})
\[
  \tfrac{d W}{ d S} |_{S = S_a} = 0, \quad W(S_a) = W_1(S_a), 
\]
and $\D(W, S) >0$ for $W \leq 0, S \leq S^* < P_{2, S} < 1$ \eqref{eq:ODE_S2}, we get $W(S)-W_1(S) < 0$ for $S \in (S_a - \e_2, S_a)$ with some small $\e_2$. 
If $W(S_I) = W_1(S_I) \leq 0$ for some $S_I \in (0, S_a)$, since the roots $P_i$ of $\D_1 = 0, \D_2 = 0$ satisfies $P_{i, W} > 0$ \eqref{eq:mid_Pi}, we obtain $\D_2(S_I, W_1(S_I)) \neq 0$. Thus, using
\[
\tfrac{d (W-W_1)}{dS} \B|_{S = S_I }
= \tfrac{\D_1(S_I, W_1(S_I))}{\D_2(S_I, W_1(S_I))}  - W_1^{\pr}(S_I)
= 0  - W_1^{\pr}(S_I) > 0,
 \]
we get that $W_1(S)$ is a upper barrier of $W(S)$ for $S \in (0, S_a)$. From \eqref{eq:ODE_prop}
and \eqref{eq:below_sonic}, for $S \leq S_a < S^* < P_{2, S} < 1$, we get
$
 W(S) \leq W_1(S) \leq 0, \ W(S) + S \leq S < 1.
$
We complete the proof of \eqref{eq:rep2_equiv2}.
\end{proof}

The following property of $W(S)$ for $S > P_{2, S}$ is proved in item (2) \textit{Original variables} and item (3) \textit{Reaching $P_2$} in \cite[Lemma 3.1]{MeRaRoSz2022a}.

\begin{lemma}\label{lem:S_out}
Assume \eqref{r-range}. We have $W(S) \in C^{\infty}(P_{2, S}, \infty)$ and 
\[
 W_2^-(S) < W(S) < W_2(S), \quad {\lim}_{S \to (P_{2, S})^+ } W(S) = P_{2, W},
\quad {\lim}_{S \to \infty} W(S) = W_e.
\]

For $S_0> P_{2, S}$ large enough,  the curve $(S, W(S))$ with $S>S_0$ corresponds to the 
spherically symmetric profile $( \bar \S(|y|), \bar \UU(y))$ (see Theorem \ref{thm:merle:implosion}) defined on $|y| \in [0, y_0]$ for some $y_0>0$.
\end{lemma}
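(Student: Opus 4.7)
The statement is essentially a restatement of results contained in items (2)--(3) of \cite[Lemma 3.1]{MeRaRoSz2022a}, and the plan is to reproduce the key steps of that argument in the present $2$D notation~\eqref{eq:ODE_nota}--\eqref{eq:ODE_prop}. The proof proceeds in three stages: (i) construct a smooth local branch emanating from the sonic point $P_2$, (ii) use a trapping/monotonicity argument to propagate it globally for $S > P_{2,S}$, and (iii) transfer the resulting curve back to the physical self-similar variables $(\bar U,\bar\Sigma)$.

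The plan is as follows. First, I would desingularize the ODE system~\eqref{eq:ODE} near $P_2$, where both $\Delta$ and $\Delta_1,\Delta_2$ vanish. Since $P_2$ is a sonic point lying on the middle-root curve $W_2(S)$, one rewrites~\eqref{eq:ODE} in terms of a new arclength-type parameter $\tau$ with $dS/d\tau = -\Delta_2$, $dW/d\tau = -\Delta_1$, and linearizes at $P_2$; by the chosen range~\eqref{r-range}, the linearization has two real eigenvalues of opposite sign (this is one of the central computations in \cite[Lemma 2.4]{MeRaRoSz2022a}), and the stable/unstable manifold theorem yields two smooth invariant curves through $P_2$. One of these branches leaves $P_2$ into the region $\{S > P_{2,S}\}$; this provides $W\in C^\infty$ in a right-neighborhood of $P_{2,S}$ with $\lim_{S\to P_{2,S}^+} W(S) = P_{2,W}$, which is the local piece of claims (1) and (3) of the lemma.

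Second, I would extend this branch globally to $S>P_{2,S}$ by a trapping argument between the barriers $W_2^-(S)$ and $W_2(S)$. The key is to check that on the open set $\{(S,W) \colon S>P_{2,S},\ W_2^-(S) < W < W_2(S)\}$ one has a definite sign of $\Delta$, $\Delta_1$, and $\Delta_2$, as dictated by \eqref{eq:ODE_prop}; combined with the facts $W_1(S)\leq 0 < W_e < W_2(S) \leq 1$ and $(W_2^-)', W_2' < 0$, this shows that neither of the two barriers can be crossed transversally by a solution of~\eqref{eq:ODE} inside this strip. This rules out blowup and yields $W\in C^\infty(P_{2,S},\infty)$. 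For the large-$S$ asymptotics, one uses that $W_e$ is a common limit of both $W_2(S)$ and $W_2^-(S)$ as $S\to \infty$ (a direct computation from the explicit quadratic/cubic formulas in~\eqref{eq:ODE}): by the squeeze from the trapping strip one concludes $\lim_{S\to\infty} W(S) = W_e$, proving~(4).

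Third, to obtain the identification with the original spherical profile claimed in the last paragraph, I would invert the substitution~\eqref{eq:ODE_nota}: setting $\xi = e^x$ and reading $(\bar U,\bar\Sigma)$ off of $(W,S)$, the autonomous ODE~\eqref{eq:ODE} is equivalent to the self-similar Euler ODE for $(\bar U,\bar\Sigma)$. Picking $S_0>P_{2,S}$ sufficiently large and integrating the vector field $d/dx$ starting from the point $(S_0,W(S_0))$ towards $x \to -\infty$ (equivalently, towards $P_2$ along the branch above) produces a smooth $(\bar U(\xi),\bar\Sigma(\xi))$ on a left-neighborhood $\xi \in [0,y_0]$, since the inward evolution terminates precisely when one reaches the sonic line, corresponding under the change of variables to $\xi=0$. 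Matching this local profile with the exterior profile (already constructed from Lemma~\ref{lem:S_in}, giving the portion $\xi \in [y_0,\infty)$) produces the global smooth profile whose existence is asserted by Theorem~\ref{thm:merle:implosion}.

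The main obstacle will be step (i), the desingularization at the sonic point $P_2$: verifying that the eigenvalues of the linearized vector field at $P_2$ have the signs and (irrational) ratio required so that a $C^\infty$ --- as opposed to merely $C^{k}$ with finite $k$ --- branch actually exists. This is precisely the content of the nonlinear eigenvalue condition encoded in the admissible discrete speeds $\{r_k\}_{k\geq 1}$ of Theorem~\ref{thm:merle:implosion}, and its verification is the technically heaviest ingredient of~\cite{MeRaRoSz2022a}; my plan is to quote it directly rather than reprove it. The trapping in step (ii) and the conversion in step (iii) are then straightforward phase-plane and change-of-variables arguments.
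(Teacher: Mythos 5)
The paper offers no proof of Lemma~\ref{lem:S_out} at all: the sentence immediately preceding it states that the result is proved in items (2)--(3) of \cite[Lemma 3.1]{MeRaRoSz2022a}, and nothing further is argued. Your decision to quote the sonic-point analysis (the existence of a $C^\infty$ branch through $P_2$, which hinges on the non-resonance condition selecting the discrete speeds $\{r_k\}$) is therefore exactly the paper's approach, and your three-stage outline is a reasonable description of how the cited proof is organized.

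Two of the steps you do attempt to carry out are, however, stated incorrectly. First, $W_2^-(S)$ is \emph{not} asymptotic to $W_e$: from the explicit quadratic $\Delta_2=0$ in \eqref{eq:ODE}, its roots behave like $W_2^{\pm}(S)\approx \pm S\sqrt{l/(l+d-1)}$ as $S\to\infty$, so $W_2^-(S)\to-\infty$. The trapping strip $W_2^-(S)<W(S)<W_2(S)$ therefore only squeezes $W$ from above (one does have $W_2(S)\searrow W_e$, since $W_2$ is decreasing, bounded below by $W_e$, and any bounded root of the cubic $\Delta_1=0$ must tend to $W_e$ as $S\to\infty$); the matching lower bound, and hence the limit $W(S)\to W_e$, requires a separate asymptotic analysis of the vector field near the point at infinity $(S,W)=(\infty,W_e)$ and cannot be read off the barriers alone. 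Second, the coordinate correspondence in your step (iii) is reversed: under \eqref{eq:ODE_nota} (where, as the identity \eqref{eq:rep2_equiv} makes clear, the intended normalization is $S=\alpha\,\xi^{-1}\bar\S(\xi)$), the origin $\xi=0$ corresponds to $x\to-\infty$ and $S\to\infty$, not to the sonic line; the sonic point $P_2$ sits at the finite radius $\xi=\xi_s>0$. Thus the branch $S>S_0$ with $S_0$ large is the \emph{interior} piece $|y|\in[0,y_0]$ with $y_0$ small, obtained by letting $S\to\infty$, and ``termination at the sonic line'' plays no role in producing the interval $[0,y_0]$. Neither error is fatal to the overall plan --- both points are handled correctly in \cite{MeRaRoSz2022a} --- but as written your squeeze argument for the limit $W(S)\to W_e$ and your identification in the final paragraph do not go through.
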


\paragraph{\bf{Proof of  \eqref{eq:rep22} and \eqref{eq:rep3} }}

Below, we prove \eqref{eq:rep22}, \eqref{eq:rep3}.

Since $W_e,   W_2(P_{2,S})=P_{2, W} = 1 - P_{2,S} < 1, W_2(S) \leq 1$ \eqref{eq:ODE_prop}, 
using Lemma \ref{lem:S_out}, we obtain 
\[
 W(S) < c < 1,  \qquad \forall S \geq P_{2, S}
\]
for some $c>0$. For $0 < S < P_{2, S}$, using \eqref{eq:rep2_equiv2} and continuity, we obtain 
\[
W(S) < c_2 < 1, \qquad \forall S < P_{2, S}.
\]
By combining the above two inequalities, we have thus proven that $W(S) \leq c_3 < 1$ for any $S \geq 0$. Using \eqref{eq:rep2_equiv}, the inequality \eqref{eq:rep22} now follows.

From \eqref{eq:ODE_nota}, we have $ W_e = \f{r-1}{2\al} $ for $d = 2$. Since $S(x) \to \infty$ corresponds to $\xi \to 0$, using Lemma \ref{lem:S_out} and $W(x) = - \f{\bar U}{\xi}$ \eqref{eq:ODE_nota}, we obtain 
\[
{\lim}_{\xi \to 0} \xi^{-1} \bar U(\xi) =
- \lim_{S \to \infty} W(S) = - W_e=- \tfrac{r-1}{2\al}. 
\]
The bound $-\f{r-1}{2\al} > - \f{r}{2}$ follows from \eqref{r-range}. We establish \eqref{eq:rep3}.

\subsection{Decay of the profile for large $\xi$}

In this section, we prove \eqref{eq:dec_U}. For $x > x_l$ with $x_l$ sufficiently large, using \eqref{eq:decays} in Lemma \ref{lem:S_in}, we obtain $|W| \ll 1, |S| \ll1, \D > \f{1}{2}$. We focus on this region. 
For $x > x_l$, we use induction on $k \geq 0$ to prove 
\beq\label{eq:decay_ind}
 |\pa_x^k W| \les_k \min(1, e^{- r x}), \quad  |\pa_x^k S| \les_k \min(1 ,e^{- r x}).
\eeq
The base case $k=0$ follows from \eqref{eq:decays}. Suppose that \eqref{eq:decay_ind} holds for $k \leq n-1, n \geq 1$. To estimate $\pa_x^{n-1} W$, we take $\pa_{x}^{n-1}$ on both sides of the ODEs \eqref{eq:ODE} for $W, S$.
Since $\D_1, \D_2, \D $ \eqref{eq:ODE} are polynomials of $W, S$, using Leibniz's rule, the inductive hypothesis, and $( \min(1, e^{- r x}) )^l \leq \min(1, e^{- r x}), l \geq 1$, we yield 
\[
 |\pa_x^i \D_1 | ,  |\pa_x^i \D_2 |,  |\pa_x^j \D | \les_n \min(1, e^{- r x}),
 \quad 0  \leq i \leq n-1, 1 \leq j \leq n - 1.
\]
We need $j \geq 1$ so that the constant $1$ in $\D$ vanishes in $\pa_x^j \D$ \eqref{eq:ODE}.
Since $\D \geq \f{1}{2}$, using Leibniz's rule again, we obtain 
\[
| \pa_x^n W | = \bigl|\pa_x^{n-1} \bigl( \tfrac{\D_1}{\D} \bigr) \bigr| \les_n \min(1, e^{- r x}),
\quad | \pa_x^n S | = \bigl| \pa_x^{n-1}  \bigl( \tfrac{\D_2}{\D} \bigr) \bigr| \les_n \min(1, e^{- r x}) .
\] 
Note that in the full expansion of $\pa_x^{n-1} (\D_i/\D), i=1,2$, the numerator only involves $\pa_x^j \D_1, j \geq 1$. We prove the estimates for the case of $k = n$ and establish \eqref{eq:decay_ind} by induction.

Using the relations $\pa_x W(x) = \xi \pa_{\xi} W( x(\xi))$,$W(x) = - \xi^{-1} \bar U(\xi)$, and $x = \log \xi$ (see~\eqref{eq:ODE_nota}), for $\xi > e^{x_l}$, we obtain 
\[
 \bigl|(\xi \pa_{\xi} )^{i} \bigl( \xi^{-1} \bar U(\xi) \bigr) \bigr| \les_i \la \xi \ra^{-r}, \quad  i\geq 0.
\]
Applying the above estimate and the Leibniz rule to $\bar U = (\bar U / \xi) \cdot \xi$, we obtain $ | (\xi \pa_{\xi} )^{i} \bar U | \les_i \la \xi \ra^{-r+1}$ for $\xi > e^{x_l}$. 
Using induction on $i$, we further obtain 
$
  |\pa_{\xi}^i \bar U| \les_i \la \xi \ra^{-r + 1 - i}
$
for $\xi > e^{x_l}$. Since $\bar U \in C^{\infty}$, 
for $\xi \leq e^{x_l}$, the bound $  |\pa_{\xi}^i \bar U| \les_i \la \xi \ra^{-r + 1 - i}$ is trivial. The decay estimate of $\bar \S$ is similar, thereby proving \eqref{eq:dec_U}.


\section{Functional inequalities}
The goal of this appendix is to gather a few functional analytic bounds that are used throughout the paper. First, we record a Leibniz rule for radially symmetric vectors/scalars.

\begin{lemma}[\!\cite{BuCLGS2022}]\label{lem:leib}
Let $f, g$ be radially symmetric scalar functions over $\R^d$ and let $\FF = F \ee_R = (F_1, .., F_d)$ and $\GG = G \ee_R = (G_1, .., G_d)$ be radially symmetric vector fields over $\R^d$. For integers $m\geq 1$ we have
\[
\bal
  | \D^m( \FF \cdot \na G_i )- \FF \cdot \na \D^m G_i - 2 m \pa_{\xi} F \; \D^m G_i |
 & \les_m \sum_{ 1 \leq j \leq 2 m  } | \na^{2m +1-j} \FF| \cdot |\na^{j} G_i | , \\
  |\D^m (f \na g ) - f \na \D^m g - 2 m \na f \D^m g  | 
 & \les_m \sum_{ 1 \leq j \leq 2 m  } | \na^{2m +1-j} f| \cdot |\na^{j} g| , \\
|\D^m( \FF \cdot \na g) - \FF \cdot \na \D^m g - 2 m \pa_{\xi} F \; \D^m g |
& \les_m  \sum_{ 1 \leq j \leq 2 m  } | \na^{2m +1-j} \FF| \cdot |\na^{j} g |
   \\
 | \D^m( f \div(\GG ) - f \div( \D^m \GG) - 2 m \na f \cdot \D^m \GG|
 & \les_m \sum_{ 1 \leq j \leq 2 m  } | \na^{2m +1- j} f| \cdot |\na^{j} \GG|,
\eal
\]
whenever  $f, g, \{F_i\}_{i=1}^d, \{G_i\}_{i=1}^d$ are sufficiently smooth.
\end{lemma}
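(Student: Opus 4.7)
The proof proceeds by simultaneous induction on $m \geq 1$ for all four estimates, with the underlying tool being the classical Leibniz identity $\Delta(uv) = (\Delta u)v + 2\nabla u \cdot \nabla v + u\Delta v$. Iterating this and keeping careful track of which terms have the highest concentration of derivatives on a single factor, one obtains, for any sufficiently smooth scalars $u, v$,
\begin{equation*}
\Delta^m(uv) \;=\; u\,\Delta^m v \;+\; 2m\, \nabla u \cdot \nabla \Delta^{m-1} v \;+\; \mathcal{R}_m(u,v),
\qquad
|\mathcal{R}_m(u,v)| \,\lesssim_m\, {\sum}_{i+j=2m,\; i\geq 2} |\nabla^i u|\,|\nabla^j v|.
\end{equation*}
This is a purely combinatorial consequence of the Leibniz rule and is independent of any symmetry assumption. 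The four estimates in Lemma~\ref{lem:leib} will be deduced from this by a case-by-case specialization in which the principal commutator term $2m\,\nabla u \cdot \nabla \Delta^{m-1} v$ is further simplified by exploiting the radial symmetry of the vector fields $\FF$ and~$\GG$, or of the scalars $f$ and~$g$.

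For the first bound, I would apply the identity above with $u = F_j$, $v = \partial_j G_i$, and sum over $j$. The ``main'' piece $u\,\Delta^m v$ reassembles into $\FF \cdot \nabla \Delta^m G_i$ as claimed. The principal commutator becomes $2m\,(\partial_k F_j)(\partial_k \partial_j \Delta^{m-1} G_i)$, and here the radial structure $\FF = F(\xi)\,\ee_R$ enters crucially: using
\begin{equation*}
\partial_k F_j \;=\; (\partial_\xi F)\,\hat y_k \hat y_j \;+\; \tfrac{F}{\xi}\bigl(\delta_{jk} - \hat y_j \hat y_k\bigr),\qquad \hat y = y/|y|,
\end{equation*}
one splits the contraction with $\partial_k \partial_j$ into its radial trace $(\partial_\xi F)\,\partial_\xi^2$ plus an angular part. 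After rearranging $\delta_{jk}\partial_k\partial_j = \Delta$, the trace term yields exactly $2m\,(\partial_\xi F)\,\Delta^m G_i$, which is the distinguished subtracted quantity. The leftover angular mismatch carries either an extra factor of $\partial_\xi F$ or of $F/\xi$; for any smooth radial vector field on $\mathbb{R}^d$ one has $|F(\xi)/\xi| \lesssim \|\partial_\xi F\|_{L^\infty}$ near the origin (because $F(0)=0$ is forced by smoothness of $\FF$ at $y=0$), so these terms are controlled by $|\nabla \FF|\,|\nabla^{2m} G_i|$, which fits into the $j=2m$ term of the claimed remainder sum. The contributions from $\mathcal{R}_m(F_j, \partial_j G_i)$ are already of the required form after reindexing.

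The remaining three identities are treated by the same template, each with its own simplification coming from radial symmetry. For $\Delta^m(f\nabla g)$ with both $f,g$ radially symmetric scalars, $\nabla f = (\partial_\xi f)\,\ee_R$ and $\nabla g = (\partial_\xi g)\,\ee_R$ are collinear, so the principal commutator $2m\,\nabla f \cdot \nabla \Delta^{m-1}(\nabla g)$ immediately factors as $2m\,\nabla f \cdot \Delta^m g$ modulo commutators between $\Delta$ and $\nabla$ acting on radial functions (these commutators produce only lower-order $\xi^{-1}$ corrections which again fit into the remainder). For $\Delta^m(\FF \cdot \nabla g)$ the same argument as in the first case applies with $G_i$ replaced by the scalar $g$, and the extraction of $2m\,\partial_\xi F\,\Delta^m g$ is in fact easier because $\nabla g$ is radial. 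For $\Delta^m(f\,\div\GG)$ the algebra mirrors the $\FF \cdot \nabla g$ case after rewriting $f\,\div\GG = f\,(\partial_\xi G + (d-1)G/\xi)$ and applying the scalar Leibniz formula.

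\textbf{Main obstacle.} The combinatorial Leibniz expansion is routine; the genuine difficulty lies in isolating the scalar factor $\partial_\xi F$ (respectively $\nabla f$) with the precise coefficient $2m$ from the tensorial contraction $(\partial_k F_j)(\partial_k\partial_j \Delta^{m-1} G_i)$. This reduction depends critically on the radial ansatz $\FF = F(\xi)\ee_R$, and the bookkeeping of the resulting angular-versus-radial mismatch terms, together with the verification that factors of $F/\xi$ can be absorbed by $|\nabla \FF|$, is the only nontrivial step in the proof.
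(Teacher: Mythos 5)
Your argument is correct for the lemma as stated, and it is the natural route (the paper itself gives no proof, deferring to \cite[Lemma A.4]{BuCLGS2022}, so there is nothing to compare against line by line). Two small points of precision, and one structural observation. First, your description of how $2m\,\pa_\xi F\,\D^m G_i$ emerges is garbled: in the split $\pa_k F_j = (\pa_\xi F)\hat y_k\hat y_j + \tfrac{F}{\xi}(\d_{jk}-\hat y_j\hat y_k)$, the piece producing $\D^m G_i$ via $\d_{jk}\pa_k\pa_j=\D$ carries the coefficient $F/\xi$, not $\pa_\xi F$, while the $\pa_\xi F$ piece produces the radial Hessian component $(\mathrm{Hess}\,\D^{m-1}G_i)_{rr}$, which is not $\D^m G_i$. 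The clean identity is
\begin{equation*}
2m\,\pa_k F_j\,\pa_k\pa_j\D^{m-1}G_i \;=\; 2m\,\pa_\xi F\,\D^m G_i \;+\; 2m\Bigl(\pa_\xi F-\tfrac{F}{\xi}\Bigr)\bigl[(\mathrm{Hess}\,\D^{m-1}G_i)_{rr}-\D^m G_i\bigr],
\end{equation*}
and the error is $\les_m |\na\FF|\,|\na^{2m}G_i|$, i.e.\ the $j=2m$ term of the claimed remainder. Second, you justify $|F/\xi|\les \|\pa_\xi F\|_{L^\infty}$ by a mean-value argument; what is actually needed is the \emph{pointwise} bound $|F(\xi)/\xi|\le|\na\FF(y)|$, which follows immediately from the Jacobian formula you wrote ($F/\xi$ is a tangential eigenvalue of $\na\FF$), so this is fixable but should be stated pointwise. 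Finally, note that with the remainder as written (sum up to $j=2m$), the extraction step is soft: both the principal commutator and the subtracted term $2m\,\pa_\xi F\,\D^m G_i$ are individually dominated by the $j=2m$ term, so the triangle inequality already closes the bound and no cancellation is needed. The cancellation you describe becomes essential only for the sharper version with remainder $\sum_{1\le j\le 2m-1}$ — which is what the paper implicitly invokes in~\eqref{eq:lin_lower} — and there one must additionally use the radial structure of $\GG$ to write $(\mathrm{Hess}-\D)\D^{m-1}G_i$ as $\tfrac{1}{\xi}\times(\text{first derivative of a radial field})$ and absorb the $\xi^{-1}$ against $\tfrac{1}{\xi}|\pa_\xi F-F/\xi|\les|\na^2\FF|$, yielding a $j=2m-1$ contribution; your sketch gestures at this but does not carry out that bookkeeping.
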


Lemma~\ref{lem:leib}  was established for $d=3$ in the proof of~\cite[Lemma A.4]{BuCLGS2022}. The proof given in~\cite{BuCLGS2022} also works for any dimension $d \geq 2$, so we do not repeat these arguments here.

Next, we  focus on Gagliardo-Nirenberg-type interpolation bounds with {\em weights}. In all of the following lemmas, we do not assume that the functions are radially symmetric. 
\begin{lemma}\label{lem:interp_wg}
Let $\d_1 \in (0, 1]$ and $\d_2 \in \R$. For integers $n\geq 0$ and sufficiently smooth functions $f$ on $\R^d$, we denote
\beq\label{eq:interp_J}
 \b_n \teq 2 n \d_1 + \d_2, 
 \qquad  
 I_n \teq \int |\na^n f(y) |^2 \la y \ra^{\b_n} d y , 
 \qquad 
 J_n \teq I_n + I_0,
 \eeq
where as usual we let $\la y \ra = (1+ |y|^2)^{1/2}$. 
Then, for $n<m$ and for  $\e>0$, there exists a constant  $C_{\e,n,m} = C(\e,n,m,\d_1,\d_2,d)>0$ such that 
\begin{subequations}
\label{eq:interp_convex}
\begin{equation}
I_n \leq \e I_m + C_{\e,n,m} I_0. 
\label{eq:interp_convex:a}
\end{equation}  
Moreover, for $p < r < q$ we have the following interpolation inequality on $\R^d$:
\begin{equation}
J_r \leq C_{p,q,r} J_p^{\al} J_q^{1-\al}, 
\qquad \mbox{where} \qquad
\al = \tfrac{q-r}{q-p},
\label{eq:interp_convex:b}
\end{equation}
\end{subequations}
for some constant $C_{p,q,r} = C(p,q,r,\d_1,\d_2,d)>0$.
\end{lemma}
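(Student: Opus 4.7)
The two inequalities are weighted Gagliardo--Nirenberg-type bounds, and the key feature that makes them hold is that the exponent $\beta_n = 2n\delta_1 + \delta_2$ is \emph{affine in $n$}, so that $\beta_{n-1} + \beta_{n+1} = 2\beta_n$. The plan is to first establish a three-term (log-convexity) bound for the sequence $I_n$, and then deduce both (a) and (b) from it.

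First, I would prove the fundamental three-term inequality
\begin{equation}
\label{eq:plan:three:term}
I_n \le C_{n}\bigl(I_{n-1}^{1/2} I_{n+1}^{1/2} + I_{n-1}\bigr), \qquad 1 \le n \le m-1,
\end{equation}
by integration by parts. Writing $w_n(y) = \la y\ra^{\beta_n}$, one has schematically
\[
I_n = \int |\nabla^n f|^2 w_n\, dy = -\int \nabla^{n-1} f \cdot \nabla\!\cdot\!\bigl(w_n \nabla^n f\bigr)\, dy = -\int \nabla^{n-1} f \cdot \bigl(w_n \nabla^{n+1} f + \nabla w_n\otimes \nabla^n f\bigr) dy,
\]
and since $|\nabla w_n| \lesssim w_n \la y\ra^{-1} \lesssim w_n^{1/2}(\la y\ra^{\beta_{n-1}})^{1/2}\la y\ra^{\delta_1 - 1}$, the bound $\delta_1 \le 1$ ensures $\la y\ra^{\delta_1-1}\le 1$, so Cauchy--Schwarz gives \eqref{eq:plan:three:term} with the exact matching $w_{n-1}^{1/2} w_{n+1}^{1/2} = w_n$. (The $I_{n-1}$ term comes from the $\nabla w_n$ contribution via Cauchy--Schwarz.)

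Second, I would deduce (a). From \eqref{eq:plan:three:term} and Young's inequality $ab \le \tfrac{\epsilon'}{2}a^2 + \tfrac{1}{2\epsilon'}b^2$, for any $\epsilon' > 0$ one obtains
\[
I_n \le \epsilon' I_{n+1} + C_{\epsilon',n} I_{n-1}, \qquad 1 \le n \le m-1.
\]
Iterating this downward from level $n$ to level $0$, and then upward to level $m$ by carefully choosing the small parameters (e.g.~by induction on $m-n$), produces the desired bound $I_n \le \epsilon I_m + C_{\epsilon,n,m} I_0$. This step is entirely standard, though one must keep track of how the constant depends on $\epsilon$, $n$, and $m$.

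Third, for (b), I would use \eqref{eq:plan:three:term} rewritten in terms of $J_k = I_k + I_0$, which yields $J_n \le C_n J_{n-1}^{1/2} J_{n+1}^{1/2}$ for $1\le n \le m-1$ (the extra $I_0$ is absorbed into $J_{n-1}$, $J_{n+1}$). This is a discrete log-convexity statement: $\log J_n$ is a convex function of $n$ up to a bounded additive constant. The interpolation bound $J_r \le C\, J_p^{\alpha} J_q^{1-\alpha}$ with $\alpha = (q-r)/(q-p)$ then follows by finite iteration of the three-term inequality between levels $p$ and $q$, optimizing over where to split; equivalently, one applies the three-term bound inductively on $r-p$ and $q-r$.

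The main obstacle I expect is keeping clean track of the constants and lower-order remainders in the integration-by-parts step \eqref{eq:plan:three:term} when $\beta_n$ may be negative (so the weight blows up at infinity with a negative power, but the weight matching $\beta_{n-1}+\beta_{n+1}=2\beta_n$ is still exact); a minor subtlety is justifying the integration by parts, which is done by first working with Schwartz $f$ and then passing to the limit using density (the quantities $I_n$ are a priori finite on the class of $f$'s we apply the inequality to). Everything else is bookkeeping.
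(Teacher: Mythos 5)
Your proposal is correct and follows essentially the same route as the paper: integration by parts exploiting the exact relations $\beta_{n-1}+\beta_{n+1}=2\beta_n$ and $\beta_n-1\le\tfrac12(\beta_{n-1}+\beta_n)$ (via $\delta_1\le 1$) to obtain a three-term log-convexity inequality for $I_n$, then induction to get \eqref{eq:interp_convex:a} and iteration of $J_n\lesssim J_{n-1}^{1/2}J_{n+1}^{1/2}$ to get \eqref{eq:interp_convex:b}. The only cosmetic difference is that the weight-derivative term naturally produces $(I_{n-1}I_n)^{1/2}$ (as in the paper) rather than $I_{n-1}$ directly, and your stated three-term bound is what results after absorbing the $I_n$ factor by Young's inequality.
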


\begin{proof}
Throughout the proof, all implicit constants depend on the index $n$ of $I_n$ or $J_n$, and on the parameters $\d_1, \d_2$. 
Using integration by parts, we have 
\begin{align*}
 I_n 
 &= \sum_i \int  \pa_i \na^{n-1} f \cdot \pa_i \na^{n-1} f \la y \ra^{\b_n} \notag\\
 &= -\sum_i \int 
 \bigl( \Delta \na^{n-1} f \cdot \na^{n-1} f \la y \ra^{\b_n}
  + \b_n \pa_i \na^{n-1} f \cdot \na^{n-1} f y_i \la y \ra^{\b_n-2} \bigr)
\end{align*}
Using that $\d_1 \leq 1$ we have $\b_n - 1 \leq \frac 12 (\b_{n-1} + \b_n)$, while by definition we have $\b_n = \tfrac{1}{2}(\b_{n-1} + \b_{n+1})$; thus, by the Cauchy-Schwarz inequality we obtain
\beq
\label{eq:interp_convex2}
I_n \leq (I_{n+1} I_{n-1})^{1/2} + \b_n (I_n I_{n-1} )^{1/2}, \quad n \geq 1.
\eeq

To prove~\eqref{eq:interp_convex:a}, it is sufficient to show that for any $ n \geq 0$ and $\e>0$, there exists $C_{\e,n}>0$ such that 
\beq\label{eq:interp_convex3}
I_n \leq \e I_{n+1} + C_{\e,n} I_0.
\eeq
We prove~\eqref{eq:interp_convex3} by induction on $n$. The base case $n=0$ is trivial, with $C_{\e,0}=1$. 
For the induction step, let $n\geq 1$. We use~\eqref{eq:interp_convex2}, the inductive hypothesis for $n-1$, and the Cauchy-Schwartz inequality, to conclude 
\[
I_n \leq \e I_{n+1} + \tfrac{1}{2} I_n + C_{\e,n} I_{n-1}
\leq \e I_{n+1} + \tfrac{3}{4}  I_n  + C_{\e,n} I_0.
\]
The above estimate concludes the proof of the induction step for~\eqref{eq:interp_convex3}; thus, we have proven~\eqref{eq:interp_convex:a}.

Combining \eqref{eq:interp_convex2} and  \eqref{eq:interp_convex3} we obtain 
\[
I_n \leq  (I_{n+1} I_{n-1})^{1/2} + \tfrac{1}{2} I_n + C_n I_{n-1}
\leq  (I_{n+1} I_{n-1})^{1/2} + \tfrac{3}{4} I_n + C_n I_0. 
\]
It follows that for all $n\geq 1$, 
\[
J_n = I_n + I_0 \leq 4 (I_{n+1} I_{n-1})^{1/2} + C_n I_0 
\leq C_n J_{n+1}^{1/2} J_{n-1}^{1/2}. 
\]
Iterating the above estimate, we deduce~\eqref{eq:interp_convex:b}.
\end{proof}

\begin{lemma}\label{lem:norm_equiv}
Let $\d_1 \in (0, 1], \d_2 \in \R$, and define $\b_n = 2 n \d_1 + \d_2$. Let $\td \vp_{n}$ be a weight satisfying the pointwise properties $\td \vp_{n}(y) \asymp_n \la y \ra^{\b_{n}}$ and $|\na \td \vp_{n}(y)| \les_n \la y \ra^{\b_{n}-1}$. Then, for any $\e > 0$ and $n\geq 0$, there exists a constant $C_{\e,n} = C(\e,n,\d_1,\d_2,d)>0$ such that\footnote{Throughout the paper we denote by $|\nabla^k f|$ the Euclidean norm of the $k$-tensor $\nabla^k f$, namely,  $|\nabla^k f| = (\sum_{|\alpha|=k} |\partial^\alpha f|^2 )^{1/2}$.}
\[
 \int |\na^{2n} f|^2 \td \vp_{2n}
 \leq (1 + \e)\int  | \D^n f |^2  \td \vp_{2n} + 
 C_{\e, n} \int |f|^2 \la y \ra^{\b_0},
\]
for any function $f$ on $\R^d$ which is sufficiently smooth and has suitable decay at infinity.
\end{lemma}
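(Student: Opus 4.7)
By density I may restrict to $f \in C_c^\infty$. The strategy is to leverage the unweighted identity
\[
\int |\nabla^{2n} f|^2\, dy \leq \int |\Delta^n f|^2\, dy,
\]
valid for $f \in C_c^\infty$, which follows by rewriting each $\int (\partial^\alpha f)^2\, dy$ (for $|\alpha|=2n$) as $\int f \cdot \partial^{2\alpha} f\, dy$ through $2n$ integrations by parts, and then comparing with the multinomial expansion $\Delta^{2n} = \sum_{|\alpha|=2n} \binom{2n}{\alpha}\partial^{2\alpha}$ together with the elementary fact that $\binom{2n}{\alpha} \geq 1$. Equivalently, the difference $\sum_{|\alpha|=2n}(\binom{2n}{\alpha}-1)\int(\partial^\alpha f)^2\,dy \geq 0$.

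To include the weight $\tilde\vp_{2n}$, I would perform the same sequence of integrations by parts but with $\tilde\vp_{2n}$ inserted. Each step produces, in addition to the ``main'' term contributing to $\int |\Delta^n f|^2 \tilde\vp_{2n}\, dy$, an error of the form $\int (\partial^\beta f)(\partial^\gamma f)\, \partial_i \tilde\vp_{2n}\, dy$ with $|\beta|+|\gamma| = 4n-1$. Applying further integrations by parts if necessary so that $|\beta|, |\gamma| \leq 2n$, combining the hypothesis $|\nabla \tilde\vp_{2n}| \les \la y\ra^{\b_{2n}-1}$ with an asymmetric weighted Cauchy--Schwarz splitting (for instance $\la y\ra^{\b_{2n}-1} = \la y\ra^{\b_{2n}/2}\la y\ra^{\b_{2n}/2 -1}$, or variants adapted to the derivative counts on the two factors), each error is bounded by
\[
\e \int |\nabla^{2n} f|^2 \la y\ra^{\b_{2n}}\, dy \;+\; C_\e \sum_{0 \leq j < 2n} \int |\nabla^j f|^2 \la y\ra^{\b_j}\, dy,
\]
for any $\e > 0$. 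Lemma~\ref{lem:interp_wg} then bounds each lower-order $I_j$ by $\eta\, I_{2n} + C_\eta I_0$; choosing $\e$ and $\eta$ small enough, the contributions proportional to $\int |\nabla^{2n} f|^2 \la y\ra^{\b_{2n}}\, dy \asymp \int |\nabla^{2n} f|^2 \tilde\vp_{2n}\, dy$ can be absorbed into the left-hand side, and the resulting prefactor $(1-\e)^{-1}$ is relabeled as $1 + \e$ after taking $\e$ smaller.

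The main obstacle is the careful bookkeeping of the weight exponents produced by each IBP error: since $\b_{2n}-1$ is not in general a member of the family $\{\b_k\}_{k\geq 0}$ when $\d_1 < 1$, no single error term is directly of the shape of an $I_k$ from Lemma~\ref{lem:interp_wg}. The asymmetric weighted Cauchy--Schwarz splitting described above circumvents this, provided one verifies case-by-case that the exponent of $\la y\ra$ on the lower-order factor can be arranged to be $\leq \b_j$ for the corresponding derivative order $j$; this is where the hypothesis $0 < \d_1 \leq 1$ is essential, as it guarantees both the strict monotonicity $\b_{k+1}-\b_k = 2\d_1 > 0$ (so that $I_j$ is in turn interpolated between $I_0$ and $I_{2n}$) and the bound on per-derivative weight loss that makes the comparisons routine.
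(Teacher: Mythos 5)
Your proposal is correct and follows essentially the same route as the paper: integrate by parts to pass between $\int |\nabla^{2n} f|^2 \td\vp_{2n}$ and $\int |\D^n f|^2 \td\vp_{2n}$, bound the weight-derivative errors via $|\na \td\vp_{2n}| \les \la y\ra^{\b_{2n}-1} \les \la y\ra^{(\b_{2n-1}+\b_{2n})/2}$ (this is exactly where $\d_1 \leq 1$ enters), and absorb the lower-order terms using Cauchy--Schwarz and Lemma~\ref{lem:interp_wg}. The only refinement worth noting is that the paper alternates the integrations by parts so that every error term has derivative counts exactly $(2n-1, 2n)$ on the two factors; this makes your ``further integrations by parts if necessary'' step unnecessary and avoids ever differentiating the weight twice (for which the hypotheses provide no bound).
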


\begin{proof}
We use the standard summation convention on repeated indices. Using integration by parts, we get 
\[
\bal
B_n &\teq \int  |\D^n f|^2 \vp_{2 n} 
=  \sum_{i_1, \ldots, i_n}\sum_{j_1, \ldots, j_n}  
\int\pa_{i_1}^2\pa_{i_2}^2 \ldots \pa_{i_n}^2 f
\; 
\pa_{j_1}^2 \pa_{j_2}^2 \ldots \pa_{j_n}^2 f 
\; \td \vp_{2 n} \\
& = -  \sum_{i_1, \ldots, i_n}  \sum_{j_1, \ldots, j_n}
\int \pa_{i_1}\pa_{i_2}^2 \ldots \pa_{i_n}^2 f 
\; 
\bigl( \pa_{i_1} \pa_{j_1}^2 \ldots \pa_{j_n}^2 f 
\; \td \vp_{2n} +  
\pa_{j_1}^2 \ldots \pa_{j_n}^2 f  
\;\pa_{i_1}  \td \vp_{2n} \bigr)
\\
& =  \sum_{i_1, \ldots, i_n}  \sum_{j_1, \ldots, j_n}
\int\pa_{i_1j_1}\pa_{i_2}^2 \ldots \pa_{i_n}^2 f 
\; 
\pa_{i_1j_1} \pa_{j_2}^2 \ldots \pa_{j_n}^2 f 
\; \td \vp_{2n} \\
&\qquad \qquad \qquad 
+ 
\int
\pa_{i_1}\pa_{i_2}^2 \ldots \pa_{i_n}^2 f   
\bigl(
\pa_{i_1} \pa_{j_1} \ldots \pa_{j_n}^2 f 
\pa_{j_1}  \td \vp_{2n}
- 
\pa_{j_1}^2 \ldots \pa_{j_n}^2 f 
\; \pa_{i_1}  \td \vp_{2n}
\bigr)
\teq I + II.
\eal
\]
The term $II$ in the above identity involves a derivative of the weight and we do not further perform integration by parts for it. For the first term, we iteratively integrate by parts with respect to $\pa_{i_l}^2$ and $\pa_{j_l}^2$, with $l\in\{2,\ldots, n\}$, to obtain 
\begin{equation*}
B_n \geq \int |\na^{2n} f|^2 \td \vp_{2n}
- C_n \int |\na^{2n-1} f | \; |\na^{2n} f| \; |\na \td \vp_{2n}|.
\end{equation*}
By assumption we have $\d_1\leq 1$, so that
\begin{equation*}
|\na \td \vp_{2n}| 
\les_n \la y \ra^{\b_{2n}-1}
\les 
\la y \ra^{\b_{2n-1}/2}
\la y \ra^{\b_{2n}/2}
\les | \td \vp_{2n-1}|^{1/2} | \td \vp_{2n}|^{1/2}.
\end{equation*} 
Using the Cauchy-Schwarz inequality, for any $\e > 0$ we obtain   
\[
B_n 
\geq (1-\e) \int |\na^{2n} f|^2 \vp_{2n} - C_{n, \e} \int |\na^{2n-1} f|^2 \vp_{2n-1}.
\]
In analogy to the proof of~\eqref{eq:interp_convex:a} in Lemma~\ref{lem:interp_wg} (we need to replace the weight $\la y \ra^{\beta_n}$ with $\td \vp_{n}$, which is permissible in light of the assumed properties of $\td \vp_n$), we may further show that for any $\e^\prime>0$, 
\[
\int |\na^{2n-1} f|^2 \td \vp_{2n-1}
\leq \e^\prime \int |\na^{2n} f|^2 \td \vp_{2n} + C_{\e^\prime,n} \int | f|^2 \td \vp_{0}.
\]
Since $\e, \e^\prime > 0$ are arbitrary and $\td \vp_0 \asymp \la y \ra^{\beta_0}$, rewriting the above two inequalities completes the proof.
\end{proof}

Lastly, we record the following product estimates for the functional spaces $\cX^m$ defined in~\eqref{norm:Xk}. It is convenient to state estimates for a general dimension $d$, not just for $d=2$. For this purpose, we recall from~\eqref{eq:vp:g:def} that $\vp_g = \la y \ra^{-\kp_1 - d}$, with $\kp_1 = 1/4 \in (0, 1)$. Moreover, we recall from Lemma~\ref{lem:wg} that the weights $\vp_m$ satisfy $\vp_m(y) \asymp_m \la y \ra^{m}$, and $|\na \vp_m(y)|\les_m \vp_{m-1}(y)$.

\begin{lemma}\label{lem:prod}
Let $m \geq d/2$.
For $f : \R^d \to \R$ which lies in $\cX^m$ and $0 \leq i \leq 2m - d $, we have
\beq\label{eq:Xm_Linf}
 |\na^i f (y)| \les_m  \| f \|_{\cX^m}  \la y \ra^{-i + \frac{\kp_1}{2}},
\eeq
pointwise for $y \in \R^d$.
Moreover, for any $i\leq 2m$, $j \leq 2n$, with $ i \leq 2m - d$ or $ j \leq 2n -d$, and for any $\b \geq \kp_1/2$, we have 
\beq\label{eq:Xm_prod}
 \| \na^i f \; \na^{j} g \; \la y \ra^{i+j -\b  } \vp_g^{1/2} \|_{L^2} 
\les_{m, n,\b} \| f \|_{\cX^m} \| g \|_{\cX^n}.
\eeq
\end{lemma}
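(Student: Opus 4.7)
\textbf{Proof proposal for Lemma~\ref{lem:prod}.}

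The plan is to treat the two estimates in sequence. For the pointwise bound \eqref{eq:Xm_Linf}, the strategy is a localized and rescaled Sobolev embedding at scale $\lambda(y_0) \asymp \langle y_0 \rangle$, combined with the weighted interpolation from Lemmas~\ref{lem:interp_wg} and~\ref{lem:norm_equiv}. For $y_0 \in \mathbb{R}^d$ with $|y_0|\geq 1$, set $\lambda = |y_0|/2$, and note that $\langle y \rangle \asymp |y_0|$ uniformly on $B(y_0,\lambda)$. Applying Lemma~\ref{lem:norm_equiv} with $\delta_1 = 1$, $\delta_2 = -\kappa_1 - d$ (so $\beta_{2m} = 4m - \kappa_1 - d$, matching $\vp_{2m}^2 \vp_g$), and then Lemma~\ref{lem:interp_wg}, I obtain the weighted bound
\[
\int |\nabla^k f|^2 \langle y \rangle^{2k - \kappa_1 - d}\,dy \lesssim_m \|f\|_{\cX^m}^2, \qquad 0\leq k\leq 2m.
\]
Restricting to $B(y_0,\lambda)$ and pulling out the constant weight gives $\|\nabla^k f\|_{L^2(B(y_0,\lambda))} \lesssim_m |y_0|^{-k + (\kappa_1+d)/2}\|f\|_{\cX^m}$. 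Rescaling via $g(z) = f(y_0 + \lambda z)$ and applying the classical embedding $H^{2m}(B(0,1)) \hookrightarrow W^{i,\infty}(B(0,1))$, which is valid precisely in the range $i \leq 2m - d$ (and in fact slightly beyond), yields, after undoing the rescaling,
\[
|y_0|^i \|\nabla^i f\|_{L^\infty(B(y_0,\lambda))} \lesssim_m \sum_{k=0}^{2m} |y_0|^{k - d/2} \cdot |y_0|^{-k + (\kappa_1+d)/2}\|f\|_{\cX^m} = |y_0|^{\kappa_1/2}\|f\|_{\cX^m},
\]
which is \eqref{eq:Xm_Linf} for $|y_0|\geq 1$. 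For $|y_0|\leq 1$, I would apply the unrescaled embedding on a fixed ball, where all weights are comparable to $1$, to obtain the same bound.

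For the product estimate \eqref{eq:Xm_prod}, without loss of generality I assume $j \leq 2n - d$ (otherwise relabel). Applying \eqref{eq:Xm_Linf} to $g$ gives the pointwise inequality $|\nabla^j g(y)| \lesssim_n \|g\|_{\cX^n} \langle y\rangle^{-j + \kappa_1/2}$. Inserting this bound into the left-hand side of \eqref{eq:Xm_prod} and using $\vp_g = \langle y\rangle^{-\kappa_1 - d}$, I reduce matters to
\[
\bigl\|\nabla^i f \cdot \nabla^j g \cdot \langle y\rangle^{i+j-\beta} \vp_g^{1/2}\bigr\|_{L^2}^2 \lesssim_n \|g\|_{\cX^n}^2 \int |\nabla^i f|^2 \langle y \rangle^{2i - 2\beta - d}\,dy.
\]
The hypothesis $\beta \geq \kappa_1/2$ implies $\langle y\rangle^{2i - 2\beta - d} \leq \langle y\rangle^{2i - \kappa_1 - d}$, and then the same weighted interpolation used above (Lemmas~\ref{lem:interp_wg} and~\ref{lem:norm_equiv}) bounds the remaining integral by $\|f\|_{\cX^m}^2$ for any $0\leq i\leq 2m$.

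The only technical obstacle I foresee is the boundary case of the Sobolev embedding: one must ensure that $i \leq 2m - d$ (combined with $m \geq d/2$) is sufficient to conclude $W^{i,\infty}$ control from $H^{2m}$, which is clear since $2m - i \geq d$. The weighted interpolation is straightforward: the exponent $\delta_2 = -\kappa_1 - d$ and $\delta_1 = 1$ reproduce exactly the weight $\vp_{2m}^2 \vp_g$ at the top level and $\vp_g$ at the bottom, so Lemma~\ref{lem:interp_wg} applies directly without adjustment. The product estimate then reduces entirely to the $L^\infty$-type bound \eqref{eq:Xm_Linf} on the factor with the spare derivatives, which is why the asymmetric hypothesis ``$i\leq 2m-d$ or $j\leq 2n-d$'' suffices.
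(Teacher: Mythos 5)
Your argument is correct, and the second half (the product estimate) is essentially the paper's own proof: choose the factor with the spare derivatives, hit it with the pointwise bound \eqref{eq:Xm_Linf}, and control the remaining weighted $L^2$ integral of the other factor via Lemmas~\ref{lem:interp_wg} and~\ref{lem:norm_equiv}, using $\b\geq \kp_1/2$ exactly as you do. Where you genuinely diverge is in the proof of \eqref{eq:Xm_Linf}. The paper first reduces, via the same two interpolation lemmas, to the uniform weighted bounds $\int |\na^k f|^2 \la y\ra^{2k-\kp_1-d} \les_m \|f\|_{\cX^m}^2$ for $0\leq k\leq 2m$, but then obtains the pointwise decay by integrating $\pa_1\cdots\pa_d\na^i f$ over a cone $\Om(y)$ extending to infinity and applying Cauchy--Schwarz; this produces the factor $\la y\ra^{-i+\kp_1/2}$ from a convergent radial integral, which however diverges when $i=0$, so the paper needs a separate step ($|f(0)|\les\|f\|_{\cX^m}$ plus the fundamental theorem of calculus along rays, using the already-established $i=1$ bound) to treat $i=0$. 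Your route instead localizes on balls $B(y_0,|y_0|/2)$, freezes the weight there, rescales to the unit ball, and invokes the classical embedding $H^{2m}(B(0,1))\hookrightarrow W^{i,\infty}(B(0,1))$ (valid since $2m-i\geq d>d/2$); the scaling bookkeeping you display is right, the embedding constant is uniform in $y_0$ because the rescaled domain is fixed, and the near-origin case $|y_0|\leq 1$ is handled by the unrescaled embedding. This buys a uniform treatment of all $0\leq i\leq 2m-d$, including $i=0$, at the modest cost of invoking the scaled Sobolev embedding rather than the paper's self-contained cone/Cauchy--Schwarz computation; both proofs funnel all the weighted information through the same Lemmas~\ref{lem:interp_wg} and~\ref{lem:norm_equiv}, applied with $\d_1=1$, $\d_2=-\kp_1-d$, so no new structural input is needed. (As in the paper, one should state that the estimate is first proved for smooth, suitably decaying $f$ and extended to $\cX^m$ by density, but this is routine.)
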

\begin{proof}
Fix $1\leq i \leq 2m-d$. 
By a standard density argument we can assume that $\nabla^i f \in C_c^{\infty}(\R^d)$.
Consider the cone with vertex at $y$ extending towards infinity: $\Om(y) \teq \{ z \in \R^d \colon z_j \sgn(y_j) \geq |y_j|, \forall 1\leq j\leq d \}$. In particular, for any $z \in \Om(y)$ have $|z| \geq |y|$. By integrating on rays extending to infinity, we have
\[
 |\na^i f(y) | \les \int_{\Om(y)} | \pa_1 \pa_2 .. \pa_d \na^i f (z) | d z.
\]
Upon letting  $\d_2 =- (\kp_1 + d)$ and $\d_1 = 1$ (choices with are consistent with the weights in~\eqref{norm:Xk}), from Cauchy-Schwartz, Lemma~\ref{lem:interp_wg} with  $\beta_n = 2n + \d_2$ (applicable since $i+d \leq 2m$), and Lemma~\ref{lem:norm_equiv} with $\td \vp_n = \vp_n \vp_g^{1/2} \asymp \la y \ra^n \vp_g^{1/2}$,  we  deduce
\begin{align*}
 |\na^i f(y) | 
 &\les \| \la z \ra^{i + d + \frac{\d_2}{2}}  \na^{i+d} f(z) \|_{L^2(\Omega(y))}
 \B( \int_{|z| \geq |y|} \la z \ra^{ - 2 i - 2 d - \d_2 } d z \B)^{1/2} 
 \\
 &\les 
 \| \la z \ra^{\frac{1}{2}\beta_{i+d}}  \na^{i+d} f(z) \|_{L^2(\R^d)}
 \B( \int_{|y|}^\infty \la R \ra^{ - 2 i - 2 d + (\kp_1 + d)} R^{d-1} dR \B)^{1/2} 
 \\
 &\les 
 \left( \| \la z \ra^{\frac 12 \beta_{2m}}  \na^{2m} f(z) \|_{L^2(\R^d)}
 + \| \la z \ra^{\frac 12 \beta_{0}}f(z) \|_{L^2(\R^d)} \right)
 \B( \int_{|y|}^\infty \la R \ra^{ - 2 i + \kp_1 - 1} d \la R \ra \B)^{1/2} 
 \\
 &\les \|f\|_{\cX^m} \la y \ra^{-i + \frac{\kp_1}{2}}.
\end{align*}
For the last inequality we have crucially used that $2i-\kp_1 > 0$, which holds since $i\geq 1$.

For $i = 0$, the aforementioned inequality (namely, $2i-\kp_1 > 0$) does not hold, and so the integral appearing in the above estimates does not converge. Instead, using the fundamental theorem of calculus and the decay estimate for $|\na f|$ obtained above, we have
\[
 |f(y)| \leq |f(0)| + \int_0^{|y| }  |\na f| \B(  \f{ t y }{ |y|} \B)  d t 
 \les  |f(0)|  +  \| f \|_{\cX^m}  \int_0^{|y|} \la t \ra^{-1 + \frac{\kp_1}{2}} d t
 \les |f(0)|  + \| f \|_{\cX^m} \la y \ra^{\frac{\kp_1}{2}}.
\]
To conclude the proof of~\eqref{eq:Xm_Linf} for $i=0$, we note that $|f(0)| \les \|f\|_{\cX^m}$. To see this, let $\chi$ be a smooth bump function $0\leq \chi\leq 1$ with $\chi\equiv1$ for $|y|\leq 1/2$ and $\chi \equiv 0$ for $|y|\geq 1$. Then, we have $|f(0)| \leq \| \chi f\|_{L^\infty} \les \|\chi f\|_{H^{2m}} \les \|\chi f\|_{\cX^{m}} \les \|f\|_{\cX^m}$, where the last inequality holds by  Lemma~\ref{lem:interp_wg} and Lemma~\ref{lem:norm_equiv}.

In order to prove~\eqref{eq:Xm_prod}, assume without loss of generality that $i \leq 2m- d$. Using \eqref{eq:Xm_Linf}, and recalling that $\vp_g = \la y \ra^{-\kp_1 - d} $, we have 
\[
 \| \na^i f  \na^{j} g \la y \ra^{i+j -\b } \vp_g^{1/2} \|_{L^2} 
 \les_m \|\la y \ra^{i -\frac{\kappa_1}{2}}  \na^i f  \|_{L^{\infty}}
 \|\na^j g  \la y \ra^{j - \b + \frac{\kappa_1}{2} } \vp_g^{1/2} \|_{L^2} 
 \les_m \| f \|_{\cX^m}  \|\na^j g  \la y \ra^{j - \b - \frac{d}{2} } \|_{L^2}.
\]
Next, Lemma~\ref{lem:interp_wg} with $\d_1 = 1$ and $\d_2 = - (\kappa_1+d)$, to obtain
\[
\|\na^j g  \la y \ra^{j - \b - \frac{d}{2} } \|_{L^2}
\les
\|\na^j g  \la y \ra^{j + \frac{\d_2}{2} } \|_{L^2}
\les_m  \| g \|_{\cX^m},
\]
where we have used that $- \beta - d/2 \leq \d_2/2$, which holds since $\beta \geq \kp_1/2$.
\end{proof}

\section*{Acknowledgments}
The work of J.C.~was supported in part by the NSF Grant DMS-2408098 and by the Simons Foundation.
The work of G.C.~was in part supported by the Collaborative NSF grant DMS-2307681 by the Simons Foundation.
The work of S.S.~was in part supported by the NSF grant DMS-2007606 and the Collaborative NSF grant DMS-2307680. 
The work of V.V.~was in part supported by the Collaborative NSF grant DMS-2307681 and a Simons
Investigator Award.

%

\begin{thebibliography}{10}

\bibitem{AbSp2022}
Leo Abbrescia and Jared Speck.
\newblock The emergence of the singular boundary from the crease in $3 d $ compressible Euler flow.
\newblock {\em arXiv preprint arXiv:2207.07107}, 2022.

\bibitem{Al1999b}
Serge Alinhac.
\newblock Blowup of small data solutions for a class of quasilinear wave
  equations in two space dimensions. {II}.
\newblock {\em Acta Math.}, 182(1):1--23, 1999.

\bibitem{Al1999a}
Serge Alinhac.
\newblock Blowup of small data solutions for a quasilinear wave equation in two
  space dimensions.
\newblock {\em Ann. of Math. (2)}, 149(1):97--127, 1999.

\bibitem{Bi2021}
Anxo Biasi.
\newblock Self-similar solutions to the compressible {E}uler equations and
  their instabilities.
\newblock {\em Commun. Nonlinear Sci. Numer. Simul.}, 103:Paper No. 106014, 28,
  2021.

\bibitem{BuCLGS2022}
Tristan Buckmaster, Gonzalo Cao-Labora, and Javier G{\'o}mez-Serrano.
\newblock Smooth imploding solutions for 3d compressible fluids.
\newblock {\em arXiv preprint arXiv:2208.09445}, 2022.

\bibitem{BuDrShVi2022}
Tristan Buckmaster, Theodore~D Drivas, Steve Shkoller, and Vlad Vicol.
\newblock Simultaneous development of shocks and cusps for 2d euler with
  azimuthal symmetry from smooth data.
\newblock {\em Annals of PDE}, 8(2):1--199, 2022.

\bibitem{BuIy2022}
Tristan Buckmaster and Sameer Iyer.
\newblock Formation of unstable shocks for 2{D} insentropic compressible
  {E}uler.
\newblock {\em Communications in Mathematical Physics}, 389:197--271, 2022.

\bibitem{BuShVi2022}
Tristan Buckmaster, Steve Shkoller, and Vlad Vicol.
\newblock Formation of shocks for 2d isentropic compressible euler.
\newblock {\em Communications on Pure and Applied Mathematics},
  75(9):2069--2120, 2022.

\bibitem{BuShVi2023a}
Tristan Buckmaster, Steve Shkoller, and Vlad Vicol.
\newblock Formation of point shocks for 3{D} compressible {E}uler.
\newblock {\em Comm. Pure Appl. Math.}, 76(9):2073--2191, 2023.

\bibitem{BuShVi2023b}
Tristan Buckmaster, Steve Shkoller, and Vlad Vicol.
\newblock Shock formation and vorticity creation for 3{D} {E}uler.
\newblock {\em Comm. Pure Appl. Math.}, 76(9):1965--2072, 2023.

\bibitem{CLGSShSt2023}
Gonzalo Cao-Labora, Javier G{\'o}mez-Serrano, Jia Shi, and Gigliola Staffilani.
\newblock Non-radial implosion for compressible {E}uler and {N}avier-{S}tokes
  in $\mathbb{T}^3$ and $\mathbb{R}^3$.
\newblock {\em arXiv preprint arXiv:2310.05325}, 2023.

\bibitem{Ch2017}
Geng Chen.
\newblock Optimal time-dependent lower bound on density for classical solutions
  of {1-D} compressible {E}uler equations.
\newblock {\em Indiana University Mathematics Journal}, pages 725--740, 2017.

\bibitem{ChPaZh2019}
Geng Chen, Ronghua Pan, and Shengguo Zhu.
\newblock A polygonal scheme and the lower bound on density for the isentropic
  gas dynamics.
\newblock {\em Discrete Contin. Dyn. Syst.}, 39(7):4259--4277, 2019.

\bibitem{ChFe2010}
Gui-Qiang Chen and Mikhail Feldman.
\newblock Global solutions of shock reflection by large-angle wedges for
  potential flow.
\newblock {\em Ann. of Math. (2)}, 171(2):1067--1182, 2010.

\bibitem{ChSlWa2008}
Gui-Qiang Chen, Marshall Slemrod, and Dehua Wang.
\newblock Vanishing viscosity method for transonic flow.
\newblock {\em Arch. Ration. Mech. Anal.}, 189(1):159--188, 2008.

\bibitem{ChWa2002}
Gui-Qiang Chen and Dehua Wang.
\newblock The {C}auchy problem for the {E}uler equations for compressible
  fluids.
\newblock In {\em Handbook of mathematical fluid dynamics, {V}ol. {I}}, pages
  421--543. North-Holland, Amsterdam, 2002.

\bibitem{Ch2024}
Gui-Qiang~G. Chen.
\newblock Morawetz's contributions to the mathematical theory of transonic
  flows, shock waves, and partial differential equations of mixed type.
\newblock {\em Bull. Amer. Math. Soc. (N.S.)}, 61(1):161--171, 2024.

\bibitem{ChFe2018}
Gui-Qiang~G. Chen and Mikhail Feldman.
\newblock {\em The mathematics of shock reflection-diffraction and von
  {N}eumann's conjectures}, volume 197 of {\em Annals of Mathematics Studies}.
\newblock Princeton University Press, Princeton, NJ, 2018.

\bibitem{ChFe2022}
Gui-Qiang~G. Chen and Mikhail Feldman.
\newblock Multidimensional transonic shock waves and free boundary problems.
\newblock {\em Bull. Math. Sci.}, 12(1):Paper No. 2230002, 85, 2022.

\bibitem{Ch2020a}
Jiajie Chen.
\newblock Singularity formation and global well-posedness for the generalized
  {Constantin--Lax--Majda} equation with dissipation.
\newblock {\em Nonlinearity}, 33(5):2502, 2020.

\bibitem{ChHo2021}
Jiajie Chen and Thomas~Y Hou.
\newblock Finite time blowup of {2D} {Boussinesq} and {3D} {Euler} equations
  with ${C}^{1,\alpha}$ velocity and boundary.
\newblock {\em Communications in Mathematical Physics}, 383(3):1559--1667,
  2021.

\bibitem{ChHo2023}
Jiajie Chen and Thomas~Y Hou.
\newblock Stable nearly self-similar blowup of the 2{D} {B}oussinesq and 3{D}
  {E}uler equations with smooth data {I}: {A}nalysis.
\newblock {\em arXiv preprint: arXiv:2210.07191v3 [math.AP]}, 2022.

\bibitem{ChHo2023b}
Jiajie Chen and Thomas~Y Hou.
\newblock Stable nearly self-similar blowup of the 2{D} {B}oussinesq and 3{D}
  {E}uler equations with smooth data {II}: rigorous numerics.
\newblock {\em arXiv preprint arXiv:2305.05660 [math.AP]}, 2023.

\bibitem{ChHoHu2021}
Jiajie Chen, Thomas~Y Hou, and De~Huang.
\newblock On the finite time blowup of the {D}e {G}regorio model for the 3{D}
  {E}uler equations.
\newblock {\em Communications on Pure and Applied Mathematics},
  74(6):1282--1350, 2021.

\bibitem{ChHoHu2022}
Jiajie Chen, Thomas~Y Hou, and De~Huang.
\newblock Asymptotically self-similar blowup of the {H}ou--{L}uo model for the
  3{D} {E}uler equations.
\newblock {\em Annals of PDE}, 8(2):24, 2022.

\bibitem{ChDLKr2015}
Elisabetta Chiodaroli, Camillo De~Lellis, and Ond{\v{r}}ej Kreml.
\newblock Global ill-posedness of the isentropic system of gas dynamics.
\newblock {\em Communications on Pure and Applied Mathematics},
  68(7):1157--1190, 2015.

\bibitem{Ch2007}
Demetrios Christodoulou.
\newblock {\em The formation of shocks in 3-dimensional fluids}, volume~2.
\newblock European Mathematical Society, 2007.

\bibitem{ChLi2016}
Demetrios Christodoulou and Andr{\'e} Lisibach.
\newblock Shock development in spherical symmetry.
\newblock {\em Annals of PDE}, 2(1):3, 2016.

\bibitem{ChMi2014}
Demetrios Christodoulou and Shuang Miao.
\newblock {\em Compressible flow and Euler's equations}, volume~9.
\newblock International Press Somerville, MA, 2014.

\bibitem{CoMZZh2023}
Diego C{\'o}rdoba, Luis Mart{\'\i}nez-Zoroa, and Fan Zheng.
\newblock Finite time singularities to the {3D} incompressible {E}uler
  equations for solutions in ${C}^\infty({R}^3\setminus \{0\}) \cap
  {C}^{1,\alpha} \cap {L}^2$.
\newblock {\em arXiv preprint arXiv:2308.12197}, 2023.

\bibitem{Da2005}
Constantine~M Dafermos.
\newblock {\em Hyperbolic conservation laws in continuum physics}, volume~3.
\newblock Springer, 2005.

\bibitem{DeSkWi2023}
Tomasz Debiec, Jack Skipper, and Emil Wiedemann.
\newblock A general convex integration scheme for the isentropic compressible
  {E}uler equations.
\newblock {\em J. Hyperbolic Differ. Equ.}, 20(1):95--117, 2023.

\bibitem{El2021}
Tarek~M Elgindi.
\newblock Finite-time singularity formation for ${C}^{1,\alpha}$ solutions to
  the incompressible {E}uler equations on $\mathbb{R}^3$.
\newblock {\em Annals of Mathematics}, 194(3):647--727, 2021.

\bibitem{ElGhMa2021}
Tarek~M. Elgindi, Tej-Eddine Ghoul, and Nader Masmoudi.
\newblock On the stability of self-similar blow-up for {$C^{1,\alpha}$}
  solutions to the incompressible {E}uler equations on {$\Bbb R^3$}.
\newblock {\em Camb. J. Math.}, 9(4):1035--1075, 2021.

\bibitem{ElJe2019}
Tarek~M. Elgindi and In-Jee Jeong.
\newblock Finite-time singularity formation for strong solutions to the
  axi-symmetric 3{D} {E}uler equations.
\newblock {\em Ann. PDE}, 5(2):Paper No. 16, 51, 2019.

\bibitem{ElPa2023}
Tarek~M Elgindi and Federico Pasqualotto.
\newblock From instability to singularity formation in incompressible fluids.
\newblock {\em arXiv preprint arXiv:2310.19780}, 2023.

\bibitem{ElNa2000}
Klaus-Jochen Engel and Rainer Nagel.
\newblock {\em One-parameter semigroups for linear evolution equations}, volume
  194 of {\em Graduate Texts in Mathematics}.
\newblock Springer-Verlag, New York, 2000.
\newblock With contributions by S. Brendle, M. Campiti, T. Hahn, G. Metafune,
  G. Nickel, D. Pallara, C. Perazzoli, A. Rhandi, S. Romanelli and R.
  Schnaubelt.

\bibitem{Ev2022}
Lawrence~C Evans.
\newblock {\em Partial differential equations}, volume~19.
\newblock American Mathematical Society, 2022.

\bibitem{Fr1954}
Kurt~Otto Friedrichs.
\newblock Symmetric hyperbolic linear differential equations.
\newblock {\em Comm. Pure Appl. Math.}, 7:345--392, 1954.

\bibitem{GiRo2024}
Daniel Ginsberg and Igor Rodnianski.
\newblock The stability of irrotational shocks and the {L}andau law of decay.
\newblock {\em arXiv preprint arXiv:2403.13568}, 2024.

\bibitem{GoGoKa2013}
Israel Gohberg, Seymour Goldberg, and Marinus~A Kaashoek.
\newblock {\em Classes of Linear Operators Vol. I}.
\newblock Operator Theory: Advances and Applications. Birkh{\"a}user Basel, 1
  edition, 2013.

\bibitem{Gu1942}
Karl~Gottfried Guderley.
\newblock Starke kugelige und zylindrische {V}erdichtungsst\"{o}sse in der
  {N}\"{a}he des {K}ugelmittelpunktes bzw. der {Z}ylinderachse.
\newblock {\em Luftfahrtforschung}, 19:302--311, 1942.

\bibitem{HoKlSpWo2016}
Gustav Holzegel, Sergiu Klainerman, Jared Speck, and Willie Wai-Yeung Wong.
\newblock Small-data shock formation in solutions to 3d quasilinear wave
  equations: an overview.
\newblock {\em Journal of Hyperbolic Differential Equations}, 13(01):1--105,
  2016.

\bibitem{JaJiSc2023}
Juhi Jang, Jiaqi Liu, and Matthew Schrecker.
\newblock On self-similar converging shock waves.
\newblock {\em arXiv preprint arXiv:2310.18483}, 2023.

\bibitem{JaJiSc2024}
Juhi Jang, Jiaqi Liu, and Matthew Schrecker.
\newblock Converging/diverging self-similar shock waves: from collapse to
  reflection.
\newblock {\em arXiv preprint arXiv:2403.12247}, 2024.

\bibitem{JeTs2020}
Helge~Kristian Jenssen and Charis Tsikkou.
\newblock Amplitude blowup in radial isentropic {E}uler flow.
\newblock {\em SIAM J. Appl. Math.}, 80(6):2472--2495, 2020.

\bibitem{JeTs2023}
Helge~Kristian Jenssen and Charis Tsikkou.
\newblock Radially symmetric non-isentropic {E}uler flows: Continuous blowup
  with positive pressure.
\newblock {\em Physics of Fluids}, 35(1), 2023.

\bibitem{JiSv2015}
Hao Jia and Vladimir Sverak.
\newblock Are the incompressible 3d navier--stokes equations locally ill-posed
  in the natural energy space?
\newblock {\em Journal of Functional Analysis}, 268(12):3734--3766, 2015.

\bibitem{Jo1974}
Fritz John.
\newblock Formation of singularities in one-dimensional nonlinear wave
  propagation.
\newblock {\em Comm. Pure Appl. Math.}, 27:377--405, 1974.

\bibitem{Ka1975}
Tosio Kato.
\newblock The {C}auchy problem for quasi-linear symmetric hyperbolic systems.
\newblock {\em Arch. Rational Mech. Anal.}, 58(3):181--205, 1975.

\bibitem{KlKrMaMa2020}
Christian Klingenberg, Ond{\v{r}}ej Kreml, V{\'a}clav M{\'a}cha, and Simon
  Markfelder.
\newblock Shocks make the {R}iemann problem for the full {E}uler system in
  multiple space dimensions ill-posed.
\newblock {\em Nonlinearity}, 33(12):6517, 2020.

\bibitem{La1955}
Peter~D. Lax.
\newblock On {C}auchy's problem for hyperbolic equations and the
  differentiability of solutions of elliptic equations.
\newblock {\em Comm. Pure Appl. Math.}, 8:615--633, 1955.

\bibitem{La1964}
Peter~D. Lax.
\newblock Development of singularities of solutions of nonlinear hyperbolic
  partial differential equations.
\newblock {\em Journal of Mathematical Physics}, 5(5):611--613, 1964.

\bibitem{Li2021}
Tai-Ping Liu.
\newblock {\em Shock waves}, volume 215 of {\em Graduate Studies in
  Mathematics}.
\newblock American Mathematical Society, Providence, RI, 2021.

\bibitem{LuSp2018}
Jonathan Luk and Jared Speck.
\newblock Shock formation in solutions to the 2d compressible euler equations
  in the presence of non-zero vorticity.
\newblock {\em Inventiones mathematicae}, 214(1):1--169, 2018.

\bibitem{LuSp2024}
Jonathan Luk and Jared Speck.
\newblock The stability of simple plane-symmetric shock formation for
  three-dimensional compressible {E}uler flow with vorticity and entropy.
\newblock {\em Anal. PDE}, 17(3):831--941, 2024.

\bibitem{LuYu2023a}
Tian-Wen Luo and Pin Yu.
\newblock On the stability of multi-dimensional rarefaction waves {I}: the
  energy estimates.
\newblock {\em arXiv preprint arXiv:2302.09714}, 2023.

\bibitem{LuYu2023b}
Tian-Wen Luo and Pin Yu.
\newblock On the stability of multi-dimensional rarefaction waves {II}:
  existence of solutions and applications to riemann problem.
\newblock {\em arXiv preprint arXiv:2305.06308}, 2023.

\bibitem{Ma1983b}
Andrew Majda.
\newblock {\em The existence of multi-dimensional shock fronts}, volume 281.
\newblock American Mathematical Soc., 1983.

\bibitem{Ma1983a}
Andrew Majda.
\newblock {\em The stability of multi-dimensional shock fronts}, volume 275.
\newblock American Mathematical Soc., 1983.

\bibitem{MeRaRoSz2022c}
Frank Merle, Pierre Rapha\"el, Igor Rodnianski, and Jeremie Szeftel.
\newblock On blow up for the energy super critical defocusing nonlinear
  schr{\"o}dinger equations.
\newblock {\em Inventiones mathematicae}, pages 1--167, 2022.

\bibitem{MeRaRoSz2022a}
Frank Merle, Pierre Rapha\"el, Igor Rodnianski, and Jeremie Szeftel.
\newblock On the implosion of a compressible fluid {I}: smooth self-similar
  inviscid profile.
\newblock {\em Ann. of Math. (2)}, 196(2):567--778, 2022.

\bibitem{MeRaRoSz2022b}
Frank Merle, Pierre Rapha\"el, Igor Rodnianski, and Jeremie Szeftel.
\newblock On the implosion of a compressible fluid {II}: singularity formation.
\newblock {\em Ann. of Math. (2)}, 196(2):779--889, 2022.

\bibitem{Mo1985}
Cathleen~Synge Morawetz.
\newblock On a weak solution for a transonic flow problem.
\newblock {\em Comm. Pure Appl. Math.}, 38(6):797--817, 1985.

\bibitem{Mo2004}
Cathleen~Synge Morawetz.
\newblock Mixed equations and transonic flow.
\newblock {\em J. Hyperbolic Differ. Equ.}, 1(1):1--26, 2004.

\bibitem{ShVi2023}
Steve Shkoller and Vlad Vicol.
\newblock The geometry of maximal development and shock formation for the
  {E}uler equations in multiple space dimensions.
\newblock {\em Inventiones mathematicae}, doi:10.1007/s00222-024-01269-x, 2024.

\bibitem{Si1985}
Thomas~C. Sideris.
\newblock Formation of singularities in three-dimensional compressible fluids.
\newblock {\em Comm. Math. Phys.}, 101(4):475--485, 1985.

\bibitem{Sp2016}
Jared Speck.
\newblock {\em Shock formation in small-data solutions to {3D} quasilinear wave
  equations}, volume 214.
\newblock American Mathematical Soc., 2016.

\bibitem{WaXi2019}
Chunpeng Wang and Zhouping Xin.
\newblock Smooth transonic flows of {M}eyer type in de {L}aval nozzles.
\newblock {\em Arch. Ration. Mech. Anal.}, 232(3):1597--1647, 2019.

\bibitem{XiYaYi2009}
Zhouping Xin, Wei Yan, and Huicheng Yin.
\newblock Transonic shock problem for the {E}uler system in a nozzle.
\newblock {\em Arch. Ration. Mech. Anal.}, 194(1):1--47, 2009.

\bibitem{XiYi2005}
Zhouping Xin and Huicheng Yin.
\newblock Transonic shock in a nozzle. {I}. {T}wo-dimensional case.
\newblock {\em Comm. Pure Appl. Math.}, 58(8):999--1050, 2005.

\bibitem{Yi2004}
Huicheng Yin.
\newblock Formation and construction of a shock wave for 3-{D} compressible
  {E}uler equations with the spherical initial data.
\newblock {\em Nagoya Math. J.}, 175:125--164, 2004.

\end{thebibliography}

\end{document}